\title{The \'etale-open topology and the stable fields conjecture}
\author{Will Johnson, Chieu-Minh Tran, Erik Walsberg, Jinhe Ye }
\email{willjohnson@fudan.edu.cn, mtran6@nd.edu, ewalsber@uci.edu, jinhe.ye@imj-prg.fr}
\DeclareFontFamily{U}{fsy}{}
\DeclareFontShape{U}{fsy}{m}{n}{<->s*[.9]psyr}{}
\DeclareSymbolFont{der@m}{U}{fsy}{m}{n}
\DeclareMathSymbol{\der}{\mathord}{der@m}{182}
\DeclareFontFamily{U}{BOONDOX-calo}{\skewchar\font=45 }
\DeclareFontShape{U}{BOONDOX-calo}{m}{n}{
  <-> s*[1.05] BOONDOX-r-calo}{}
\DeclareFontShape{U}{BOONDOX-calo}{b}{n}{
  <-> s*[1.05] BOONDOX-b-calo}{}
\DeclareMathAlphabet{\mathcalboondox}{U}{BOONDOX-calo}{m}{n}
\SetMathAlphabet{\mathcalboondox}{bold}{U}{BOONDOX-calo}{b}{n}
\DeclareMathAlphabet{\mathbcalboondox}{U}{BOONDOX-calo}{b}{n}
\DeclareSymbolFont{imag@m}{OT1}{cmr}{m}{ui}
\DeclareMathSymbol{\imag}{\mathord}{imag@m}{105}
\DeclareMathOperator*{\forkindep}{\raise0.2ex\hbox{\ooalign{\hidewidth$\vert$\hidewidth\cr\raise-0.9ex\hbox{$\smile$}}}}
\newcommand{\Sa}[1]{\ensuremath{\mathscr{#1}}}
\newcommand{\reg}{\mathrm{reg}}
\newcommand{\sm}{\mathrm{sm}}
\newcommand{\Def}{\mathrm{Def}}
\newcommand{\kalg}{K^{\mathrm{alg}}}
\newcommand{\palg}{\overline{\Ff}_p}
\newcommand{\bk}{\mathbf{k}}
\newcommand{\Gal}{\operatorname{Gal}}
\newcommand{\Spec}{\operatorname{Spec}}
\newcommand{\Frac}{\operatorname{Frac}}
\newcommand{\val}{\operatorname{val}}
\newcommand{\Chara}{\operatorname{Char}}
\newcommand{\reslk}{\rtp}
\newcommand{\affk}{\mathrm{Aff}_K}
\newcommand{\affl}{\mathrm{Aff}_L}
\newcommand{\etp}{\mathrm{Ext}_{L/K}}
\newcommand{\rtp}{\mathrm{Res}_{L/K}}
\newtheorem*{claim-star}{Claim}
\newtheorem{theorem}{Theorem}[section] 
\newtheorem{lemma}[theorem]{Lemma}
\newtheorem{prop-def}[theorem]{Proposition-Definition}
\newtheorem{corollary}[theorem]{Corollary}
\newtheorem{fact}[theorem]{Fact}
\newtheorem{fact-eh}[theorem]{Fact(?)}
\newtheorem{proposition}[theorem]{Proposition}
\newtheorem{proposition-eh}[theorem]{Proposition(?)}
\newtheorem*{theorem-star}{Theorem}
\newtheorem*{conjecture-star}{Conjecture}
\newtheorem*{lemma-star}{Lemma}
\newtheorem*{thmA}{Theorem A}
\newtheorem*{thmB}{Theorem B}
\newtheorem*{thmC}{Theorem C}
\newtheorem*{thmD}{Theorem D}
\theoremstyle{definition}
\newtheorem{definition}[theorem]{Definition}
\newtheorem{remark}[theorem]{Remark}
\theoremstyle{remark}
\newcommand{\Aa}{\mathbb{A}}
\newcommand{\Ff}{\mathbb{F}}
\newcommand{\Gg}{\mathbb{G}}
\newcommand{\Qq}{\mathbb{Q}}
\newcommand{\Rr}{\mathbb{R}}
\newcommand{\Nn}{\mathbb{N}}
\newcommand{\Cc}{\mathbb{C}}
\newcommand{\Pp}{\mathbb{P}}
\newcommand{\cE}{\mathscr{E}}
\newcommand{\Var}{\mathrm{Var}}
\newcommand{\cT}{\mathscr{T}}
\newcommand{\cS}{\mathscr{S}}
\newcommand{\meno}{\medskip \noindent}
\newenvironment{claimproof}[1][\proofname]
               {
                 \proof[#1]
                 
               }
               {
                 \endproof
               }
\subjclass[2010]{Primary 12J10, 12J15,12L12}
\keywords{stable fields, large fields, canonical topology, \'etale-open topology}
\begin{document}
\maketitle
\begin{abstract}
For an arbitrary field $K$ and $K$-variety $V$, we introduce the \'etale-open topology on the set $V(K)$ of $K$-points of $V$.
This topology agrees with the Zariski topology, Euclidean topology, or valuation topology when $K$ is separably closed, real closed, or $p$-adically closed, respectively.
Topological properties of the \'etale-open topology corresponds to algebraic properties of $K$.
For example, the \'etale-open topology on $\mathbb{A}^1(K)$ is not discrete if and only if $K$ is large.
As an application, we show that a large stable field is separably closed.
\end{abstract}

\section{Introduction}
\noindent 
We introduce the \textbf{\'etale-open topology} on the set $V(K)$ of $K$-points of a variety $V$ over a field $K$, show that this generalizes the natural topology for many choices of $K$, and study the relationship between the properties of this topology and algebraic properties of $K$.
In particular, the \'etale-open topology is non-discrete if and only if $K$ is large.
Recall that $K$ is \textbf{large} if every smooth $K$-curve with a $K$-point has infinitely many $K$-points.
Separably closed fields, real closed fields, fields which admit non-trivial Henselian valuations such as $\Qq_p$ and $K((t))$, quotient fields of Henselian domains, pseudofinite fields, infinite algebraic extensions of finite fields, $\mathrm{PAC}$ fields, $p$-closed fields, and fields which satisfy a local-global principle (such as pseudo real closed and pseudo $p$-adically closed fields) are all large.
Finite fields, number fields, and function fields are not large. 
See \cite{Pop-little} for a survey of largeness and further examples. 
All known model-theoretically tame infinite fields seem to be large.

\medskip
We use the \'etale-open topology to resolve the model-theoretic \emph{stable fields conjecture} for large fields.
Specifically, we prove that large stable fields are separably closed.
The assumption of largeness might be necessary: Scanlon has recently raised the question of whether the (nonlarge) field $\mathbb{C}(t)$ is stable, giving suggestive evidence from arithmetic geometry \cite{Scanlon-CT}.



\meno
Throughout, $K$ is an infinite field, $X$, $Y$,  $V$, and $W$ range over $K$-varieties (i.e.\@ separated schemes of finite type over $K$), $K[V]$ is the coordinate ring of $V$, and $V(K)$ is the set of $K$-points of $V$.
The notion of an \'etale morphism was introduced by Grothendieck as the algebro-geometric counterpart of a local homeomorphism. If $V$ is smooth then $f: X \to V$ is {\bf \'etale} if and only if $X$ is smooth and  $f$ induces a tangent space isomorphism $T_p X \to T_{f(p)} V$ at every $p \in X$; see Section~\ref{section:basic etale} for the more technical definition when $V$ is not smooth.
A subset $U$ of $V(K)$ is an \textbf{\'etale image in} $V(K)$ if there is an \'etale morphism $X \to V$ such that $U$ is the image of the induced map $X(K) \to V(K)$.
Standard results about \'etale morphisms imply that the collection of \'etale images in $V(K)$ is closed under finite intersections and finite unions, and contains all Zariski-open subsets of $V(K)$; see Lemmas~\ref{intersection:lem2a} and \ref{lem:intersection} for details.
In particular, this collection is a basis for a topology  (in the classical sense) on $V(K)$, which we call the {\bf \'etale-open topology} on $V(K)$.

\begin{remark}
We make some comments concerning the relationship between the \'etale-open topology and the classical \'etale topology developed by the Grothendieck school.
Both are defined in terms of \'etale morphisms and are motivated by similar intuition.
However, we are not aware of a direct connection.
The \'etale topology is a Grothendieck topology (i.e., a site), whereas the \'etale-open topology is a topology in the classical sense.
The \'etale topology has well-behaved sheaf cohomology, and is often ``connected'' in spirit.
In contrast, the \'etale-open topology is usually totally disconnected (Theorem~\ref{thm:connected}).
There is not even a comparison morphism between the \'etale site and the \'etale-open site.
\end{remark}

Let $\cE_K = \{\cE_{V}\}$ be the family consisting of the \'etale-open topology on $V(K)$ for each $K$-variety $V$.
This family satisfies some natural compatibility conditions which we now describe.

\meno
Suppose $\cT = \{\cT_V\}$ is a family consisting of a topology on $V(K)$ for each $K$-variety $V$.  We refer to $\cT_V$ as the \textbf{$\cT$-topology} on $V(K)$.
\begin{definition}\label{sys-top:def}
The family $\cT$ is a {\bf system of topologies} if for any morphism  $f : V \to W$:
\begin{enumerate}
\item \label{st-1} the
induced map $V(K) \to W(K)$ is $\cT$-continuous.
\item \label{st-2} If $f$ is a (scheme-theoretic) open immersion, then the induced map $V(K) \to W(K)$ is a $\cT$-open embedding.
\item \label{st-3} If $f$ is a (scheme-theoretic) closed immersion, then the induced map $V(K) \to W(K)$ is a $\cT$-closed embedding.
\end{enumerate}
\end{definition}

Note that a system of topologies determines a functor from the category of $K$-varieties to the category of topological spaces.  More precisely, a system of topologies gives a lifting of the $K$-points functor (from varieties to sets)
along the forgetful functor from topological spaces to sets.
Two systems of topologies are already familiar: the \textbf{Zariski system of topologies} assigns to each variety $V$ the Zariski topology on $V(K)$.  Likewise, any field topology on $K$ determines a system of topologies over $K$; see Fact~\ref{fact:field-top} below.

\begin{thmA}
The family $\cE_K$ of \'etale-open topologies is a system of topologies. 
\end{thmA}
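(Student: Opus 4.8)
The plan is to verify the three clauses of Definition~\ref{sys-top:def} in turn, using three facts: \'etale morphisms are stable under base change; an \'etale morphism is Zariski-locally a standard \'etale morphism (as recalled above); and taking $K$-points commutes with fibre products, so that $(X\times_W V)(K)=X(K)\times_{W(K)}V(K)$ for any $K$-variety morphisms $X\to W\leftarrow V$. For clause~\ref{st-1}, let $f\colon V\to W$ be arbitrary and let $U$ be the image of $X(K)\to W(K)$ for some \'etale $X\to W$. Base change gives an \'etale morphism $X\times_W V\to V$, and the fibre-product description of $K$-points identifies the image of $(X\times_W V)(K)\to V(K)$ with $f^{-1}(U)$. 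Hence $f^{-1}(U)$ is an \'etale image in $V(K)$, in particular $\cE_V$-open; since such $U$ form a basis of $\cE_W$, the map $f$ is $\cE$-continuous.

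For clause~\ref{st-2}, identify $V$ with an open subscheme of $W$. The map $V(K)\to W(K)$ is injective (an open immersion is a monomorphism), and its image is an \'etale image in $W(K)$ because open immersions are \'etale. For \'etale $X\to V$ the composite $X\to V\hookrightarrow W$ is \'etale, and the image of $X(K)\to V(K)$ equals, as a subset of $W(K)$, the image of $X(K)\to W(K)$; thus every basic $\cE_V$-open set is an $\cE_W$-open subset of $W(K)$ contained in $V(K)$. Together with clause~\ref{st-1}, which makes the inclusion $V(K)\to W(K)$ $\cE$-continuous and hence shows $\cE_V$ refines the subspace topology, this gives that $\cE_V$ \emph{is} the subspace topology inherited from $\cE_W$; so $f$ is an $\cE$-open embedding.

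For clause~\ref{st-3}, identify $V$ with a closed subscheme of $W$. Then $V(K)\to W(K)$ is injective with Zariski-closed image, which is therefore $\cE_W$-closed since $\cE_W$ refines the Zariski topology (Lemmas~\ref{intersection:lem2a} and~\ref{lem:intersection}); and by clause~\ref{st-1} the topology $\cE_V$ refines the subspace topology. The content is the reverse inclusion: every basic $\cE_V$-open set is the trace on $V(K)$ of an $\cE_W$-open set. Since this is Zariski-local on $W$ — and using clause~\ref{st-2} to pass between open subschemes of $W$ and their traces — we may assume $W$, hence the closed subscheme $V$, is affine. Given \'etale $X\to V$, cover $X$ by opens $X_i$ with each $X_i\to V$ factoring through a principal open $V_i\subseteq V$ as a standard \'etale morphism $X_i=\{f_i=0\neq g_i\}\subseteq V_i\times\Aa^1$, where $f_i$ is monic in $y$ and $\der f_i/\der y$ is invertible in $(K[V_i][y]/(f_i))[1/g_i]$. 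Each $V_i$ is a closed subscheme of a principal open $W_i\subseteq W$, and we may lift $f_i,g_i$ to $\tilde f_i,\tilde g_i\in K[W_i][y]$ with $\tilde f_i$ still monic of the same degree. Put $\tilde X_i=\{\tilde f_i=0\neq \tilde g_i\cdot(\der\tilde f_i/\der y)\}\subseteq W_i\times\Aa^1$: this is a standard \'etale morphism $\tilde X_i\to W_i$, and restricting along $V_i\hookrightarrow W_i$ one computes $\tilde X_i\times_{W_i}V_i=X_i$ inside $V_i\times\Aa^1$, precisely because inverting $g_i$ already inverts $\der f_i/\der y$ in $K[V_i][y]/(f_i)$. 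Composing $\tilde X_i\to W_i\hookrightarrow W$ yields an \'etale morphism, so the image $U_i$ of $\tilde X_i(K)\to W(K)$ is $\cE_W$-open, and the identity $\tilde X_i\times_{W_i}V_i=X_i$ with the fibre-product description of $K$-points shows that $U_i\cap V(K)$ is the image of $X_i(K)\to V(K)$. Finitely many $X_i$ cover $X$ (it is quasi-compact), so the image of $X(K)\to V(K)$ equals $(\bigcup_i U_i)\cap V(K)$, the trace of an $\cE_W$-open set. Hence $\cE_V$ is the subspace topology and $f$ is an $\cE$-closed embedding.

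Clauses~\ref{st-1} and~\ref{st-2} are essentially formal, flowing from base change and the compatibility of $K$-points with fibre products. The genuinely substantive point is the lifting step in clause~\ref{st-3}: given an \'etale $X\to V$, one must produce, Zariski-locally on $X$, an \'etale $W$-scheme whose restriction to the closed subscheme $V$ recovers $X$. The device — enlarging the non-vanishing locus of the lift by the factor $\der\tilde f/\der y$, so that the lifted monic polynomial becomes \'etale on the nose while that factor is already a unit over $V$ — is where I expect to spend care, and in particular the scheme-theoretic (not merely set-theoretic) identity $\tilde X_i\times_{W_i}V_i=X_i$ should be checked directly from the definition of standard \'etale morphism.
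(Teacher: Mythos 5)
Your proof is correct, and for clauses~\ref{st-1} and~\ref{st-2} it essentially coincides with the paper's argument (Lemmas~\ref{lem:cts} and \ref{lem:open-map}: base change of \'etale morphisms for continuity, composition of \'etale morphisms for openness of open immersions). Where you genuinely diverge is clause~\ref{st-3}. The paper's Proposition~\ref{prop:closed-embedd} outsources the hard step to Fact~\ref{fact:ega} (EGA IV, 18.1.1), which states that an \'etale morphism $X \to V$ over a closed subscheme $V \hookrightarrow W$ is Zariski-locally on $X$ the restriction along $V$ of an \'etale morphism to $W$. You instead prove the needed case of this from scratch, by working affine-locally with the standard \'etale presentation $X_i = \{f_i = 0 \ne g_i\}$, lifting $f_i, g_i$ through the surjection $K[W_i] \twoheadrightarrow K[V_i]$, and then \emph{enlarging the inverted function} to $\tilde g_i \cdot (\der \tilde f_i / \der y)$ so that the lift becomes standard \'etale on $W_i$ while its pullback to $V_i$ is unchanged — because $\der f_i/\der y$ is already a unit in $(K[V_i][y]/(f_i))[1/g_i]$. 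The scheme-theoretic identity $\tilde X_i \times_{W_i} V_i = X_i$ then holds on coordinate rings, and the fiber-product description of $K$-points finishes the argument. The trade-off is clear: the paper's route is shorter but leans on a nontrivial EGA reference; your route is longer but self-contained, and in fact makes explicit the standard proof of the cited EGA result in the affine case. One small stylistic difference: the paper proves the stronger statement that all \'etale morphisms induce open maps (not just open immersions), which yields the characterization in Proposition~\ref{characterization}; your proof only establishes what Theorem~A literally requires.
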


Theorem A is an easy consequence of standard results on \'etale morphisms; see Section~\ref{section:thmA}.

\medskip


When $K$ is separably closed, the \'etale-open topology agrees with the Zariski topology; see Proposition~\ref{prop:zar}.
When $K$ is real closed, the \'etale-open topology
is induced by the order topology; see Corollary~\ref{rcf-case}.
The \'etale-open topology
is induced by the Henselian valuation topology
when $K$ is a non-separably closed henselian valued field such as $\Qq_p$; see Corollary~\ref{hens-case}.
In particular, if $K$ is a local field other than $\Cc$, then the \'etale-open topology is induced by the usual field topology on $K$.


\meno
More generally, the \'etale-open topology is closely connected to t-Henselianity.
Recall that a \textbf{V-topology} on $K$ is a non-discrete field topology induced by an absolute value or valuation.
V-topologies can also be characterized intrinsically; see Definition~\ref{defn:V}.
A V-topology is \textbf{t-Henselian} if it satisfies a  topological analogue of Hensel's lemma; see Definition~\ref{defn:t-hensel}.
The usual topology on a local field, the valuation topology on a Henselian valued field, and the order topology on a real closed field are all t-Henselian.
If $K$ is not separably closed then $K$ admits at most one t-Henselian topology.
So we say that $K$ is t-Henselian if $K$ admits a t-Henselian field topology, and if $K$ is not separably closed then we refer to this canonical topology as ``the'' t-Henselian topology.

\begin{thmB}
\label{thm:intro-1}
If $K$ is t-Henselian and not separably closed then the \'etale-open topology over $K$ is induced by the t-Henselian topology.
If the \'etale-open topology over $K$ is induced by a V-topology $\uptau$ on $K$ then $\uptau$ (and hence $K$) is t-Henselian and $K$ is not separably closed.
\end{thmB}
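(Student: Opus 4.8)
The plan is to treat the two implications separately and to reduce everything to statements about the \'etale-open topology on $\Aa^1(K)$, exploiting Theorem A (the system-of-topologies property) to transfer information between varieties. For the first implication, suppose $K$ is t-Henselian and not separably closed, with t-Henselian topology $\uptau$. I would first show that every $\uptau$-open subset of $V(K)$ is \'etale-open: since $\uptau$ is a field topology, the standard \'etale morphism $\pi\colon X \to \Aa^1$ given by $f=0\neq g$ has the property that its image contains a $\uptau$-neighborhood of each of its points — this is precisely the content of topological Hensel's lemma (the implicit function theorem for t-Henselian topologies, which lets one solve $f(\alpha,y)=0$ near a given root with $\partial f/\partial y\neq 0$). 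Conversely, to see every \'etale image is $\uptau$-open one argues that the image of $X(K)\to\Aa^1(K)$ is $\uptau$-open by the same implicit-function statement: near a $K$-point of $X$ the map $\pi$ is a $\uptau$-homeomorphism onto a $\uptau$-open set. Combining, the two topologies on $\Aa^1(K)$ coincide; by the system-of-topologies axioms (closed immersions are closed embeddings, open immersions open embeddings, products — embedding $V$ into $\Aa^n$ affine-locally) this propagates to all $V$, so $\cE_K$ is induced by $\uptau$. Here I would lean on whichever earlier section treats the separably-closed, real-closed, and Henselian special cases, since the t-Henselian argument is the common generalization; the key technical input is a topological implicit function theorem for t-Henselian fields, which should either be cited from the literature (Prestel--Ziegler) or proved from \texttt{Definition~\ref{defn:t-hensel}}.

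For the converse, suppose $\cE_K$ is induced by a V-topology $\uptau$. First, $K$ is not separably closed: if it were, Proposition~\ref{prop:zar} says $\cE_K$ is the Zariski topology, which on $\Aa^1(K)$ is cofinite hence not induced by any V-topology (a V-topology is non-discrete and Hausdorff with no isolated points, whereas the cofinite topology on an infinite set is not Hausdorff). So $K$ is not separably closed. It remains to show $\uptau$ is t-Henselian, i.e.\ satisfies topological Hensel's lemma. The idea is to run the implicit-function argument in reverse: given $f\in K[x,y]$ monic in $y$ with a simple root, the polynomial $f$ (after localizing away from the bad locus) defines a standard \'etale morphism $X\to\Aa^1$, so its image is \'etale-open, hence $\uptau$-open; this says exactly that simple roots persist under small $\uptau$-perturbation of the coefficients, and unwinding the precise formulation of \texttt{Definition~\ref{defn:t-hensel}} shows this is equivalent to t-Henselianity of $\uptau$. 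The uniqueness clause ("if $K$ is not separably closed it admits at most one t-Henselian topology") is what lets us speak of "the" t-Henselian topology and guarantees consistency with the first implication.

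The main obstacle I anticipate is the converse direction, specifically extracting t-Henselianity from the bare hypothesis that \emph{some} V-topology induces $\cE_K$. The subtlety is that the definition of t-Henselianity (\texttt{Definition~\ref{defn:t-hensel}}) is a statement about families of polynomials with a uniform neighborhood witnessing the root, and one must check that the openness of a single \'etale image — which a priori only gives, for each point, \emph{some} basic \'etale-open neighborhood inside it — can be upgraded to the uniform polynomial statement. This likely requires choosing the \'etale morphism carefully (e.g.\ taking $X$ to be cut out by the very polynomial $f$ whose Hensel property we want, with $g$ chosen to excise the ramification locus) and then translating "$\alpha$ lies in a basic \'etale-open set contained in the image" back into "the nearby polynomial has a root near $\beta$." A secondary, more routine obstacle is verifying that a V-topology cannot coincide with the Zariski topology on $\Aa^1(K)$ for infinite $K$ — easy, but needs the remark that V-topologies are Hausdorff and Zariski is not. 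I would also want to double-check that the first implication's implicit function theorem is available in the generality of t-Henselian (not merely Henselian valued) fields; if the literature only provides it for V-topological fields with a Henselian valuation, one must instead cite or reprove the Prestel--Ziegler version for t-Henselian topologies.
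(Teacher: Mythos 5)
Your plan for the second implication is essentially the right one and matches the paper's: use the fact that $\cE_K$ induced by a V-topology rules out separably closed $K$ (via Proposition~\ref{prop:zar}), then produce an \'etale image on $\Aa^{n+1}(K)$ from the universal family of polynomials, and convert its $\cT_\uptau$-openness into the uniform neighborhood required by Definition~\ref{defn:t-hensel}. You correctly flag the uniformity worry; the paper resolves it by observing that, for a system induced by a field topology, the $\cT_\uptau$-topology on $\Aa^{n+1}(K)$ is the \emph{product} of copies of $\uptau$ (Fact~\ref{fact:field-top}/Proposition~\ref{prop:top-product}), so a $\cT_\uptau$-neighborhood of the origin contains a box $U^{n+1}$ with $U$ a $\uptau$-neighborhood of $0$, which is exactly the uniform statement needed. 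That is the missing step in your sketch, and it is short once stated.

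The first implication, however, has a genuine gap. You need to prove two refinements: (a) $\cT_\uptau$ refines $\cE_K$ (every \'etale image is $\uptau$-open), and (b) $\cE_K$ refines $\cT_\uptau$ (every $\uptau$-open set is a union of \'etale images). Your two paragraphs both argue via the topological implicit function theorem that \'etale images are $\uptau$-open; that is direction (a) twice, once mislabeled as (b). The IFT for t-Henselian topologies does not, by itself, produce a nonempty \'etale image sitting inside a given small $\uptau$-ball, which is what (b) requires. The paper closes (b) with a different idea (Lemmas~\ref{lem:def-loc-bound}, \ref{dense-refiner:lem}, and \ref{lem:key-refine}): since $K$ is not separably closed, there is a nonconstant separable $f \in K[x]$ with no root in $K$, and by Prestel--Ziegler (Fact~\ref{fact:prestel-def}) the image $f(\Aa^1(K))$ is not $\uptau$-dense. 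That gives a nonempty \'etale image which, after a linear fractional transformation (using the V-topology formalism and $\Pp^1$, Lemma~\ref{lem:V-proj}), is $\uptau$-bounded. Affine-invariance of both topologies then lets one translate and rescale this single bounded \'etale-open set to cover every $\uptau$-open set, which gives (b) and hence the coincidence of the topologies on $\Aa^1(K)$. Without an argument of this kind — producing a $\uptau$-small \'etale image, not merely a $\uptau$-open one — the first implication does not go through.
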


Every $t$-Henselian field is large.
Hence, we can view the \'etale-open topology as a natural generalization of the t-Henselian topology to the broader class of large fields.
In general the \'etale-open topology $\cE_K$ is not induced by a field topology on $K$; see Section~\ref{section:pseudofinite}.

\medskip
We expect topological properties of the \'etale-open topology to correspond to field-theoretic properties of $K$.
The following theorem is a first step in this direction.

\begin{thmC} 
\ 
\begin{enumerate} 
\item $K$ is large if and only if the \'etale-open topology on $\Aa^1(K)$ is not discrete if and only if the \'etale-open topology on $V(K)$ is non-discrete whenever $V(K)$ is infinite.
\item $K$ is not separably closed if and only if the \'etale-open topology on $V(K)$ is Hausdorff for quasi-projective $V$.
\item The \'etale-open topology on $\Aa^1(K)$ is connected if and only if $K$ is separably closed or isomorphic to $\Rr$.
(More generally: the \'etale-open topology on $\Aa^1(K)$ is definably connected if and only if $K$ is separably closed or real closed.)  
\item The \'etale-open topology on $\Aa^1(K)$ is locally compact Hausdorff if and only if $K$ is a local field other than $\Cc$.
\end{enumerate}
\end{thmC}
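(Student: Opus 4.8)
The plan is to prove the two directions separately, reducing the forward (``only if'') direction, via an Ellis-type automatic-continuity argument, to the classical classification of non-discrete locally compact fields. The reverse direction is quick: if $K$ is a local field other than $\Cc$, then $K$ is either real closed (the case $K = \Rr$) or a non-separably-closed Henselian valued field (a finite extension of $\Qq_p$, or a local field of positive characteristic with its canonical complete valuation); in each case $K$ is t-Henselian and not separably closed, and the t-Henselian topology on $K$ is exactly the usual locally compact Hausdorff field topology. By Theorem~B (see also Corollaries~\ref{rcf-case} and~\ref{hens-case}) the \'etale-open topology on $\Aa^1(K) = K$ agrees with this topology, hence is locally compact and Hausdorff.

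For the forward direction, suppose the \'etale-open topology $\cE$ on $\Aa^1(K)$ is locally compact and Hausdorff; we may and do assume it is non-discrete (a discrete $\Aa^1(K)$ is trivially locally compact Hausdorff and, by part~(1), occurs only for non-large $K$, so this is the substantive case). Then $K$ is large by part~(1), and $K$ is not separably closed, since otherwise $\cE$ on $\Aa^1(K)$ would be the Zariski, i.e.\ cofinite, i.e.\ non-Hausdorff, topology by Proposition~\ref{prop:zar}. The crux is to show that $(K, \cE)$ is a topological field. First I would observe, using Theorem~A, that every $K$-variety morphism induces an $\cE$-continuous map on $K$-points; applied to the automorphisms $x \mapsto x + a$ and $x \mapsto ax$ of $\Aa^1$ (for $a \in K^\times$) and to the zero morphism, this shows that addition and multiplication on $K$ are \emph{separately} $\cE$-continuous. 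Since $\cE$ is locally compact Hausdorff, Ellis's theorem then gives that $(K, +, \cE)$ is a topological group; because the ring multiplication is moreover biadditive, a Baire-category argument on this locally compact group upgrades its separate continuity to joint continuity, so $(K, +, \cdot, \cE)$ is a locally compact Hausdorff topological ring. Next, $K^\times = \Gg_m(K)$ is $\cE$-open in $\Aa^1(K)$ (the inclusion $\Gg_m \hookrightarrow \Aa^1$ is an open immersion, so Theorem~A applies), hence $(K^\times, \cE)$ is locally compact Hausdorff with separately continuous multiplication, and Ellis's theorem again makes it a topological group, so inversion is $\cE$-continuous. Therefore $(K, \cE)$ is a non-discrete locally compact Hausdorff topological field, and by the classical classification of such fields $K$ is topologically isomorphic to $\Rr$, $\Cc$, a finite extension of $\Qq_p$, or a local field of positive characteristic. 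As $K$ is not separably closed, $K \not\cong \Cc$, so $K$ is a local field other than $\Cc$.

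The hard part will be the passage from separate to joint continuity of the ring operations. A priori the \'etale-open topology on $\Aa^2(K)$ is strictly finer than the product of the \'etale-open topologies on the two copies of $\Aa^1(K)$, so the $\cE$-continuity of the addition and multiplication \emph{morphisms} $\Aa^2 \to \Aa^1$ supplied by Theorem~A does not by itself make $(K, \cE)$ a topological ring; local compactness is what rescues this, entering through the Ellis-type automatic-continuity results, and the step requiring care is to verify that their hypotheses genuinely hold here — that the relevant spaces are locally compact Hausdorff and that multiplication is separately continuous in the appropriate sense on $K$ and on $K^\times$.
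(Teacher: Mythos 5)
Your proposal addresses only part (4) of the four-part Theorem~C. Parts (1), (2), and (3) are substantial theorems in their own right — in the paper they are Theorems~\ref{thm:3-strong}, \ref{thm:haus}, and \ref{thm:connected}, and each requires significant independent work (the Zariski-density and smooth-locus lemmas for (1), the Galois-theoretic Lemma~\ref{lemma:galois} plus passage to finite extensions via Weil restriction for (2), the analysis of $p$th powers and the set $P \cup (1+P)$ for (3)). Since your argument for part (4) explicitly \emph{uses} parts (1) and (2) as black boxes, the proposal as written is incomplete and, taken as a proof of Theorem~C in full, circular.

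For part (4) in isolation, your argument is essentially the same as the paper's Theorem~\ref{thm:loc-compact}. The reverse direction matches exactly: a local field other than $\Cc$ is t-Henselian and not separably closed, so Theorem~B (via Corollaries~\ref{rcf-case} and~\ref{hens-case}) pins down $\cE_K$ as the usual locally compact field topology. The forward direction also matches: the paper likewise invokes Hausdorffness via Theorem~\ref{thm:haus}, observes that translation, scaling, negation, and inversion are $\cE$-continuous because they come from variety morphisms, and cites Ellis's theorem (Fact~\ref{fact:semi-top}) to conclude $(K,\cE)$ is a topological field, after which the classification of locally compact fields finishes. One thing you do more scrupulously than the paper: you flag that Ellis's theorem as stated is a single-group statement, so it gives joint continuity of $+$ on $K$ and of $\cdot$ on $K^\times$, and you supply a Baire-category argument for a biadditive separately continuous map to get joint continuity of $\cdot$ on $K \times K$; the paper compresses this entire step into ``By Fact~\ref{fact:semi-top} the \'etale-open topology on $\Aa^1(K)$ is a field topology,'' leaving that passage implicit. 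One loose end present in both versions: when $\cE$ on $\Aa^1(K)$ is discrete (equivalently, $K$ is not large) it is trivially locally compact Hausdorff yet $K$ need not be a local field in the classical non-discrete sense; the paper side-steps this with the hypothesis ``$K$ is not separably closed'' in Theorem~\ref{thm:loc-compact} and an implicit non-discreteness convention, whereas you dismiss it as ``not the substantive case'' without actually showing the forward implication holds there.
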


We believe that the \'etale-open topology will be a useful tool in the model theory of large fields.
As evidence of this we offer Theorem D below, a special case of a famous conjecture.

\begin{thmD}
If $K$ is large and stable then $K$ is separably closed.
\end{thmD}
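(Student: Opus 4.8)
The plan is to argue by contradiction: assume $K$ is large and stable but not separably closed, and derive a failure of stability. The key is to exploit Theorem~B and Theorem~C to get a well-behaved, definable topology on $K$, and then to use that topology to define a linear order or a combinatorial pattern witnessing instability (the order property). So the first step is to understand the \'etale-open topology under our assumptions. Since $K$ is large, Theorem~C(1) tells us the \'etale-open topology on $\Aa^1(K)$ is non-discrete. Since $K$ is not separably closed, Theorem~C(2) tells us the topology is Hausdorff on quasi-projective varieties. The crucial point I would try to establish is that in a stable field the \'etale-open topology must actually be a \emph{definable} topology — each basic open set $\{\alpha : \exists \beta\, [f(\alpha,\beta)=0 \neq g(\alpha,\beta)]\}$ is already a definable subset of $K^n$, and stability gives us strong uniformity (e.g.\ via definability of types and the finiteness of the number of types over finite sets) which should let us extract a definable basis of neighborhoods of a point, uniformly in the point.

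Next I would leverage the interaction between stability and a non-discrete, Hausdorff, definable field topology. The classical intuition (going back to the analysis of stable and weakly minimal groups, and to the fact that an infinite definable family of ``large'' subsets of a stable group is constrained) is that stable fields cannot carry nontrivial definable V-topologies: a V-topology yields, roughly, a definable valuation-like structure, and definable valuations on stable fields are known to be trivial (or to force algebraic closedness). More concretely, I would aim to show that the \'etale-open topology, being non-discrete, definable, and translation-equivariant (it is a system of topologies, so it is preserved by the affine automorphisms $x \mapsto ax+b$ of $\Aa^1$), endows $K$ with a definable nontrivial field topology. Then the second step is: a stable field with a nontrivial definable field topology is separably closed. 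For this I would invoke (or reprove in this setting) the circle of results saying that stable fields have no nontrivial definable valuations and, more strongly, that a definable non-discrete Hausdorff field topology on a stable field is impossible unless the field is separably closed — e.g.\ by showing such a topology would be a V-topology (using the group structure and Hausdorffness to rule out the topology being induced by a proper additive subgroup that is not ``valuation-like''), hence by Theorem~B the field would be $t$-Henselian and not separably closed, and $t$-Henselian non-separably-closed fields are famously unstable (they interpret a nontrivial valued field, which has the independence or order property).

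The main obstacle, I expect, is precisely the passage from ``the \'etale-open topology is non-discrete and Hausdorff'' to ``$K$ carries a nontrivial definable V-topology'' in a way that is licensed by stability. The \'etale-open topology is not a priori a field topology on $K$ (the excerpt explicitly warns of this), so one cannot simply quote Theorem~B. The real content will be to show that under stability the \'etale-open topology on $\Aa^1(K)$ \emph{is} a field topology: one must verify continuity of addition and multiplication and of inversion for the \'etale-open topology, and here stability is the essential extra hypothesis — without it the topology can fail to be a field topology (as happens for pseudofinite fields). I would try to prove this via a stability-theoretic argument: the basic \'etale-open neighborhoods form a definable family, stability bounds the combinatorial complexity of this family (no infinite half-graphs), and this rules out the kind of ``spreading out'' that obstructs continuity of multiplication. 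Once the topology is a non-discrete field topology, one shows it is a V-topology — using that $K$ is not separably closed and a result characterizing V-topologies among definable field topologies on non-separably-closed fields, perhaps again appealing to Theorem~B's second clause in reverse — and then a $t$-Henselian field is not stable (it has the independence property via the valuation ring), giving the contradiction. Assembling the pieces: largeness gives non-discreteness, non-separable-closedness plus stability upgrade this to a V-topology, Theorem~B then forces $t$-Henselianity, and $t$-Henselian fields are unstable — contradiction, so $K$ must be separably closed.
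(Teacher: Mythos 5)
Your proposal takes a genuinely different route from the paper, but it has several gaps that are not cosmetic; the hardest of them you yourself flag and then do not close.

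The paper's proof (Section~\ref{sec: tablefieldconj2}) never tries to show that the \'etale-open topology is a field topology, let alone a V-topology. Instead it uses only three things: Theorem~C(2) gives two disjoint nonempty definable basic opens $U,U^*$ in $K$; Theorem~A gives affine-invariance of the topology; and Poizat's theorem (Fact~\ref{fact:poizat}) gives a unique type that is simultaneously the additive and the multiplicative generic in a stable field. Since $U$ and $U^*$ are disjoint, one of them, say $U$, misses the generic. Translate so $0\in U$; then $K\setminus U$ is multiplicatively generic, so finitely many multiplicative translates of $K\setminus U$ cover $K^\times$, i.e.\ $\{0\}=\bigcap_i a_iU$ is a finite intersection of opens, so the topology is discrete, and Theorem~C(1) then says $K$ is not large. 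The whole argument is about five lines once the tools are in hand, and it never needs the topology to interact well with the ring operations.

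Your proposal, by contrast, hinges on the claim that stability forces $\cE_K$ to be a definable, non-discrete, Hausdorff \emph{field} topology on $K$ and in fact a V-topology. This is the gap. Concretely: (a) You sketch no actual proof that stability gives continuity of $+$ and $\times$ for $\cE_K$ on $\Aa^1(K)$; ``stability bounds combinatorial complexity and rules out spreading out'' is an intuition, not an argument, and the product-topology failures in Section~\ref{section:pseudofinite} show this is a real issue, not a technicality. (b) Even granting (a), upgrading from ``field topology'' to ``V-topology'' is an additional step you only gesture at; Theorem~B's second clause is stated for V-topologies, not arbitrary field topologies. (c) Even granting (b), you would still need ``$t$-Henselian and not separably closed implies unstable.'' But the Henselian valuation underlying a $t$-Henselian topology need not be $\emptyset$-definable in the pure field language, so ``interprets a nontrivial valued field'' does not immediately follow, and you give no argument for it. (d) You also appeal to ``stable fields have no nontrivial definable valuations / V-topologies,'' but to the extent such statements are true in the needed generality, they are consequences of (or comparable in difficulty to) Theorem~D rather than independent black boxes one can cite. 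So the proposal, as written, replaces the one direct application of Poizat's genericity theorem with a chain of substantially harder (and unproved) claims; the paper's own route is both shorter and closes the argument.
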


Here, ``stable'' is in the sense of model theory; see \cite{Poizat}.
We first proved Theorem D as a corollary of Theorems A and C, as presented in Section~\ref{sec: tablefieldconj2}.
Later we extracted a self-contained proof, in which the \'etale-open topology only appears implicitly.  
For the reader's convenience, we give this direct proof in Section~\ref{sec: tablefieldconj}.
%
%
Our proof of Theorem D in Section~\ref{sec: tablefieldconj} produces an explicit unstable formula in a non-separably closed virtually large field.

\meno
Theorem D is a partial result towards the conjecture that an infinite stable field is separably closed.
Any separably closed field is stable by work of Ershov~\cite{Ershov-solvable} and Wood~\cite{Wood1979}.
Macintyre~\cite{Macintyre-omegastable} showed that an infinite $\aleph_0$-stable field is algebraically closed.
About ten years later Cherlin and Shelah~\cite{Cherlin-Shelah-superstable} showed that an infinite superstable field is algebraically closed.
In $1999$ Scanlon showed that an infinite stable field is Artin-Schreier closed \cite{Kaplan2011}, and there has been little progress in the past twenty years.

\meno
There is another notable model-theoretic conjecture which is open in general but holds for large fields: the Podewski conjecture that a minimal field is algebraically closed.
Koenigsmann showed that the Podewski conjecture holds for large fields.
This also follows immediately from the fact that the \'etale-open topology on a large non-separably closed field is non-discrete and Hausdorff; see Section~\ref{section:podewski}.

\meno
We now summarize the content of this paper.
In Section~\ref{sec: tablefieldconj} we prove Theorem D.
In Section~\ref{section:general-facts} we prove some general facts about systems of topologies.
In particular we show that if $L$ is an extension of $K$ then any system over $L$ ``restricts'' to a system over $K$ and if $L$ is a finite extension of $K$ then any system over $K$ ``extends'' to a system over $L$.
We will make extensive use of these operations.
In Section~\ref{section:thmA} we prove Theorem A and some useful facts about extension and restriction of the \'etale-open system.
In Section~\ref{sec:example} we prove the first claim of Theorem B.
In that section we also show that if $<$ is a field order on $K$ then $\cE_K$ refines the system of topologies induced by $<$ and if $v$ is a non-trivial valuation on $K$ with non-separably closed Henselization then $\cE_K$ refines the system of topologies induced by $v$.
In particular if $v$ is a non-trivial valuation with either non-algebraically closed residue field or non-divisible value group then $\cE_K$ refines the system induced by $v$.
In Section~\ref{sec:topo-field} we prove Theorem C and the second claim of Theorem B.
Finally, in Section~\ref{section:pseudofinite} we give some examples of $K$ such that $\cE_K$ is not a field topology.
We use the Hasse-Weil bounds to show that the $\cE_K$-topology on $K$ is not a field topology when $K$ is an infinite non-quadratically closed algebraic extension of a finite field.
We also give a model-theoretic proof that the \'etale-open topology on a pseudofinite field of odd characteristic is not a field topology.

\subsection{Acknowledgements}
We thank Ehud Hrushovski for useful conversations and thank the anonymous referee for many useful comments on the structure of the paper.
The first author acknowledges support by the National Science Foundation under Award No.\@ DMS-1803120. The fourth author was partially supported by NSF grant DMS-1800806.  Any opinions, findings, and conclusions or recommendations expressed in this material are those of the authors and do not necessarily reflect the views of the National Science Foundation.

\section*{Notations and conventions}
\noindent
Throughout, $n$ is a natural number, $k,l,m$ are integers, and $K$ is an infinite field.
We let $\Chara(K)$ denote the characteristic of $K$.
A \textbf{variety} over $K$, or $K$-variety, is a separated scheme of finite type over $K$, not assumed to be reduced or irreducible.
We let $\Var_K$ be the category of $K$-varieties.
We let $V(K)$ denote the set of $K$-points of a $K$-variety $V$.
We refer to $V \mapsto V(K)$ as the \textbf{$K$-points functor}.  Each $K$-point determines a (scheme-theoretic) point of $V$.  The \textbf{Zariski topology} on $V(K)$ is the topology making $V(K) \to V$ be a topological embedding.

\meno
An \textbf{open (closed) immersion} is an open (closed) immersion of schemes.
An \textbf{open (closed) embedding} is an open (closed) embedding of topological spaces.
If $\uptau$ is a topology on a set $X$ we will sometimes write $(X,\uptau)$ to denote the topological space.  
A topological space $X$ is \textbf{totally separated} if for any $a, b \in X$ there is a clopen $U \subseteq X$ such that $a \in U$ and $b \notin U$.

\meno
We let $\Aa^n$ and $\Gg_m$ denote the varieties $\Spec K[x_1,\ldots,x_n]$ and $\Spec K[y,y^{-1}]$, i.e., affine space and the multiplicative group over $K$.  
Recall that $\Aa^n(K)$ is $K^n$ and $\Gg_m(K)$ is $K^\times$.
Given a $K$-variety $V$ and an extension $L/K$ we let $V_L = V \times_{\Spec K} \Spec L$ be the base change of $V$ and if $f : V \to W$ is a morphism of $K$-varieties then $f_L : V_L \to W_L$ is the base change of $f$.
(The base change $V_L$ can fail to be reduced when $V$ is reduced and $L/K$ is purely inseparable; this is why we do not assume that $K$-varieties are reduced.)

\section{Preliminaries}

\subsection{Algebro-geometric preliminaries} \label{section:basic etale}

\meno
We will need some basic facts on smooth and regular points.
We let $V_{\mathrm{sm}}, V_\mathrm{reg}$ be the smooth and regular loci of a $K$-variety $V$, respectively.

\begin{fact}
\label{fact:smooth points}
Suppose that $V$ is a $K$-variety and $p \in V(K)$.
\begin{enumerate}
\item $V_\mathrm{sm}$ and $V_\mathrm{reg}$ are both open subvarieties of $V$.
\item $p \in V_\mathrm{sm}$ if and only if $p \in V_\mathrm{reg}$.
\item $V_\reg$ is nonempty.
\end{enumerate}
\end{fact}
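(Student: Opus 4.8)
The plan is to dispatch the three parts separately using standard facts about schemes of finite type over a field; none requires a new idea, so the work is mostly organizing the inputs and flagging where care is needed.

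For (1), openness of $V_\sm$ is the Jacobian criterion: locally $V \cong \Spec\bigl(K[x_1,\dots,x_n]/(f_1,\dots,f_r)\bigr)$, and a point is smooth of relative dimension $d$ exactly when some $(n-d)\times(n-d)$ minor of $(\der f_i/\der x_j)$ is nonzero there, an open condition, so $V_\sm$ is a union of open loci. Openness of $V_\reg$ is a theorem of Grothendieck: schemes of finite type over a field are excellent, and the regular locus of an excellent scheme is open (EGA IV; or the Stacks project). Note this is strictly more than the Jacobian criterion gives, since $V_\reg$ can be strictly larger than $V_\sm$.

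For (2), the direction $p\in V_\sm \Rightarrow p\in V_\reg$ holds because the source of a smooth morphism over a regular base is regular and $\Spec K$ is regular (localize at $p$). Conversely, for $V$ locally of finite type over $K$ a point $x$ is smooth if and only if $\mathcal{O}_{V,x}$ is regular and $\kappa(x)/K$ is a separable field extension; when $x = p\in V(K)$ the residue field is $K$ itself, which is trivially separable over $K$, so regularity of $\mathcal{O}_{V,p}$ already forces smoothness at $p$. I expect this is the point to watch: over an imperfect $K$ there are regular closed points that fail to be smooth, so the hypothesis that $p$ is $K$-rational is genuinely used — and it is the same phenomenon (regular $\neq$ smooth at non-rational points) that makes the excellence input necessary in (1).

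For (3), the relevant case is $V$ reduced — this loses nothing, since $V(K) = V_\red(K)$ and so the \'etale-open topology on $V(K)$ depends only on $V_\red$, whereas for genuinely non-reduced $V$ the statement can fail outright (e.g.\ $V = \Spec K[\epsilon]/(\epsilon^2)$). Assuming $V$ reduced, let $\eta$ be the generic point of any irreducible component of $V$; then $\mathcal{O}_{V,\eta}$ is a reduced Artinian local ring, hence a field, hence a regular local ring, so $\eta \in V_\reg$ and $V_\reg$ is nonempty. The only substantive inputs are the excellence fact in (1) and the smooth-versus-regular characterization in (2); everything else is bookkeeping, so I do not anticipate a real obstacle.
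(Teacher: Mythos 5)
Your proof is correct and follows essentially the same route as the paper, which simply cites the Stacks project for (1) and (2) and gives the same one-line argument for (3). The substantive thing you have caught is that (3) is, strictly speaking, false under the paper's conventions: $K$-varieties are explicitly not assumed reduced, and $V = \Spec K[\epsilon]/(\epsilon^2)$ has $V_\reg = \emptyset$, since its unique local ring is a nonreduced Artinian local ring and hence not regular. The paper's own justification of (3) --- ``the generic point of an irreducible component of $V$ is regular'' --- silently uses reducedness, exactly as your argument does; the only downstream use, Lemma~\ref{smooth-dimension}, already carries $V$ reduced as a hypothesis, so nothing breaks, but a reduced (or at least generically reduced) hypothesis really should appear in Fact~\ref{fact:smooth points}(3). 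Your alternative justification, that the \'etale-open topology on $V(K)$ only sees $V_\mathrm{red}$ because $V_\mathrm{red}\hookrightarrow V$ is a closed immersion inducing a bijection on $K$-points, is also sound and non-circular (Proposition~\ref{prop:closed-embedd} does not depend on this Fact). One peripheral inaccuracy in your discussion of (2): the Stacks characterization you quote, ``smooth at $x$ iff $\mathcal{O}_{V,x}$ is regular and $\kappa(x)/K$ is separable,'' is stated there for \emph{closed} points $x$; at a non-closed point a smooth $K$-variety can have inseparable residue field over $K$ (for instance, the point of $\Aa^2$ over $\Ff_p(t)$ cut out by $y^p - t x^p$ is a smooth point with residue field $\Ff_p(t^{1/p})(x)$). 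This does not affect your argument, since you only apply the characterization at $K$-rational points, where it is exactly what is needed.
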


\begin{proof}
(1) is \cite[Lemma~056V, Proposition~07QW]{stacks-project}, (2) is~\cite[Lemma~00TV]{stacks-project}, and (3) follows as the generic point of an irreducible component of $V$ is regular.
\end{proof}

A {\bf standard \'etale morphism} is a morphism $\pi : X \to V$ where $V$ is a $K$-variety, $X$ is the subvariety of $V \times \Aa^1$ given by $f = 0$, $g \ne 0$ for $f,g \in (K[V])[y]$ such that $f$ is monic, $\der f/\der y \ne 0$ on $X$, and $\pi$ is the restriction of the projection $V \times \Aa^1 \to V$. A $K$-variety morphism is {\bf \'etale} if it is locally a standard \'etale morphism up to isomorphism.


\begin{fact}
\label{fact:etale}
\quad
\begin{enumerate}
\item  Open immersions are \'etale.
\item \'Etale morphisms are closed under composition.
\item \'Etale morphisms are closed under base change.
\item Open immersions and closed immersions are closed under base change.
\item If $f : V \to W$ is an \'etale morphism of $K$-varieties then the image of $f$ is an open subvariety of $V$.
\end{enumerate}
\end{fact}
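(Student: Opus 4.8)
All five assertions are standard parts of the foundational theory of \'etale morphisms, and the plan is to deduce them from one characterization: a morphism of schemes is \'etale if and only if it is flat, locally of finite presentation, and unramified---equivalently, smooth of relative dimension zero; see the chapter on \'etale morphisms in \cite{stacks-project}. Granting this, clauses (1)--(4) are pure bookkeeping, because each of the three defining properties belongs to a class of morphisms that is stable under composition and under arbitrary base change, and likewise the classes of open immersions and of closed immersions are stable under base change.

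In more detail: for (1), an open immersion is an isomorphism onto an open subscheme, hence flat (localization is flat), locally of finite presentation, and unramified (its module of relative differentials vanishes), so it is \'etale. For (2), a composition of flat morphisms is flat, a composition of morphisms locally of finite presentation is locally of finite presentation, and a composition of unramified morphisms is unramified, so a composition of \'etale morphisms is \'etale. Clause (3) is the identical argument with ``composition'' replaced by ``base change.'' For (4), it suffices to check on affine charts that the preimage of an open (resp.\ closed) subscheme along a ring homomorphism $\varphi : A \to B$ is again an open (resp.\ closed) subscheme: a principal open $D(h) \subseteq \Spec A$ pulls back to the principal open $D(\varphi(h)) \subseteq \Spec B$, and the closed subscheme cut out by an ideal $I \subseteq A$ pulls back to the closed subscheme cut out by $IB$; one then glues over charts.

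Clause (5) is the only one with genuine content, and here is how I would argue it. Since $V$ and $W$ are varieties over a field, $f : V \to W$ is a morphism of finite type between noetherian schemes, so by Chevalley's theorem the image $f(V)$ is a constructible subset of $W$. Since $f$ is flat it satisfies going-down, hence $f(V)$ is stable under generization. A constructible subset of a scheme that is stable under generization is open, so $f(V)$ is an open subset of $W$; with its open-subscheme structure it is then an open subvariety of $W$ (an open subscheme of a separated finite-type $K$-scheme is again separated of finite type). Equivalently, one can simply invoke the theorem that every flat, locally-of-finite-presentation morphism of schemes is an open map. I expect this clause to be the only real obstacle, in the sense that it is the single step that is not formal juggling of stable classes of morphisms: it rests on combining Chevalley's constructibility theorem with the going-down property of flat morphisms (equivalently, on the openness of flat finitely presented morphisms)---a nontrivial input, but one already available off the shelf in \cite{stacks-project}.
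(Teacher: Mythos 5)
Your proposal is correct. The paper itself does not give a proof---it simply cites EGA~IV, Proposition~17.1.3, and EGA~I, 4.3.2, or alternatively the Stacks Project Lemmas 02GP, 02GN, 02G0, 01JY, 03WT, for the five clauses---so you are supplying an argument the paper deliberately omits. The route you take, via the characterization of \'etale as flat, locally of finite presentation, and unramified, is one of the standard ones and matches what those references do internally. Your treatment of (1)--(4) as stability of the three defining classes (and of open and closed immersions) under composition and base change is clean; the affine-local sketch for (4) (pulling back principal opens and ideals along a ring map, then gluing) is the usual argument and is more than enough in the noetherian setting at hand. You correctly isolate (5) as the only clause with real content and give the classical proof that a constructible set stable under generization is open, with constructibility from Chevalley and generization-stability from going-down for flat morphisms; the alternative you mention, that flat morphisms locally of finite presentation are (universally) open, is the same theorem in packaged form. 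One small remark: the paper's statement of clause (5) has a typo---the image of $f : V \to W$ is an open subvariety of $W$, not of $V$---which you read through correctly.
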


\begin{proof}
(1), (2), (3), (5) are \cite[Proposition~17.1.3]{EGA-IV-4} and (4) is \cite[4.3.2]{EGA-I}.
Alternatively (1),(2),(3),(4),(5) are ~\cite[Lemma 02GP, 02GN, 02G0, 01JY,  03WT]{stacks-project} respectively.
\end{proof}

Fact~\ref{fact:ega} is a special case of \cite[Proposition~18.1.1]{EGA-IV-4}.

\begin{fact}
\label{fact:ega}
Suppose that $W$ is a $K$-variety, $V$ is a closed subvariety of $W$, $X$ is a $K$-variety, $g : X \to V$ is an \'etale morphism, and $p$ is a (scheme-theoretic) point of $X$.
Then there is an open subvariety $O$ of $X$ with $p \in O$, a $K$-variety $Y$, and an \'etale morphism $h: Y \to W$ such that $O$ is isomorphic as a $V$-scheme to the fiber product $Y \times_W V $.
\end{fact}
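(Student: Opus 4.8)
The plan is to deduce the statement from the local structure of \'etale morphisms as recalled in the excerpt — that every \'etale morphism is, locally on the source, a standard \'etale morphism up to isomorphism — by explicitly lifting a standard \'etale chart of $g : X \to V$ near $p$ to a standard \'etale morphism over $W$.

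\textbf{Reduction to an affine standard-\'etale chart.} Let $q = g(p)$, regarded as a point of $V$ and hence of $W$ via the closed immersion $V \hookrightarrow W$. Choose an affine open $W' \subseteq W$ with $q \in W'$. Then $V' := V \times_W W' = V \cap W'$ is simultaneously an \emph{open} subvariety of $V$ (so $g^{-1}(V') = X \times_V V'$ is an open subvariety of $X$ containing $p$) and a \emph{closed} subvariety of the affine variety $W'$, hence affine. The restriction $g^{-1}(V') \to V'$ is \'etale, being a base change of $g$. By the local structure of \'etale morphisms there is an affine open $O$ of $X$ with $p \in O \subseteq g^{-1}(V')$ and — after shrinking $W'$ to a principal affine open and correspondingly shrinking $V'$ and $O$, which we absorb into the notation — an isomorphism of $V'$-schemes $O \cong \Spec\bigl((A[t]/(f))[1/a]\bigr)$, where $A = K[V']$, $f \in A[t]$ is monic, $a \in A[t]$, and $\der f/\der t$ is invertible in $(A[t]/(f))[1/a]$.

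\textbf{The lift.} Write $B = K[W']$ and let $I \subseteq B$ be the ideal with $A = B/I$. Choose a monic lift $\tilde f \in B[t]$ of $f$ of the same degree, and any lift $\tilde a \in B[t]$ of $a$, and set $Y := \Spec\bigl((B[t]/(\tilde f))[1/(\tilde a \cdot \der\tilde f/\der t)]\bigr)$. This is an affine $K$-variety, and $Y \to W'$ is standard \'etale ($\tilde f$ monic, $\der\tilde f/\der t$ invertible by construction); composing with the open immersion $W' \hookrightarrow W$, which is \'etale, yields an \'etale morphism $h : Y \to W$. It remains to identify the fiber product. Since $h$ factors through $W'$, we have $Y \times_W V = Y \times_{W'} (W' \times_W V) = Y \times_{W'} V'$, and base changing along $B \to A = B/I$ gives $Y \times_{W'} V' = \Spec\bigl((A[t]/(f))[1/(a\cdot \der f/\der t)]\bigr)$. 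But $\der f/\der t$ is already invertible in $(A[t]/(f))[1/a]$, so inverting it again changes nothing, and therefore $Y \times_W V \cong \Spec\bigl((A[t]/(f))[1/a]\bigr) \cong O$ as $V'$-schemes, hence as $V$-schemes. As $O$ is an open subvariety of $X$ containing $p$, this is exactly the desired conclusion.

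\textbf{Main obstacle.} This is in essence a reproof of the affine case of \cite[Proposition~18.1.1]{EGA-IV-4}, whose substantive input is the local structure theorem for \'etale morphisms (which the excerpt takes as the definition of \'etale). The only delicate points are bookkeeping: checking that $V' = V \cap W'$ is at once open in $V$ and closed in the affine $W'$ and that the successive shrinkings of $W'$, $V'$, $O$ are harmless; and verifying that adjoining the extra localization by $\der f/\der t$ on the $W$-side does not alter the fiber over $V$ — which is precisely where the standard-\'etale hypothesis (invertibility of the derivative on the chart) is used.
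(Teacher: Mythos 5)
Your proof is correct. The paper itself gives no proof of this fact; it simply cites \cite[Proposition~18.1.1]{EGA-IV-4}, so your self-contained argument is a genuinely independent (and more elementary) route, working directly from the local structure theorem for \'etale morphisms that the paper adopts as the definition. The key points all check out: shrinking $W'$ to a principal affine open so that the standard-\'etale chart's base $V''$ is literally of the form $V \cap W'$ is legitimate (opens of $V'$ come from opens of $W'$ in the subspace topology, and restricting a standard \'etale chart to a principal open of the base remains standard \'etale); the lift $Y = \Spec\bigl((B[t]/(\tilde f))[1/(\tilde a \cdot \der\tilde f/\der t)]\bigr)$ is well-defined and genuinely standard \'etale over $W'$ because you force invertibility of $\der\tilde f/\der t$ rather than hoping it persists from the quotient; and the fiber-product computation $Y \times_W V \cong \Spec\bigl((A[t]/(f))[1/(a\cdot\der f/\der t)]\bigr) \cong O$ uses precisely the fact that $\der f/\der t$ is already a unit downstairs, which is where the standard-\'etale hypothesis on $O \to V'$ earns its keep. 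This proof buys transparency at no real cost, since the paper's cited source proves a much more general statement; the one thing the citation buys that yours does not is scope (EGA handles arbitrary schemes, not just finite-type $K$-varieties, and arbitrary immersions rather than closed ones), but none of that generality is used in the paper.
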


We will need some basic results on Weil restriction of scalars.
We refer to \cite[A.5]{pseudo-reductive-groups} for a general account of Weil restriction.
Let $\affk$, $\affl$ be the categories of affine $K$-, $L$-varieties, respectively.
The {\bf Weil restriction} functor $\reslk : \affl \to \affk$ is right adjoint to the base change functor $\affk \to \affl, V \mapsto V_L$. (An arbitrary non-affine variety need not have a Weil restriction.)
We focus our attention on affine varieties  as a system of topologies is determined by its restriction to affine varieties.

\meno
We give an explicit definition of the Weil restriction $\reslk(V)$ of an affine $L$-variety $V$ for the benefit of the reader unfamiliar with this notion.
Let $V$ be $\Spec L[x_1,\ldots,x_n]/(f_1,\ldots,f_k)$.
Let $y_{ij}$ be a variable for each $i \in \{1,\ldots,n\}$ and $j \in \{1,\ldots,m\}$.
For each $i$ we substitute $y_{i1}e_1 + \cdots + y_{im} e_m$ for $x_i$, i.e for each $l \in \{1,\ldots,k\}$, $r \in \{1,\ldots,m\}$ let $g_{lr} \in K[y_{ij}]$ be the unique polynomial so that
\[ f_l\left( \sum_{j = 1}^{m} y_{1j} e_j,\ldots, \sum_{j = 1}^{m} y_{nj} e_j \right) = g_{l1}e_1 + \cdots + g_{lm}e_m .\]
Then $\reslk(V)$ is $\Spec K[y_{ij}]/(g_{lr})$.

\meno
Suppose that $V$ is an affine $K$-variety.
As base change and Weil restriction are adjoint functors there is a natural morphism $V \to \reslk(V_L)$.
We give an explicit description of this morphism under the assumption that $e_1 = 1$.
Suppose $V = \Spec K[x_1,\ldots,x_n]/(f_1,\ldots,f_k)$.
Then $V_L$ is $\Spec L[x_1,\ldots,x_n]/(f_1,\ldots,f_k)$.
Let $y_{ij}$ and $g_{lr}$ be defined as above, so $\reslk(V_L)$ is $\Spec K[y_{ij}]/(g_{lr})$.
Let $\varphi$ be the $K$-algebra morphism $K[y_{ij}]/(g_{lr})\to K[x_1,..,x_n]/(f_1,...,f_k)$ given by declaring $\varphi(y_{ij}) = x_i$ when $j=1$ and $\varphi(y_{ij}) = 0$ otherwise.
The canonical morphism $V \to \reslk(V_L)$ is the morphism corresponding to $\varphi$.
Fact~\ref{fact:can-is-closed} below follows from the observation that $\varphi$ is surjective.

\begin{fact}
\label{fact:can-is-closed}
Suppose that $L$ is a finite extension of $K$ and $V$ is an affine $K$-variety.
The canonical morphism $V \to \reslk(V_L)$ is a closed immersion.
\end{fact}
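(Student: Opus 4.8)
The plan is to reduce the statement to the standard fact that a morphism of affine schemes $\Spec B \to \Spec A$ is a closed immersion if and only if the corresponding ring homomorphism $A \to B$ is surjective. Granting this, it suffices to identify the ring map underlying the canonical morphism $V \to \reslk(V_L)$ and to verify that it is surjective.

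First I would fix a $K$-basis $e_1,\ldots,e_m$ of $L$ with $e_1 = 1$; this is harmless, since the canonical morphism $V \to \reslk(V_L)$ comes from the adjunction between base change and Weil restriction and hence does not depend on the choice of basis. Writing $V = \Spec K[x_1,\ldots,x_n]/(f_1,\ldots,f_k)$ and defining the $y_{ij}$ and $g_{lr}$ as in the discussion preceding the statement, that discussion already exhibits the canonical morphism as the morphism of affine $K$-schemes corresponding to the $K$-algebra homomorphism
\[ \varphi : K[y_{ij}]/(g_{lr}) \longrightarrow K[x_1,\ldots,x_n]/(f_1,\ldots,f_k), \qquad \varphi(y_{i1}) = x_i, \quad \varphi(y_{ij}) = 0 \text{ for } j > 1. \]
It then remains only to observe that $\varphi$ is surjective: since $\varphi(y_{i1}) = x_i$ for each $i$, every generator $x_1,\ldots,x_n$ of the target lies in the image of $\varphi$, and as $\varphi$ is a homomorphism of $K$-algebras and these elements generate the target as a $K$-algebra, $\varphi$ is onto. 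This completes the argument.

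I do not expect a genuine obstacle. The only non-formal ingredient, the computation of $\varphi$ from the adjunction, is already carried out in the text before the statement, and surjectivity of $\varphi$ is immediate from its explicit formula. The sole point requiring a little care is that this formula was recorded under the normalization $e_1 = 1$, which is why I begin by arranging that.
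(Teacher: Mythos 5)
Your argument is exactly the paper's: the text preceding the statement exhibits the canonical morphism as $\Spec$ of the $K$-algebra map $\varphi$ sending $y_{i1}\mapsto x_i$ and $y_{ij}\mapsto 0$ for $j>1$, and the paper's one-line proof simply observes that $\varphi$ is surjective. You have spelled out the surjectivity (the $x_i$ generate the target) and recalled the standard criterion for closed immersions of affine schemes, but the route is the same; your remark about normalizing $e_1=1$ is a reasonable small clarification of a convention the paper adopts implicitly.
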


We recall some standard facts about Weil restriction of affine varieties (these facts hold whenever the Weil restriction exists).

\begin{fact} 
\label{fact:weil-restriction}
Suppose $L$ is a finite extension of $K$ and $[L:K] = m$.

\begin{enumerate}[leftmargin=*]
\item $\reslk(\Aa^n_L)$ is isomorphic to $\Aa^{mn}$.
\item If $V$ is an affine $L$-variety, then there is a canonical bijection $(\reslk (V) ) (K) \to V(L)$.
\item If $V,W$ are affine $L$-varieties and $f : V \to W$ is a morphism then the Weil restriction $(\reslk f) : (\reslk (V) )(K) \to (\reslk (W) )(K)$ agrees with $f : V(L) \to W(L)$ 
under the canonical bijection.
\end{enumerate}
\end{fact}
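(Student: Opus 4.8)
The plan is to derive all three items formally from the defining adjunction --- $\reslk\colon\affl\to\affk$ is right adjoint to the base change functor $V\mapsto V_L$ --- together with the explicit coordinate description of $\reslk$ recalled just above. Throughout fix the $K$-basis $e_1,\ldots,e_m$ of $L$ used in that description.

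For item (1), I would read it off the construction directly: $\Aa^n_L=\Spec L[x_1,\ldots,x_n]$ has no defining relations, so there are no polynomials $g_{lr}$ and $\reslk(\Aa^n_L)=\Spec K[y_{ij}:1\le i\le n,\ 1\le j\le m]\cong\Aa^{mn}$. Equivalently, since right adjoints preserve products and $\Aa^n_L=(\Aa^1_L)^n$, it suffices to treat $n=1$, where for an affine $K$-variety $W$ one has $\Hom_K(W,\reslk(\Aa^1_L))=\Hom_L(W_L,\Aa^1_L)=K[W]\otimes_K L\cong K[W]^m=\Hom_K(W,\Aa^m)$ naturally in $W$, so $\reslk(\Aa^1_L)\cong\Aa^m$ by Yoneda.

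For item (2), I would instantiate the adjunction at the source variety $\Spec K$. Since $(\Spec K)_L=\Spec(K\otimes_K L)=\Spec L$, we obtain
\[ (\reslk V)(K)=\Hom_K(\Spec K,\reslk V)=\Hom_L((\Spec K)_L,V)=\Hom_L(\Spec L,V)=V(L), \]
and I take this composite as the definition of the canonical bijection. In coordinates it sends a $K$-point $y_{ij}\mapsto a_{ij}\in K$ of $\reslk V=\Spec K[y_{ij}]/(g_{lr})$ to the tuple with $b_i:=\sum_j a_{ij}e_j\in L$; the defining identity for the $g_{lr}$ yields $f_l(b_1,\ldots,b_n)=\sum_r g_{lr}(a)\,e_r=0$, so $(b_i)\in V(L)$, and the map is a bijection precisely because $e_1,\ldots,e_m$ is a $K$-basis of $L$.

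For item (3), the point is that the bijection of (2) is natural in $V$, being an instance of the naturality of the adjunction isomorphism $\Hom_K(\Spec K,\reslk(-))\cong\Hom_L(\Spec L,-)$. Hence for any morphism $f\colon V\to W$ of affine $L$-varieties, the square whose horizontal arrows are the canonical bijections of (2), whose left vertical arrow is $(\reslk f)$ evaluated on $K$-points, and whose right vertical arrow is $f$ evaluated on $L$-points, commutes --- which is exactly the assertion. I do not expect a genuine obstacle here: all three statements are formal properties of an adjoint functor, and the content is bookkeeping. The only point needing care is consistency of terminology --- one should fix once and for all that the ``canonical bijection'' in (2) is the adjunction isomorphism rather than some coordinate-dependent identification, since that is exactly what makes the naturality in (3) immediate instead of a computation unwinding how the substitution $x_i\mapsto\sum_j y_{ij}e_j$ interacts with the polynomial map defining $f$.
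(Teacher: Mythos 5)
Your derivation is correct, and in fact the paper gives no proof of Fact~\ref{fact:weil-restriction}: it is stated as a standard property of Weil restriction, with the explicit coordinate construction and the adjunction $\Hom_K(W, \reslk V) \cong \Hom_L(W_L, V)$ recalled immediately beforehand. Your argument --- reading item (1) off the coordinate description (or via Yoneda and the fact that right adjoints preserve products), obtaining (2) by specializing the adjunction at $W = \Spec K$ and noting $(\Spec K)_L = \Spec L$, and obtaining (3) from naturality of the adjunction isomorphism --- supplies exactly the verification the paper leaves to the reader, and the emphasis on taking the adjunction isomorphism as \emph{the} canonical bijection (so that (3) becomes formal rather than a coordinate computation) is the right way to organize it.
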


\begin{fact}
\label{fact:restrict-0}
Suppose that $f : V \to W$ is a morphism of affine $L$-varieties.
If $f$ is an open immersion then $\reslk(f)$ is an open immersion.
If $f$ is a closed immersion then $\reslk(f)$ is a closed immersion.
\end{fact}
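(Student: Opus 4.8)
The plan is to treat the two assertions separately, reducing each to a concrete description of $\reslk$. I would use throughout that for an affine $L$-variety $X$ and any $K$-algebra $R$ the adjunction between base change and Weil restriction gives a bijection $\reslk(X)(R)\cong X(R\otimes_K L)$, natural in $X$ and $R$, under which $\reslk(g)$ on $R$-points corresponds to $g$ on $(R\otimes_K L)$-points (the case $R=K$ is Fact~\ref{fact:weil-restriction}(2),(3); the general case is the same unwinding of the adjunction). Since $\reslk$ is a functor, we may replace $f$ by an isomorphic morphism at will.

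For the closed-immersion case I would argue by an explicit computation. Here the induced map $\varphi\colon L[W]\to L[V]$ of coordinate rings is surjective; as $L[W]$ is Noetherian we may fix a presentation $L[W]=L[x_1,\dots,x_n]/(f_1,\dots,f_k)$ together with $h_1,\dots,h_s\in L[x_1,\dots,x_n]$ whose images generate $\ker\varphi$, so that $L[V]=L[x_1,\dots,x_n]/(f_1,\dots,f_k,h_1,\dots,h_s)$ with $\varphi$ the quotient map. Running the explicit construction of Weil restriction set up before Fact~\ref{fact:can-is-closed}, relative to a $K$-basis $e_1,\dots,e_m$ of $L$, expresses $\reslk(W)$ as $\Spec K[y_{ij}]/(g_{lr})$ and $\reslk(V)$ as $\Spec K[y_{ij}]/\bigl((g_{lr})+(h'_{tr})\bigr)$, where each $h'_{tr}$ arises from $h_t$ exactly as $g_{lr}$ arises from $f_l$. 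A comparison of $R$-points---a point of $\reslk(W)(R)$ is a tuple $\bar x=(\bar x_i)\in(R\otimes_K L)^n$ with all $f_l(\bar x)=0$; writing $\bar x_i=\sum_j\bar y_{ij}e_j$ identifies it with the $R$-point $(\bar y_{ij})$ of $\Spec K[y_{ij}]/(g_{lr})$, and this point lies in the image of $f$ exactly when the $h_t(\bar x)$, equivalently the $h'_{tr}(\bar y)$, all vanish---shows $\reslk(f)$ is the morphism induced by the evident surjection $K[y_{ij}]/(g_{lr})\twoheadrightarrow K[y_{ij}]/\bigl((g_{lr})+(h'_{tr})\bigr)$. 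Hence $\reslk(f)$ is a closed immersion.

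For the open-immersion case I would work with subfunctors. Since $V$ is affine, $V$ is an affine open subscheme of $W$ and we may take $f$ to be the inclusion. For a $K$-algebra $R$, the bijection $\reslk(W)(R)\cong W(R\otimes_K L)$ carries $\reslk(V)(R)$ onto the set of morphisms $\Spec(R\otimes_K L)\to W$ with image in $V$. I claim $\reslk(V)$ is an open subfunctor of $\reslk(W)$; since an open subfunctor of a representable functor is represented by an open subscheme, this makes $\reslk(V)$ an open subscheme of the affine variety $\reslk(W)$, and then $\reslk(f)$, being up to isomorphism the inclusion of this open subscheme, is an open immersion. For the claim, fix $\Spec R\to\reslk(W)$ corresponding to $\psi\colon\Spec(R\otimes_K L)\to W$, and let $\pi\colon\Spec(R\otimes_K L)\to\Spec R$ be the projection. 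Since $L/K$ is finite, $\pi$ is finite, hence closed, so $T:=\Spec R\setminus\pi\bigl(\psi^{-1}(W\setminus V)\bigr)$ is an open subscheme of $\Spec R$. A diagram chase---base change $\pi$ along $\Spec R'\to\Spec R$, and use faithful flatness of $-\otimes_K L$ to lift, over any point of $\pi(\psi^{-1}(W\setminus V))$, a point of $\Spec(R'\otimes_K L)$ mapping into $\psi^{-1}(W\setminus V)$---shows that for $R\to R'$ the base change of $\psi$ has image in $V$ if and only if $\Spec R'\to\Spec R$ factors through $T$; i.e.\ the pullback of $\reslk(V)$ along $\Spec R\to\reslk(W)$ is the open subscheme $T$. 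This proves the claim.

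The main obstacle is the open-immersion case, specifically the verification that $\reslk(V)\subseteq\reslk(W)$ is open: this is exactly the step where finiteness of $L/K$ enters essentially, through $\pi$ being a closed map. The closed-immersion case is comparatively formal. Alternatively, one may simply cite the general theory of Weil restriction, where stability of open and closed immersions under $\reslk$ is recorded in greater generality; see \cite[\S A.5]{pseudo-reductive-groups}.
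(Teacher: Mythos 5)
The paper does not give a proof of Fact~\ref{fact:restrict-0}; it simply cites \cite[Proposition~A.5.2(4), Proposition~A.5.5]{pseudo-reductive-groups}. Your proposal supplies an actual argument, and it is correct. Moreover it is, in substance, the standard argument that the cited reference uses: the closed-immersion case by exhibiting $\reslk(f)$ as the morphism induced by a surjection of coordinate rings (equivalently, by functor-of-points bookkeeping with the explicit construction), and the open-immersion case by showing $\reslk(V)$ is an open subfunctor of $\reslk(W)$, the key point being that $\Spec(R\otimes_K L)\to\Spec R$ is finite and hence closed, which makes the complement $T$ open and lets one check the universal property of the pullback. So you have not found a genuinely different route; you have unpacked the citation.

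Two small remarks. First, in the closed-immersion case it is worth saying explicitly that your $R$-points computation both identifies $\reslk(f)$ with the morphism induced by the quotient map \emph{and} verifies that the explicit polynomial construction of the Weil restriction is presentation-independent (by Yoneda); as written this is implicit but deserves a sentence. Second, in the open-immersion diagram chase, the invocation of ``faithful flatness of $-\otimes_K L$'' is slightly off target: what is actually needed is that, given $q'\in\Spec R'$ and $p\in\Spec(R\otimes_K L)$ lying over the same $q\in\Spec R$, there exists $p'\in\Spec(R'\otimes_K L)$ over both. Since $\Spec(R'\otimes_K L)$ is the fiber product $\Spec R'\times_{\Spec R}\Spec(R\otimes_K L)$, this reduces to the fact that $\kappa(q')\otimes_{\kappa(q)}\kappa(p)\ne 0$, i.e.\ a tensor product of field extensions is nonzero. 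Faithful flatness of $\Spec(R'\otimes_K L)\to\Spec R'$ gives surjectivity onto $\Spec R'$ but does not by itself pin the image point in $\Spec(R\otimes_K L)$; the fiber-product/tensor-of-fields argument is the precise fact being used, and you should cite it as such.
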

 
See \cite[Proposition~A.5.2(4), Proposition~A.5.5]{pseudo-reductive-groups} for a proof of Fact~\ref{fact:restrict-0}.

\begin{fact}
\label{fact:restriction-etale}
If $f$ is an \'etale morphism between affine $L$-varieties then $\reslk(f)$ is \'etale.
\end{fact}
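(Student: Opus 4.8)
The plan is to reduce the statement about étale morphisms to the local structure theorem for étale morphisms together with the compatibility of Weil restriction with fiber products and with standard étale morphisms. First I would recall that $\rtp$ is a functor on affine varieties, so it preserves isomorphisms; combined with the fact that étaleness is local on the source and target, it suffices to check the conclusion for a standard étale morphism $\pi : X \to V$, where $X$ is cut out in $V \times \Aa^1$ by $f = 0$, $g \ne 0$ with $f \in (L[V])[y]$ monic and $\der f/\der y$ invertible on $X$, and $\pi$ is the restriction of the projection. Actually, care is needed because $\rtp$ of an open subvariety need not be an open subvariety of $\rtp$ of the ambient space in a way that interacts cleanly with localness; so instead I would argue via the characterization that a morphism of smooth varieties is étale iff it is flat and unramified, or better, use Fact~\ref{fact:etale}(3) (étale is stable under base change) in reverse: $f : V \to W$ is étale iff the diagonal $V \to V \times_W V$ is an open immersion and $f$ is flat of finite presentation. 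The cleanest route: a finitely presented morphism is étale iff it is formally étale, i.e.\ flat and unramified.

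The key steps, in order, would be: (i) Show $\rtp$ commutes with fiber products on affine varieties — this is formal from the adjunction with base change, since a right adjoint preserves limits, and fiber products exist in $\affl$. (ii) Show that if $f : V \to W$ is étale then the diagonal $\Delta_f : V \to V \times_W V$ is an open immersion (standard), and that an open immersion $\rtp$-restricts to an open immersion by Fact~\ref{fact:restrict-0}. Combined with (i), $\rtp(\Delta_f) = \Delta_{\rtp(f)} : \rtp(V) \to \rtp(V) \times_{\rtp(W)} \rtp(V)$ is an open immersion, so $\rtp(f)$ is unramified. (iii) Show $\rtp(f)$ is flat: this is the substantive analytic point. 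One way is to observe that flatness can be checked on completed local rings or via the fiberwise criterion, but the slickest is to note that étale morphisms are, locally on the source, standard étale, hence $V$ is locally $\Spec(L[W]_h)[y]/(f)$ localized at $g$ with $f$ monic; Weil restriction of such an algebra over $\rtp(W)$ is a composite of finite free extensions and localizations, hence flat. Here I would invoke the explicit presentation of $\rtp$ given before Fact~\ref{fact:can-is-closed}. (iv) Conclude $\rtp(f)$ is flat, unramified, and finitely presented (the last is clear from the explicit description), hence étale.

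The main obstacle I anticipate is step (iii), flatness of $\rtp(f)$, and more precisely reconciling the fact that the local structure theorem is local on $V$ with the fact that Weil restriction is a global construction — one cannot naively glue the Weil restrictions of an affine open cover. The way around this is to avoid gluing: verify unramifiedness globally via the diagonal (step (ii)), and verify flatness using that flatness is fppf-local on the base $W$ and that after an étale (or even finite free) cover of $W$ one can bring $f$ into a product-of-standard-étale-pieces form; alternatively, use that $\rtp$ along the finite free map $\Spec L \to \Spec K$ can be computed after the faithfully flat base change $K \to L$, where it becomes a product of copies of the identity-type functors (since $L \otimes_K L \cong L^m$ when $L/K$ is separable, and one handles the inseparable case by a further completion/reduction argument), reducing flatness and unramifiedness of $\rtp(f)$ to that of finite products and base changes of $f$ itself, each of which is étale by Fact~\ref{fact:etale}(2),(3). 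I would write up whichever of these two is shortest; the base-change-to-$L$ approach is probably cleanest when $L/K$ is separable, with a short remark that the general finite-extension case follows by factoring through the separable closure of $K$ in $L$ and treating the purely inseparable part by a direct computation on the explicit presentation.
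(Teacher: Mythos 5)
The paper does not give a proof of this fact; it simply cites \cite[Proposition~A.5.2(4)]{pseudo-reductive-groups}. So there is no in-paper argument for yours to match or diverge from, and I will assess the sketch on its own terms.

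Your unramifiedness argument is sound. $\reslk$ is a right adjoint so it commutes with fibered products of affine $L$-varieties; by naturality $\reslk(\Delta_f) = \Delta_{\reslk(f)}$ under the identification $\reslk(V\times_W V)\cong \reslk(V)\times_{\reslk(W)}\reslk(V)$; open immersions are preserved by Fact~\ref{fact:restrict-0}; and finite presentation of $\reslk(f)$ is visible from the explicit presentation given before Fact~\ref{fact:can-is-closed}. That yields unramified and of finite presentation. The gap is flatness, and you have correctly located it but not closed it. Your first route (reduce to standard \'etale form and compute) runs into exactly the obstruction you flag: $\reslk$ does not take an affine open cover of $V$ to a cover of $\reslk(V)$, and neither ``fppf-local on the base'' nor passing to a finite free cover of $W$ by itself repairs this, since the cover must be one along which $\reslk$ is computable, which just pushes you to the second route. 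Your second route is correct when $L/K$ is separable: over a Galois closure $M$, $\reslk(V)\times_K M\cong\prod_\sigma V\otimes_{L,\sigma}M$, so $\reslk(f)\times_K M$ is a finite product of base changes of $f$, hence \'etale, and \'etaleness descends along the fppf cover $\Spec M\to\Spec K$. But when $L/K$ has an inseparable part, $L\otimes_K \kalg$ is a non-reduced Artinian ring, and $\reslk(V)\times_K \kalg$ becomes a truncated jet (Greenberg-type) construction rather than a product of conjugates of $V$; the assertion that the jet functor preserves \'etaleness is true but is itself essentially the fact to be proved, and ``a direct computation on the explicit presentation'' is not yet an argument. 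So as written the proposal leaves the inseparable case open.

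The argument that handles both cases uniformly, and which is in effect the one behind the cited reference, bypasses flatness: a finitely presented morphism is \'etale if and only if it is formally \'etale. By the defining adjunction, $\reslk(V)(T)=V(T\times_K L)$ for any affine $K$-scheme $T$. If $T_0\hookrightarrow T$ is a square-zero thickening, then $(T_0)_L\hookrightarrow T_L$ is also a square-zero thickening (the ideal sheaf pulls back along the finite free map $T_L\to T$), and the infinitesimal lifting problem for $\reslk(f)$ along $T_0\hookrightarrow T$ is literally the lifting problem for $f$ along $(T_0)_L\hookrightarrow T_L$. The latter has a unique solution because $f$ is \'etale, hence so does the former. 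This makes no use of separability and replaces your step (iii) entirely. If you want to keep the flat-plus-unramified framing you must supply the inseparable step; otherwise the formal-\'etaleness criterion is shorter and complete.
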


See~\cite[Proposition~A.5.2(4)]{pseudo-reductive-groups} for a proof of Fact~\ref{fact:restriction-etale}.

\subsection{Model-theoretic preliminaries} \label{sec:model-theoretic preliminaries}
Suppose $K$ is a definable field in some structure, and $X$ is a definable subset of $K$.
Recall that $X$ is \textbf{additively generic} if there is a finite $A \subseteq K$ such that $X + A = K$, $X$ is \textbf{multiplicatively generic} if there is finite $A \subseteq K^\times$ such that $A (X \cap K^\times) = K^\times$, and a partial unary type $p$ \textcolor{olive}{in} $K$ is \textbf{additively (multiplicatively) generic} if every formula in $p$ defines an additively (multiplicatively) generic set.  The following is well-known.

\begin{fact}[Theorem 5.10 \cite{Poizat}]
\label{fact:poizat}
Suppose that $K$ is stable. 
Then there is a unique complete additive generic type $p^+$ and a unique complete multiplicative generic type $p^\times$, and $p^+ = p^\times$.  Moreover, a definable $X \subseteq K$ is generic if and only if $p^+$ concentrates on $X$.
\end{fact}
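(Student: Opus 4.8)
The plan is to prove the three standard facts about generic types in stable fields: existence and uniqueness of the additive generic type, existence and uniqueness of the multiplicative generic type, and their equality, together with the ``moreover'' clause identifying genericity with concentration.

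First I would establish the additive generic type. The key tool is the stable group theory (the local rank / stratified rank machinery): in a stable group $(G,\cdot)$, the collection of generic formulas forms a type-definable ideal, every definable set or its complement is generic, the generic types exist, and when $G$ is connected there is a unique generic type. Here the relevant group is the additive group $(K,+)$. Since a stable field $K$ has no definable subgroup of finite index (the subgroups $K$, $\{0\}$ and any coset arithmetic would force, via connectedness arguments for fields, the additive group to be connected — one uses that $nK = K$ and $\{x : nx = 0\}$ is trivial or all of $K$, and more fundamentally that a field interprets no proper finite-index additive subgroup because such would, multiplied by $K^\times$, contradict minimality of the generic), the additive group $(K,+)$ is connected. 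Connectedness plus stability then yields a unique complete additive generic type $p^+$, and by the general theory a definable $X \subseteq K$ is additively generic if and only if $p^+$ concentrates on $X$.

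Next I would run the same argument for the multiplicative group $(K^\times,\cdot)$. A priori $K^\times$ need not be connected (e.g.\ it has the finite-index subgroup of squares in many fields), so instead I work with its connected component $(K^\times)^0$, which has finite index and a unique generic type $p^\times$; one checks that a definable subset of $K^\times$ is multiplicatively generic exactly when it meets every coset of $(K^\times)^0$ in a generic set, and that this forces a unique \emph{complete} multiplicatively generic type concentrating on $(K^\times)^0$. The heart of the matter — and the step I expect to be the main obstacle — is proving $p^+ = p^\times$, i.e.\ that additive and multiplicative genericity coincide. The argument: let $X \subseteq K$ be additively generic, so finitely many additive translates $X + a_i$ cover $K$; I want to show $X$ is multiplicatively generic. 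Suppose not; then by the stable multiplicative group theory the complement $K \setminus X$ (intersected with $K^\times$) is multiplicatively generic, so finitely many multiplicative translates $b_j \cdot (K^\times \setminus X)$ cover $K^\times$. Now combine: for any $c \in K^\times$, translating an additive covering of $K$ by $c$ and comparing with the multiplicative covering, one derives that $K^\times$ is covered by finitely many sets of the form $c(X+a_i) \cap b_j(K\setminus X)$, and a counting/genericity argument in the ambient field — using that both the additive and multiplicative generic ideals are proper (neither $K$ nor $K^\times$ is a finite union of non-generic sets) — produces a contradiction by showing $X$ and its complement would both have to be ``large'' in incompatible senses. The precise bookkeeping uses the symmetry between $+$ and $\times$ mediated by the ring structure: multiplication distributes over the additive translates, so a multiplicative cover of a non-additively-generic set interacts with an additive cover to tile $K^\times$, contradicting that the non-generic sets form an ideal. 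This is exactly Poizat's Théorème 5.10 argument, and I would cite \cite{Poizat} for the delicate combinatorial step rather than reproduce it. Once $p^+ = p^\times$ as the unique complete generic type in either sense, the ``moreover'' clause is immediate from the additive case already handled.
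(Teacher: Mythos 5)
The paper does not prove Fact~\ref{fact:poizat} directly --- it is cited from Poizat --- but it does prove and use a local version (Proposition~\ref{prop:local-generics}) whose proof is ``a straightforward localization of the usual proof,'' so that is the natural comparison point. Measured against it, your proposal has a real gap at exactly the place you flag as the main obstacle, and a subsidiary confusion about $K^\times$.

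The clean argument for $p_+ = p_\times$ does not need the intricate interaction of additive and multiplicative covers that you sketch; it hinges on one observation you do not use: for $\alpha \in K^\times$, multiplication by $\alpha$ is an \emph{automorphism of the additive group} $(K,+)$, hence preserves additive genericity. Granting that, the proof is two lines: if $X$ is multiplicatively generic but not additively generic, choose $\alpha_1,\dots,\alpha_n \in K^\times$ with $K^\times \subseteq \bigcup_i \alpha_i X$; each $\alpha_i X$ is still not additively generic, so the complete type $p_+$ omits each $\alpha_i X$ and hence omits $\bigcup_i \alpha_i X \supseteq K^\times$; but $K^\times$ is cofinite, hence additively generic, so $p_+$ must concentrate on it --- contradiction. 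Your version, by contrast, intersects $c(X+a_i)$ with $b_j(K\setminus X)$ and appeals to an unspecified ``counting/genericity argument'' before punting to Poizat, which is precisely the step that needs to be supplied. The same observation (scaling is an additive automorphism) is also what makes the $(K,+)$-connectedness argument work cleanly: if $H$ is the additive connected component, then $\beta^{-1}H$ is also a definable finite-index subgroup of $(K,+)$, so $H \subseteq \beta^{-1}H$, i.e.\ $\beta H \subseteq H$ for all $\beta \in K^\times$, so $H$ is a nonzero ideal and $H = K$. Your parenthetical about $nK = K$ and $\{x : nx = 0\}$ gestures at this but is not correct as stated (e.g.\ in characteristic $p$ one has $pK = 0$); the ideal argument is the one to use.

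Separately, your handling of $K^\times$ is off. If $(K^\times)^0$ had index $>1$, each coset would carry its own generic type and there would \emph{not} be a unique complete multiplicative generic, contradicting the statement you are trying to prove; ``a unique complete multiplicatively generic type concentrating on $(K^\times)^0$'' does not follow from ``meets every coset in a generic set.'' In fact the multiplicative connectedness of $K^\times$ is not an input but an output: once you know every multiplicative generic equals $p_+$, there is only one multiplicative generic, hence only one coset, hence $K^\times = (K^\times)^0$. You should not be trying to prove the theorem modulo a possibly-disconnected $K^\times$.
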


We now prove a local version of Fact~\ref{fact:poizat}. The proof is a straightforward localization of the usual proof of Fact~\ref{fact:poizat}.
We first recall some facts about local generics in definable groups.
\textbf{Throughout, $G$ is a definable group in a structure $\Sa M$.}

\meno
Suppose that $\delta(x,y)$ is a formula such that $\alpha \in G$ whenever $\Sa M \models \delta(\alpha,b)$ for some $b$.
Then $\delta$ is \textbf{invariant} if for any $b$ and $\alpha \in G$ there is $b^*$ such that $\alpha\delta(G,b) = \delta(G,b^*)$.
Let $\Def_\delta(G)$ be the boolean algebra generated by instances of $\delta$ and $S_\delta(G)$ be the set of complete $\delta$-types.
If $\delta$ is invariant and $X \in \Def_\delta(G)$ then $\alpha X \in \Def_\delta(G)$ for any $\alpha \in G$.
A subset $Y$ of $G$ is \textbf{generic} if there are $\alpha_1,\ldots,\alpha_n \in G$ such that $\alpha_1 Y \cup \cdots \cup \alpha_n Y = G$, and $p \in S_\delta(G)$ is generic if it only contains generic sets.
Given $p \in S_\delta(G)$ and $\alpha \in G$ we define $\alpha p = \{ \alpha X : X \in p\}$.


\begin{fact} [Thm 2.3 \cite{CPT}]
\label{fact:cpt}
Suppose that $\delta(x,y)$ is stable and invariant.
Then there is a finite index subgroup $G^0_\delta$ of $G$ such that $G^0_\delta$ is in $\Def_\delta(G)$ and $G^0_\delta \subseteq H$ for any finite index subgroup $H \in \Def_\delta(G)$ of $G$.
Furthermore
\begin{enumerate}
\item Every left coset of $G^0_\delta$ contains a unique generic $\delta$-type.
\item If $\alpha \in G$ and $X \in \Def_\delta(G)$ then exactly one of $\alpha G^0_\delta \cap X$ or $\alpha G^0_\delta \setminus X$ is generic.
\end{enumerate}
\end{fact}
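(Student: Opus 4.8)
The statement is the local version --- relative to the single stable, invariant formula $\delta$ --- of the classical theorem that a stable group has a smallest finite-index (type-)definable subgroup, the ``connected component''. The plan is to rerun the standard argument (as in \cite[Ch.~1]{Poizat}) with global rank and degree replaced by their $\delta$-local analogues; the hypothesis that $\delta$ is invariant is exactly what makes these ranks invariant under left translation by elements of $G$.

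First I would set up the local machinery. Since $\delta$ is stable it lacks the order property, so the Stone space $S_\delta(G)$ of $\Def_\delta(G)$ is scattered, with a finite-valued Cantor--Bendixson $\delta$-rank $R_\delta$ and a finite $\delta$-degree $\mathrm{m}_\delta$ (the number of top-rank $\delta$-types on a set); these pass to $\Def_\delta(G)$ with the expected formal properties --- monotonicity, $R_\delta(X\cup Y)=\max(R_\delta X,R_\delta Y)$, additivity of $\mathrm{m}_\delta$ over partitions into top-rank pieces, and, because $\delta$ is invariant, $R_\delta(\alpha X)=R_\delta X$ and $\mathrm{m}_\delta(\alpha X)=\mathrm{m}_\delta X$ for all $\alpha\in G$. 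I would then record the standard dictionary of local stable group theory: for $X\in\Def_\delta(G)$ the covering definition of genericity is equivalent to $R_\delta(X)=R_\delta(G)$. The forward direction is immediate from translation invariance and additivity of $R_\delta$; the converse is where the $\delta$-local forking calculus is used (definability and symmetry of $\delta$-nonforking, together with the fact that $G$ acts transitively on the top-rank $\delta$-types). Consequently a complete $p\in S_\delta(G)$ is generic iff $R_\delta(p)=R_\delta(G)$, such $p$ exist, and the set $P$ of generic $\delta$-types is finite and $G$-invariant.

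Now construct $G^0_\delta$. Any $\delta$-definable finite-index subgroup $H\le G$ has pairwise disjoint cosets which are $\delta$-definable of full $\delta$-rank $R_\delta(G)$; each such coset carries at least one top-rank $\delta$-type and there are $\mathrm{m}_\delta(G)$ of these in all, so $[G:H]\le\mathrm{m}_\delta(G)$. Thus the indices of $\delta$-definable finite-index subgroups of $G$ are uniformly bounded; since $\Def_\delta(G)$ is closed under finite intersections and intersecting subgroups never pushes the index past that bound, among all finite intersections of such subgroups there is one of maximal index, $G^0_\delta$, which is then contained in every $\delta$-definable finite-index subgroup and itself lies in $\Def_\delta(G)$. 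For clause (1): each $p\in P$ is complete and $G^0_\delta\in\Def_\delta(G)$, so $p$ concentrates on exactly one coset of $G^0_\delta$, which gives a $G$-equivariant map $P\to G/G^0_\delta$; it is surjective because every coset, being a translate of $G^0_\delta$, has full $\delta$-rank and so carries a generic type, and it is injective --- equivalently $\mathrm{m}_\delta(G^0_\delta)=1$ --- by the localization of the classical fact that the connected component of a stable group has a unique generic type (the stabilizer of a generic $\delta$-type is a $\delta$-definable finite-index subgroup, hence equals $G^0_\delta$, and symmetry of $\delta$-nonforking forces two generic $\delta$-types concentrated on $G^0_\delta$ to agree). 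Clause (2) is then immediate: given $\alpha\in G$ and $X\in\Def_\delta(G)$, let $p$ be the unique generic $\delta$-type on $\alpha G^0_\delta$; as $p$ is complete, exactly one of $X$, $G\setminus X$ lies in it, and since $p$ is the only top-rank $\delta$-type on $\alpha G^0_\delta$ and the two sets partition $\alpha G^0_\delta$ (which has $\delta$-rank $R_\delta(G)$), exactly one of $\alpha G^0_\delta\cap X$ and $\alpha G^0_\delta\setminus X$ has $\delta$-rank $R_\delta(G)$, i.e.\ is generic.

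The main obstacle is the local stable group theory input of the second paragraph: producing a well-behaved finite-valued, additive, left-translation-invariant $\delta$-rank and $\delta$-degree on $\Def_\delta(G)$ from the bare assumptions that $\delta$ is stable and invariant, establishing the ``covering $=$ full $\delta$-rank'' dictionary, and proving the degree-one statement for $G^0_\delta$ via the stabilizer and symmetry-of-forking arguments --- all while keeping the relevant sets inside $\Def_\delta(G)$ rather than a larger boolean algebra. Once this local toolkit is available, the group-theoretic bookkeeping in the third paragraph is routine; this is presumably why the authors simply cite \cite{CPT}.
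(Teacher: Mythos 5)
The paper offers no proof of this statement: it is recorded as a Fact with a direct citation to Theorem~2.3 of \cite{CPT}, so there is no in-house argument to compare your sketch against. Your outline is the classical localization strategy---finite-valued $\delta$-local Cantor--Bendixson rank and degree, the dictionary ``generic $=$ full $\delta$-rank,'' the bound $[G:H]\le\mathrm{m}_\delta(G)$ forcing a smallest $\delta$-definable finite-index subgroup $G^0_\delta$, and degree-one via forking symmetry---and you correctly flag where the real work lies, namely building this local calculus while keeping every relevant object inside $\Def_\delta(G)$. One caution on the uniqueness step: your parenthetical claim that the stabilizer of a generic $\delta$-type is a $\delta$-definable finite-index subgroup is not automatic. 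Definability of $\delta$-types gives definable formula-by-formula stabilizers, but those defining formulas need not a priori be boolean combinations of instances of $\delta$; invariance of $\delta$ is needed to keep translates in $\Def_\delta(G)$, and even then the usual route to $\mathrm{m}_\delta(G^0_\delta)=1$ is a direct symmetry-of-forking computation on realizations of generics rather than first exhibiting $G^0_\delta$ as a $\delta$-definable stabilizer. Since you explicitly defer exactly this machinery to \cite{CPT}, I read your submission as a sound plan whose validation is precisely the content of the citation, not yet a complete proof.
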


This is the local version of a well-known result on stable groups.
We say that $G$ is \textbf{$\delta$-connected} if $G^0_\delta = G$.

\begin{corollary}
\label{cor:cpt}
Suppose that $\delta(x,y)$ is stable and invariant, and $G$ is $\delta$-connected.
Then there is a unique generic type $p \in S_\delta(G)$ and any $X \in \Def_\delta(G)$ is generic if and only if $p$ concentrates on $X$.
If $X \in \Def_\delta(G)$ then exactly one of $X$ or $G \setminus X$ is generic.
\end{corollary}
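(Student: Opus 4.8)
The plan is to deduce the corollary directly from Fact~\ref{fact:cpt} by specializing to the case where the smallest finite-index subgroup $G^0_\delta$ equals $G$, which is exactly the hypothesis that $G$ is $\delta$-connected. Everything should fall out of the two numbered clauses of that fact applied in the connected case.

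First I would observe that when $G^0_\delta = G$, the group $G$ is itself the unique left coset of $G^0_\delta$. Applying Fact~\ref{fact:cpt}(1) to this single coset then immediately yields a unique generic $\delta$-type $p \in S_\delta(G)$, which is the first assertion. For the last assertion of the corollary, I would apply Fact~\ref{fact:cpt}(2) with $\alpha$ the identity element of $G$: since $\alpha G^0_\delta = G$, we get that for each $X \in \Def_\delta(G)$ exactly one of $G \cap X = X$ or $G \setminus X$ is generic.

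It remains to check the equivalence ``$X$ is generic if and only if $p$ concentrates on $X$'' for $X \in \Def_\delta(G)$. The right-to-left direction is immediate, since $p$ is a generic type and hence every set in $p$ is generic. For the converse, suppose $X$ is generic. As $p$ is a complete $\delta$-type and $G \setminus X \in \Def_\delta(G)$, exactly one of $X$ or $G \setminus X$ lies in $p$. If $G \setminus X \in p$, then $G \setminus X$ is generic because $p$ is a generic type, so both $X$ and $G \setminus X$ are generic, contradicting the dichotomy extracted from Fact~\ref{fact:cpt}(2). Hence $X \in p$, that is, $p$ concentrates on $X$.

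I do not expect a genuine obstacle here: the corollary is essentially a routine unwinding of Fact~\ref{fact:cpt} in the connected case. The only step requiring a little care is matching ``$X \in p$ versus $G \setminus X \in p$'' with ``$X$ generic versus $G \setminus X$ generic'', which is precisely where the dichotomy from Fact~\ref{fact:cpt}(2) is combined with the definition of a generic type.
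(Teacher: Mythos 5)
Your proof is correct, and since the paper simply states the corollary without an explicit proof (treating it as an immediate specialization of Fact~\ref{fact:cpt} to the case $G^0_\delta = G$), your argument is precisely the routine unwinding the paper implicitly expects. No discrepancies.
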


We now suppose that $K$ is a definable field in a structure $\Sa M$.
Let $\varphi(x,y)$ be an formula such that $\alpha \in K$ whenever $\Sa M \models \varphi(\alpha,b)$ for some $b$.
We say that $\varphi$ is \textbf{affine invariant} if for any $b$, $\alpha \in K^\times, \beta \in K$ there is $b^*$ such that $\alpha\varphi(K,b) + \beta = \varphi(K,b^*)$.
Suppose that $\varphi$ is affine invariant.
We say that $p \in S_\varphi(K)$ is an \textbf{additive generic} if it is generic for $(K,+)$ and is a \textbf{multiplicative generic} if is generic for $K^\times$.  

\begin{proposition}
\label{prop:local-generics}
Suppose that $K$ is an $\Sa M$-definable field and $\varphi(x,y)$ is stable and affine invariant.
Then there is a unique additive generic $p_+ \in S_\varphi(K)$, a unique multiplicative generic $p_\times \in S_\varphi(K)$, and $p_+ = p_\times$.
\end{proposition}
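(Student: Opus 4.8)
The plan is to run Poizat's proof of Fact~\ref{fact:poizat} one stable formula at a time, using Fact~\ref{fact:cpt} and Corollary~\ref{cor:cpt} in place of the global theory of stable groups. First I would unpack affine invariance. Taking $\alpha = 1$ shows $\varphi$ is invariant for the translation action of $(K,+)$, so Fact~\ref{fact:cpt} applies to $((K,+),\varphi)$ and yields a smallest finite-index $\varphi$-definable subgroup $G^0_\varphi$. Taking $\beta = 0$ shows multiplication by any $\alpha \in K^\times$ carries instances of $\varphi$ to instances of $\varphi$, hence carries $\Def_\varphi$ (for $(K,+)$) into itself. Setting $\psi(x,y) := \varphi(x,y) \wedge x \neq 0$, the formula $\psi$ is stable (a Boolean combination of stable formulas) and, by the $\beta = 0$ case again, invariant for the multiplication action of $K^\times$ (and every instance of $\psi$ lies in $K^\times$); so Fact~\ref{fact:cpt} also applies to $(K^\times,\psi)$, giving a smallest finite-index $\psi$-definable subgroup $H := (K^\times)^0_\psi$.

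Next I would show $(K,+)$ is $\varphi$-connected. For $\alpha \in K^\times$, the set $\alpha G^0_\varphi$ is a finite-index subgroup of $(K,+)$ lying in $\Def_\varphi$, so minimality of $G^0_\varphi$ gives $G^0_\varphi \subseteq \alpha G^0_\varphi$; running this with $\alpha^{-1}$ and multiplying through by $\alpha$ gives the reverse inclusion, so $\alpha G^0_\varphi = G^0_\varphi$ for every $\alpha \in K^\times$. Thus $G^0_\varphi$ is an additive subgroup closed under multiplication by $K^\times$ and containing $0$, i.e.\ an ideal of $K$; it is infinite, being of finite index in the infinite group $(K,+)$, so it is a nonzero ideal, hence all of $K$. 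By Corollary~\ref{cor:cpt} there is a unique additive generic $p_+ \in S_\varphi(K)$, and a $\varphi$-definable set is additively generic exactly when $p_+$ concentrates on it.

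Then I would transfer $p_+$ to $K^\times$. Multiplication by $\alpha \in K^\times$ is an automorphism of $(K,+)$ taking additively generic sets to additively generic sets, so $\alpha p_+$ is additively generic and hence equals $p_+$; thus $p_+$ is $K^\times$-invariant. A singleton is never additively generic in an infinite field, so $p_+$ concentrates on $x \neq 0$ and therefore restricts to a complete $\psi$-type $q \in S_\psi(K^\times)$, still $K^\times$-invariant. The finitely many cosets of $H$ partition $K^\times$ and each lies in $\Def_\psi$, so the complete type $q$ concentrates on exactly one coset $cH$; $K^\times$-invariance then forces $\alpha c H = cH$ for every $\alpha \in K^\times$, whence $H = K^\times$. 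So $K^\times$ is $\psi$-connected, and by Corollary~\ref{cor:cpt} it has a unique generic $\psi$-type, necessarily $q$, with a $\psi$-definable subset of $K^\times$ multiplicatively generic exactly when $q$ concentrates on it.

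Finally I would glue the two sides. Since $\{0\}$ is not multiplicatively generic, any multiplicative generic $p_\times \in S_\varphi(K)$ concentrates on $x \neq 0$, so its restriction to $\psi$ is a generic $\psi$-type on $K^\times$ and hence equals $q$; conversely a complete $\varphi$-type concentrating on $x \neq 0$ is determined by its $\psi$-restriction, so there is a unique such $p_\times$, and $p_\times|_\psi = q = p_+|_\psi$ with both concentrating on $x \neq 0$, giving $p_+ = p_\times$. I expect the third step to be the crux: it is the localized form of the step in Poizat's argument that the additive generic is also multiplicatively generic, and the delicate point is exactly that a $K^\times$-invariant complete $\psi$-type concentrates on a single coset of $H$ — this is what upgrades invariance to $\psi$-connectedness of $K^\times$; the rest is bookkeeping with Fact~\ref{fact:cpt} together with the observation that all the relevant types avoid $0$.
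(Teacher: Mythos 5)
Your proposal is correct but takes a genuinely different route in its second half. Both arguments begin the same way: use affine invariance to show $(K,+)$ is $\varphi$-connected (the paper: $\beta^{-1}H \supseteq H$ for $\beta \in K^\times$ forces $H := (K,+)^0_\varphi$ to be an ideal, hence $K$), so Corollary~\ref{cor:cpt} gives a unique additive generic $p_+$. From there the paths diverge. The paper never passes to a $\psi$-boolean algebra or discusses $\psi$-connectedness of $K^\times$ at all; instead it takes an arbitrary multiplicative generic $p_\times$, supposes $p_\times \ne p_+$, picks $X \in \Def_\varphi(K)$ with $X \in p_\times$ and $X \notin p_+$, covers $K^\times$ by finitely many multiplicative translates $\alpha_i X$, observes that multiplication by $\alpha_i$ is an additive automorphism so each $\alpha_i X$ is not additively generic, and derives that $p_+$ concentrates on $K \setminus \bigcup_i \alpha_i X \subseteq \{0\}$, a contradiction. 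You instead establish $\psi$-connectedness of $K^\times$ by transporting the $K^\times$-invariance of $p_+$ down to a $\psi$-type $q$ and arguing that an invariant complete $\psi$-type must concentrate on a single coset of $(K^\times)^0_\psi$, which forces the connected component to be all of $K^\times$. Your route is more structural and has the side benefit of explicitly exhibiting $p_+$ as a multiplicative generic (the paper leaves existence of $p_\times$ to the reader); the paper's route is shorter and sidesteps the $\Def_\varphi(K) \to \Def_\psi(K^\times)$ restriction, which your argument relies on and which deserves a sentence of justification (it is well defined because any two $\varphi$-sets agreeing off $\{0\}$ differ by a set in $\Def_\varphi$ contained in $\{0\}$, which cannot lie in $p_+$). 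One small point to tighten in your write-up: you assert the generic $\psi$-type is ``necessarily $q$''; this needs the standard observation that a $K^\times$-invariant complete $\psi$-type must be generic (if $X \in q$ were non-generic then, by Corollary~\ref{cor:cpt}, $K^\times \setminus X$ would be generic, so finitely many translates $\alpha_i(K^\times\setminus X)$ cover $K^\times$, whence $\bigcap_i \alpha_i X = \emptyset \in q$ by invariance, a contradiction). With that filled in, your argument is sound.
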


\begin{proof}
We first show that there is a unique additive generic.
By Fact~\ref{fact:cpt} it suffices to show that $(K,+)$ is $\varphi$-connected.
Let $H$ be the connected component $(K,+)^0_\varphi$.  
For any $\beta \in K^\times$, the group $\beta^{-1}H$ is in $\Def_\varphi(K)$ and has finite index, and so $\beta^{-1}H \supseteq H$.  
Equivalently, $\beta H \subseteq H$. 
Then $H$ is an ideal, so $H = K$.

\meno
Let $p_+$ be the additive generic and $p_\times$ be any multiplicative generic.
We show that $p_\times = p_+$.  
Suppose otherwise.
Fix $X \in \Def_\varphi(G)$ such that $p_\times$ concentrates on $X$, and $p_+$ does not.  
Then $X$ is multiplicatively generic but not additively generic.
Hence there are $\alpha_1, \ldots, \alpha_n \in K^\times$ with $K^\times \subseteq \bigcup_{i = 1}^n \alpha_i X$.  
The map $x \mapsto \alpha_ix$ is an automorphism of $(K,+)$, so each $\alpha_i X$ is not additively generic.
Then $p_+$ does not concentrate on $\bigcup_{i = 1}^n \alpha_i X \supseteq K^\times$, which is absurd.
\end{proof}



\section{Direct proof of Theorem~D}
\label{sec: tablefieldconj}

Theorem D is an easy consequence of Theorems A and C---see Section~\ref{sec: tablefieldconj2} for a half-page proof.  Here, we give a self-contained proof of Theorem D, not using Theorems A or C, but extracted from our later proof in Section~\ref{sec: tablefieldconj2}.
In fact, we will prove the following strengthening of Theorem D:

\begin{theorem}\label{d-improved}
If $K$ is virtually large and every existential formula is stable, then $K$ is separably closed.
\end{theorem}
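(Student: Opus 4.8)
The plan is to prove the contrapositive-flavoured statement: assuming $K$ is virtually large but \emph{not} separably closed, produce an unstable existential formula. The key tension we want to exploit is that largeness forces $\mathbb{A}^1(K)$ to carry a rich non-discrete "étale-open-like" topology (equivalently, many "basic" existentially-defined sets behave like a V-topology or at least like a proper system of topologies), while non-separable-closedness forbids that topology from being trivial (Zariski). First I would reduce to the honestly-large case: if $L/K$ is finite with $L$ large, then an existential $L$-formula is essentially an existential $K$-formula (using that $L\cong K^m$ as a $K$-variety via Weil restriction, and that the canonical morphism and base change interact well with existential definability — Facts~\ref{fact:weil-restriction} and \ref{fact:can-is-closed}), and $L$ is not separably closed iff $K$ is not. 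So WLOG $K$ is large and not separably closed, and I want a single unstable existential $\varphi$.

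The main tool is Proposition~\ref{prop:local-generics} together with Corollary~\ref{cor:cpt}: if \emph{every} existential formula were stable, then for every affine-invariant existential $\varphi$ there is a unique additive=multiplicative generic $\varphi$-type, and exactly one of $X$, $K\setminus X$ is generic for each $X\in\Def_\varphi(K)$. The strategy is to find an existentially-definable set $U\subseteq K$ which is "small at $0$" in a way incompatible with this dichotomy. Concretely: because $K$ is large, there is a smooth curve with infinitely many $K$-points, which after the standard manipulations yields a basic étale-open set $U=\{a\in K:\exists b\ (f(a,b)=0\ne g(a,b))\}$ with $(\partial f/\partial y\ne 0$ on $f=0\ne g)$, such that $U$ is nonempty, $0\in U$ (or some chosen point), and $U\ne K$ — the latter precisely because $K$ is not separably closed (an étale image that is all of $\mathbb{A}^1(K)$ would, together with the Hausdorffness/totally-separatedness arguments underlying Theorem~C(2), force separable closedness; I would instead argue directly that if every such $U$ were cofinite then $\mathbb{A}^1(K)$ would be Zariski, hence $K$ separably closed by Proposition~\ref{prop:zar}'s converse). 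Translating $U$ by affine maps makes $\varphi$ affine invariant after suitably quantifying over the parameters. The point is that $U$ being a proper étale image gives translates $a+U$ forming a basis of a non-discrete Hausdorff (hence totally disconnected) "neighbourhood filter," so no finite union of translates covers $K$, i.e.\ $U$ is not additively generic; but by the multiplicative/additive coincidence and $\delta$-connectedness of $(K,+)$ it would have to be, contradiction — \emph{unless} the complement is generic, and then one plays the same game with the complement to violate Hausdorff separation. This yields the instability of the existential formula defining $U$.

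The hard part, and where I'd spend the most care, is making the "affine invariant existential formula" bookkeeping honest: the étale-open basic sets are existentially definable but their natural parametrization (varying $f,g$) need not give an affine-invariant family, so I would instead fix one good curve/pair $(f,g)$ coming from largeness, then explicitly build $\varphi(x;u,v)$ defining $v\cdot U + u$ (or $v(U)+u$), check it is existential and affine invariant, and verify $U$ is a proper nonempty subset — the properness being exactly the non-separably-closed input. A secondary subtlety is the descent from virtually large to large at the level of \emph{existential} (not just definable) formulas, which needs that Weil restriction and the adjunction morphisms are given by polynomial maps so that $\exists$-quantifier complexity is preserved; this is routine given Section~\ref{section:basic etale} but must be stated. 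Once $U$ is pinned down, the instability witness is the standard "generic type is unique, so every definable set or its complement is generic, but a basis of a non-discrete Hausdorff topology has neither property" argument, run through Corollary~\ref{cor:cpt}.
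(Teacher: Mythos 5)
Your high-level intuition is right (local stability plus largeness plus non-separable-closedness should clash through some non-trivial ``\'etale-open'' structure on $\Aa^1(K)$), but the actual argument you sketch has several genuine gaps, and it is not a faithful reconstruction of the paper's proof.

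First, the reduction from virtually large to large is subtler than you claim. Your statement ``$L$ is not separably closed iff $K$ is not'' is false: if $K$ is real closed then $L=K(i)$ is separably closed. The paper handles this via Artin--Schreier (Fact~\ref{fact:artin}): if a large extension of minimal degree were separably closed, $K$ would be real closed and hence already large, contradicting minimality of the degree. You need this patch, and you also don't really reduce to ``WLOG $K$ large''---the paper keeps $K$ as the ambient model and works with the auxiliary large finite extension $L$, pulling back $L$-definability to existential $K$-definability via the $K$-basis of $L$; largeness of $L$ is invoked only at the very last step.

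Second, producing a \emph{proper} nonempty \'etale image is exactly where the Galois input enters, and your sketch dodges it. The alternatives you suggest (appealing to the Hausdorffness of Theorem~C(2) or to the converse of Proposition~\ref{prop:zar}) are circular for a ``self-contained'' proof, since their proofs already rest on Lemma~\ref{lemma:galois}. More importantly, the paper needs more than a proper \'etale image: it needs $P$ to be a proper \emph{subgroup} of $(L,+)$ or of $L^\times$ so that cosets give disjoint instances of $\varphi$, and that is precisely what the image of a non-surjective Artin--Schreier or prime power map provides. A bare proper \'etale image $U$, as in your plan, does not obviously have a disjoint ``companion'' instance $U^*$ of the same formula without already knowing some form of Hausdorffness.

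Third, and most seriously, the mechanism you use to derive the contradiction is wrong. You assert that because translates of $U$ form a basis of a non-discrete Hausdorff topology, ``no finite union of translates covers $K$, i.e.\ $U$ is not additively generic''---but finitely many basic opens of a Hausdorff topology can certainly cover the space, so this inference fails. And even if $U$ were non-generic, you give no reason it ``would have to be'' generic; ``Hausdorff separation'' is not what gets violated. The paper's actual contradiction runs through largeness, not Hausdorffness: since $P$ and $P^*$ are disjoint instances of $\varphi$, one of them, say $P$, fails to be $\varphi$-generic; after translating so that $0\in P^{**}$, the complement $L^\times\setminus P^{**}$ is multiplicatively $\varphi$-generic (this uses the coincidence $p^+_\varphi=p^\times_\varphi$ and the unique-generic dichotomy from Corollary~\ref{cor:cpt}); finitely many multiplicative translates of the complement then cover $L^\times$, forcing $\bigcap_i a_iP^{**}=\{0\}$; but $\{0\}$ is a finite intersection of \'etale images, hence an \'etale image by Lemma~\ref{intersection:lem2a}, and Lemma~\ref{large:lem} says a nonempty \'etale image in a large field is infinite. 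That last collision with largeness is the heart of the proof, and it is missing from your proposal.
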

Here, we say that $K$ is \textbf{virtually large} if some finite extension of $K$ is large.
Srinivasan~\cite{Srinivasan} constructs a virtually large field which is not large.

\medskip
We will need local stability theory (Proposition~\ref{prop:local-generics}) to produce an explicit unstable formula; we do not need this to prove Theorem D.





\medskip
We assume the definitions from Section~\ref{sec:model-theoretic preliminaries}.  Suppose $\varphi$ is stable and affine invariant.  We say that a boolean combination of instances of $\varphi$ is \textbf{$\varphi$-generic} if the following equivalent conditions hold:
\begin{itemize}
    \item It is additively generic.
    \item It is multiplicatively generic.
    \item It contains $p^+_\varphi = p^\times_\varphi$.
\end{itemize}

\meno
We now gather some field-theoretic lemmas.

\begin{fact}
\label{fact:large-fin}
A finite extension of a large field is large.
\end{fact}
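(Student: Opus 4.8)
The plan is to reduce the statement to the known characterization of largeness via smooth curves, and to exploit the classical fact that largeness is a Galois-theoretic/geometric property that passes between a field and its finite extensions. Let $L$ be a finite extension of a large field $K$; I want to show $L$ is large, i.e. every smooth $L$-curve $C$ with an $L$-point has infinitely many $L$-points. The key step is to move the problem down to $K$ by Weil restriction: given a smooth affine $L$-curve $C$ with $C(L)\ne\emptyset$, form $W = \rtp(C)$, an affine $K$-variety with a canonical bijection $W(K)\cong C(L)$ by Fact~\ref{fact:weil-restriction}(2). Then $C(L)$ is infinite if and only if $W(K)$ is infinite, so it suffices to produce infinitely many $K$-points on $W$.

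Next I would pass to a smooth curve inside $W$. Since $C$ is smooth over $L$ and Weil restriction along a finite separable extension preserves smoothness, $W$ is smooth over $K$ (one subtlety: if $L/K$ is not separable the Weil restriction can misbehave, but a large field is perfect, so every finite extension of $K$ is separable — this is where largeness of $K$, not merely its being a field, enters at the outset). Pick a point $p\in W(K)$ corresponding to a chosen point of $C(L)$. The variety $W$ has dimension $[L:K]\cdot\dim C = [L:K]$, which is typically $>1$, so I cannot directly apply the curve definition of largeness; instead I would cut $W$ down by generic hyperplane sections through $p$ to obtain a smooth curve $C_0\subseteq W$ over $K$ with $p\in C_0(K)$, using a Bertini-type argument (valid since $K$ is infinite). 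Largeness of $K$ then gives $|C_0(K)|=\infty$, hence $|W(K)|=\infty$, hence $|C(L)|=\infty$.

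The main obstacle I anticipate is the Bertini step: ensuring that a generic linear section of the smooth $K$-variety $W$ through the prescribed point $p$ is again smooth and positive-dimensional, over a possibly imperfect or not-very-large base field $K$ — though here $K$ is large hence perfect, which helps. An alternative route that sidesteps Bertini entirely: use instead the equivalent characterization of largeness in terms of $\cE_K$ from Theorem~C(1) together with Theorem~A. Namely, largeness of $K$ is equivalent to non-discreteness of the \'etale-open topology on $\Aa^1(K)$, equivalently non-discreteness on $V(K)$ for every $V$ with $V(K)$ infinite; and by the restriction/extension machinery for systems of topologies (to be developed in Section~\ref{section:general-facts}), the \'etale-open system over $L$ is the extension of the system over $K$, so non-discreteness transfers from $K$ to $L$. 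However, since Fact~\ref{fact:large-fin} is invoked inside the self-contained proof of Theorem~D and should not depend on Theorems~A and~C, I would present the Weil-restriction-plus-curve-slicing argument as the primary proof, citing only the standard facts on Weil restriction recalled above and the curve definition of largeness.
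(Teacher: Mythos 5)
The paper does not prove this fact, deferring to \cite[Proposition~2.7]{Pop-little}, whose proof is a Weil-restriction argument broadly along your lines, so your overall strategy (restrict scalars to return to $K$, then slice the resulting $K$-variety down to a smooth $K$-curve through the given rational point) is sound. However, the assertion that ``a large field is perfect, so every finite extension of $K$ is separable'' is false. The Laurent series field $\Ff_p((t))$ is large --- it carries a nontrivial Henselian valuation, as the introduction of the paper notes --- yet $t$ has no $p$-th root in it, so the field is imperfect. Purely inseparable finite extensions therefore do arise for large fields, and the separability you rely on cannot be secured by appealing to largeness; this affects both places in your write-up where the perfection claim is used.

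The error is repairable rather than fatal, because the separability hypothesis is also unnecessary: Weil restriction along an arbitrary finite extension of fields preserves smoothness, not only along separable ones --- see \cite[Proposition~A.5.2(4)]{pseudo-reductive-groups}, the same source the paper already cites for Facts~\ref{fact:restrict-0} and~\ref{fact:restriction-etale}. Deleting the false perfection claim and citing the unrestricted smoothness-preservation statement repairs the argument. A minor additional remark: the Bertini step (slicing $W$ by generic hyperplanes through $p$ to obtain a smooth $K$-curve) is asserted without argument, and Bertini through a fixed base point requires some care in positive characteristic. It does go through for hyperplane sections of a smooth quasi-projective variety over an infinite field, but a careful write-up should spell out the transversality check both at $p$ and away from $p$.
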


See \cite[Proposition~2.7]{Pop-little} for a proof.

\meno
Let $V$ be a variety over $K$.  Recall from the introduction that an \textbf{\'etale image} in $V(K)$ is a set of the form $f(X(K))$ where $f : X \to V$ is an \'etale morphism of $K$-varieties.
\begin{lemma}
\label{intersection:lem2a}
Let $V$ be a $K$-variety.  The collection of \'etale images in $V(K)$ is closed under finite unions and finite intersections.
\end{lemma}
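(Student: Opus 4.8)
The plan is to establish closure under finite unions and finite intersections separately, in each case reducing to the binary case by induction.

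For \textbf{finite unions}, the argument is essentially formal. Given étale morphisms $f_1 : X_1 \to V$ and $f_2 : X_2 \to V$, I would form the disjoint union $X = X_1 \sqcup X_2$ (the coproduct in the category of $K$-varieties, which exists since we allow non-connected varieties), with the morphism $f : X \to V$ restricting to $f_1$ and $f_2$ on the two components. Since being étale is checked locally on the source, $f$ is étale; and the image of $f : X(K) \to V(K)$ is clearly $f_1(X_1(K)) \cup f_2(X_2(K))$, using that $X(K) = X_1(K) \sqcup X_2(K)$. This handles binary unions, and induction gives all finite unions; the empty union is $\emptyset$, which is the image of the étale morphism from the empty variety.

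For \textbf{finite intersections}, the natural candidate is the fiber product: given étale $f_1 : X_1 \to V$ and $f_2 : X_2 \to V$, set $X = X_1 \times_V X_2$ with $f : X \to V$ the canonical morphism. The projection $X \to X_1$ is a base change of $f_2$, hence étale by Fact~\ref{fact:etale}(3), and composing with $f_1$ (étale by Fact~\ref{fact:etale}(2)) shows $f$ is étale. The key point is then the identification of $K$-points: $X(K) = X_1(K) \times_{V(K)} X_2(K)$, which holds because $\Hom_K(\Spec K, -)$ preserves fiber products. Hence the image of $f$ on $K$-points is exactly $\{v \in V(K) : v \in f_1(X_1(K)) \text{ and } v \in f_2(X_2(K))\} = f_1(X_1(K)) \cap f_2(X_2(K))$. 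Induction then yields all finite intersections, with the empty intersection being $V(K)$ itself, the image of the identity (an étale morphism by Fact~\ref{fact:etale}(1), since open immersions are étale and the identity is an open immersion).

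The main thing to be careful about — though it is not really an obstacle — is the bookkeeping around fiber products of possibly non-reduced, non-irreducible $K$-varieties: one must check that $X_1 \times_V X_2$ is genuinely a $K$-variety (separated and of finite type over $K$), which follows from stability of these properties under fiber product and composition, and that the functor-of-points description of fiber products is being applied correctly over the base $\Spec K$ rather than over $V$. Everything reduces to standard categorical facts about fiber products plus the cited properties of étale morphisms, so there is no deep step here; the lemma is genuinely routine given Fact~\ref{fact:etale}.
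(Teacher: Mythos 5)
Your proof is correct and takes essentially the same approach as the paper: disjoint union for binary unions, fiber product over $V$ for binary intersections (using that the $K$-points functor preserves fiber products), and induction for the general finite case. The only difference is that you spell out a bit more detail — why the fiber product morphism is étale (base change followed by composition) and the degenerate empty-union/empty-intersection cases — which the paper handles more tersely.
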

\begin{proof}
  Suppose $U_0, U_1$ are \'etale images in $V(K)$.  For $i \in \{0,1\}$, let $f_i : X_i \to V$ be an \'etale morphism of $K$-varieties such that $f_i(X_i(K)) = U_i$.  Let $X_\cup$ be the disjoint union of $X_0$ and $X_1$ and $f_\cup : X_\cup \to V$ be the morphism produced from $f_0,f_1$.
  Then $f_\cup$ is \'etale and the image of the induced map $X_\cup \to V$ is $U_0 \cup U_1$.  Therefore $U_0 \cup U_1$ is an \'etale image.
  
  \meno
  Let $Y$ be the fiber product $X_0 \times_{V} X_1$, and $g : Y \to V$ be the natural map.  By Fact~\ref{fact:etale}, $g$ is \'etale.
Then
  \begin{equation*}
      \xymatrix{ Y(K) \ar[r] \ar[d] & X_0(K) \ar[d]^{f_0} \\ X_1(K) \ar[r]_{f_1} & V(K),}
  \end{equation*}
is a pull-back square, so $g(Y(K)) = f_0(X_0(K)) \cap f_1(X_1(K)) = U_0 \cap U_1$.
Therefore $U_0 \cap U_1$ is an \'etale image.
\end{proof}
\begin{lemma}
\label{intersection:lem2b}
Let $g : V \to W$ be an isomorphism of $K$-varieties.  If $U$ is an \'etale image in $V(K)$, then $g(U)$ is an \'etale image in $W(K)$.
\end{lemma}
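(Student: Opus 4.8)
The plan is to reduce everything to two facts already recorded in the excerpt: isomorphisms are \'etale, and \'etale morphisms are closed under composition. Concretely, suppose $U$ is an \'etale image in $V(K)$, so there is an \'etale morphism $f : X \to V$ of $K$-varieties with $f(X(K)) = U$. Then $g \circ f : X \to W$ is a composite of $f$ with the isomorphism $g$.

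Next I would observe that $g$ is \'etale: an isomorphism is in particular an open immersion, and open immersions are \'etale by Fact~\ref{fact:etale}(1). (Equivalently one can cite directly that isomorphisms are \'etale.) By Fact~\ref{fact:etale}(2), \'etale morphisms are closed under composition, so $g \circ f$ is \'etale.

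Finally I would compute the image on $K$-points using functoriality of the $K$-points functor: the induced map $(g\circ f)_* : X(K) \to W(K)$ equals $g_* \circ f_*$, so its image is $g_*(f_*(X(K))) = g(f(X(K))) = g(U)$. Hence $g(U)$ is the image of a $K$-point map of an \'etale morphism $X \to W$, i.e.\ an \'etale image in $W(K)$, as desired. There is no real obstacle here: the statement is essentially immediate once one notes that isomorphisms are \'etale and that \'etale images are transported along morphisms by functoriality; the only thing to be slightly careful about is that the witnessing variety $X$ and morphism $f$ need not change at all, only the target does.
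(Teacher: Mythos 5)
Your proof is correct and follows exactly the same route as the paper's: compose the witnessing \'etale morphism $f : X \to V$ with $g$, note the composite is \'etale, and observe that the image of $X(K)$ under $g \circ f$ is $g(U)$. You merely make explicit the justification (via Fact~\ref{fact:etale}(1) and (2)) that $g \circ f$ is \'etale, which the paper leaves implicit.
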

\begin{proof}
Let $f : X \to V$ be an \'etale morphism of $K$-varieties such that $U = f(X(K))$.
Then $g \circ f : X \to W$ is \'etale, and $(g \circ f)(X(K)) =  g(U)$, so $g(U)$ is an \'etale image.
\end{proof}
In particular, if $U$ is an \'etale image in $K = \Aa^1(K)$, and $f : K \to K$ is an affine transformation $f(x) = ax + b$, then the direct image $f(U)$ is an \'etale image in $K$.





\begin{lemma}
\label{large:lem}
If $K$ is a large field, then every non-empty \'etale image in $K$ is infinite.
\end{lemma}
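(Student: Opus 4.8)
The plan is to reduce to the definition of largeness by finding, inside a non-empty étale image $U = f(X(K))$ with $f : X \to \Aa^1$ étale, a smooth affine $K$-curve with a $K$-point whose $K$-points map into $U$. First I would fix a point $a \in U$ and a preimage $p \in X(K)$ with $f(p) = a$. By Fact~\ref{fact:smooth points}, $X$ is smooth at $p$ (an étale morphism to the smooth variety $\Aa^1$ has smooth source, or one can note $p$ is a $K$-point hence smooth iff regular, and étale morphisms preserve regularity), so after replacing $X$ by the smooth locus — an open subvariety still containing $p$, with image still inside $U$ — we may assume $X$ is smooth. Thus $X$ is a smooth $K$-variety of dimension $1$ (since $f$ is étale, $\dim X = \dim \Aa^1 = 1$) with a $K$-point $p$.

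Next I would pass to an affine curve: let $X'$ be the irreducible component of $X$ containing $p$, which is a smooth irreducible affine... wait, $X$ need not be affine; but $X$ is a variety of finite type, so I can choose an affine open neighbourhood $O$ of $p$ in $X$. Then $O \to \Aa^1$ is still étale (open immersions are étale and étale morphisms compose, by Fact~\ref{fact:etale}), $O(K) \ni p$, and $f(O(K)) \subseteq U$. So I may assume $X$ is a smooth affine $K$-curve with $p \in X(K)$. Now largeness of $K$ applies directly: $X$ is a smooth $K$-curve with a $K$-point, so $X(K)$ is infinite. Since $f$ restricted to $X(K)$ has finite fibres — $f$ is étale, hence quasi-finite, so each fibre $f^{-1}(c)$ is finite for $c \in \Aa^1(K)$, in particular $f^{-1}(c)(K)$ is finite — the image $f(X(K)) \subseteq U$ is infinite, hence $U$ is infinite.

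The only genuinely delicate point is making sure the definition of ``large'' as stated in the introduction applies: it speaks of a smooth $K$-\emph{curve}, i.e. presumably a $1$-dimensional variety, with a $K$-point having infinitely many $K$-points. One must check that after shrinking to an affine open neighbourhood of $p$, what remains is genuinely $1$-dimensional and has $p$ as a $K$-point; dimension $1$ is automatic since étale maps preserve dimension and the target is $\Aa^1$, and non-emptiness at $p$ is preserved since $p$ lies in every open neighbourhood we choose. A subtlety is whether ``curve'' in the definition of largeness requires irreducibility or reducedness; if so, one further replaces $X$ by the reduced irreducible component through $p$, which is still smooth and affine and still has the quasi-finite map to $\Aa^1$, so the argument is unaffected. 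The main obstacle is thus purely bookkeeping — threading $p$ through the successive open restrictions while keeping étaleness (via Fact~\ref{fact:etale}) and the containment $f(\text{source}(K)) \subseteq U$ — rather than anything substantive, so the proof should be short.
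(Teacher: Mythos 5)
Your proof is correct and follows the same approach as the paper's: observe that the source of an étale morphism to $\Aa^1$ is a smooth $K$-curve, apply largeness to get infinitely many $K$-points, and use that étale morphisms are finite-to-one. The paper's version is terser (it doesn't bother with the affine reduction or the irreducibility caveat), but the underlying argument is identical.
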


\begin{proof}
Let $S$ be a non-empty \'etale image in $K$.
There is a $K$-variety $X$ with $X(K) \ne \emptyset$ and an \'etale morphism $f : X \to \Aa^1$ with $S = f(X(K))$.
The variety $X$ is smooth of dimension 1, hence a curve.
By largeness, $X(K)$ is infinite.
As $f$ is finite-to-one, $S$ is infinite.
\end{proof}

Lemma~\ref{lemma:galois} below is essentially used in the proof of Macintyre's theorem on $\aleph_0$-stable fields.
We provide a proof for the sake of completeness.

\begin{lemma}
\label{lemma:galois}
Suppose that $K$ is not separably closed.
Then there are finite field extensions $L/K$ and $L'/L$ such that either
\begin{enumerate}
\item $L'$ is an Artin-Schreier extension of $L$, or
\item there is a prime $p \neq \mathrm{Char}(K)$ such that $L$ contains a primitive $p$th root of unity and $L' = L(\alpha)$ for some $\alpha \in L'$ such that $\alpha^p \in L$.
\end{enumerate}
\end{lemma}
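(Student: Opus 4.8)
The plan is to analyze a non-separably-closed field $K$ via Galois theory, splitting into the case where the absolute Galois group has a quotient of order $p = \Chara(K)$ (the wild case, handled by Artin--Schreier) and the case where only quotients of order $\ell \neq \Chara(K)$ appear (the tame case, handled by Kummer theory after adjoining roots of unity). First I would fix a separable closure $K^{\mathrm{sep}}$ and observe that since $K$ is not separably closed, the absolute Galois group $\Gal(K^{\mathrm{sep}}/K)$ is a nontrivial profinite group. A nontrivial profinite group has a nontrivial finite continuous quotient, hence an open subgroup $H$ whose core has prime index over some intermediate subgroup; more concretely, $\Gal(K^{\mathrm{sep}}/K)$ has a closed subgroup of prime index $q$ for some prime $q$ (take a maximal proper open subgroup inside a suitable finite quotient, or use that a nontrivial finite group always has a subgroup of prime index inside one of its composition factors). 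By Galois correspondence this yields a finite separable extension $M/K$ admitting a further cyclic extension of degree $q$.

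Next I would split on whether such a $q$ can be taken equal to $\Chara(K)$. If $\Chara(K) = p > 0$ and some prime-degree cyclic extension has degree $p$, then by Artin--Schreier theory a degree-$p$ cyclic extension of a field of characteristic $p$ is automatically of the form $L'= L(\alpha)$ with $\alpha^p - \alpha \in L$; here I would set $L = M$ and get case~(1) directly. Otherwise, every prime $q$ arising as above is different from $\Chara(K)$; pick one such $q$ and the corresponding cyclic degree-$q$ extension $M'/M$. Now adjoin a primitive $q$th root of unity: set $L = M(\zeta_q)$, a finite separable extension of $K$ since $q \neq \Chara(K)$. The extension $L'/L$ obtained by compositing $M'$ with $L$ is either trivial or cyclic of degree $q$; I would need to ensure it is nontrivial, but this is not automatic, so instead I would argue slightly differently — see below. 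Once $L'/L$ is genuinely cyclic of degree $q$ and $\zeta_q \in L$, Kummer theory gives $L' = L(\alpha)$ with $\alpha^q \in L$, which is case~(2) with $p := q$.

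The main obstacle is exactly the nontriviality issue: passing from $M'/M$ to its compositum with $L = M(\zeta_q)$ could collapse it if $M'\subseteq L$. I would handle this by choosing the prime $q$ and the extension more carefully from the start: replace $K$ by $M(\zeta_q)$ at the outset (a finite separable extension of $K$, still not separably closed), and then re-run the argument to find a prime-degree cyclic extension over \emph{this} field. Iterating finitely often, or invoking that the absolute Galois group of $M(\zeta_q)$ is still infinite (nontrivial) and so still has a closed subgroup of prime index $q'$, one arrives at a pair $L/K$, $L'/L$ with $L'/L$ cyclic of prime degree $q'$, where either $q' = \Chara(K)$ (giving case~(1)) or $q'\neq \Chara(K)$ and $\zeta_{q'}\in L$ by construction (giving case~(2)). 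The bookkeeping to guarantee termination — ensuring we do not loop forever adjoining roots of unity — is the only delicate point, and it is resolved by noting that after adjoining $\zeta_q$ we may then look for \emph{any} prime-degree quotient of the (still nontrivial) Galois group, and such a quotient, together with the roots of unity already present or freshly adjoined, furnishes the desired $L'/L$ in one of the two stated forms.
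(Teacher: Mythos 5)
The nontriviality worry in your second paragraph is actually unfounded, and recognizing this makes your third paragraph unnecessary (which is fortunate, because that paragraph is where the gap lies). Since $[M':M]=q$ is prime and $[M(\zeta_q):M]$ divides $q-1$, the two degrees are coprime; both extensions are Galois over $M$, so $M'\cap M(\zeta_q)=M$ and hence $[M'(\zeta_q):M(\zeta_q)]=[M':M]=q$. Taking $L=M(\zeta_q)$ and $L'=M'(\zeta_q)$ therefore gives a cyclic degree-$q$ extension with $\zeta_q\in L$, and Kummer theory finishes. Your attempted iterative repair does not work as written: the fresh prime $q'$ obtained after passing to $M(\zeta_q)$ need not equal $q$, so $\zeta_{q'}$ is not already present, and ``freshly adjoining'' it just recreates the same supposed obstruction with no termination argument. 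Also, the claim that $\Gal(K^{\mathrm{sep}}/M(\zeta_q))$ is still nontrivial requires the Artin--Schreier theorem (it can be $\Zz/2\Zz$, when $M$ is real closed), which you never invoke.

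Two smaller points. Your opening claim that a nontrivial profinite group has a closed subgroup of prime \emph{index} is false in general (the finite simple group $A_6$ has no subgroup of prime index, and is a finite quotient of many profinite groups); what you need is a subgroup of prime \emph{order} in some nontrivial finite Galois group over $K$, which Cauchy's theorem provides, and whose fixed field gives the degree-$q$ cyclic extension $M'/M$. The paper's proof avoids all of the compositum bookkeeping with an extremal argument: it takes $p>1$ minimal among $[L':L]$ over finite Galois extensions $L'/L$ with $K\subseteq L\subseteq L'$, and this single choice forces both that $p$ is prime (via a subgroup of prime order) and that $\zeta_p\in L$ when $p\neq\Chara(K)$, because $L(\zeta_p)/L$ is Galois of degree at most $p-1<p$. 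Your compositum route, once the unfounded collapse worry is excised, is a correct alternative.
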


\begin{proof}
As $K$ is not separably closed, $K$ has a non-trivial finite Galois extension.
Take $p > 1$ minimal such that there are finite extensions $K \subseteq L \subseteq L'$ with $L'/L$ Galois and $p = [L ' : L]$.
Take a prime $q$ dividing $p = |\Gal(L'/L)|$, and a subgroup $H$ of order $q$. 
Let $L''$ be the fixed field of $H$. Then $L'/L''$ is Galois and $[L' : L''] = q$.
By minimality, $p = q$, and $p$ is prime.

\meno  
If $p = \Chara(K)$ then $L'/L$ is an Artin-Schreier extension.
Suppose that $q \neq \Chara(K)$.
The extension of $L$ by a primitive $p$th rooth of unity is a Galois extension of degree $\leq p - 1$.  
So $L$ contains a primitive $p$th root of unity by minimality of $p$.
So $L'/L$ is a Kummer extension hence $L' = L(\alpha)$ for some $\alpha \in L'$ such that $\alpha^p = L$.
\end{proof}

The following is a famous theorem of Artin and Schreier.

\begin{fact}
\label{fact:artin}
If some finite extension of $K$ is separably closed then $K$ is either separably closed or real closed.
\end{fact}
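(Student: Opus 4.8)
The plan is to reduce the statement to the classical Artin--Schreier theorem on fields with a finite absolute Galois group. Suppose $L \supseteq K$ is a finite extension which is separably closed, and fix an algebraic closure $\kalg$ of $K$ with $K \subseteq L \subseteq \kalg$. The separable closure $K^{\mathrm{sep}}$ of $K$ in $\kalg$ is contained in $L$: if $\alpha \in K^{\mathrm{sep}}$ then its minimal polynomial over $K$ is a separable polynomial over $L$, hence splits in $L$ since $L$ is separably closed, so $\alpha \in L$. Therefore $[K^{\mathrm{sep}} : K] \leq [L:K] < \infty$. Since $K^{\mathrm{sep}}/K$ is Galois, $G := \Gal(K^{\mathrm{sep}}/K)$ is a finite group. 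If $G$ is trivial then $K = K^{\mathrm{sep}}$ is separably closed and we are done; so assume $|G| > 1$, and it remains to show that $K$ is real closed.

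This last assertion is precisely the Artin--Schreier theorem, and for completeness I would recall the shape of its proof. One shows successively: (a) $G$ has no element $\sigma$ of odd prime order $q$ --- otherwise, letting $F$ be the fixed field of $\sigma$, the extension $K^{\mathrm{sep}}/F$ is cyclic of degree $q$, and distinguishing the cases $q \neq \Chara K$ (where $F$ contains a primitive $q$th root of unity by minimality of degree, so the extension is Kummer and one produces, using that a norm from a Kummer extension lands in the $q$th powers, a $q$th root of the generator not lying in $K^{\mathrm{sep}}$) and $q = \Chara K$ (the Artin--Schreier tower case, cf. the analysis in Lemma~\ref{lemma:galois}) yields a proper separable extension of $K^{\mathrm{sep}}$, a contradiction; (b) $\Chara K$ does not divide $|G|$ --- otherwise a cyclic Artin--Schreier subextension of degree $p = \Chara K$ likewise extends further; hence $G$ is a nontrivial $2$-group and $\Chara K \neq 2$; (c) ruling out $\Zz/4\Zz$ and $(\Zz/2\Zz)^2$ as subquotients in the same fashion forces $|G| = 2$. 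Finally, writing $K^{\mathrm{sep}} = K(\imag)$ with $\imag^2 = -1$, one checks via elementary manipulations with sums of squares that $K$ is formally real, that each element of $K$ or its negative is a square, and that every odd-degree polynomial over $K$ has a root; thus $K$ is real closed (so in particular $\Chara K = 0$ and $K^{\mathrm{sep}} = \kalg$).

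The main obstacle is the Artin--Schreier theorem itself; within its proof the delicate points are step (a) for $q$ an odd prime (and especially the subcase $q = \Chara K$) and step (b), both of which rest on the fact that over a field admitting a cyclic degree-$q$ extension of the relevant type one can always build a further separable extension. As these arguments are entirely standard, an acceptable alternative is to simply cite the Artin--Schreier theorem; I would present the reduction above and then defer to that reference.
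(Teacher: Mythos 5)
Your proof is correct and takes the same route as the paper, which simply states this as ``a famous theorem of Artin and Schreier'' and gives no proof. Your contribution is to spell out the (small but genuine) reduction — namely that $K^{\mathrm{sep}} \subseteq L$ forces $\Gal(K^{\mathrm{sep}}/K)$ to be finite, so the modern form of Artin--Schreier (finite nontrivial absolute Galois group $\Rightarrow$ real closed) applies — together with an accurate sketch of that theorem, and both the reduction and the sketch are sound; deferring to the standard reference at the end, as you suggest, is exactly what the paper does.
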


\noindent
We now prove Theorem~\ref{d-improved}.

\begin{proof}
Suppose that $K$ is virtually large and not separably closed.
Let $L'/K$ be a large extension of minimal degree.
If $L'$ is separably closed then by Fact~\ref{fact:artin} $K$ is real closed, hence large, which contradicts minimality.
So we may suppose that $L'$ is not separably closed.
Applying Lemma~\ref{lemma:galois} we obtain a finite extension $L$ of $L'$ such that either
\begin{enumerate}
\item the $p$th power map $L^\times \to L^\times$ is not surjective for some prime $p \ne \Chara(K)$, or
\item the Artin-Schreier map $L \to L$ is not surjective.
\end{enumerate}
Note that $L$ is large by Fact~\ref{fact:large-fin}.
Fix a $K$-basis $\beta_1,\ldots,\beta_m$ of $L$.
We identify $L$ with $K^m$ by identifying every $(a_1,\ldots,a_m) \in K^m$ with $a_1 \beta_1 + \cdots + a_m\beta_m$.
Let $+_L,\times_L : K^m \times K^m \to K^m$ be addition and multiplication in $L$.
Note that $+_L, \times_L$ are both existentially definable in $K$.
In case $(1)$ fix such a $p$ and let $P$ be the image of the $p$th power map $L^\times \to L^\times$.
In case $(2)$ let $P$ be the image of the Artin-Schreier map $L \to L$.
Note that $P$ is existentially definable in $K$.
Let $\varphi(x,y_1,y_2)$ be the existential formula $[y_1 \in L^\times] \land [x \in (y_1 \times_L P) +_L y_2]$.
Note that $\varphi$ is affine invariant.
We show that $\varphi$ is unstable.

\meno
Let $\Sa B_L$ denote the set of \'etale images in $\Aa^1(L) = L$.  We first make a number of observations concerning $\Sa B_L$ and $\varphi$.  By Lemmas~\ref{intersection:lem2a}, \ref{intersection:lem2b}, $\Sa B_L$ is closed under finite intersections and affine transformations.  The $p$th power map for $p$ coprime to $\Chara(K)$ and the Artin-Schreier map are both \'etale, so in either case $P$ is in $\Sa B_L$.
Then every instance of $\varphi$ is in $\Sa B_L$, by affine symmetry.
In case $(1)$ $P$ is a non-trivial subgroup of $(L,+)$, in case $(2)$ $P$ is a non-trivial subgroup of $L^\times$, and in either case every coset of $P$ is an instance of $\varphi$.
So we may fix $P^* \subseteq K$ such that $P^*$ is defined by an instance of $\varphi$ and $P \cap P^* = \emptyset$.


\meno
Suppose $\varphi$ is stable.
By Proposition~\ref{prop:local-generics} either $P$ or $P^*$ is not $\varphi$-generic.
Suppose that $P$ is not $\varphi$-generic; the other case is similar.
Take $c \in P$ and let $P^{**} = P - c$.  Then $0 \in P^{**} \in \Sa B_L$, and $P^{**}$ is not $\varphi$-generic.  Thus, $L^\times \setminus P^{**}$ is $\varphi$-generic in $L^\times$ by Proposition~\ref{prop:local-generics}.
So there are $a_1,\ldots,a_n \in L^\times$ such that $\bigcup_{i = 1}^{n} a_i(L^\times \setminus P^{**}) = L^\times$.
Thus $\bigcap_{i = 1}^{n} a_i P^{**} = \{0\}$.
By Lemma~\ref{intersection:lem2a} $\{0\}$ is in $\Sa B_L$.
By Lemma~\ref{large:lem} this contradicts largeness of $L$.
\end{proof}

The unstable formula $\varphi$ produced in the proof of Theorem D is existential.
Corollary~\ref{cor:existential} follows by this and the fact that separably closed fields are stable.

\begin{corollary}
\label{cor:existential}
$K$ is stable when $K$ is virtually large and every existential formula is stable.
\end{corollary}

Corollary~\ref{cor:existential} is sharp in that any quantifier free formula in any field is stable.

\section{General facts on systems of topologies}
\label{section:general-facts}
It will be useful to know some general facts about systems of topologies (Definition~\ref{sys-top:def}).
Suppose $\cT$ and $\cT^*$ are both systems of topologies over $K$.
Then $\cT$ \textbf{refines} $\cT^*$ (and $\cT^*$ is \textbf{coarser} then $\cT$) if the $\cT$-topology on $V(K)$ refines the $\cT^*$-topology for any $K$-variety $V$.


\subsection{Reduction to affine varieties}

As every variety is locally affine, any system of topologies over $K$ will be determined by its restriction to affine $K$-varieties.
This is a consequence of the following trivial but useful remark.

\begin{remark} \label{rem:Sytemdeterminedbylocal}
Suppose $\cT$ is a system of topologies over $K$, $V$ is a $K$-variety, $(V_i)_{i \in I}$ is a collection of open subvarieties of $V$ covering $V$, and $A$ is a subset of $V(K)$.
Then $A \subseteq V(K)$ is $\cT$-open if and only if $A \cap V_i(K)$ is a $\cT$-open subset of $V_i(K)$ for all $i \in I$.
\end{remark}

Lemma~\ref{lem:determine} shows that it suffices to consider restrictions to affine spaces:

\begin{lemma}
\label{lem:determine}
If the $\cT'$-topology on $\Aa^n(K)$ refines the $\cT$-topology  on $\Aa^n(K)$ for every $n$ then $\cT'$ refines $\cT$.
If the $\cT'$-topology on $\Aa^n(K)$ agrees with the $\cT$-topology on $\Aa^n(K)$ for every $n$ then $\cT$ agrees with $\cT'$.
\end{lemma}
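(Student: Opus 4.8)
The plan is to reduce the statement about arbitrary $K$-varieties to the statement about affine $K$-varieties via Remark~\ref{rem:Sytemdeterminedbylocal}, and then reduce the affine case to affine space by using that an affine variety is a closed subvariety of some $\Aa^n$, invoking clause~\eqref{st-3} of the definition of a system of topologies. I will prove the ``refines'' assertion in detail; the ``agrees'' assertion follows by applying ``refines'' in both directions.

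First I would unwind the reduction to affine varieties. Suppose the $\cT'$-topology refines the $\cT$-topology on $\Aa^n(K)$ for all $n$. Let $V$ be any $K$-variety and let $A \subseteq V(K)$ be $\cT$-open; I must show $A$ is $\cT'$-open. Cover $V$ by affine open subvarieties $(V_i)_{i \in I}$. By Remark~\ref{rem:Sytemdeterminedbylocal} (applied to $\cT$), each $A \cap V_i(K)$ is $\cT$-open in $V_i(K)$, and by the same remark applied to $\cT'$ it suffices to show each $A \cap V_i(K)$ is $\cT'$-open in $V_i(K)$. So it is enough to treat the case where $V$ is affine.

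Next, suppose $V$ is affine, say $V$ is a closed subvariety of $\Aa^n$ via a closed immersion $\iota : V \to \Aa^n$. Then $\iota$ induces a map $V(K) \to \Aa^n(K)$ which, by clause~\eqref{st-3} of Definition~\ref{sys-top:def}, is a $\cT$-closed embedding and also (applying the definition to $\cT'$) a $\cT'$-closed embedding; in particular it is a topological embedding for both the $\cT$- and the $\cT'$-topologies. Let $A \subseteq V(K)$ be $\cT$-open. Since $\iota$ is a $\cT$-embedding, there is a $\cT$-open $B \subseteq \Aa^n(K)$ with $A = \iota^{-1}(B) = B \cap V(K)$ (identifying $V(K)$ with its image). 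By hypothesis $B$ is $\cT'$-open in $\Aa^n(K)$, so $B \cap V(K)$ is $\cT'$-open in the subspace topology on $V(K)$ induced from the $\cT'$-topology on $\Aa^n(K)$; but since $\iota$ is also a $\cT'$-embedding, that subspace topology is exactly the $\cT'$-topology on $V(K)$. Hence $A$ is $\cT'$-open, as desired. For the second sentence of the lemma, if the two topologies on $\Aa^n(K)$ agree for every $n$, then each refines the other, so by the first part $\cT'$ refines $\cT$ and $\cT$ refines $\cT'$, i.e.\ they agree.

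The only mild subtlety — and the one step worth stating carefully rather than the routine diagram-chase — is the compatibility of ``subspace topology'' with the embedding clauses: one must apply clause~\eqref{st-3} to \emph{both} systems simultaneously to see that the $\cT$- and $\cT'$-topologies on $V(K)$ are the subspace topologies induced from $\Aa^n(K)$, so that the open set $B$ witnessing $\cT$-openness of $A$ can be reused to witness $\cT'$-openness. No genuine obstacle arises; the content is entirely bookkeeping with Remark~\ref{rem:Sytemdeterminedbylocal} and Definition~\ref{sys-top:def}.
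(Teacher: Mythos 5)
Your proof is correct and follows essentially the same route as the paper's: reduce to the affine case via Remark~\ref{rem:Sytemdeterminedbylocal}, then use that closed immersions induce closed embeddings (Definition~\ref{sys-top:def}.\eqref{st-3}) to pass from an affine variety to an ambient $\Aa^n$. The paper states this as a two-line sketch; you have merely unwound the bookkeeping, which is fine.
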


\begin{proof}
The second claim is immediate from the first.
Suppose that the $\cT'$-topology on $\Aa^n(K)$ refines the $\cT$-topology for every $n$.
As closed immersions induce $\cT$-closed embeddings,  the $\cT'$-topology on $V(K)$ refines the $\cT$-topology for every affine variety $V(K)$.
The statement for  general $V$ follows from Remark~\ref{rem:Sytemdeterminedbylocal} and the fact that every variety is locally affine.
\end{proof}

An \textbf{affine system of topologies} $\cS$ over $K$ is a choice of a topology $\cS_{V}$ on $V(K)$ for each affine $K$-variety $V$ such that the conditions of Definition~\ref{sys-top:def} hold for morphisms between affine varieties.
Every system of topologies over $K$ restricts to an affine system of topologies over $K$.
Lemma~\ref{lem:affine} gives us the converse.

\begin{lemma}\label{lem:affine}
Suppose that $\cS$ is an affine system of topologies over $K$.
Then there is a unique system of topologies $\cT$ over $K$ such that the $\cS$-topology on $V(K)$ agrees with the $\cT$-topology on $V(K)$ for any affine $K$-variety $V$.

\end{lemma}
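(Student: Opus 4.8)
\textbf{Proof proposal for Lemma~\ref{lem:affine}.}

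The plan is to build the topology $\cT_V$ on each $K$-variety $V(K)$ by gluing the affine-system topologies along an affine cover, and then to check that this gluing is well-defined (independent of the chosen cover), defines a genuine topology, and satisfies the three axioms of Definition~\ref{sys-top:def}. First I would fix an affine open cover $(V_i)_{i\in I}$ of $V$; each $V_i(K)$ carries the topology $\cS_{V_i}$. Declare $A\subseteq V(K)$ to be $\cT_V$-open if and only if $A\cap V_i(K)$ is $\cS_{V_i}$-open in $V_i(K)$ for every $i$. This is patterned exactly on Remark~\ref{rem:Sytemdeterminedbylocal}, which tells us what any system of topologies must look like locally, so uniqueness is essentially forced: if $\cT$ is any system restricting to $\cS$ on affines, then by Remark~\ref{rem:Sytemdeterminedbylocal} its opens are exactly the sets $A$ with $A\cap V_i(K)$ open in the subspace topology on $V_i(K)$, and since open immersions induce $\cT$-open embeddings the subspace topology on $V_i(K)$ is $\cT_{V_i}=\cS_{V_i}$.

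The key steps, in order, are: (i) check $\cT_V$ as defined is a topology on $V(K)$ --- this is routine since the defining condition is preserved under arbitrary unions and finite intersections, using that each $\cS_{V_i}$ is a topology; (ii) check the definition is independent of the choice of affine cover --- given two affine covers, pass to a common refinement by affine opens (intersections $V_i\cap V'_j$ of affine opens in a separated scheme are affine), and use that for affine opens $U\subseteq V_i$ the inclusion $U\hookrightarrow V_i$ is an open immersion of affine varieties, so by the affine-system axiom $\cS_U$ is the subspace topology induced from $\cS_{V_i}$; a patching argument then shows the two candidate topologies coincide; (iii) verify axiom~\eqref{st-1}, continuity of $f:V\to W$ under $V(K)\to W(K)$: this is local on the source and target, so cover $W$ by affine opens $W_j$, cover $f^{-1}(W_j)$ by affine opens $V_{jk}$, and apply continuity of the affine system to each $V_{jk}\to W_j$; (iv) verify axiom~\eqref{st-2} for open immersions: an open immersion $V\hookrightarrow W$ identifies $V$ with an open subvariety, and the claim that $V(K)$ gets the subspace topology is again local --- cover $V$ by affine opens and use that the composite affine open immersions are $\cS$-open embeddings; (v) verify axiom~\eqref{st-3} for closed immersions $V\hookrightarrow W$: here I would cover $W$ by affine opens $W_j$, note $V\cap W_j$ is a closed subvariety of the affine $W_j$, and use the affine-system closed-embedding property together with the fact that a subset of $V(K)$ is closed iff it is closed in each $V(K)\cap W_j(K)$ and the latter are closed in $W_j(K)$.

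The main obstacle I expect is step~(ii), well-definedness under change of affine cover, together with the closed-immersion axiom~(v). For (ii) the subtlety is that the affine-system axioms only directly constrain morphisms \emph{between affine varieties}, so I must be careful to only ever invoke them for inclusions of affine opens into affine opens, never for an affine open sitting inside a general $V_i$; passing to the common refinement by the affine opens $V_i\cap V'_j$ handles this, but one has to check the resulting subspace topologies agree on overlaps, which is where the open-immersion property of the affine system does the real work. For (v), the care needed is that ``$\cT$-closed embedding'' requires both that the map is a homeomorphism onto its image \emph{and} that the image is closed; the image being closed in $V(K)$ follows by checking it is closed in each affine piece $W_j(K)$ and gluing, while the homeomorphism-onto-image part reduces, via the affine cover of $W$, to the affine-system axiom~\eqref{st-3}. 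Everything else is bookkeeping of the kind one expects when gluing local data, and I would present it compactly.
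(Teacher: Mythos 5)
Your proposal is correct and follows essentially the same approach as the paper: defining $\cT_V$ by gluing along an affine open cover, proving independence of the cover by passing to pairwise intersections $V_i\cap V'_j$ (which are affine by separatedness) and invoking the affine-system open-immersion axiom, and then verifying the three system axioms locally on affine covers. The only notable difference is that you spell out all three axioms in detail (and are explicit about uniqueness via Remark~\ref{rem:Sytemdeterminedbylocal}), whereas the paper verifies only the open-immersion axiom and leaves the rest to the reader.
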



\begin{proof}
Let $V$ be a $K$-variety.
Suppose $(V_i)_{i \in I}$ is a cover of $V$ by affine open subvarieties.
Let $\cT_V$ be the topology on $V(K)$ such that $U\subseteq V(K)$ is $\cT_V$-open if and only if $U\cap V_i(K)$ is $\cS_{V_i}$-open for all $i$.
We show that $\cT_V$ does not depend on choice of $(V_i)_{i \in I}$.
Suppose $(V'_j)_{j \in J}$ is another cover of $V$ by affine open subvarieties and likewise define $\cT'_V$ on $V(K)$.
We show that $\cT_V$ agrees with $\cT'_V$.
For each pair $V_i,V'_j$, take $O_{i,j}$ to be $V_i \cap V'_j$.
Then $O_{i,j}$ is an affine open subvariety of $V$ as $V$ is separated.
For any $U\subseteq V(K)$, $U\cap V_i(K)$ is in $\cS_{V_i}$ for all $i$ if and only if $U\cap O_{i,j}(K)$ is in $\cS_{O_{i,j}}$  for all $i,j$ if and only if $U\cap V'_j(K)$ is in $\cS_{V'_j}$ for all $j$. 

\meno
Let $\cT$ be the collection ${\cT_V}$.
It remains to show that  $\cT$ is a system of topologies.
We verify condition (\ref{st-2}) of Definition~\ref{sys-top:def} and leave the rest to the reader.
Let $f:V\to W$ be a open immersion of $K$-varieties.
Note that the induced map $V(K)\to W(K)$ is injective so it suffices to show that $V(K)\to W(K)$ is a $\cT$-open map. 
Suppose that $U$ is a $\cT$-open subset of $V(K)$; we show that $f(U)$ is $\cT$-open.
Let $(W_i)_{i \in I}$ be a cover of $W$ by affine open subvarieties.
It suffices to show that each $f(U)\cap W_i(K)$ is $\cT$-open in $W_i(K)$.
For each $W_i$ let $(V_{ij})_{i \in J_i}$ be a cover of $f^{-1}(W_i)$ by affine open subvarieties.
Then \[f(U)\cap W_{i}(K)=\bigcup_{j} f(U\cap V_{i,j}(K)).\]
Note that $f|_{V_{i,j}}:V_{i,j} \to W_{i}$ is $\cT$-open as $\Sa S$ is an affine system.
So the right hand side is a union of open sets, hence open.
\end{proof}

\subsection{The Zariski and discrete systems}
\label{section:zariski-discrete}
The {\bf Zariski system of topologies} is the system of topologies over $K$ that assigns the Zariski topology on $V(K)$ to each $K$-variety $V$.

\begin{lemma}
\label{lem:refine-Zariski}
Any system of topologies on $K$ refines the Zariski system of topologies.
\end{lemma}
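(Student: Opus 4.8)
The plan is to reduce to the case of affine space using Lemma~\ref{lem:determine}, and then to show directly that the $\cT$-topology on $\Aa^n(K)$ refines the Zariski topology on $\Aa^n(K)$. By Lemma~\ref{lem:determine}, it suffices to check that every Zariski-open subset of $\Aa^n(K)$ is $\cT$-open. Since the Zariski topology on $\Aa^n(K)$ has a basis of sets of the form $D(g) = \{a \in K^n : g(a) \neq 0\}$ for $g \in K[x_1,\ldots,x_n]$, it is enough to show that each such $D(g)$ is $\cT$-open.

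The key observation is that $D(g)$ is the image of the $K$-points of an open subvariety under an isomorphism, or more directly: the principal open $\Spec K[x_1,\ldots,x_n][1/g] \hookrightarrow \Aa^n$ is an open immersion of $K$-varieties, so by condition~(\ref{st-2}) of Definition~\ref{sys-top:def} the induced map on $K$-points is a $\cT$-open embedding. Its image is exactly $D(g) \subseteq \Aa^n(K)$, which is therefore $\cT$-open. Thus every basic Zariski-open subset of $\Aa^n(K)$ is $\cT$-open, hence every Zariski-open subset of $\Aa^n(K)$ is $\cT$-open, i.e. the $\cT$-topology on $\Aa^n(K)$ refines the Zariski topology. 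Applying Lemma~\ref{lem:determine} completes the proof.

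There is no real obstacle here; the only point requiring a little care is that one should cite the correct axiom of a system of topologies (open immersions induce open embeddings) and make sure the principal open subscheme $D(g)$ genuinely sits inside $\Var_K$ — it does, being an affine $K$-variety — so that condition~(\ref{st-2}) applies. Alternatively, one could avoid invoking open immersions by noting that $D(g) \subseteq \Aa^n(K)$ is a $\cT$-open set because it equals the image of the étale (indeed open-immersion) morphism above; but the cleanest route is simply axiom~(\ref{st-2}).
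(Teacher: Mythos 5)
Your proof is correct, and the key step is the same as the paper's: open immersions induce $\cT$-open embeddings (condition~(\ref{st-2}) of Definition~\ref{sys-top:def}). However, the paper's proof is more direct — it works for an arbitrary $K$-variety $V$ without any reduction to affine space. Given a Zariski-open $S \subseteq V(K)$, one has $S = U(K)$ for some open subvariety $U \subseteq V$ (by definition of the Zariski topology on $V(K)$ as the subspace topology from $V$), and then condition~(\ref{st-2}) applied to the open immersion $U \hookrightarrow V$ immediately gives that $S$ is $\cT$-open. Your detour through Lemma~\ref{lem:determine} and principal opens $D(g)$ is valid but costs you an extra dependency: Lemma~\ref{lem:determine} itself invokes condition~(\ref{st-3}) (closed immersions) and the local-affine structure of varieties, none of which is actually needed here. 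So the paper's one-line argument is both shorter and uses strictly fewer axioms.
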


\begin{proof}
Suppose that $\cT$ is a system of topologies over $K$, $V$ is a $K$-variety, and $S \subseteq V(K)$ is a Zariski open subset.  Then $S = U(K)$ for at least one open subvariety $U \subseteq V$.
The inclusion $U \hookrightarrow V$ is an open immersion, so $U(K)$ is a $\cT$-open subset of $V(K)$.
\end{proof}

There is also a finest system of topologies over $K$, the {\bf discrete system} of topologies over $K$ assigning the discrete topology on $V(K)$ to any $K$-variety $V$.

\begin{proposition}
\label{prop:discrete}
Suppose $\cT$ is a system of topologies over $K$ and $\Aa^1(K)$ is $\cT$-discrete.
Then $\cT$ is the discrete system of topologies.
\end{proposition}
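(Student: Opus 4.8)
The plan is to reduce the general statement to a single concrete computation: that $\Aa^n(K)$ is $\cT$-discrete for every $n$. Granting this, Lemma~\ref{lem:determine} immediately forces $\cT$ to agree with the discrete system (the discrete topology on each $\Aa^n(K)$ being a system of topologies, hence $\cT$ refines it and conversely any system refines... no---rather, $\cT$ equals it since the discrete topology is the finest possible), so all the work is in propagating $\cT$-discreteness of $\Aa^1(K)$ up to $\Aa^n(K)$.

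First I would handle $\Aa^2(K)$, and then induct. The key point is that a point $(a,b) \in \Aa^2(K)$ should be $\cT$-open. Consider the two coordinate projection morphisms $\pi_1, \pi_2 : \Aa^2 \to \Aa^1$; by condition~(\ref{st-1}) of Definition~\ref{sys-top:def} these induce $\cT$-continuous maps $\Aa^2(K) \to \Aa^1(K)$. Since $\{a\}$ and $\{b\}$ are $\cT$-open in $\Aa^1(K)$ by hypothesis, their preimages $\pi_1^{-1}(a) = \{a\} \times \Aa^1(K)$ and $\pi_2^{-1}(b) = \Aa^1(K) \times \{b\}$ are $\cT$-open in $\Aa^2(K)$, and their intersection is exactly $\{(a,b)\}$. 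Hence every singleton is $\cT$-open, i.e. $\Aa^2(K)$ is $\cT$-discrete. The same argument with all $n$ coordinate projections $\Aa^n \to \Aa^1$ shows $\Aa^n(K)$ is $\cT$-discrete for every $n$: the singleton $\{(a_1,\dots,a_n)\}$ is the intersection of the $n$ $\cT$-open sets $\pi_i^{-1}(a_i)$.

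Then I would invoke Lemma~\ref{lem:determine}: since $\Aa^n(K)$ is $\cT$-discrete for every $n$ and the discrete system is visibly a system of topologies whose topology on each $\Aa^n(K)$ is refined by (in fact equals) that of $\cT$, the two systems agree on all affine spaces, hence $\cT$ is the discrete system. Alternatively one can bypass Lemma~\ref{lem:determine} and argue directly: for an arbitrary $K$-variety $V$ and $p \in V(K)$, choose an affine open $U \subseteq V$ with $p \in U(K)$, embed $U$ as a closed subvariety of some $\Aa^n$ via condition~(\ref{st-3}), conclude $\{p\}$ is $\cT$-open in $U(K)$, and then use the open immersion $U \hookrightarrow V$ and condition~(\ref{st-2}) to see $\{p\}$ is $\cT$-open in $V(K)$.

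There is no serious obstacle here; the only thing to be careful about is making sure the reduction is stated cleanly. The mildest subtlety is the final step---that $\cT$-discreteness of all $\Aa^n(K)$ really does pin down $\cT$ on \emph{all} varieties---but this is exactly what Lemma~\ref{lem:determine} (together with Remark~\ref{rem:Sytemdeterminedbylocal}) delivers, since the discrete system agrees with $\cT$ on every affine space. I expect the proof to be only a few lines.
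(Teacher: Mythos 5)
Your proof is correct and is essentially the paper's argument: showing $\Aa^n(K)$ is $\cT$-discrete via the coordinate projections is exactly the content of Lemma~\ref{lem:refine-product} (the paper invokes that lemma where you inline its short proof), and the reduction from $\Aa^n$ to arbitrary $V$ via Lemma~\ref{lem:determine} is the same in both. Your alternative direct route at the end (affine open around $p$, closed embedding into $\Aa^n$, then open immersion into $V$) is also sound and is just an unwinding of what Lemma~\ref{lem:determine} does.
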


We first prove a lemma.

\begin{lemma}
\label{lem:refine-product}
Let $\cT$  be a  system of topologies over $K$, and $V_1, \ldots, V_n$  be $K$-varieties.
Then the $\cT$-topology on  $(V_1 \times \cdots \times V_n)(K)$ refines the product of the $\cT$-topologies on the $V_i(K)$.
\end{lemma}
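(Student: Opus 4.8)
The plan is to reduce to the case $n = 2$ by induction and then show that for $K$-varieties $V$ and $W$, the two projection maps $V \times W \to V$ and $V \times W \to W$ are $\cT$-continuous, which immediately forces the $\cT$-topology on $(V \times W)(K)$ to refine the product topology. First I would observe that the identification $(V_1 \times \cdots \times V_n)(K) \cong V_1(K) \times \cdots \times V_n(K)$ is the canonical one, and that the product topology on a finite product is generated by preimages of open sets under the coordinate projections; so it suffices to check that each projection $\pi_i \colon V_1 \times \cdots \times V_n \to V_i$ induces a $\cT$-continuous map on $K$-points. But $\pi_i$ is a morphism of $K$-varieties, so this is exactly condition~(\ref{st-1}) of Definition~\ref{sys-top:def}.

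The one point that needs a word of care is that the projection $V_1 \times \cdots \times V_n \to V_i$ really is a morphism of $K$-varieties in the sense used here, i.e.\ that the fiber product $V_1 \times_{\Spec K} \cdots \times_{\Spec K} V_n$ is again a $K$-variety (separated and of finite type over $K$); this is standard, as finite type and separatedness are stable under fiber products. Given that, the argument is: let $U \subseteq V_i(K)$ be $\cT$-open; then $\pi_i^{-1}(U) \subseteq (V_1 \times \cdots \times V_n)(K)$ is $\cT$-open by~(\ref{st-1}); and the product topology on $V_1(K) \times \cdots \times V_n(K)$ is by definition the coarsest topology making all the $\pi_i$ continuous, hence is refined by the $\cT$-topology.

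I do not anticipate a genuine obstacle here; the statement is essentially a restatement of functoriality of the $K$-points functor into topological spaces together with the universal property of the product topology. The only thing to be slightly careful about is not to claim the reverse inclusion (the $\cT$-topology on the product can be strictly finer than the product topology in general), so the proof should only use continuity of the projections and not, say, openness.

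\begin{proof}
By induction it suffices to treat the case $n = 2$; write $V = V_1$, $W = V_2$.
The fiber product $V \times_{\Spec K} W$ is again a $K$-variety, and under the canonical identification $(V \times W)(K) = V(K) \times W(K)$ the projections $\pi_V \colon V \times W \to V$ and $\pi_W \colon V \times W \to W$ induce the coordinate projections.
By condition~(\ref{st-1}) of Definition~\ref{sys-top:def}, the induced maps $(V \times W)(K) \to V(K)$ and $(V \times W)(K) \to W(K)$ are $\cT$-continuous.
Hence for every $\cT$-open $U \subseteq V(K)$ the set $U \times W(K)$ is $\cT$-open in $(V \times W)(K)$, and likewise $V(K) \times U'$ is $\cT$-open for every $\cT$-open $U' \subseteq W(K)$.
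Since the product topology on $V(K) \times W(K)$ is generated by such sets, it is refined by the $\cT$-topology on $(V \times W)(K)$.
\end{proof}
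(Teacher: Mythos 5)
Your proof is correct and is exactly the argument the paper uses: the paper's one-line proof simply says to apply the fact that each coordinate projection is $\cT$-continuous, which is precisely your observation that condition~(\ref{st-1}) gives continuity of the projections and hence the $\cT$-topology refines the product topology. You've just spelled out the details more fully.
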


\begin{proof}
Apply the fact that each projection $(V_1 \times \cdots \times V_n)(K) \to V_i(K)$ is $\cT$-continuous.
\end{proof}

We now prove Proposition~\ref{prop:discrete}.

\begin{proof}
By Lemma~\ref{lem:refine-product} the $\cT$-topology on $\Aa^n(K)$ is discrete for all $n$.
Apply Lemma~\ref{lem:determine}.
\end{proof}

In Lemma~\ref{lem:refine-product}, the topology on $(V_1 \times \cdots \times V_n)(K)$ need not agree with the product topology, even when $\cT$ is the \'etale-open system.  See Section~\ref{section:pseudofinite}.

\subsection{Field topologies}  \label{section:field-topologies}

Aside from the Zariski system, every previously studied system of topologies of which we are aware is induced by a field topology on $K$. 
A \textbf{field topology} on $K$ is a topology $\uptau$ such that inversion $K^\times \to K^\times$ is $\uptau$-continuous and addition and multiplication $K^2 \to K$ are $\uptau$-continuous when $K^2$ is equipped with the product topology.
It is well known and easy to see that a field topology on $K$ is Hausdorff if and only if it is $T_0$.
\emph{We henceforth assume all field topologies are Hausdorff (or equivalently, $T_0$)}.

\meno
Fact~\ref{fact:field-top} below allows us to construct a system of topologies from a field topology; it is essentially proven in \cite[Chapter I.10]{red-book}.

\begin{fact}
\label{fact:field-top}
Suppose that $\uptau$ is a field topology on $K$.
There is a unique system of topologies $\cT_\uptau$ over $K$ such that the $\cT_\uptau$-topology on $\Aa^1(K)$ is $\uptau$ and the $\Sa T_\uptau$-topology on $\Aa^n(K)$ is the product of the $n$ copies of $\uptau$.
\end{fact}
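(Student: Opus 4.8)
The plan is to construct the system of topologies $\cT_\uptau$ by hand on affine varieties and then invoke Lemma~\ref{lem:affine} to extend it to all $K$-varieties; uniqueness will follow from Lemma~\ref{lem:determine}. First I would fix, for each affine $K$-variety $V$, a closed immersion $V \hookrightarrow \Aa^n$ (i.e.\ a presentation $V = \Spec K[x_1,\ldots,x_n]/I$), equip $\Aa^n(K) = K^n$ with the product topology $\uptau^n$, and define $\cT_{\uptau,V}$ to be the subspace topology on $V(K) \subseteq K^n$. The first thing to check is that this does not depend on the chosen embedding: given two closed immersions $V \hookrightarrow \Aa^m$ and $V \hookrightarrow \Aa^n$, the two induced maps $V(K) \to K^m$ and $V(K) \to K^n$ have components given by polynomials in the coordinates, and polynomial maps $K^a \to K^b$ are $\uptau$-continuous because $\uptau$ is a field topology (addition and multiplication are continuous, constants are continuous); so each embedding factors continuously through the other composed with the graph map, forcing the two subspace topologies to coincide. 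This well-definedness argument is the technical heart of the construction.

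Next I would verify that $\{\cT_{\uptau,V}\}$ over affine varieties satisfies the three conditions of Definition~\ref{sys-top:def}. For continuity (condition~\ref{st-1}): a morphism $f : V \to W$ of affine varieties is, after choosing closed immersions into $\Aa^m$ and $\Aa^n$, given in coordinates by polynomials, hence $\uptau$-continuous as a map $K^m \supseteq V(K) \to W(K) \subseteq K^n$; restricting to subspace topologies preserves continuity. For closed immersions (condition~\ref{st-3}): if $V \hookrightarrow W$ is a closed immersion of affine varieties, compose with a closed immersion $W \hookrightarrow \Aa^n$ to get a closed immersion $V \hookrightarrow \Aa^n$; then $V(K)$ carries the subspace topology from $W(K)$, which carries the subspace topology from $K^n$, and transitivity of subspace topologies gives exactly what is needed — it is automatically an embedding, and one checks the image is closed using that $V(K) = \{p \in W(K) : g_1(p) = \cdots = g_k(p) = 0\}$ for the polynomials $g_i$ cutting out $V$ in $W$, a closed condition since each $g_i$ is $\uptau$-continuous and $\uptau$ is Hausdorff (here is where the standing Hausdorff assumption is used). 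For open immersions (condition~\ref{st-2}): the delicate case is an open immersion $j : V \hookrightarrow W$ of affine varieties whose image is a principal open $D(g) \subseteq W$; then $V \cong \Spec K[W][1/g]$, and one checks directly that the subspace topology on $V(K)$ from the embedding $W(K) \hookrightarrow K^n$, $x \mapsto x$ (forgetting the extra coordinate $1/g$), agrees with the topology coming from $V$'s own embedding, using that inversion $K^\times \to K^\times$ is $\uptau$-continuous — this is precisely the reason the definition of ``field topology'' includes continuity of inversion. A general open immersion of affines is covered by principal opens, and Remark~\ref{rem:Sytemdeterminedbylocal} (applied within the affine setting) handles the reduction; openness of the image in the topological sense then follows from Fact~\ref{fact:etale}(5) together with Lemma~\ref{lem:refine-Zariski}-style reasoning, or more directly from the principal-open computation.

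Having produced an affine system of topologies $\cS = \{\cT_{\uptau,V}\}$, I apply Lemma~\ref{lem:affine} to obtain a system of topologies $\cT_\uptau$ over $K$ extending it. By construction $\cT_\uptau$ restricted to $\Aa^1(K)$ is $\uptau$ and restricted to $\Aa^n(K)$ is $\uptau^n$, so the existence claim holds. For uniqueness, suppose $\cT'$ is another system of topologies with $\cT'$ on $\Aa^1(K)$ equal to $\uptau$ and on $\Aa^n(K)$ equal to $\uptau^n$; then $\cT'$ agrees with $\cT_\uptau$ on every $\Aa^n(K)$, so by Lemma~\ref{lem:determine} the two systems agree.

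The main obstacle I anticipate is the open-immersion case, i.e.\ checking condition~\ref{st-2} for a principal open $D(g) \subseteq W$: one must show the two a priori different topologies on $D(g)(K)$ — the subspace topology inherited from $W(K)$, and the topology from $D(g)$'s own embedding into an affine space that additionally records the coordinate $1/g$ — actually coincide. The $\subseteq$ direction is easy (forgetting a coordinate is continuous); the other direction is exactly where continuity of $x \mapsto x^{-1}$ on $K^\times$ is indispensable, and it needs to be spelled out with a little care. Everything else reduces to the single observation that polynomial maps between affine spaces are $\uptau$-continuous, which is immediate from the field-topology axioms.
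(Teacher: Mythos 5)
The paper does not give a proof of this fact; it is stated as a \textbf{Fact} and attributed to Mumford's Red Book (Chapter I.10), where the ``strong topology'' on varieties over a topological field is constructed. Your proposal carries out exactly that standard construction — embed affine $V$ into $\Aa^n$, take the subspace topology from $\uptau^n$, check independence of embedding via continuity of polynomial maps, verify the three conditions of Definition~\ref{sys-top:def} affine-locally (Hausdorffness for the closed-immersion case, continuity of inversion for principal opens), and then invoke Lemma~\ref{lem:affine} and Lemma~\ref{lem:determine} — so it is correct and matches the approach the paper implicitly endorses.

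One wrinkle worth tightening: in the open-immersion case you gesture at Fact~\ref{fact:etale}(5) and ``Lemma~\ref{lem:refine-Zariski}-style reasoning'' to get openness of the image, but both of those presuppose you already have a system of topologies, which is what you are in the middle of establishing, so they cannot be used here. Fortunately you also give the correct direct route: the principal-open computation shows that for $D(g) \subseteq W$ the two candidate topologies on $D(g)(K)$ coincide (forgetting the $1/g$-coordinate is trivially continuous; recovering it uses $\uptau$-continuity of $x \mapsto x^{-1}$), and then a general affine open immersion $V \hookrightarrow W$ is handled by covering $V$ by principal opens of $W$, using that $V(K) \to W(K)$ is \emph{injective} so that being locally an open embedding on an open cover implies being an open embedding. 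That injectivity step should be made explicit, since without it the covering argument only gives an open map.
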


We refer to this as the system of topologies over $K$ \textbf{induced} by $\uptau$ and denote it by $\cT_\uptau$.
Note that if $\cT$ is a system of topologies over $K$ then the $\cT$-topology on $\Aa^1(K)$ is $T_1$, so Fact~\ref{fact:field-top} fails if one allows field topologies to be non-Hausdorff.

\begin{lemma}
\label{lem:field-top-reduce}
Suppose that $\cS$ is a system of topologies over $K$ and $\uptau$ is a field topology on $K$.
Suppose that the $\cS$-topology on $\Aa^1(K)$ refines $\uptau$.
Then $\cS$ refines $\cT_\uptau$.
\end{lemma}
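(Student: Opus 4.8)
The plan is to reduce everything to affine spaces and then combine two earlier lemmas. By Lemma~\ref{lem:determine}, to show that $\cS$ refines $\cT_\uptau$ it suffices to show that the $\cS$-topology on $\Aa^n(K)$ refines the $\cT_\uptau$-topology on $\Aa^n(K)$ for every $n$. By Fact~\ref{fact:field-top}, the $\cT_\uptau$-topology on $\Aa^n(K)$ is exactly the product of $n$ copies of $\uptau$ (under the identification $\Aa^n(K) = K^n$). So the goal becomes: the $\cS$-topology on $\Aa^n(K)$ refines the $n$-fold product topology on $K^n$ coming from $\uptau$.

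Next I would invoke Lemma~\ref{lem:refine-product} with $V_1 = \cdots = V_n = \Aa^1$: the $\cS$-topology on $(\Aa^1 \times \cdots \times \Aa^1)(K) = \Aa^n(K)$ refines the product of the $\cS$-topologies on the factors $\Aa^1(K)$. By hypothesis the $\cS$-topology on $\Aa^1(K)$ refines $\uptau$, and refinement is preserved under taking products of topologies (a basic open box for the $\uptau^n$-topology is a product of $\uptau$-opens, each of which is $\cS$-open on the corresponding factor, hence the box is open in the product of the $\cS$-topologies). Chaining these two refinements, the $\cS$-topology on $\Aa^n(K)$ refines the product of $n$ copies of $\uptau$, i.e.\ the $\cT_\uptau$-topology on $\Aa^n(K)$.

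Finally, having established this for all $n$, I would apply Lemma~\ref{lem:determine} to conclude that $\cS$ refines $\cT_\uptau$, which completes the proof.

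I do not anticipate any real obstacle here; the only point requiring a line of care is the transitivity-and-products bookkeeping in the middle paragraph, namely that ``$\cS$-topology on $\Aa^1(K)$ refines $\uptau$'' upgrades to ``product of $\cS$-topologies refines product of $\uptau$'s'', which is immediate from the definition of the product topology in terms of boxes. Everything else is a direct citation of Fact~\ref{fact:field-top}, Lemma~\ref{lem:refine-product}, and Lemma~\ref{lem:determine}.
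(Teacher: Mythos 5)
Your proof is correct and takes essentially the same approach as the paper's: the paper's two-line proof (apply Lemma~\ref{lem:refine-product}, then Lemma~\ref{lem:determine}) implicitly contains exactly the chain of refinements you spell out, with the middle step (product of $\cS$-topologies refines product of $\uptau$-topologies) left to the reader. You have simply unpacked the same argument.
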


\begin{proof}
By Lemma~\ref{lem:refine-product} the $\cS$-topology on each $\Aa^n(K)$ refines the $\cT_\uptau$-topology.
Apply Lemma~\ref{lem:determine}.
\end{proof}

The following proposition characterizes systems arising from field topologies:

\begin{proposition}
\label{prop:top-product}
Suppose $\cT$ is a system of topologies over $K$.
The following are equivalent:
\begin{enumerate}[leftmargin=*]
\item $\cT$ is induced by a field topology $\uptau$ on $K$.
\item For any $K$-varieties $V,W$ the $\cT$-topology on $(V \times W)(K)$ is the product of the $\cT$-topologies on $V(K)$ and $W(K)$.
\item For any $n$ the $\cT$-topology on $\Aa^n(K)$ is the product of $n$ copies of the $\cT$-topology on $\Aa^1(K)$.
\end{enumerate}
\end{proposition}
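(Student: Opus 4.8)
\textbf{Proof plan for Proposition~\ref{prop:top-product}.}
The implications $(1)\Rightarrow(2)\Rightarrow(3)$ are the straightforward directions, so I would dispatch them first. For $(1)\Rightarrow(2)$, assume $\cT = \cT_\uptau$ for a field topology $\uptau$. By Fact~\ref{fact:field-top} the $\cT$-topology on each $\Aa^n(K)$ is the product topology, so for the general claim I would embed $V$ and $W$ as closed subvarieties of affine spaces $\Aa^m$ and $\Aa^n$, use condition~(\ref{st-3}) of Definition~\ref{sys-top:def} to get that $V(K)$ and $W(K)$ carry the subspace topologies from $K^m$ and $K^n$, observe that $V(K)\times W(K)$ is then a subspace of $K^{m+n} = K^m \times K^n$ with the product topology, and finally note that $(V\times W)(K)$ is a closed subvariety of $\Aa^{m+n}$ whose point set is exactly $V(K)\times W(K)$; by (\ref{st-3}) again the $\cT$-topology on it is the subspace topology, which coincides with the product of the subspace topologies. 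The implication $(2)\Rightarrow(3)$ is immediate by taking $V = \Aa^1$ and inducting on $n$ (using associativity of products and the fact that $\Aa^{n}\times\Aa^1 \cong \Aa^{n+1}$ as varieties, together with Lemma~\ref{intersection:lem2b}-style transport of the topology along the isomorphism).

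The substantive direction is $(3)\Rightarrow(1)$. Assume the $\cT$-topology on $\Aa^n(K)$ is the $n$-fold product of the topology $\uptau := \cT_{\Aa^1}$ on $\Aa^1(K) = K$. I must check that $\uptau$ is a (Hausdorff) field topology and that $\cT = \cT_\uptau$. First, $\uptau$ is $T_1$: since $\cT$ is a system of topologies it refines the Zariski topology (Lemma~\ref{lem:refine-Zariski}), and points are Zariski-closed in $\Aa^1(K)$. To see $\uptau$ is a field topology I would use the ring operations as morphisms of varieties: addition and multiplication are morphisms $\Aa^2 \to \Aa^1$, so by condition~(\ref{st-1}) the induced maps $K^2 \to K$ are $\cT$-continuous, and by hypothesis~(3) the source $\Aa^2(K)$ carries exactly the product topology $\uptau\times\uptau$; hence $+$ and $\times$ are continuous from $(K,\uptau)^2$ to $(K,\uptau)$. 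Inversion is a morphism $\Gg_m \to \Gg_m$, so $K^\times \to K^\times$ is $\cT$-continuous for the $\cT$-topology on $\Gg_m(K) = K^\times$; I need this subspace topology to agree with the subspace topology induced from $\uptau$ on $K^\times$, which follows because $\Gg_m \hookrightarrow \Aa^1$ is an open immersion and condition~(\ref{st-2}) makes $\Gg_m(K) \to \Aa^1(K)$ a $\cT$-open embedding. A $T_1$ field topology is automatically Hausdorff (noted in the text), so $\uptau$ is an admissible field topology and $\cT_\uptau$ is defined via Fact~\ref{fact:field-top}. Finally, $\cT_\uptau$ and $\cT$ agree on every $\Aa^n(K)$ — both are the $n$-fold product of $\uptau$, by Fact~\ref{fact:field-top} and by hypothesis~(3) respectively — so by Lemma~\ref{lem:determine} the two systems coincide.

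The main obstacle I anticipate is purely bookkeeping rather than conceptual: making sure that when I pass between a variety and its embedding into affine space, the various \emph{subspace} topologies induced by the closed/open immersion axioms (\ref{st-2}),(\ref{st-3}) genuinely line up with the \emph{product} topologies, and in particular that the point set of a product variety $(V\times W)(K)$ is canonically $V(K)\times W(K)$ with the expected topology. One has to be a little careful that a closed subvariety of $\Aa^{m+n}$ cut out as $V\times W$ really does induce the product of the two subspace topologies and not something finer; this is where Lemma~\ref{lem:refine-product} is used to get one inclusion and the closed-embedding axiom gives the other. None of this requires genuine ideas — it is just a matter of chasing the definitions through the functoriality package of Definition~\ref{sys-top:def} — so I would keep the writeup terse and cite Lemmas~\ref{lem:determine} and \ref{lem:refine-product} freely.
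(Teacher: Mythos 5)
Your proof plan matches the paper's proof in structure and in the key steps: $(1)\Rightarrow(2)\Rightarrow(3)$ are treated as the easy directions, and for $(3)\Rightarrow(1)$ you take $\uptau = \cT_{\Aa^1}$, verify it is a Hausdorff field topology by applying the morphism and open-immersion axioms to $+$, $\times$, and inversion on $\Gg_m$, and then invoke Lemma~\ref{lem:determine}. The paper compresses that verification to ``it is easy to see that $\uptau$ is a field topology,'' so your writeup is a faithful expansion.

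One small caveat on your $(1)\Rightarrow(2)$ argument: you embed $V$ and $W$ as closed subvarieties of affine spaces and then use condition~(\ref{st-3}) of Definition~\ref{sys-top:def}. That only handles \emph{affine} $V,W$, while the statement allows arbitrary $K$-varieties. The fix is standard --- cover $V$ and $W$ by affine opens $V_i$, $W_j$, note that the $V_i\times W_j$ cover $V\times W$ by affine opens, and apply Remark~\ref{rem:Sytemdeterminedbylocal} to glue (together with the fact that a subspace of a product is the product of the subspaces, and that the product topology on $V(K)\times W(K)$ is likewise characterized by its restrictions to the $V_i(K)\times W_j(K)$). The paper sidesteps this by simply appealing to the construction in Fact~\ref{fact:field-top}, where the product-compatibility for arbitrary varieties is built in, rather than re-deriving it from the axioms. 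Either route works; yours is a little more self-contained once the local-to-global step is added.
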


\begin{proof}
Fact~\ref{fact:field-top} and the definition of the induced topology show that $(1)$ implies $(2)$.
It is immediate that $(2)$ implies $(3)$.
We show that $(3)$ implies $(1)$.
Suppose $(3)$.
Let $\uptau$ be the $\cT$-topology on $\Aa^1(K)$.
It is easy to see that $\uptau$ is a field topology. 
By $(3)$, the $\cT_\uptau$-topology agrees with the $\cT$-topology on any $\Aa^n(K)$.  By Lemma~\ref{lem:determine}, $\cT_\uptau = \cT$.
\end{proof}

\subsection{Hausdorffness and disconnectedness}
We will need an elementary topological fact whose verification we leave to the reader.

\begin{fact}
\label{fact:inject-disconnect}
Suppose that $S \to T$ is a continuous injection of topological spaces.
If $T$ is Haudorff then $S$ is Hausdorff.
If $T$ is totally separated then $S$ is totally separated.
\end{fact}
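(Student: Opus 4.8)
\textbf{Proof proposal for Fact~\ref{fact:inject-disconnect}.}

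The plan is to verify both implications directly from the definitions, using the continuous injection $\iota : S \to T$ to pull back separating open/clopen sets. For the Hausdorff claim, I would take distinct points $a, b \in S$. Since $\iota$ is injective, $\iota(a) \ne \iota(b)$, so by Hausdorffness of $T$ there are disjoint opens $U, V \subseteq T$ with $\iota(a) \in U$ and $\iota(b) \in V$. Then $\iota^{-1}(U)$ and $\iota^{-1}(V)$ are open in $S$ by continuity, contain $a$ and $b$ respectively, and are disjoint because $U \cap V = \emptyset$. Hence $S$ is Hausdorff.

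For the totally separated claim, the argument is the same in spirit: given distinct $a, b \in S$, injectivity gives $\iota(a) \ne \iota(b)$, so there is a clopen $U \subseteq T$ with $\iota(a) \in U$, $\iota(b) \notin U$. The preimage $\iota^{-1}(U)$ is clopen in $S$ (preimages of open sets are open by continuity, preimages of closed sets are closed by continuity), contains $a$, and excludes $b$ since $\iota(b) \notin U$. Thus $S$ is totally separated.

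There is no real obstacle here; the only point to be slightly careful about is that both conclusions genuinely need the injectivity hypothesis — without it one could have $\iota(a) = \iota(b)$ for $a \ne b$ and no separating set in $S$ could exist. Since the paper explicitly leaves this verification to the reader, a one-paragraph remark of the above form suffices, and I would not expand it further.
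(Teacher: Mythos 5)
Your proof is correct and is precisely the standard argument the authors had in mind when they wrote that the verification is ``left to the reader'': pull back the separating open sets (respectively clopen sets) along the continuous injection, using injectivity to guarantee that the images of distinct points are distinct. There is nothing to add.
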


A topology on $K$ is \textbf{affine invariant} if it is invariant under affine transformations.  If $\cT$ is a system of topologies, then the $\cT$-topology on $\Aa^1(K) = K$ is affine invariant, by (\ref{st-1}) of Definition~\ref{sys-top:def} in the case where $f$ is an affine transformation.
\begin{lemma}
\label{lem:hausdorff-0}
Suppose that $\uptau$ is an affine invariant topology on $K$.
Then $\uptau$ is Hausdorff if and only if there are two nonempty $\uptau$-open sets with empty intersection.
\end{lemma}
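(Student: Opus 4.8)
The forward direction is trivial: if $\uptau$ is Hausdorff, then since $K$ is infinite, any two distinct points of $K$ can be separated by disjoint nonempty open sets. So the content is the converse. The plan is to assume we have nonempty $\uptau$-open sets $U_0, U_1$ with $U_0 \cap U_1 = \emptyset$, and deduce that any two distinct points $a, b \in K$ can be separated. By affine invariance, the key move is to \emph{translate and scale} $U_0$ and $U_1$ so that one of them becomes an open neighborhood of $a$ and the other an open neighborhood of $b$, while preserving disjointness.

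First I would extract points $c_0 \in U_0$ and $c_1 \in U_1$. Since $U_0 \cap U_1 = \emptyset$ we have $c_0 \ne c_1$. Now given distinct $a, b \in K$, I want an affine transformation $f(x) = \lambda x + \mu$ (with $\lambda \in K^\times$) sending $c_0 \mapsto a$ and $c_1 \mapsto b$; this is possible precisely because the pairs $(c_0, c_1)$ and $(a,b)$ each consist of two distinct elements — solve $\lambda c_0 + \mu = a$, $\lambda c_1 + \mu = b$, giving $\lambda = (a-b)/(c_0 - c_1) \in K^\times$ and $\mu$ determined. Then $f(U_0)$ and $f(U_1)$ are $\uptau$-open by affine invariance, they are disjoint (affine maps are injective, so images of disjoint sets are disjoint), and $a \in f(U_0)$, $b \in f(U_1)$. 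This separates $a$ and $b$, so $\uptau$ is Hausdorff. A minor point to note: a field topology — or more generally any affine invariant topology on $K$ appearing as the $\Aa^1(K)$-topology of a system — is $T_1$, but in fact we don't even need $T_1$ here; the argument above directly produces the Hausdorff separation from the hypothesis.

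I don't anticipate a genuine obstacle: the only thing to be careful about is that the affine transformation exists, which hinges on $a \ne b$ and $c_0 \ne c_1$, both of which hold. One could phrase it slightly differently — first translate $U_0$ to contain $0$ and $U_1$ to contain some point, then scale — but the two-point interpolation is cleanest. I would write this up in three or four sentences.
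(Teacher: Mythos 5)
Your proof is correct and is exactly the argument the paper has in mind: the paper simply remarks that the lemma holds because the affine group acts two-transitively on $K$ and omits the details, and your explicit construction of the affine map $f(x) = \lambda x + \mu$ sending $(c_0,c_1) \mapsto (a,b)$ is precisely that two-transitivity spelled out.
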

This holds because the action of the affine group on $K$ is two-transitive; we omit the details.

\begin{proposition}
\label{prop:hd}
The following are equivalent for any system $\cT$ of topologies over $K$:
\begin{enumerate}
\item there are disjoint nonempty $\cT$-open subsets $U,V$ of $\Aa^1(K)$,
\item the $\cT$-topology on $\Aa^1(K)$ is Haudorff,
\item the $\cT$-topology on $V(K)$ is Hausdorff for any quasi-projective $K$-variety $V$.
\end{enumerate}
\end{proposition}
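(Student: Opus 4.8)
The plan is to prove the cycle of implications $(3) \Rightarrow (2) \Rightarrow (1) \Rightarrow (3)$. The implication $(3) \Rightarrow (2)$ is immediate since $\Aa^1$ is quasi-projective. The implication $(2) \Rightarrow (1)$ is also immediate: Hausdorffness of $\Aa^1(K)$ gives disjoint nonempty open neighbourhoods of any two distinct points, and $\Aa^1(K) = K$ is infinite so two distinct points exist. The content is therefore entirely in $(1) \Rightarrow (3)$.

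For $(1) \Rightarrow (3)$, first I would upgrade $(1)$ to full Hausdorffness of $\Aa^1(K)$ using Lemma~\ref{lem:hausdorff-0}: the $\cT$-topology on $\Aa^1(K)$ is affine invariant by (\ref{st-1}) of Definition~\ref{sys-top:def}, so the existence of two disjoint nonempty open sets forces Hausdorff. Next I would promote this to Hausdorffness of $\Aa^n(K)$ for every $n$: by Lemma~\ref{lem:refine-product} the $\cT$-topology on $\Aa^n(K)$ refines the product topology, which is Hausdorff since each factor is, and a topology refining a Hausdorff topology is Hausdorff. Finally, for a quasi-projective $K$-variety $V$, I would exhibit a $\cT$-continuous injection $V(K) \to \Aa^n(K)$ for suitable $n$ and invoke Fact~\ref{fact:inject-disconnect}.

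Producing this injection is the main point. Since $V$ is quasi-projective, it is (isomorphic to) a locally closed subvariety of some $\Pp^n$; one then covers $V$ by finitely many pieces $V \cap D_+(x_i)$, each of which is a locally closed subvariety of an affine space $\Aa^n$, hence embeds (as a locally closed subscheme) into $\Aa^n$. A locally closed immersion factors as a closed immersion into an open subvariety followed by an open immersion, and by (\ref{st-2}) and (\ref{st-3}) of Definition~\ref{sys-top:def} both induce $\cT$-embeddings; hence each $(V \cap D_+(x_i))(K)$ embeds $\cT$-topologically into $\Aa^n(K)$, which we have shown is Hausdorff, so each piece $(V \cap D_+(x_i))(K)$ is Hausdorff by Fact~\ref{fact:inject-disconnect}. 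It remains to glue: a space covered by finitely many open Hausdorff subspaces need not be Hausdorff in general, so I would instead argue directly that two distinct points $p, q \in V(K)$ can be separated — if they lie in a common affine chart $(V \cap D_+(x_i))(K)$ we are done by the above; if not, one can still separate them since each chart is open in $V(K)$ and Hausdorff, choosing disjoint opens inside a chart containing one point that also miss the other.

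The step I expect to be the main obstacle is precisely this last gluing/separation argument: ensuring that two points lying in different affine charts of the quasi-projective $V$ can genuinely be separated by $\cT$-open sets. The cleanest fix is probably to use that $V$, being quasi-projective, admits a locally closed immersion into a single $\Pp^n$, then compose with an embedding of an affine chart of $\Pp^n$ — but more robustly, one observes that the diagonal $V \to V \times V$ is a (locally) closed immersion since $V$ is separated, so by (\ref{st-3}) $V(K)$ is $\cT$-homeomorphic onto a $\cT$-closed subspace of $(V \times V)(K)$; combined with Lemma~\ref{lem:refine-product} and a reduction to the affine-space case via the charts, this pins down Hausdorffness. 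I would present the proof using the diagonal-and-charts approach, deferring the routine chart bookkeeping.
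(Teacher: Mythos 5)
Your plan and the easy implications match the paper exactly, and you correctly spot that the content is $(2) \Rightarrow (3)$ and that the affine-space case follows from Lemma~\ref{lem:hausdorff-0}, Lemma~\ref{lem:refine-product}, and Fact~\ref{fact:inject-disconnect}. You also correctly identify the sticking point: a quasi-projective $V$ is only covered by affine charts, and a union of open Hausdorff subspaces need not be Hausdorff. However, neither of your two proposed fixes actually closes the gap.

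The diagonal argument does not work. Separatedness gives a closed immersion $\Delta : V \to V \times V$, and condition (\ref{st-3}) then says $\Delta(V(K))$ is closed in $(V \times V)(K)$ with its $\cT$-topology. But the topological criterion for Hausdorffness requires $\Delta(V(K))$ to be closed in $V(K) \times V(K)$ with the \emph{product} of the $\cT$-topologies, and Lemma~\ref{lem:refine-product} tells you the $\cT$-topology on $(V \times V)(K)$ \emph{refines} the product topology. Closedness in a finer topology is a weaker, not stronger, statement; it does not descend to the product topology. So this route is a dead end. Your first alternative---compose the locally closed immersion $V \hookrightarrow \Pp^n$ with an embedding of an affine chart of $\Pp^n$---also stalls as written, because $V$ need not lie inside a single affine chart.

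The missing observation, which is the paper's key step, is pointwise rather than global: after reducing (via Fact~\ref{fact:inject-disconnect} and a locally closed immersion $V \hookrightarrow \Pp^n$) to the case $V = \Pp^n$, one fixes two distinct points $a, b \in \Pp^n(K)$ and notes that there exists a single open immersion $\upiota : \Aa^n \hookrightarrow \Pp^n$ whose image contains both $a$ and $b$ (take the complement of a hyperplane missing both, which exists since $K$ is infinite). By (\ref{st-2}), $\upiota(\Aa^n(K))$ is a $\cT$-open subspace of $\Pp^n(K)$ homeomorphic to $\Aa^n(K)$, which you have already shown is Hausdorff; so $a,b$ can be separated there, and those separating opens are also open in $\Pp^n(K)$. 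This local-to-global step is exactly the ``chart bookkeeping'' you were tempted to defer; it is not routine but is the heart of the argument, and without it the proof is incomplete.
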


\begin{proof}
Lemma~\ref{lem:hausdorff-0} shows that $(1)$ implies $(2)$.
It is clear that $(3)$ implies $(1)$.
We show that $(2)$ implies $(3)$.
Let $V$ be a quasi-projective $K$-variety.
Then there is a morphism $V \to \Pp^n$ such that $V(K) \to \Pp^n(K)$ is injective.
So by Fact~\ref{fact:inject-disconnect} we may suppose that $V = \Pp^n$.
Let $a,b$ be distinct elements of $\Pp^n(K)$.
There is an open immersion $\upiota : \Aa^n \hookrightarrow \Pp^n$ such that $a,b \in \upiota(\Aa^n(K))$.
Hence we may suppose that $V = \Aa^n$.
This case follows by an application of Lemma~\ref{lem:refine-product} and the fact that a product of Hausdorff spaces is Hausdorff.
\end{proof}

We do not know if Proposition~\ref{prop:hd} generalizes to arbitrary varieties.

\medskip
We next discuss total separatedness.
A clopen subset of $S$ is \emph{non-trivial} if it is not $\emptyset$ or $S$.

\begin{proposition}
\label{prop:tot-dis}
The following are equivalent for any system $\cT$ of topologies over $K$:
\begin{enumerate}
\item there is a non-trivial $\cT$-clopen subset of $\Aa^1(K)$,
\item the $\cT$-topology on $\Aa^1(K)$ is totally separated,
\item the $\cT$-topology on $V(K)$ is totally separated for any quasi-projective $K$-variety $V$.
\end{enumerate}
\end{proposition}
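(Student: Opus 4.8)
The plan is to mimic the structure of Proposition~\ref{prop:hd}, replacing ``Hausdorff'' by ``totally separated'' throughout. The implication $(3) \Rightarrow (1)$ is trivial since $\Aa^1$ is quasi-projective. For $(1) \Rightarrow (2)$, first note that the $\cT$-topology on $\Aa^1(K) = K$ is affine invariant (by condition~(\ref{st-1}) of Definition~\ref{sys-top:def} applied to affine transformations). Given a non-trivial clopen $C \subseteq K$ and two distinct points $a,b \in K$, I would use two-transitivity of the affine group: there is an affine transformation carrying a chosen pair of points (one in $C$, one not in $C$ — which exist since $C$ is non-trivial) to $(a,b)$. Pushing $C$ forward along this transformation yields a clopen set containing $a$ but not $b$, which is exactly total separatedness. (This is the total-separatedness analogue of Lemma~\ref{lem:hausdorff-0}, and I expect it to be stated as a short companion lemma or folded inline with ``we omit the details''.)

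The main work is $(2) \Rightarrow (3)$, and here I would follow the reduction chain in the proof of Proposition~\ref{prop:hd} verbatim. Let $V$ be quasi-projective; choose a morphism $V \to \Pp^n$ inducing an injection $V(K) \to \Pp^n(K)$, so by the ``totally separated'' half of Fact~\ref{fact:inject-disconnect} it suffices to treat $V = \Pp^n$. Any two distinct points of $\Pp^n(K)$ lie in the image of some affine chart $\upiota : \Aa^n \hookrightarrow \Pp^n$, and since $\upiota$ is an open immersion $V(K) \to \Pp^n(K)$ is a topological embedding; so it suffices to separate two points inside $\Aa^n(K)$. By Lemma~\ref{lem:refine-product} the $\cT$-topology on $\Aa^n(K)$ refines the product of $n$ copies of the $\cT$-topology on $\Aa^1(K)$, so it is enough to separate two distinct points in that product topology. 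Two distinct points of $K^n$ differ in some coordinate $i$; applying $(2)$ to that coordinate gives a clopen $C \subseteq K$ separating the two values, and its preimage under the $i$-th projection is clopen in the product topology (projections are continuous and $C$ is clopen), hence clopen in the finer $\cT$-topology, and separates the two points.

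I do not anticipate a genuine obstacle: every ingredient is already in place, and the argument is a routine transcription of Proposition~\ref{prop:hd} with ``clopen separation'' in place of ``open separation''. The only point deserving a moment's care is that refining a topology preserves clopen-ness of a given set (if $C$ is clopen in a coarser topology it is open and closed in the finer one), which is immediate. As with Proposition~\ref{prop:hd}, the statement is presumably left open for arbitrary (non-quasi-projective) varieties. I would write the proof as:

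\begin{proof}
It is clear that $(3)$ implies $(1)$, since $\Aa^1$ is quasi-projective. For $(1)$ implies $(2)$: the $\cT$-topology on $\Aa^1(K) = K$ is affine invariant, and the affine group acts two-transitively on $K$; given a non-trivial clopen $C$ and distinct $a, b \in K$, pick $c \in C$, $c' \notin C$ and an affine transformation sending $(c, c')$ to $(a, b)$, and push $C$ forward to obtain a clopen set containing $a$ but not $b$.

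We show $(2)$ implies $(3)$. Let $V$ be a quasi-projective $K$-variety. There is a morphism $V \to \Pp^n$ such that $V(K) \to \Pp^n(K)$ is injective, so by Fact~\ref{fact:inject-disconnect} we may suppose $V = \Pp^n$. Given distinct $a, b \in \Pp^n(K)$, there is an open immersion $\upiota : \Aa^n \hookrightarrow \Pp^n$ with $a, b \in \upiota(\Aa^n(K))$, so we may suppose $V = \Aa^n$. By Lemma~\ref{lem:refine-product} the $\cT$-topology on $\Aa^n(K)$ refines the product of $n$ copies of the $\cT$-topology on $\Aa^1(K)$. The points $a, b$ differ in some coordinate $i$; by $(2)$ there is a $\cT$-clopen $C \subseteq \Aa^1(K)$ containing the $i$-th coordinate of $a$ but not that of $b$. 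The preimage of $C$ under the $i$-th projection is clopen in the product topology, hence clopen in the finer $\cT$-topology on $\Aa^n(K)$, and it separates $a$ from $b$.
\end{proof}
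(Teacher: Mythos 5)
Your proposal has a genuine gap in the implication $(2) \Rightarrow (3)$, and it is precisely the step you borrowed verbatim from Proposition~\ref{prop:hd} that fails. After reducing to $V = \Pp^n$ via Fact~\ref{fact:inject-disconnect} (which is fine), you pass to an affine chart $\Aa^n$ containing both points and claim it suffices to separate the two points inside $\Aa^n(K)$. This is valid for Hausdorffness because disjoint open neighbourhoods in the open subspace $\Aa^n(K)$ remain disjoint and open in $\Pp^n(K)$. It is \emph{not} valid for total separatedness: a clopen subset of $\Aa^n(K)$ is open in $\Pp^n(K)$ but typically not closed there, because its closure in $\Pp^n(K)$ can acquire points at infinity. (Concretely, with $K=\Qq$ and the order topology on $\Aa^1(K)$, the set $\{q : q > \sqrt{2}\} \cup \{q : q < -\sqrt{2}\}$ is clopen in $\Aa^1(\Qq)$ but not closed in $\Pp^1(\Qq)$, as $\infty$ lies in its closure.) Fact~\ref{fact:inject-disconnect} only lets you pull total separatedness back along a continuous injection; it gives nothing in the forward direction from an open subspace to the ambient space. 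So producing a clopen set in $\Aa^n(K)$ does not produce one in $\Pp^n(K)$, and your reduction step does not close the argument.

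The paper's proof works harder exactly here. For $\Pp^1(K)$, instead of passing to a single chart, it uses $3$-transitivity of the fractional-linear action to arrange $\alpha, \alpha^* \ne \infty$ and then works simultaneously with two affine charts $U = \Pp^1(K)\setminus\{\alpha^*\}$ and $V = \Pp^1(K)\setminus\{\infty\}$: it picks a clopen $O \subseteq U$ with $\alpha \in O$, $\infty \notin O$ and a clopen $P \subseteq V$ with $\alpha \in P$, $\alpha^* \notin P$, and then shows that $O \cap P$ is clopen in all of $\Pp^1(K)$. The key is that $\Pp^1(K) \setminus (O \cap P) = (U \setminus O) \cup (V \setminus P)$, and each of $U \setminus O$, $V \setminus P$ is open in its chart and hence in $\Pp^1(K)$; the two charts each handle the point the other chart is missing. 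For $\Pp^n(K)$ with $n \ge 2$, the paper induces on $n$ using the projection $\pi : \Pp^n(K)\setminus\{\beta\} \to \Pp^{n-1}(K)$ from a point $\beta$ not on the line through the two given points, pulling back a clopen set from $\Pp^{n-1}(K)$ in two ways and again intersecting to patch over the missing point. Your product-topology argument via Lemma~\ref{lem:refine-product} is a perfectly good proof that $\Aa^n(K)$ is totally separated, but it cannot be upgraded to $\Pp^n(K)$ without this additional gluing idea.
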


Again, we do not know if Proposition~\ref{prop:tot-dis} extends to arbitrary varieties.

\begin{proof}
We work in the $\cT$-topology.
It is clear that $(3)$ implies $(1)$, and $(1)$ implies $(2)$ as the action of the affine group on $\Aa^1(K)$ is $2$-transitive.
We show that $(2)$ implies $(3)$.
We first show that $\Pp^1(K)$ is totally separated.
Fix $\alpha,\alpha^* \in \Pp^1(K)$ with $\alpha \ne \alpha^*$.
Every linear fractional transformation gives a homeomorphism $\Pp^1(K) \to \Pp^1(K)$.
As the action of the group of linear fractional transformations on $\Pp^1(K)$ is $3$-transitive we may suppose that $\infty \notin \{\alpha,\alpha^*\}$.
Let $U = \Pp^1(K) \setminus \{\alpha^*\}$ and $V = \Pp^1(K) \setminus \{\infty\}$.  Then $U$ and $V$ are both homeomorphic to $\Aa^1(K)$.
Choose clopen subsets $O \subseteq U$ and $P \subseteq V$ such that $\alpha \in O$, $\infty \notin O$, $\alpha \in V$, $\alpha^* \notin V$.
It is easy to see that $O \cap P$ is clopen in $\Pp^1(K)$ and $\alpha \in O \cap P$, $\alpha^* \notin O \cap P$.

\meno
We now apply induction to show that $\Pp^n(K)$ is totally separated for every $n \ge 2$.
Fix distinct $\alpha,\alpha^* \in \Pp^n(K)$.
Take $\beta \in \Pp^n(K)$ not lying on the line through $\alpha,\alpha^*$.
We identify the set of lines in $\Pp^n(K)$ through $\beta$ with $\Pp^{n - 1}(K)$.
Let $U = \Pp^n(K) \setminus \{\beta\}$ and define $\pi : U \to \Pp^{n - 1}(K)$ by declaring $\pi(b)$ to be the line through $\beta,b$.
Then $\pi$ is continuous and $\pi(\alpha) \ne \pi(\alpha^*)$.
By induction there is a clopen $O^* \subseteq \Pp^{n - 1}(K)$ such that $\pi(\alpha) \in O^*$, $\pi(\alpha^*) \notin O^*$.
Let $O = \pi^{-1}(O^*)$.
Then $O$ is a clopen subset of $U$ and $\alpha \in O$, $\alpha^* \notin O$.
By the same reasoning there is a clopen $P \subseteq \Pp^n(K) \setminus \{\alpha^*\}$ such that $\alpha \in P$, $\beta \notin P$.
It is easy to see that $O \cap P$ is clopen in $\Pp^n(K)$ and $\alpha \in O \cap P$, $\alpha^* \notin O \cap P$.

\meno
Now suppose that $V$ is a quasi-projective $K$-variety.
Then there is a continuous morphism $V \to \Pp^n$ such that the induced map $V(K) \to \Pp^n(K)$ is injective.
Apply Fact~\ref{fact:inject-disconnect}.
\end{proof}

\subsection{Restriction and extension of systems}
\textbf{Throughout this section $L$ is an extension of $K$, $\cT_K$ is a system of topologies over $K$, and $\cT_L$ is a system of topologies over $L$.} 
We will show that $\cT_L$ ``restricts'' to a system of topologies over $K$ and if $L/K$ is finite then $\cT_K$ ``extends'' to a system of topologies over $L$.
For example the extension of the order topology over $\Rr$ to $\Cc$ will be the complex analytic topology and the restriction of the order topology over $\Rr$ to a subfield $K$ of $\Rr$ will be the order topology over $K$.
 
\subsubsection{Restriction}
\label{section:restriction}
Suppose $V$ is a $K$-variety; recall that $V_L$ is the base change of $V$.
Then $V(K)$ is a subset of $V(L)$ and there is a canonical bijection $V(L) \to V_L(L)$.
So we will consider $V(K)$ to be a subset of $V_L(L)$.
Define the $\rtp(\cT_L)$-topology on $V(K)$ to be the subspace topology induced by the $\cT_L$-topology on $V_L(L)$. 


\begin{proposition}
\label{prop:extension}
$\rtp(\cT_L)$ is a system of topologies over $K$.
\end{proposition}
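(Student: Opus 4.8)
The plan is to verify conditions (1)--(3) of Definition~\ref{sys-top:def} for $\rtp(\cT_L)$ directly, reducing each to the corresponding condition for $\cT_L$ via base change. The starting observation is that for a morphism $f \colon V \to W$ of $K$-varieties, the base change $f_L \colon V_L \to W_L$ induces on $L$-points a $\cT_L$-continuous map $V_L(L) \to W_L(L)$ which, under the canonical inclusions $V(K) \subseteq V_L(L)$ and $W(K) \subseteq W_L(L)$, restricts to $f \colon V(K) \to W(K)$. Condition (1) is then immediate: the restriction of a continuous map to subspaces is continuous, and $\rtp(\cT_L)$ is by definition the subspace topology coming from $\cT_L$.

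For conditions (2) and (3), first note that by Fact~\ref{fact:etale}(4) the base change $f_L$ of an open (resp.\ closed) immersion is again an open (resp.\ closed) immersion, so by condition (2) (resp.\ (3)) applied to $\cT_L$, the induced map $V_L(L) \to W_L(L)$ is a $\cT_L$-open (resp.\ $\cT_L$-closed) embedding. That $f \colon V(K) \to W(K)$ is then a topological embedding follows from transitivity of the subspace topology: the $\rtp(\cT_L)$-topology on $V(K)$ is the subspace topology from $V_L(L)$, which---since $V_L(L) \hookrightarrow W_L(L)$ is an embedding---is the subspace topology from $W_L(L)$, and this in turn restricts along $W(K) \hookrightarrow W_L(L)$ to the $\rtp(\cT_L)$-topology on $W(K)$. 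It remains to check that $f(V(K))$ is open (resp.\ closed) in $W(K)$. Here I would invoke the elementary scheme-theoretic fact that if $V \hookrightarrow W$ is an open or closed immersion, then a $K$-point of $W$ factors through $V$ if and only if the associated $L$-point does (the two determine the same point of $W$, and for an open or closed subscheme factorization through it is equivalent to that point lying in the subscheme). This yields $V(K) = V_L(L) \cap W(K)$ inside $W_L(L)$, whence $f(V(K)) = f_L(V_L(L)) \cap W(K)$; since $f_L(V_L(L))$ is $\cT_L$-open (resp.\ closed) in $W_L(L)$, its trace on $W(K)$ is open (resp.\ closed) in the subspace topology, which is exactly the $\rtp(\cT_L)$-topology.

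I expect the only point requiring any care is the identity $V(K) = V_L(L) \cap W(K)$ for immersions, i.e.\ the compatibility of ``factoring through a subvariety'' with base change along $\Spec L \to \Spec K$; everything else is formal manipulation of subspace topologies together with the stability of open and closed immersions under base change. Once that identity is in place, all three clauses of Definition~\ref{sys-top:def} follow as sketched, so $\rtp(\cT_L)$ is a system of topologies over $K$.
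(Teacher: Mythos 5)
Your proof is correct and follows essentially the same route as the paper: check continuity by restricting the base-changed morphism, and reduce the embedding conditions to $\cT_L$ via stability of open/closed immersions under base change. The one place you go beyond the paper is in explicitly isolating and justifying the identity $V(K) = V_L(L) \cap W(K)$ for an open or closed immersion $V \hookrightarrow W$; the paper uses this silently when it passes from $U = U^* \cap V(K)$ to $U = U^* \cap W(K)$, so your extra care there is a genuine, if minor, improvement in rigor.
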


We call $\rtp(\cT_L)$ the \textbf{restriction} of $\cT_L$ to $K$.

\begin{proof}


Suppose that $f : V \to W$ is a morphism between $K$-varieties $V,W$.
Then the map $f_L: V_L(L) \to W_L(L)$ is $\cT_L$-continuous.
Note that $f$ is the restriction of $f_L$ to $V(K)$.
Hence, $f$ is $ \rtp(\cT_L)$-continuous.
It follows that if $f : V \to W$ is an isomorphism then $f : V(K) \to W(K)$ is a $\rtp(\cT_L)$-homeomorphism.
 
\medskip
Suppose that $f$ is an open immersion. 
We need to show that $V(K) \to W(K)$ is a $\rtp(\cT_K)$-open embedding.
By the remark at the end of the previous paragraph, we can assume $V$ is an open subvariety of $W$ and $f$ is the inclusion.
By Fact~\ref{fact:etale} $f_L : V_L \to W_L$ is an open immersion, so we  identify $V_L$ with an open subvariety of $W_L$.
We identify $W(L)$ with $W_L(L)$ and consider $V_L(L), W(K)$, and $V(K)$ as subsets of $W_L(L)$.
Suppose that $U$ is an $\rtp(\cT_L)$-open subset of $V(K)$.
There is a $\cT_L$-open $U^* \subseteq V_L(L)$ with $U = U^* \cap V(K)$, hence $U^*$ is a $\cT_L$-open subset of $W_L(L)$.
As $U = U^* \cap W(K)$, $U^*$ is a $\rtp(\cT_L)$-open subset of $W(K)$.

\meno
Closed immersions are handled via an identical argument, replacing ``open'' with ``closed.''
\end{proof}

\subsubsection{Restriction of Zariski and field topologies}
\label{section:restrict-zar}

\begin{lemma}
\label{lem:zar-restrict}
Suppose $\cT_K, \cT_L$ are the Zariski systems over $K,L$, respectively.
Then $\rtp(\cT_L)$ agrees with $\cT_K$.
\end{lemma}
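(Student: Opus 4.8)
The plan is to show that the two topologies on $V(K)$ coincide by showing each refines the other, then reducing to affine space and finally to $\Aa^1$ using Lemma~\ref{lem:determine}. Since $\rtp(\cT_L)$ is itself a system of topologies over $K$ by Proposition~\ref{prop:extension}, and the Zariski system is the coarsest system of topologies over $K$ by Lemma~\ref{lem:refine-Zariski}, we immediately get that $\rtp(\cT_L)$ refines $\cT_K$. So the only real content is the reverse: the Zariski topology on $V(K)$ refines the topology induced as a subspace of the Zariski topology on $V_L(L)$.

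First I would reduce to the case $V = \Aa^n$ using Lemma~\ref{lem:determine}: it suffices to show that the subspace topology on $\Aa^n(K) = K^n$ inside $(\Aa^n)_L(L) = L^n$ (with its Zariski topology) is coarser than — hence equal to — the Zariski topology on $K^n$. Concretely, a subbasic closed set in the subspace topology is of the form $Z(L) \cap K^n$ where $Z \subseteq \Aa^n_L$ is a Zariski-closed $L$-subvariety, say cut out by polynomials $h_1, \ldots, h_r \in L[x_1, \ldots, x_n]$. I must exhibit this intersection as a $K$-Zariski-closed subset of $K^n$, i.e., as the zero set in $K^n$ of finitely many polynomials over $K$.

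The key algebraic step: given $h \in L[x_1, \ldots, x_n]$, write $h = \sum_{j} c_j m_j$ where $m_j$ are monomials and $c_j \in L$. Pick a $K$-basis $e_1, \ldots, e_d$ of (the $K$-span of the coefficients of $h$, or all of $L$ if $[L:K]$ is finite; in the infinite case one works inside the finite-dimensional $K$-subspace generated by the finitely many coefficients). Expanding each $c_j$ in this basis and collecting, $h = \sum_{s=1}^{d} e_s h_s$ with $h_s \in K[x_1, \ldots, x_n]$. Then for $\alpha \in K^n$, the value $h(\alpha) = \sum_s e_s h_s(\alpha)$ is an element of $L$ whose coordinates in the basis $e_1, \ldots, e_d$ are $h_1(\alpha), \ldots, h_d(\alpha) \in K$; since $e_1, \ldots, e_d$ are $K$-linearly independent, $h(\alpha) = 0$ if and only if $h_s(\alpha) = 0$ for all $s$. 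Hence $Z(h)(L) \cap K^n = \bigcap_s Z(h_s)(K)$, a $K$-Zariski-closed set. Intersecting over $h_1, \ldots, h_r$ shows $Z(L) \cap K^n$ is $K$-Zariski-closed, which is exactly what we need.

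The main obstacle — really the only subtlety — is the bookkeeping in the case $L/K$ infinite, where $L$ has no finite $K$-basis: one must observe that each individual polynomial $h_i$ has only finitely many coefficients, so they lie in a finite-dimensional $K$-subspace of $L$, and the expansion above can be carried out there. Everything else is a routine unwinding of the definition of the restriction topology (Section~\ref{section:restriction}) together with the reductions already established in Lemmas~\ref{lem:refine-Zariski} and~\ref{lem:determine}; no deep geometry is involved. I would present the argument in the streamlined form: $\rtp(\cT_L)$ refines $\cT_K$ by Lemma~\ref{lem:refine-Zariski}, and $\cT_K$ refines $\rtp(\cT_L)$ by the explicit computation above applied on each $\Aa^n$, then invoke Lemma~\ref{lem:determine} to conclude equality of systems.
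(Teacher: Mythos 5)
Your proof is correct and takes essentially the same route as the paper: reduce to $\Aa^n$ via Lemmas~\ref{lem:refine-Zariski} and \ref{lem:determine}, expand each defining polynomial over $L$ in a $K$-basis of the (finite-dimensional) $K$-span of its coefficients, and observe that the zero set on $K^n$ is cut out by the resulting $K$-coefficient polynomials. The paper handles the infinite-degree case the same way you do --- by taking the span of the finitely many coefficients of a single polynomial --- so your ``main obstacle'' is exactly the paper's ``Let $S \subseteq L$ be the $K$-linear span of the coefficients of $f$.''
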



\begin{proof}
By Lemmas~\ref{lem:determine} and \ref{lem:refine-Zariski}, it suffices to show that the $\rtp(\cT_L)$-topology on $\Aa^n(K)$ is coarser than the $\Sa T_K$-topology.
Let $Z$ be a Zariski closed set in $L^n$; we must show that $Z \cap K^n$ is Zariski closed.
Fix distinct $f_1,\ldots,f_m \in L[x_1,\ldots,x_n]$ such that $Z$ agrees with $\{ a \in L^n : f_1(a) = \cdots = f_m(a) = 0 \}$.
We easily reduce to the case $m = 1$.  Let $S \subseteq L$ be the $K$-linear span of the coefficients of $f = f_1$.  Let $\beta_1, \ldots, \beta_d$ be a $K$-linear basis of $S$.
Then we can write $f(x_1,\ldots,x_n) = \sum_{i = 1}^d \beta_i g_i(x_1,\ldots,x_n)$ for some $g_1,\ldots,g_d \in K[x_1,\ldots,x_n]$. 
The set $Z \cap K^n$ is cut out by the equations $g_1 = g_2 = \cdots = g_d = 0$, so it is Zariski closed.
\end{proof}

The following lemma shows that our notion of restriction agrees with the usual notion over field topologies.

\begin{lemma}
\label{lem:field-top-restrict}
Suppose that $\uptau$ is a field topology on $L$ and $\sigma$ is the induced subspace topology on $K$.
Then $\sigma$ is a field topology and $\rtp(\cT_\uptau)$ agrees with $\cT_\sigma$.
\end{lemma}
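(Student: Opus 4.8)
The plan is to verify the two assertions separately, starting with the easier one. First I would check that $\sigma$ is a field topology on $K$. This is routine: the subspace topology on $K \subseteq L$ makes addition, multiplication, and inversion continuous because these operations on $K$ are restrictions of the corresponding $\uptau$-continuous operations on $L$, and the inclusion $K \hookrightarrow L$ is a topological embedding compatible with the product-topology structure on $K^2 \subseteq L^2$. (One must note that $\sigma$ is Hausdorff since $\uptau$ is, which is required by our standing convention on field topologies.) The content of the lemma is the claim $\rtp(\cT_\uptau) = \cT_\sigma$.

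By Lemma~\ref{lem:determine} it suffices to show that the $\rtp(\cT_\uptau)$-topology on $\Aa^n(K) = K^n$ agrees with the $\cT_\sigma$-topology on $K^n$ for every $n$. Now the $\cT_\sigma$-topology on $K^n$ is, by Fact~\ref{fact:field-top}, the $n$-fold product of $\sigma$; and $\sigma$ is by definition the subspace topology on $K$ induced from $(L,\uptau)$. Hence the $\cT_\sigma$-topology on $K^n$ is the $n$-fold product of the subspace topology, which is the same as the subspace topology on $K^n \subseteq L^n$ induced from the $n$-fold product of $\uptau$ on $L^n$ (products and subspaces commute for topological spaces). On the other hand, the $\cT_\uptau$-topology on $\Aa^n(L) = L^n$ is, again by Fact~\ref{fact:field-top}, precisely the $n$-fold product of $\uptau$; and by the definition of restriction in Section~\ref{section:restriction}, the $\rtp(\cT_\uptau)$-topology on $K^n$ is the subspace topology it induces on $K^n$. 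Comparing the two descriptions gives exactly the desired equality.

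The only mild subtlety — and the step I would be most careful about — is the identification of $\Aa^n(K)$ as a subset of $\Aa^n_L(L) = L^n$ used in the definition of $\rtp$: one should check that under the canonical bijection $\Aa^n(L) \to \Aa^n_L(L)$ the subset $\Aa^n(K) = K^n$ goes to $K^n \subseteq L^n$ in the naive coordinatewise way, so that the two notions of ``subspace topology on $K^n$'' genuinely coincide. This is immediate from the explicit description of base change of affine space, but it is the hinge on which the argument turns. Everything else is the standard fact that forming finite products of topological spaces commutes with passing to subspaces, together with Fact~\ref{fact:field-top} applied over both $K$ and $L$.
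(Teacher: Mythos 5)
The paper declares the proof ``easy and left to the reader,'' so there is no official argument to compare against; your proposal simply supplies the standard one. Your argument is correct: you rightly reduce via Lemma~\ref{lem:determine} to affine spaces, apply Fact~\ref{fact:field-top} over both $K$ and $L$ to identify the topologies on $K^n$ and $L^n$ as $n$-fold products, and then invoke the fact that finite products commute with subspaces; the preliminary check that $\sigma$ is a (Hausdorff) field topology and the closing remark about the coordinatewise identification $\Aa^n(K) \subseteq \Aa^n_L(L)$ under base change are both the right points to flag, and both are as routine as you say.
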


The proof is easy and left to the reader.

\subsubsection{Extension}
\label{section:extension}
Suppose $L/K$ is finite.
Let $[L:K] = m$ and $e_1,\ldots,e_m$ be a $K$-basis for $L$.
Our choice of a basis allows us to identify each $\Aa^n(L)$ with $\Aa^{mn}(K)$.
We show below that there is a system of topologies $\etp(\cT_K)$ on $L$ such that the $\etp(\cT_K)$-topology on every $\Aa^n(L)$ agrees with the $\cT_K$-topology on $\Aa^{mn}(K)$.

\begin{prop-def}
\label{prop:restrict-0}
There is a unique system of topologies $\etp(\cT_K)$ over $L$ such that if $V$ is an affine $L$-variety then the $\etp(\cT_K)$-topology on $V(L)$ agrees with the $\cT_K$-topology on $(\reslk (V) )(K)$ via the natural bijection $V(L) \to (\reslk (V) )(K)$.  
We call $\etp(\cT_K)$ the \textbf{extension} of $\cT_K$ to $L$.
\end{prop-def}

Proposition~\ref{prop:restrict-0} follows from the first two claims of Fact~\ref{fact:restrict-0} and Lemma~\ref{lem:affine}.

%

\begin{remark}
\label{remark:weil-restrict}
The Weil restriction of an arbitrary $L$-variety need not exist, so Proposition~\ref{prop:restrict-0} involves affine varieties instead of arbitrary varieties.
However, it can be easily checked that if the Weil restriction of an $L$-variety $V$ exists, then the $\etp(\cT_K)$-topology on $V(L)$ agrees with the $\cT_K$-topology on $(\reslk (V))(K)$.
\end{remark}

\section{The \'etale-open topology: proof of Theorem A}
\label{section:thmA}

In this section we prove Theorem A; see Lemmas~\ref{lem:intersection}, \ref{lem:cts}, \ref{lem:open-map}, and Proposition~\ref{prop:closed-embedd} below.  We will use the basic properties of \'etale morphisms listed in Fact~\ref{fact:etale}.




\meno
Recall that an \textbf{\'etale image in} $V(K)$ is a set of the form $f(X(K))$ for some \'etale morphism $f : X \to V$ of $K$-varieties.
When we wish to keep track of the underlying field we will write ``$K$-\'etale image in $V(K)$".

\begin{lemma}
\label{lem:intersection}
Let $V$ be a $K$-variety.
The collection of \'etale images in $V(K)$ contains every Zariski open subset of $V(K)$ and is closed under finite unions and finite intersections.
So the collection of \'etale images in $V(K)$ is a basis for a topology refining the Zariski topology.
\end{lemma}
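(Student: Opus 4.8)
\textbf{Proof plan for Lemma~\ref{lem:intersection}.}

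The plan is to assemble this lemma from pieces, most of which have already been isolated. The closure under finite unions and finite intersections is exactly the content of Lemma~\ref{intersection:lem2a}, which has already been proved via the disjoint-union construction (for unions) and the fibre-product construction (for intersections, using that \'etale morphisms are stable under base change, Fact~\ref{fact:etale}(3)). So the only genuinely new assertion is that every Zariski-open subset of $V(K)$ is an \'etale image. For this I would argue as follows: if $U \subseteq V(K)$ is Zariski open, then $U = O(K)$ for some open subvariety $O \subseteq V$ (this is how the Zariski topology on $V(K)$ is defined, as the subspace topology from $V$). The inclusion morphism $\upiota : O \hookrightarrow V$ is an open immersion, hence \'etale by Fact~\ref{fact:etale}(1), and the induced map $O(K) \to V(K)$ has image exactly $O(K) = U$. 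Therefore $U$ is an \'etale image in $V(K)$.

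From these two facts the final sentence is a formal consequence. A collection of subsets of a set $V(K)$ which contains a distinguished topology's basis (here the Zariski topology — or at minimum, contains $V(K)$ itself and $\emptyset$, which are \'etale images as open subvarieties) and is closed under finite intersections is a basis for a topology in the classical sense; the topology it generates, being closed under finite unions already, consists precisely of arbitrary unions of \'etale images. Since every Zariski-open set is itself an \'etale image, this topology refines the Zariski topology. I would spell this out in a sentence or two rather than invoking a named lemma, since it is elementary point-set topology.

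I do not anticipate any real obstacle here: the substantive geometric input (stability of \'etale morphisms under composition and base change, and open immersions being \'etale) has been quarantined into Fact~\ref{fact:etale} and already used in Lemma~\ref{intersection:lem2a}. The one point requiring a word of care is that ``basis for a topology'' in the classical sense requires the basis to cover $V(K)$ and to have the intersection property on overlaps; both follow immediately since $V(K)$ is an \'etale image (via the identity morphism, or the open immersion $V \hookrightarrow V$) and finite intersections of \'etale images are \'etale images. So the proof is really just: cite Lemma~\ref{intersection:lem2a} for the closure properties, observe that open immersions are \'etale to get the Zariski-open sets, and note that these two facts formally yield the basis statement and the refinement claim.
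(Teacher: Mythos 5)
Your proposal matches the paper's proof exactly: both reduce the closure properties to Lemma~\ref{intersection:lem2a}, and both handle Zariski-open sets by writing $U = O(K)$ for an open subvariety $O$ and observing that the open immersion $O \hookrightarrow V$ is \'etale by Fact~\ref{fact:etale}(1). The extra sentence you add spelling out why this yields a basis for a topology refining the Zariski topology is a harmless elaboration of what the paper leaves implicit.
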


\begin{proof}
Suppose $U \subseteq V(K)$ is Zariski open.
Then there is an open subvariety $O$ of $V$ such that $U =  O(K)$.
The inclusion $O \hookrightarrow V$ is an open immersion, hence \'etale.  Thus $U$ is an \'etale image.  Finite unions and intersections were handled in Lemma~\ref{intersection:lem2a}.
\end{proof}

For each $K$-variety $V$, let $\cE_V$ be the topology on $V(K)$ with basis the \'etale images in $V(K)$. We call $\cE_V$ the \textbf{\'etale-open topology on $V$}.
We briefly give a more elementary definition of the \'etale open topology on $K^n$.
Let $\mathcal{P}$ be the set of $(f,g)$ such that $f,g \in K[x_1,\ldots,x_n,y]$, $f$ is monic in $y$, and the following equivalent conditions hold (here $\kalg$ is an algebraic closure of $K$ and $L$ is a field):
\begin{enumerate}[leftmargin=*]
\item $\der f/\der y$ does not vanish on the subvariety of $\Aa^n \times \Aa^1$ given by $f = 0 \ne g$.
\item $\der f / \der y$ is invertible in $\left(K[x_1,\ldots,x_n,y]/(f)\right)[1/g]$.
\item If $L$ extends $K$ and $(\alpha,\beta) \in L^n\times L$ satisfies $f(\alpha,\beta) = 0 \ne g(\alpha,\beta)$ then $(\der f / \der y)(\alpha,\beta) \ne 0$.
\item If $(\alpha,\beta) \in (\kalg)^n\times\kalg$ satisfies $f(\alpha,\beta) = 0 \ne g(\alpha,\beta)$ then $(\der f / \der y)(\alpha,\beta) \ne 0$.
\end{enumerate}
Then the collection of sets of the form $\{ \alpha \in K^n : \exists \beta \in K \hspace{4pt} [f(\alpha,\beta) = 0 \ne g(\alpha,\beta)] \}$ for $(f,g) \in \mathcal{P}$ forms a basis for the \'etale open topology on $K^n$.
This is immediate from the definitions and the fact that every \'etale morphism is locally standard \'etale up to isomorphism.
We omit the details as we will not require this at present.

\medskip
We will show that the collection $(\cE_V)_{V\in \Var_K}$ is a system of topologies such that $(V(K),\cE_V) \to (W(K),\cE_W)$ is an open map for any \'etale morphism $V \to W$.  Specifically,
for any $K$-variety morphism $f : V \to W$, we will show the following:
\begin{enumerate}
\item $f: (V(K), \cE_V) \to (W(K), \cE_W)$ is continuous.
\item If $f$ is \'etale then $f: (V(K),\cE_V) \to (W(K), \cE_W)$ is an open map.
\item If $f$ is a closed immersion then $f: (V(K), \cE_V) \to ( W(K), \cE_W )$ is a closed embedding.
\end{enumerate}
Note that open immersions are \'etale (Fact~\ref{fact:etale}.1), so (2) generalizes condition (\ref{st-2}) of Definition~\ref{sys-top:def}.
We first establish (1) above.

\begin{lemma}
\label{lem:cts}
Suppose $V \to W$ is a morphism of $K$-varieties and let $f : V(K) \to W(K)$ be the induced map.
If $U$ is an \'etale image in $W(K)$ then $f^{-1}(U)$ is an \'etale image in $V(K)$.
Hence, $f : ( V(K), \cE_V ) \to ( W(K), \cE_W )$ is continuous.
\end{lemma}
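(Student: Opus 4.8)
\textbf{Proof proposal for Lemma~\ref{lem:cts}.}

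The plan is to reduce the statement to a pullback square and invoke stability of \'etale morphisms under base change (Fact~\ref{fact:etale}.3). First I would unpack the hypothesis: if $U$ is an \'etale image in $W(K)$, then by definition there is an \'etale morphism $g : X \to W$ of $K$-varieties with $U = g(X(K))$. We are given a morphism $\pi : V \to W$ inducing $f : V(K) \to W(K)$, and the goal is to write $f^{-1}(U)$ as an \'etale image in $V(K)$. The natural candidate is the fiber product $Y = V \times_W X$ with its projection $\mathrm{pr}_V : Y \to V$; by Fact~\ref{fact:etale}.3, since $g$ is \'etale, so is its base change $\mathrm{pr}_V$. Thus $\mathrm{pr}_V(Y(K))$ is an \'etale image in $V(K)$, and it remains to check this set equals $f^{-1}(U)$.

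The key computation is that $K$-points of a fiber product are the fiber product of $K$-points, i.e.\ $Y(K) = V(K) \times_{W(K)} X(K)$ — a point of $Y(K)$ is precisely a pair $(v, x) \in V(K) \times X(K)$ with $\pi(v) = g(x)$ in $W(K)$. (This holds because $V(K) = \Hom_K(\Spec K, V)$ and the universal property of the fiber product is compatible with the functor of points.) Given this identification, a point $v \in V(K)$ lies in $\mathrm{pr}_V(Y(K))$ if and only if there exists $x \in X(K)$ with $\pi(v) = g(x)$, i.e.\ if and only if $f(v) = \pi(v) \in g(X(K)) = U$, i.e.\ if and only if $v \in f^{-1}(U)$. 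This gives $\mathrm{pr}_V(Y(K)) = f^{-1}(U)$, completing the proof that preimages of \'etale images are \'etale images.

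For the final sentence of the lemma: since the \'etale images in $W(K)$ form a basis for $\cE_W$ by Lemma~\ref{lem:intersection}, and we have just shown $f^{-1}$ carries basic open sets to \'etale images (hence $\cE_V$-open sets), it follows formally that $f^{-1}$ carries arbitrary $\cE_W$-open sets to $\cE_V$-open sets — a union of basic opens pulls back to a union of \'etale images, which is $\cE_V$-open. Hence $f : (V(K), \cE_V) \to (W(K), \cE_W)$ is continuous. I do not expect any real obstacle here; the only point requiring a moment's care is the identification $Y(K) = V(K) \times_{W(K)} X(K)$, which is the functor-of-points description of fiber products and is entirely routine, but worth stating explicitly since the whole argument hinges on it.
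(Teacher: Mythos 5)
Your proof is correct and follows essentially the same route as the paper: form the fiber product $V\times_W X$, use Fact~\ref{fact:etale}.3 to see the projection to $V$ is \'etale, and use that the $K$-points functor preserves pullback squares to identify the image with $f^{-1}(U)$. You spell out the functor-of-points identification a bit more explicitly, but the argument is the same.
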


\begin{proof}
The second statement is immediate from the first so we only treat the first.
Suppose $h : X \to W$ is an \'etale morphism of $K$-varieties and $U = h(X(K))$.
By Fact \ref{fact:etale}.3, the map $g: X \times_{W} V \to V$ given by the pullback square is \'etale.
As the $K$-points functor preserves pullback squares the diagram
\[ \xymatrix{
(V \times_{W} X)(K) \ar[r] \ar[d]^{g} & X(K) \ar[d]^{h} \\ V(K)
\ar[r]^{f} & W(K)}\]
is also pullback square.
Therefore, $(f)^{-1}(U) = g((V \times_{W} X)(K))$ is an \'etale image.
\end{proof}

Next, we verify (2).

\begin{lemma}
\label{lem:open-map}
Suppose  $f : V \to W$ is an \'etale morphism of $K$-varieties.
If $U$ is an \'etale image in $V(K)$ then $f(U)$ is an \'etale image in $W(K)$.
Hence, $f : ( V(K), \cE_V ) \to ( W(K), \cE_W )$ is an open map.
\end{lemma}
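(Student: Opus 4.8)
The plan is to reduce the first assertion to the definition of an étale image and the fact that étale morphisms are closed under composition (Fact~\ref{fact:etale}.2), and then deduce the open-map statement from the fact that the étale images form a basis for $\cE_V$ (Lemma~\ref{lem:intersection}).

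First I would recall that, by definition, an étale image in $V(K)$ is a set of the form $h(X(K))$ for some étale morphism $h : X \to V$ of $K$-varieties. So let $h : X \to V$ be étale with $h(X(K)) = U$. I would then consider the composite $f \circ h : X \to W$. Since both $f$ and $h$ are étale, Fact~\ref{fact:etale}.2 gives that $f \circ h$ is étale as well. On $K$-points we have $(f \circ h)(X(K)) = f\bigl(h(X(K))\bigr) = f(U)$, so $f(U)$ is an étale image in $W(K)$. This proves the first claim.

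For the ``Hence'' statement, I would take an arbitrary $\cE_V$-open subset $O \subseteq V(K)$. By Lemma~\ref{lem:intersection} the étale images form a basis for $\cE_V$, so $O = \bigcup_{i \in I} U_i$ for some collection of étale images $U_i$ in $V(K)$. Then $f(O) = \bigcup_{i \in I} f(U_i)$, and each $f(U_i)$ is an étale image in $W(K)$ by the first part, hence $\cE_W$-open. A union of $\cE_W$-open sets is $\cE_W$-open, so $f(O)$ is $\cE_W$-open, and therefore $f : (V(K), \cE_V) \to (W(K), \cE_W)$ is an open map.

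I do not expect any genuine obstacle here: the entire argument is an immediate consequence of the definition of étale image together with the stability of the class of étale morphisms under composition. The only point requiring a moment's care is that it suffices to verify the open-map property on the basic opens (the étale images), which is exactly what Lemma~\ref{lem:intersection} licenses.
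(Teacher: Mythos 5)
Your proof is correct and takes exactly the same route as the paper: compose the étale morphism $h : X \to V$ witnessing $U$ with $f$, invoke closure of étale morphisms under composition, and observe $(f \circ h)(X(K)) = f(U)$. The paper leaves the ``Hence'' step implicit, whereas you spell out the standard basis argument; the content is identical.
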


\begin{proof}
Let $X$ be a $K$-variety, $h : X \to V$ be an \'etale morphism, and $U = h(X(K))$.
By Fact~\ref{fact:etale}, $f \circ h : X \to W$ is \'etale, so $f(U) = (f \circ h)(X(K))$ is an \'etale image.
\end{proof}




Now we check (3).
\begin{proposition}
\label{prop:closed-embedd}
Let $V,W$ be $K$-varieties and $f : V \to W$ be a closed immersion.
Then every  \'etale image in $V(K)$ is the inverse image under $f$ of an \'etale image in $W(K)$.
Therefore $f : ( V(K), \cE_V) \to ( W(K), \cE_W )$ is a closed embedding.
\end{proposition}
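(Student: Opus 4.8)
The plan is to reduce everything to the statement that if $f : V \to W$ is a closed immersion and $h : X \to V$ is an étale morphism, then there is an étale morphism $Y \to W$ whose image in $W(K)$ pulls back under $f$ to $h(X(K))$; the ``closed embedding'' conclusion then follows by combining this with continuity (Lemma~\ref{lem:cts}) and the fact that a closed immersion induces an injective map on $K$-points which is a topological embedding for the Zariski topology, hence also for any finer topology coming from a pullback description. The key input is Fact~\ref{fact:ega}: given the étale $h : X \to V$ and a scheme-theoretic point $p$ of $X$, we obtain an open subvariety $O \subseteq X$ containing $p$, a $K$-variety $Y$, and an étale morphism $Y \to W$ with $O \cong Y \times_W V$ as $V$-schemes.

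First I would cover $X$ by open subvarieties $O_i$ as furnished by Fact~\ref{fact:ega}, with corresponding étale $h_i : Y_i \to W$ such that $O_i \cong Y_i \times_W V$ over $V$. Since the $K$-points functor preserves fiber products, for each $i$ we get a pullback square identifying $O_i(K)$ with $Y_i(K) \times_{W(K)} V(K)$, and under this identification the map $O_i(K) \to V(K)$ (restriction of $h$) corresponds to the projection, so $h(O_i(K)) = f^{-1}(h_i(Y_i(K)))$. Taking the disjoint union $Y = \bigsqcup_i Y_i$ with the induced étale morphism $Y \to W$ (étale morphisms are closed under finite disjoint unions, as in the proof of Lemma~\ref{intersection:lem2a}), we get $h_i(Y_i(K))$ unioning to an étale image $U^* = h(Y(K))$ in $W(K)$ with $f^{-1}(U^*) = \bigcup_i h(O_i(K)) = h(X(K))$ since the $O_i$ cover $X$, hence the $O_i(K)$ cover $X(K)$. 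This proves that every étale image in $V(K)$ is the preimage under $f$ of an étale image in $W(K)$; the quasi-compactness needed to replace an arbitrary cover by a finite one is not actually required here, since an infinite disjoint union of étale morphisms is still étale and $Y$ need only be locally of finite type---but to stay within the convention that varieties are of finite type, one notes $X$ is quasi-compact so a finite subcover suffices.

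For the ``closed embedding'' conclusion: continuity of $f$ is Lemma~\ref{lem:cts}. Injectivity of $f : V(K) \to W(K)$ holds because closed immersions are monomorphisms. To see $f$ is a closed map onto its image, take an $\cE_V$-closed set $C \subseteq V(K)$; its complement $V(K) \setminus C$ is a union of basic étale images, each of which by the above is $f^{-1}$ of an étale image in $W(K)$, so $V(K) \setminus C = f^{-1}(U^*)$ for an $\cE_W$-open $U^*$, whence $f(C) = f(V(K)) \setminus U^* = f(V(K)) \cap (W(K) \setminus U^*)$, which is closed in the subspace topology on $f(V(K))$. Thus $f$ is a homeomorphism onto a closed subspace of $W(K)$, as required. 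The main obstacle is purely bookkeeping around Fact~\ref{fact:ega}---making sure the $V$-scheme isomorphism $O_i \cong Y_i \times_W V$ is used correctly so that the projection, not some twisted map, is what corresponds to $h|_{O_i}$ on $K$-points---and checking that $f(V(K))$ is in fact $\cE_W$-closed, which follows by applying the displayed identity to $C = V(K)$ itself.
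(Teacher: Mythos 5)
Your main argument follows the paper's own proof almost exactly: both reduce to showing that an \'etale image in $V(K)$ is $f^{-1}$ of an \'etale image in $W(K)$, both invoke Fact~\ref{fact:ega} to produce the local models $O_p \cong Y_p \times_W V$, both pass to a finite subcover by quasi-compactness of $X$, and both take the union of the resulting $Y_p \to W$ (the paper cites Lemma~\ref{lem:intersection} for closure of \'etale images under finite unions; you unwind it via the disjoint union). Your explicit verification that $h(O_i(K)) = f^{-1}(h_i(Y_i(K)))$, via the $K$-points functor preserving fiber products, fills in the step the paper compresses into ``$(U'_p \cap V(K))_{p\in\Delta}$ is a finite cover of $U$,'' so this part of your write-up is correct and arguably clearer.

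The one place I would push back is the justification that $f(V(K))$ is $\cE_W$-closed. You say this ``follows by applying the displayed identity to $C = V(K)$,'' but setting $C = V(K)$ makes $V(K)\setminus C = \emptyset$, so $U^* = \emptyset$ and the identity reduces to the tautology $f(V(K)) = f(V(K)) \cap W(K)$, which carries no information. The correct argument is shorter: $f(V(K))$ is Zariski-closed in $W(K)$ (it is the set of $K$-points of a closed subvariety), and the \'etale-open topology refines the Zariski topology (Lemma~\ref{lem:refine-Zariski}, or the statement in Lemma~\ref{lem:intersection}), so $f(V(K))$ is $\cE_W$-closed. This is exactly how the paper handles it. With that one substitution your proof is complete.
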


\begin{proof}
Without loss of generality, we may assume that $V$ is a closed subvariety of $W$ and $f$ is the inclusion map.
Then $V(K)$ is a closed subset of $W(K)$ in the \'etale-open topology by Lemma~\ref{lem:intersection}. As $f$ is continuous with respect to the \'etale-open topology,  the \'etale-open topology on $V(K)$ refines the induced subspace topology.
It remains to show that the subspace topology on $V(K)\subseteq (W(K),\cE_W)$ refines the \'etale-open topology on $V(K)$.
Suppose that $U$ is an \'etale image in $V$ which is given by an \'etale map $g: X \to V$.
Let $p$ be a scheme-theoretic point of $X$.
Then by Fact~\ref{fact:ega}, we get a Zariski open neighborhood $O_p$ of $p$ in $X$ and an \'etale morphism $h_p: Y_p \to W$, such that $O_p$ is isomorphic to $Y_p \times_W V$ as $V$-schemes.
For each $p$, let $U'_p \subseteq W(K)$ be the image of $Y_p(K)$ under $h_p$.
By compactness of the Zariski topology on $X$, there is a finite set $\Delta \subseteq X$ such that $(O_p)_{p \in \Delta}$ forms a finite cover over $X$.
Then $(U'_p \cap V(K))_{p \in \Delta}$ is a finite cover of $U$. By Lemma~\ref{lem:intersection}, $\bigcup_{p \in \Delta} U'_p$ is an \'etale image in $W$, so we get the first statement.
The second statement is immediate.
\end{proof}

This completes the proof of Theorem A---the collection $(\cE_V)_{V\in \Var_K}$ is a system of topologies.  We call this \textbf{the \'etale-open system of topologies} on $K$.

\medskip
If $\cT$ is a system of topologies over $K$, say that $\cT$ \textbf{turns \'etale morphisms into open maps} if, for any \'etale-morphism $V \to W$, the map $V(K) \to W(K)$ is $\cT$-open.  By Lemma~\ref{lem:open-map}, the \'etale-open system of topologies turns \'etale morphisms into open maps.

\begin{proposition} \label{characterization}
  The \'etale-open system of topologies is the coarsest system of topologies which turns \'etale morphisms into open maps.
\end{proposition}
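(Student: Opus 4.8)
The plan is to show two things: first, that the \'etale-open system $\cE_K$ turns \'etale morphisms into open maps (this is already Lemma~\ref{lem:open-map}); and second, that if $\cT$ is any system of topologies over $K$ that turns \'etale morphisms into open maps, then $\cT$ refines $\cE_K$. Together these give that $\cE_K$ is the coarsest such system.

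For the second (main) point, let $\cT$ be a system of topologies turning \'etale morphisms into open maps. By Lemma~\ref{lem:determine} it suffices to show that the $\cT$-topology on $\Aa^n(K)$ refines the $\cE_K$-topology on $\Aa^n(K)$ for every $n$; in fact, since \'etale images form a basis for $\cE_{\Aa^n}$, it is enough to show that every \'etale image in $\Aa^n(K)$ is $\cT$-open. So let $U = f(X(K))$ where $f : X \to \Aa^n$ is an \'etale morphism of $K$-varieties. By Fact~\ref{fact:etale}(5), the image of $f$ is an open subvariety $O \subseteq \Aa^n$, and $f$ factors through a surjective \'etale morphism $X \to O$. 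Since $\cT$ turns \'etale morphisms into open maps, the induced map $X(K) \to O(K)$ is $\cT$-open, hence its image $U$ is a $\cT$-open subset of $O(K)$. Because $O \hookrightarrow \Aa^n$ is an open immersion, $O(K)$ is a $\cT$-open subset of $\Aa^n(K)$ (condition (\ref{st-2}) of Definition~\ref{sys-top:def}), and therefore $U$ is $\cT$-open in $\Aa^n(K)$. This shows the $\cT$-topology refines the \'etale-open topology on each $\Aa^n(K)$, and Lemma~\ref{lem:determine} finishes the argument.

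I expect the only subtlety is the bookkeeping at the start: one must observe that \'etale images already furnish a basis for $\cE_{\Aa^n}$ (by definition of the \'etale-open topology), so refinement on affine space reduces to the assertion ``every \'etale image is $\cT$-open,'' and that this assertion suffices over arbitrary varieties by Lemma~\ref{lem:determine}. Everything else is a direct appeal to the definition of ``turns \'etale morphisms into open maps,'' to the factorization of an \'etale morphism through its (open) image, and to the system-of-topologies axioms. There is no real obstacle; the proof is short.
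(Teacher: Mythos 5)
Your proof is correct, but the core step is more elaborate than it needs to be. The paper's argument dispenses with the factorization through the open image $O$ and the reduction to affine space entirely: if $f : W \to V$ is \'etale and $\cT$ turns \'etale morphisms into open maps, then $W(K) \to V(K)$ is a $\cT$-open map, so the image of the open set $W(K)$ itself is $\cT$-open in $V(K)$ --- done, for arbitrary $V$. You are in effect re-deriving this by hand: you pass to $\Aa^n$ via Lemma~\ref{lem:determine}, then use Fact~\ref{fact:etale}(5) to factor $f$ through its image and invoke the open-immersion axiom. None of this machinery is needed, because ``image of an open map is open'' already applies to the full domain. Your extra steps are not wrong, and the factorization you describe is genuinely true, but the direct argument is cleaner and avoids any appeal to Lemma~\ref{lem:determine} or Fact~\ref{fact:etale}(5). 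Worth internalizing: when the goal is to show a basic open is $\cT$-open, and that basic open is \emph{the image of an entire \'etale map}, you can just apply openness of the map once.
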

\begin{proof}
   Suppose $\cT$ turns \'etale morphisms into open maps.  We claim that $\cT_V$ refines $\cE_V$ for each $K$-variety $V$.  It suffices to show that every \'etale-image in $V(K)$ is $\cT$-open.  Let $f : W \to V$ be \'etale.  Then $W(K) \to V(K)$ is a $\cT$-open map, so its image is $\cT$-open.
\end{proof}
Proposition~\ref{characterization} is the original reason for the name ``\'etale-open topology.''

\begin{lemma}
\label{lem:affine2}
Suppose that $\cT$ is a system of topologies over $K$.
If $V(K) \to W(K)$ is $\cT$-open for any \'etale morphism $V \to W$ of affine $K$-varieties then $V(K) \to W(K)$ is $\cT$-open for any \'etale morphism $V \to W$ of $K$-varieties.
\end{lemma}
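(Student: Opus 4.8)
The plan is to reduce to the affine hypothesis by covering $W$ and $V$ by affine opens and invoking Remark~\ref{rem:Sytemdeterminedbylocal}. So let $f : V \to W$ be an \'etale morphism of $K$-varieties and fix a $\cT$-open set $U \subseteq V(K)$; the goal is to show $f(U)$ is $\cT$-open in $W(K)$. First I would choose a cover $(W_i)_{i \in I}$ of $W$ by affine open subvarieties, and for each $i$ a cover $(V_{ij})_{j \in J_i}$ of the open subvariety $f^{-1}(W_i) \subseteq V$ by affine open subvarieties. By Remark~\ref{rem:Sytemdeterminedbylocal} it is enough to prove that each $f(U) \cap W_i(K)$ is $\cT$-open in $W_i(K)$.

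Next I would check that each restriction $f|_{V_{ij}} : V_{ij} \to W_i$ is an \'etale morphism of affine $K$-varieties, so that the hypothesis applies to it. The preimage $f^{-1}(W_i)$ is an open subvariety of $V$, and the restricted morphism $f^{-1}(W_i) \to W_i$ is a base change of $f$ along the open immersion $W_i \hookrightarrow W$, hence \'etale by Fact~\ref{fact:etale}(3); composing it with the open immersion $V_{ij} \hookrightarrow f^{-1}(W_i)$, which is \'etale by Fact~\ref{fact:etale}(1), and using that \'etale morphisms compose (Fact~\ref{fact:etale}(2)), shows that $f|_{V_{ij}} : V_{ij} \to W_i$ is \'etale. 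As $V_{ij}$ and $W_i$ are affine, the hypothesis of the lemma gives that $f|_{V_{ij}} : V_{ij}(K) \to W_i(K)$ is a $\cT$-open map.

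Finally I would chase $U$ through the covers. Since $V_{ij} \hookrightarrow V$ is an open immersion, $V_{ij}(K) \to V(K)$ is a $\cT$-open embedding by condition~(\ref{st-2}) of Definition~\ref{sys-top:def}, so $U \cap V_{ij}(K)$ is $\cT$-open in $V_{ij}(K)$. Using the identities $f^{-1}(W_i)(K) = f^{-1}(W_i(K))$ and $f^{-1}(W_i)(K) = \bigcup_{j \in J_i} V_{ij}(K)$, together with the elementary set fact $f(U) \cap S = f(U \cap f^{-1}(S))$, one computes
\[
f(U) \cap W_i(K) \;=\; f\big(U \cap f^{-1}(W_i(K))\big) \;=\; \bigcup_{j \in J_i} (f|_{V_{ij}})\big(U \cap V_{ij}(K)\big),
\]
and each set on the right is $\cT$-open in $W_i(K)$ by the previous paragraph, hence so is the union. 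Thus $f(U) \cap W_i(K)$ is $\cT$-open in $W_i(K)$ for every $i$, and Remark~\ref{rem:Sytemdeterminedbylocal} yields that $f(U)$ is $\cT$-open in $W(K)$. I do not expect any genuine obstacle here: the argument is pure locality of \'etaleness (Fact~\ref{fact:etale}) plus the bookkeeping afforded by Remark~\ref{rem:Sytemdeterminedbylocal}; the only point requiring a little care is the verification that the pieces $f|_{V_{ij}} : V_{ij} \to W_i$ are again \'etale morphisms between \emph{affine} varieties, which is precisely what makes the affine hypothesis applicable.
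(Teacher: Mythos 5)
Your proof is correct and follows the same route the paper intends: the paper merely says the argument is "similar to that of Lemma~\ref{lem:affine}," and that proof uses exactly this scheme of covering $W$ by affine opens, covering each $f^{-1}(W_i)$ by affine opens, and combining via Remark~\ref{rem:Sytemdeterminedbylocal}. The only additional step you needed, checking that each restriction $f|_{V_{ij}}: V_{ij} \to W_i$ is an \'etale morphism of affine varieties, you handle correctly using Fact~\ref{fact:etale}.
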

The proof is similar to that of Lemma~\ref{lem:affine}.

\subsection{Extension and restriction of the \'etale-open system of topologies}
We prove two useful results about extension and restriction of the \'etale-open system of topologies.

\begin{proposition}
\label{prop:restrict-1}
Suppose that $L/K$ is finite.
Then $\etp(\cE_K)$ refines $\cE_L$.
\end{proposition}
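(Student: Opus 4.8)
The plan is to use the characterization in Proposition~\ref{characterization}: $\cE_L$ is the coarsest system of topologies over $L$ that turns \'etale morphisms into open maps. Hence, to show $\etp(\cE_K)$ refines $\cE_L$, it suffices to check that $\etp(\cE_K)$ itself turns \'etale morphisms into open maps. By Lemma~\ref{lem:affine2}, it is enough to verify this for \'etale morphisms $f : V \to W$ of \emph{affine} $L$-varieties. So let $f : V \to W$ be such a morphism; we must show that the induced map $V(L) \to W(L)$ is $\etp(\cE_K)$-open.

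By the defining property of $\etp(\cE_K)$ in Proposition-Definition~\ref{prop:restrict-0}, the $\etp(\cE_K)$-topology on $V(L)$ is carried, via the canonical bijection of Fact~\ref{fact:weil-restriction}(2), to the $\cE_K$-topology on $(\reslk V)(K)$, and likewise for $W(L)$ and $(\reslk W)(K)$. Moreover, by Fact~\ref{fact:weil-restriction}(3), the map $f : V(L) \to W(L)$ corresponds under these bijections to the map $(\reslk f) : (\reslk V)(K) \to (\reslk W)(K)$ on $K$-points. So it suffices to show that $(\reslk f)(K) : (\reslk V)(K) \to (\reslk W)(K)$ is $\cE_K$-open. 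But Fact~\ref{fact:restriction-etale} tells us that $\reslk f$ is an \'etale morphism of affine $K$-varieties (this is where we use that $f$ is \'etale and between affine varieties), and Lemma~\ref{lem:open-map} says that the \'etale-open system $\cE_K$ turns \'etale morphisms into open maps. Hence $(\reslk f)(K)$ is $\cE_K$-open, and we are done.

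The main point to be careful about is the bookkeeping: one must confirm that the canonical bijections $V(L) \to (\reslk V)(K)$ and $W(L) \to (\reslk W)(K)$ are compatible with $f$ (Fact~\ref{fact:weil-restriction}(3)) and are homeomorphisms between the relevant topologies (Proposition-Definition~\ref{prop:restrict-0}), so that openness of $(\reslk f)(K)$ transfers back to openness of $f(L)$. This is not an obstacle so much as a chain of already-established identifications that must be lined up correctly. There is no real difficulty beyond this, since all the heavy lifting — that Weil restriction preserves \'etaleness, and that $\cE_K$ turns \'etale maps into open maps — has already been done in Fact~\ref{fact:restriction-etale} and Lemma~\ref{lem:open-map}. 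Note also that $\etp(\cE_K)$ is a genuine system of topologies over $L$ by Proposition-Definition~\ref{prop:restrict-0}, so the appeal to Proposition~\ref{characterization} is legitimate.
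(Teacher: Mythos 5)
Your proof is correct and follows essentially the same route as the paper: reduce to affine \'etale morphisms via Lemma~\ref{lem:affine2}, use Fact~\ref{fact:restriction-etale} to see that Weil restriction sends affine \'etale morphisms to \'etale morphisms, and invoke the fact that $\cE_K$ turns \'etale morphisms into open maps. The only cosmetic difference is that you wrap up by citing Proposition~\ref{characterization} explicitly, whereas the paper concludes directly from the observation that every $L$-\'etale image is $\etp(\cE_K)$-open.
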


\begin{proof}
By Fact~\ref{fact:restriction-etale} and Lemma~\ref{lem:affine2} $V(L) \to W(L)$ is an $\etp(\cE_K)$-open map for any \'etale morphism $V \to W$ of $L$-varieties.
Therefore any $L$-\'etale image is $\etp(\cE_K)$-open.
\end{proof}

Suppose $L$ is an algebraic extension of $K$.
We show below that $\cE_K$ is the discrete system of topologies if and only if $K$ is not large.
So by Fact~\ref{fact:large-fin} we see that if $\cE_L$ is discrete then $\cE_K$ is discrete.
The following theorem is therefore is a topological refinement of Fact~\ref{fact:large-fin}.

\begin{theorem}
\label{thm:general}
Suppose that $L/K$ is algebraic.
Then $\cE_K$ refines $\rtp(\cE_L)$.
\end{theorem}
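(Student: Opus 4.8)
The goal is to show that for an algebraic extension $L/K$, the $\cE_K$-topology refines the $\rtp(\cE_L)$-topology on $V(K)$ for every $K$-variety $V$. By Lemma~\ref{lem:determine} it is enough to work with $V = \Aa^n$, and by the definition of restriction, $\rtp(\cE_L)$-open sets on $\Aa^n(K)$ are exactly the traces on $K^n$ of $\cE_L$-open subsets of $\Aa^n_L(L) = L^n$. So I must show: if $U^* \subseteq L^n$ is $\cE_L$-open, then $U^* \cap K^n$ is $\cE_K$-open. It suffices to treat basic opens, i.e. $L$-\'etale images: let $h : X \to \Aa^n_L$ be an \'etale morphism of $L$-varieties with $U^* = h(X(L))$; I want $h(X(L)) \cap K^n$ to be $\cE_K$-open.

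\textbf{Reduction to the finite case.} The first move is to reduce to $L/K$ finite. An \'etale morphism $h : X \to \Aa^n_L$ is a finite-type object, so it is defined over a finitely generated — hence finite — subextension $K \subseteq L_0 \subseteq L$: there is an \'etale $L_0$-morphism $h_0 : X_0 \to \Aa^n_{L_0}$ with $h = (h_0)_L$ after base change. I claim $h(X(L)) \cap K^n = h_0(X_0(L_0)) \cap K^n$. The inclusion $\supseteq$ is clear since $L_0$-points give $L$-points. For $\subseteq$: if $a \in K^n$ and $a = h(x)$ for some $x \in X(L)$, then since $h_0$ (being \'etale) induces an open map on points over any field and $a$ already lies in $\Aa^n(L_0)$, the fibre of $h_0$ over $a$ is a finite \'etale $L_0$-scheme which acquires an $L$-point, hence — being a product of finite separable field extensions of $L_0$ — already has an $L_0$-point (one needs here that $X(L)$ nonempty in the fibre forces one of these residue fields to embed in $L$ over $L_0$; it then embeds in $L_0^{\mathrm{sep}} \cap L$, but more carefully one argues directly that a finite \'etale $L_0$-algebra with an $L$-point has an $L_0$-point only after possibly enlarging $L_0$ inside $L$ — this is the delicate spot, see below). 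Granting this, $h_0(X_0(L_0)) \cap K^n$ is the set we want, and by Proposition~\ref{prop:restrict-1} (which handles finite extensions) combined with Proposition~\ref{prop:extension}, $\etp(\cE_K)$ refines $\cE_{L_0}$, and taking restrictions to $K$ — which is monotone — gives that $\rtp(\etp(\cE_K)) = \cE_K$ refines $\rtp(\cE_{L_0})$; applying this to the particular basic open completes the finite case, and then the colimit over all finite $L_0 \subseteq L$ finishes the general case.

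\textbf{The main obstacle.} The genuinely delicate step is the descent claim: given $a \in K^n$ with $h^{-1}(a)(L) \neq \emptyset$, I need $a \in \cE_K$-interior of the restricted set, and the clean way is to pass to a finite $L_0$ over which $a$ already has an $L_0$-point in $X_0$. The subtlety is that enlarging $L_0$ to catch one point $a$ might not work uniformly for all points of $U^*$ simultaneously. The right fix is probably not to chase individual points but to observe: fix one finite $L_0$ over which $h$ is defined as $h_0 : X_0 \to \Aa^n_{L_0}$; then $h(X(L)) = \bigcup_{L_0 \subseteq L_1} (h_0)_{L_1}(X_{0,L_1}(L_1))$ over finite $L_1$, and intersecting with $K^n$ commutes with this union, so $h(X(L)) \cap K^n = \bigcup_{L_1} \big( (h_0)_{L_1}(X_{0,L_1}(L_1)) \cap K^n \big)$; each term is $\cE_K$-open by the finite case (Proposition~\ref{prop:restrict-1} plus restriction), and a union of $\cE_K$-open sets is $\cE_K$-open. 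This avoids the pointwise descent entirely. I expect the write-up to spend most of its effort justifying that an \'etale morphism of finite type over $L$ descends to a finite subextension — standard but requiring a citation to spreading-out (\cite{stacks-project}) — and on the bookkeeping that restriction of systems is monotone under refinement, which is immediate from the subspace-topology definition in Section~\ref{section:restriction}.
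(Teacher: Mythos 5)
Your final argument is essentially the paper's: the paper also reduces to affine varieties via Lemma~\ref{lem:determine}, treats the finite case first, and for infinite $L/K$ writes the trace of an $L$-\'etale image $g(X(L))$ on $V(K)$ as the union over finite intermediate extensions $J \subseteq M \subseteq L$ of the traces of $f_M(Y_M(M))$, each $\cE_K$-open by the finite case. You correctly diagnosed that pointwise descent through fibers is awkward and replaced it with exactly this union over finite subextensions, which is the paper's move.

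The one genuine divergence is how you handle the finite case. The paper proves it directly as Lemma~\ref{lem:basic-down}: after replacing $X$ by a disjoint union of affine opens (so $\reslk$ exists), apply Weil restriction to $h : X \to V_L$ to get an \'etale morphism of $K$-varieties (Fact~\ref{fact:restriction-etale}), identify $V_L(L)$ with $(\reslk V_L)(K)$, and pull the \'etale image back along the closed immersion $V \hookrightarrow \reslk(V_L)$ using Proposition~\ref{prop:closed-embedd} to get a $K$-\'etale image. You instead argue formally: $\etp(\cE_K)$ refines $\cE_{L_0}$ (Proposition~\ref{prop:restrict-1}), restriction is monotone under refinement, and $\rtp(\etp(\cE_K)) = \cE_K$. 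That last identity is the load-bearing step you leave unjustified; it \emph{is} true for any system of topologies (apply Fact~\ref{fact:can-is-closed} and the closed-embedding condition~(\ref{st-3}) of Definition~\ref{sys-top:def} to the canonical morphism $V \to \reslk(V_L)$), but spelling this out uses the same Weil-restriction facts as Lemma~\ref{lem:basic-down}, so the formal detour does not actually shorten the argument. The paper's version also has the small advantage of producing an explicit $K$-\'etale image, a fact it reuses later (e.g.\ in Lemma~\ref{lem:extension-disconnect}).
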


We first prove a lemma.

\begin{lemma}
\label{lem:basic-down}
Suppose that $L$ is a finite extension of $K$, $V$ is a $K$-variety such that $\reslk(V_L)$ exists and $U \subseteq V_L(L)$ is an $L$-\'etale image.
Then $U \cap V(K)$ is a $K$-\'etale image.
In particular if $U \subseteq \Aa^n_L(L)$ is an $L$-\'etale image then $U \cap \Aa^n(K)$ is a $K$-\'etale image.
\end{lemma}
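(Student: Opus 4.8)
\textbf{Proof plan for Lemma~\ref{lem:basic-down}.}
The plan is to exploit the adjunction between base change and Weil restriction in order to transport an \'etale cover of $V_L$ down to an \'etale cover of $\reslk(V_L)$, and then intersect with the image of the canonical closed immersion $V \to \reslk(V_L)$. First I would reduce the general case to the statement about affine space by noting that $V$ embeds as a closed subvariety of some $\Aa^N$; since $\reslk$ takes closed immersions to closed immersions (Fact~\ref{fact:restrict-0}) and $\reslk(\Aa^N_L) \cong \Aa^{mN}$ (Fact~\ref{fact:weil-restriction}(1)), we get a compatible commuting square relating $V \hookrightarrow \Aa^N$ over $K$, $V_L \hookrightarrow \Aa^N_L$ over $L$, the canonical maps to Weil restrictions, and a closed immersion $\reslk(V_L) \hookrightarrow \Aa^{mN}$; an $L$-\'etale image in $V_L(L)$ is, by Proposition~\ref{prop:closed-embedd}, the restriction to $V_L(L)$ of an $L$-\'etale image in $\Aa^N_L(L)$, and $K$-\'etale images pull back and push forward correctly along the closed immersions, so it suffices to treat $V = \Aa^N$. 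This is the last assertion of the lemma, so everything reduces to it.

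For the case $V = \Aa^n$: let $U = h(X(L))$ for an \'etale morphism $h : X \to \Aa^n_L$ of $L$-varieties; we may take $X$ affine by covering it with affine opens, replacing $X$ by the disjoint union (\'etale over $\Aa^n_L$, and the union of the images is $U$). Now apply Weil restriction: by Fact~\ref{fact:restriction-etale}, $\reslk(h) : \reslk(X) \to \reslk(\Aa^n_L) \cong \Aa^{mn}$ is an \'etale morphism of affine $K$-varieties, so its image $(\reslk h)(\reslk(X)(K))$ is a $K$-\'etale image in $\Aa^{mn}(K) = L^n$. Under the canonical bijection of Fact~\ref{fact:weil-restriction}(2),(3), this image is exactly $h(X(L)) = U$ viewed inside $\Aa^n_L(L) \cong \Aa^{mn}(K)$. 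That gives that $U$ itself is a $K$-\'etale image in $\Aa^{mn}(K)$. Finally, $\Aa^n(K) \subseteq \Aa^n_L(L) \cong \Aa^{mn}(K)$ is a Zariski-closed (indeed $K$-linear) subset — this is the content of the canonical closed immersion $\Aa^n \to \reslk(\Aa^n_L)$ (Fact~\ref{fact:can-is-closed}), which one checks identifies $\Aa^n(K)$ with the locus $\{y_{ij} = 0 : j \neq 1\}$ when $e_1 = 1$. So by Proposition~\ref{prop:closed-embedd} the set $U \cap \Aa^n(K)$ is the preimage of the $K$-\'etale image $U$ under the closed immersion $\Aa^n \hookrightarrow \reslk(\Aa^n_L)$, hence a $K$-\'etale image in $\Aa^n(K)$, as desired.

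I expect the main obstacle to be purely bookkeeping: making sure that the chain of identifications ($V(L) \leftrightarrow \reslk(V_L)(K)$, compatibility of this bijection with morphisms and with base change of the ambient affine space, and the precise description of $V(K)$ as the zero locus cutting out the canonical closed immersion) is coherent, so that ``$U$ as a subset of $V_L(L)$'' and ``$U$ as a subset of $\reslk(V_L)(K)$'' genuinely agree and the intersection with $V(K)$ really is the preimage under the canonical closed immersion. None of the individual facts is hard — everything is quoted from Facts~\ref{fact:can-is-closed}, \ref{fact:weil-restriction}, \ref{fact:restrict-0}, \ref{fact:restriction-etale} and Proposition~\ref{prop:closed-embedd} — but the commuting diagram relating the $K$-level and $L$-level constructions needs to be laid out carefully. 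One mild subtlety: Weil restriction is only guaranteed to exist for affine varieties, which is why the hypothesis ``$\reslk(V_L)$ exists'' is imposed in the general statement and why the affine reduction is done first rather than working with $V$ directly.
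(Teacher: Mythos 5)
Your proof is correct and uses the same central idea as the paper's: Weil-restrict the \'etale cover of $V_L$ to obtain an \'etale morphism over $K$, and then intersect with the image of the canonical closed immersion $V \to \reslk(V_L)$ via Proposition~\ref{prop:closed-embedd}. The difference in organization is that you first reduce the general statement to the case $V = \Aa^N$, whereas the paper runs the Weil-restriction argument directly on $V$; your second paragraph is, almost line for line, the paper's entire proof specialized to $V = \Aa^n$. The reduction is a detour: the argument you give for $\Aa^n$ goes through verbatim for any $V$ with $\reslk(V_L)$ existing (replace $h: X \to \Aa^n_L$ by $h: X \to V_L$, affinize $X$, take $\reslk(h): \reslk(X) \to \reslk(V_L)$, and invoke Fact~\ref{fact:can-is-closed} and Proposition~\ref{prop:closed-embedd}), so there is no need to first lift the \'etale image to $\Aa^N_L(L)$ and then push it back down. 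A mild cost of the detour is a formal mismatch with the stated hypothesis: embedding $V$ as a closed subvariety of some $\Aa^N$ tacitly assumes $V$ is affine, which is stronger than ``$\reslk(V_L)$ exists''; the paper's version avoids this extra assumption (though its own appeal to Fact~\ref{fact:can-is-closed}, stated for affine $V$, quietly carries the same restriction, and the lemma is only ever applied with $V = \Aa^n$).
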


\begin{proof}
Let $X$ be an $L$-variety and $h : X \to V_L$ be an \'etale morphism such that $U = h(X(L))$.
Let $X_1,\ldots,X_n$ be affine open subvarieties of $X$ covering $X$.
Let $X^*$ be the disjoint union of $X_1,\ldots,X_n$ and $h^*$ be the natural morphism $X^* \to V_L$.
Then $X^*$ is affine, $h$ is \'etale, and $U = h^*(X^*(L)))$.  Replacing $X$ with $X^*$, we may assume that $X$ is affine.
This ensures that $\reslk(X)$ exists.
Let $g = \reslk(h)$; this is an \'etale morphism $\reslk(X) \to \reslk(V_L)$ by Fact~\ref{fact:restrict-0}.
As before we identify $(\reslk(V_L))(K)$ and $V_L(L)$.
Then $U$ is a $K$-\'etale image in $(\reslk(V_L))(K)$.
By Fact~\ref{fact:can-is-closed} the natural morphism $V \to \reslk(V_L)$ is a closed immersion.
By Proposition~\ref{prop:closed-embedd}, $U \cap V(K)$ is a $K$-\'etale image in $V(K)$.
\end{proof}

We now prove Theorem~\ref{thm:general}.

\begin{proof}
The case when $L/K$ is finite follows by Lemma~\ref{lem:determine} and Lemma~\ref{lem:basic-down}.
Suppose that $L/K$ is infinite.
By Lemma~\ref{lem:determine} it suffices to fix an affine $K$-variety $V$ and show that the $\cE_K$-topology on $V(K)$ refines the $ \rtp(\cE_L)$-topology.
Let $X$ be an $L$-variety and $g : X \to V_L$ be an \'etale morphism.
We show that $U = g(X(L)) \cap V(K)$ is $\cE_K$-open.
Let $K \subseteq J \subseteq L$ be a finite extension such that $X$ and $g$ are defined over $J$.
So there is a $J$-variety $Y$ and a morphism $f : Y \to V_J$ such that $X = Y_L$ and $g = f_L$.
Then $U$ is the union of the $f_M(Y_M(M)) \cap V(K)$ where $M$ ranges over finite extensions $J \subseteq M \subseteq L$.
By the finite case each $f_M(Y_M(M)) \cap V(K)$ is $\cE_K$-open.
\end{proof}

We will use the following proposition to show that the \'etale-open topology over a separably closed field agrees with the Zariski topology.

\begin{proposition}
\label{prop:inseparable}
Suppose that $L/K$ is purely inseparable.
Then $\cE_K$ agrees with $\rtp(\cE_L)$.
\end{proposition}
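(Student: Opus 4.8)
The plan is to prove the two inclusions of topologies separately, using Theorem~\ref{thm:general} for one direction and a direct argument exploiting pure inseparability for the other. By Theorem~\ref{thm:general}, applied to the algebraic (indeed purely inseparable) extension $L/K$, we already know that $\cE_K$ refines $\rtp(\cE_L)$. So the content of the proposition is the reverse: $\rtp(\cE_L)$ refines $\cE_K$. By Lemma~\ref{lem:determine} it suffices to check this on affine spaces, i.e.\ to show that every $K$-\'etale image $U \subseteq \Aa^n(K)$ is $\rtp(\cE_L)$-open, equivalently that $U$ is the trace on $\Aa^n(K)$ of some $L$-\'etale image in $\Aa^n_L(L)$.

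The key idea is that a $K$-\'etale morphism $f : X \to \Aa^n$ can be base-changed to an $L$-\'etale morphism $f_L : X_L \to \Aa^n_L$ (\'etaleness is preserved by base change, Fact~\ref{fact:etale}.3), and since $L/K$ is purely inseparable the induced map on points behaves well: if $\alpha \in \Aa^n(K)$ lifts to a point of $X_L(L)$, does it already lift to $X(K)$? In general not, but we only need the weaker statement that $U = f(X(K))$ agrees with $f_L(X_L(L)) \cap \Aa^n(K)$. So first I would reduce to showing $f_L(X_L(L)) \cap \Aa^n(K) \subseteq f(X(K))$ (the reverse inclusion being trivial, as $X(K) \subseteq X_L(L)$ compatibly with the maps to affine space). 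Take $\alpha \in \Aa^n(K)$ and a point $x \in X_L(L)$ with $f_L(x) = \alpha$. Pulling back along $\alpha : \Spec K \to \Aa^n$, the fiber $X_\alpha = X \times_{\Aa^n} \Spec K$ is a finite \'etale $K$-scheme (finite because standard \'etale morphisms are quasi-finite and, after shrinking, finite; \'etale because \'etaleness is stable under base change), and $x$ gives an $L$-point of $X_\alpha$, i.e.\ $X_\alpha(L) \neq \emptyset$. But a finite \'etale $K$-algebra is a product of finite separable field extensions of $K$; since $L/K$ is purely inseparable, the only such extension embedding into $L$ is $K$ itself, so $X_\alpha(L) \neq \emptyset$ forces $X_\alpha(K) \neq \emptyset$, hence $\alpha \in f(X(K))$. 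One subtlety: a general \'etale morphism need not be finite, only locally standard \'etale; I would handle this by working Zariski-locally on $X$, or by invoking the structure of \'etale morphisms (an \'etale morphism is locally, after shrinking the source, a composite of an open immersion and a finite \'etale morphism), which is enough since we may shrink $X$ around a chosen lift.

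The main obstacle I anticipate is the bookkeeping around the fact that \'etale need not mean finite: making the "pull back the fiber" argument rigorous requires either the local structure theorem for \'etale morphisms or a careful reduction to the standard \'etale case using Fact~\ref{fact:ega}, and then checking that the finite \'etale $K$-scheme obtained as a fiber really is a product of \emph{separable} field extensions (this is automatic from \'etaleness over a field, but worth stating). Once that is in place, combining $\rtp(\cE_L)$ refines $\cE_K$ with Theorem~\ref{thm:general}'s $\cE_K$ refines $\rtp(\cE_L)$ yields equality, and Lemma~\ref{lem:determine} promotes the statement from affine spaces to all $K$-varieties.
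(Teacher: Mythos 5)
Your proposal is correct and takes essentially the same route as the paper: one direction is Theorem~\ref{thm:general}, and the other direction proceeds by showing $f(X(K)) = f_L(X_L(L)) \cap V(K)$ via the \'etale fiber over a $K$-point, using that \'etale over a field means separable residue fields and that $L/K$ being purely inseparable forces those residue fields to equal $K$. Two small remarks. First, the reduction to affine spaces via Lemma~\ref{lem:determine} is harmless but unnecessary; the paper runs the same argument directly for an arbitrary $K$-variety $V$. Second, the ``main obstacle'' you anticipate about finiteness of the fiber is not actually an obstacle: you do not need the fiber $X_\alpha$ to be finite, nor any local structure theorem. All you need is that $f_\alpha \colon X_\alpha \to \Spec K$ is \'etale (immediate from base change), so \emph{every} scheme-theoretic point $b$ of $X_\alpha$ has residue field $\mathbf{k}(b)$ a finite separable extension of $K$. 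An $L$-point of $X_\alpha$ gives such a $b$ with $\mathbf{k}(b)$ embeddable in $L$; pure inseparability then forces $\mathbf{k}(b) = K$, so $b$ is a $K$-point. No decomposition of the fiber as a product of field extensions, and no appeal to Fact~\ref{fact:ega} or to standard \'etale presentations, is required.
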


We let $\mathbf{k}(a)$ be the residue field of a scheme-theoretic point $a$ on a $K$-variety $V$.

\begin{proof}
A purely inseparable extension is algebraic so by Theorem~\ref{thm:general} it suffices to show that $\rtp(\cE_L)$ refines $\cE_K$.
Let $f : X \to V$ be an \'etale morphism of $K$-varieties and $U = f(X(K))$.
We show that $U$ is $\rtp(\cE_L)$-open.
The base change $f_L : X_L \to V_L$ is \'etale by Fact~\ref{fact:etale}.
Let $U' = f_L(X_L(L))$.
It suffices to show that $U = U' \cap V(K)$.
We have $U \subseteq U' \cap V(K)$ so it is enough to fix $a \in U' \cap V(K)$ and show that $a \in U$.
Then $a$ is a point in $V$ with $\mathbf{k}(a)= K$. 
Let $f_{a} : X_{a} \to a$ be the scheme-theoretic fiber of $X$ over $a$. 
Since $a$ is in $U'$ and we identify $X_L(L)$ and $X(L)$, there is a (scheme-theoretic) point $b\in X_a$ with $\mathbf{k}(b)$ embeddable into $L$. Furthermore $f_a$ is \'etale as it is the base change of an \'etale morphism.
Hence, $\mathbf{k}(b)$ is a separable extension of $K$. The only separable extension of $K$ in $L$ is $K$, so $\mathbf{k}(b) =K$. Thus $b$ is a $K$-point of $X$ which implies that $a$ is in $U$. 
\end{proof}

\section{Classical examples and generalizations}
\label{sec:example}
 
In this section we show that the \'etale-open topology agrees with known topologies on separably closed fields and t-Henselian fields.
This covers many natural examples.

\subsection{Separably closed fields}
\label{section:separably-closed}

 
By Lemma~\ref{lem:refine-Zariski} the \'etale-open topology always refines the Zariski topology.
The converse holds when $K$ is separably closed.

\begin{proposition}
\label{prop:zar}
Suppose $K$ is separably closed.
Then the \'etale-open topology on $V(K)$ agrees with the Zariski topology for any $V$.
\end{proposition}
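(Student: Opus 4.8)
The plan is to leverage Lemma~\ref{lem:refine-Zariski}, which already gives that the \'etale-open topology on $V(K)$ refines the Zariski topology for every $V$. So it suffices to establish the opposite inclusion: when $K$ is separably closed, every \'etale image in $V(K)$ is Zariski open, whence the two topologies coincide. Since the \'etale images form a basis for the \'etale-open topology, this is all that is needed, and there is no need to reduce to the affine case first (though one could, via Remark~\ref{rem:Sytemdeterminedbylocal}).

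Concretely, I would fix an \'etale morphism $f : X \to V$ of $K$-varieties and show that $U := f(X(K))$ is Zariski open. The key input is Fact~\ref{fact:etale}.5: the scheme-theoretic image of $f$ is an open subvariety $O \subseteq V$. Thus $U \subseteq O(K)$, and I claim equality. One inclusion is clear; for the other, fix $p \in O(K)$, regarded as a $K$-point of $V$, and consider the scheme-theoretic fiber $X_p := X \times_V \Spec \mathbf{k}(p)$, where $\mathbf{k}(p) = K$. Because $p$ lies in the set-theoretic image of $f$, the fiber $X_p$ is nonempty, and because \'etale morphisms are stable under base change (Fact~\ref{fact:etale}.3) the structure morphism $X_p \to \Spec K$ is \'etale. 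A finite-type \'etale scheme over a field $K$ is a finite disjoint union of spectra of finite separable field extensions of $K$; since $K$ is separably closed, each such extension is $K$, so $X_p$ has a $K$-point $q$. Then $q \in X(K)$ with $f(q) = p$, so $p \in U$. Hence $U = O(K)$ is Zariski open, as desired.

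I do not expect a genuine obstacle. The one place to be slightly careful is the structure theory of \'etale algebras over a field --- that \'etale plus finite type over $K$ forces a finite product of finite separable field extensions --- which is standard; everything else is bookkeeping with the identification of $K$-points and scheme-theoretic points of residue field $K$, and the stability of \'etaleness under base change. (This argument can equivalently be phrased via the elementary description of the \'etale-open topology on $K^n$ in terms of pairs $(f,g) \in \mathcal{P}$, but the scheme-theoretic formulation is cleaner and handles arbitrary $V$ uniformly.)
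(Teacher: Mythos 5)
Your proof is correct, and it takes a genuinely different route from the paper's. The paper proceeds in two steps: it first proves the result for algebraically closed $K$ (where the scheme-theoretic image argument is immediate, since every non-empty finite-type $K$-scheme has a $K$-point), and then deduces the separably closed case from the algebraically closed case via the restriction-of-systems machinery — specifically Proposition~\ref{prop:inseparable} ($\cE_K$ agrees with $\rtp(\cE_L)$ for a purely inseparable extension $L/K$) together with Lemma~\ref{lem:zar-restrict} (the Zariski system restricts to the Zariski system). You instead argue directly over a separably closed field, observing that the fiber $X_p$ is a non-empty finite-type \'etale $K$-scheme, hence a finite disjoint union of spectra of finite separable extensions of $K$, each of which is $K$ itself. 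It is worth noting that the very same fiber-plus-separable-residue-field idea is what powers the paper's Proposition~\ref{prop:inseparable}, so in effect you have folded that lemma's content directly into the proof of the present proposition, dispensing with the detour through the algebraic closure. Your version is shorter and more self-contained; the paper's version has the advantage of isolating Proposition~\ref{prop:inseparable}, which is reusable elsewhere (e.g., it underlies the reduction of separable closure to algebraic closure in several places). One small point worth making explicit: the step ``$p$ lies in the set-theoretic image, hence $X_p$ is non-empty'' uses that $p$, as a $K$-point of $O$, corresponds to a scheme-theoretic point of $O$ with residue field $K$, and $O$ was chosen to have the set-theoretic image as its underlying space; this is fine but deserves a sentence.
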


\begin{proof}
First suppose that $K$ is algebraically closed.
By Lemma~\ref{lem:refine-Zariski} it suffices to show that the Zariski topology refines $\cE_V$.
Let $f : X \to V$ be an \'etale morphism of $K$-varieties and $U = f(X(K))$.
We show that $U$ is Zariski open in $V(K)$.
The
image of $f : X \to V$ (the map on scheme-theoretic points) is an open subset $O$ of $V$ by Fact~\ref{fact:etale}.4, and $O$ naturally carries the structure of an open subvariety of $V$.
As $K$ is algebraically closed $U = O(K)$.

\meno
Now suppose that $K$ is separably closed.
Fix an algebraic closure $L$ of $K$.
Then $\cE_L$ is the Zariski topology over $L$.
Furthermore, $L$ is a purely inseparable extension of $K$, so $\cE_K$ agrees with $\rtp( \cE_L)$ by Proposition~\ref{prop:inseparable}.
Apply Lemma~\ref{lem:zar-restrict}.
\end{proof}

\subsection{t-Henselian fields}
\label{section:t-Henselian}
We first recall some definitions.

\begin{definition}
\label{defn:V}
Let $\uptau$ be a non-discrete field topology on $K$.
A subset $B$ of $K$ is said to be \textbf{$\uptau$-bounded} if for every open neighborhood $U$ of zero there is non-zero $a \in K$ such that $a \cdot B \subseteq U$.
It is easy to see that any finite subset of $K$ is bounded and the collection of bounded sets is closed under finite unions, subsets, and additive and multiplicative translates. 
Then $\uptau$ is  a \textbf{V-topology} if $(K \setminus U)^{-1}$ is bounded for any neighborhood $U$ of zero.
\end{definition}

It is easy to see that the topology on $K$ associated to any field order, non-trivial valuation, or non-trivial absolute value is a V-topology.  The following converse is due to Kowalsky and D\"urbaum, and independently, Fleischer.
\begin{fact}[{\cite[Theorem B.1]{EP-value}}]
\label{fact:V}
Suppose that $\uptau$ is a field topology on $K$.
If $\uptau$ is a V-topology then $\uptau$ is induced by some absolute value or valuation on $K$.
\end{fact}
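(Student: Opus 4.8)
The statement is the classical theorem of Kowalsky--D\"urbaum and, independently, Fleischer; the plan is to recover from the V-topology $\uptau$ either a (possibly archimedean) absolute value or a valuation ring of $K$ inducing $\uptau$. The converse direction---that such a structure induces a V-topology---is routine and essentially recorded in the surrounding text, so only this direction is at issue. First I would set down the calculus of $\uptau$-bounded sets: they are closed under subsets, finite unions, and additive and multiplicative translates, and, since $\uptau$ is a ring topology, under finite sums $B+B'$ and products $B\cdot B'$ of bounded sets; moreover $K$ itself is \emph{not} bounded, since a bounded $K$ would be absorbed into every neighbourhood of $0$ and force $\uptau$ to be indiscrete, contradicting Hausdorffness. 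The genuine input is a structural lemma extracting from the V-axiom a neighbourhood base $(U_\gamma)$ at $0$, linearly ordered by inclusion and consisting of proper symmetric open sets, such that for each $\gamma$ there is $\delta$ with $U_\delta+U_\delta\subseteq U_\gamma$, $U_\delta\cdot U_\delta\subseteq U_\gamma$, and, crucially, $(K\setminus U_\gamma)^{-1}\subseteq U_\delta$. This last ``inversion clause'' is the precise sense in which inversion exchanges ``large'' and ``small'', and it is what powers both constructions below.

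I would then split into cases according to whether some proper neighbourhood $U\subsetneq K$ of $0$ satisfies $U+U\subseteq U$. In that (non-archimedean) case, replacing $U$ by $U\cap(-U)$ we may take $U$ to be a proper symmetric additively closed neighbourhood, hence a proper clopen additive subgroup, and the inversion clause shows $U$ is $\uptau$-bounded. Set $\mathcal{O}:=\{x\in K:xU\subseteq U\}$. This is a subring of $K$ (if $xU,yU\subseteq U$ then $(x+y)U\subseteq U+U\subseteq U$ and $xyU\subseteq U$), and it is proper since $U$ is neither $0$ nor $K$. I claim $\mathcal{O}$ is a valuation ring whose valuation topology is $\uptau$. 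The crucial totality $x\in\mathcal{O}$ or $x^{-1}\in\mathcal{O}$ for $x\in K^\times$ comes from the inversion clause: if $xU\not\subseteq U$, then a dilation of $U$ by $x$ escapes some small $U_\gamma$, and applying $(K\setminus U_\gamma)^{-1}\subseteq U_\delta$ to the escaping elements forces $x^{-1}U'\subseteq U'$ for a suitable base set $U'$, i.e.\ $x^{-1}\in\mathcal{O}$. Finally, one shows the dilations $\{aU:a\in K^\times\}$ form a neighbourhood base at $0$---again using the inversion clause together with boundedness of $U$---which identifies the valuation topology of $\mathcal{O}$ with $\uptau$.

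In the remaining (archimedean) case every proper neighbourhood of $0$ fails to be additively closed; this forces $\operatorname{char}K=0$, so $\mathbb{Q}\subseteq K$ and $\uptau$ restricts to a non-discrete, hence archimedean, field topology on $\mathbb{Q}$. Fixing a proper symmetric bounded neighbourhood $B$ of $0$ with $1\notin B$, I would define a gauge $|x|$ as the infimum of those $r\in\mathbb{Q}_{>0}$ with $x\in rB$ (such $r$ exist and shrink because $\uptau|_{\mathbb{Q}}$ is archimedean). One checks $|xy|=|x|\,|y|$ from $B\cdot B$ being comparable to $B$ up to bounded dilation, $|x|=0\iff x=0$ from Hausdorffness, and a relaxed triangle inequality $|x+y|\le C\max(|x|,|y|)$ from $B+B$ lying inside a fixed dilation of $B$; since in this case no power of $|\cdot|$ is ultrametric, one genuinely has $C>1$, so by the classical description of archimedean absolute values (Ostrowski, Gelfand--Tornheim) $|\cdot|$ is equivalent to the restriction of the usual absolute value along a topological embedding $K\hookrightarrow\mathbb{R}$ or $K\hookrightarrow\mathbb{C}$. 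Thus $\uptau$ is induced by an absolute value.

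The main obstacle is the case split together with, in the non-archimedean case, simultaneously showing that $\mathcal{O}$ is a genuine valuation ring---the totality must be squeezed out of the inversion clause with careful control of which $U_\gamma$ is used at each step---and that the valuation it defines induces exactly $\uptau$ and not merely a comparable topology, which needs the precise identification of $\uptau$-bounded sets with the valuation-bounded sets. In the archimedean case the analogous difficulty is the relaxed triangle inequality for the gauge and the correct invocation of the archimedean classification (which in particular requires knowing $\operatorname{char}K=0$). The remaining items---the bounded-set calculus and the manipulations with the base $(U_\gamma)$---are routine.
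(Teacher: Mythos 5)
The paper does not actually prove this statement: it records it as a Fact with a citation to \cite[Theorem B.1]{EP-value}, attributing it in the surrounding text to Kowalsky--D\"urbaum and, independently, Fleischer. So the only question is whether your sketch would constitute a correct proof of the cited classical theorem, and there you have a genuine gap. Your ``inversion clause'' asserts that for each basic neighbourhood $U_\gamma$ of $0$ there is a smaller basic $U_\delta$ with $(K\setminus U_\gamma)^{-1}\subseteq U_\delta$. That is not what the V-axiom says, and it is already false for a valuation topology: taking $U_\gamma=\{x:v(x)>\gamma\}$ with $\gamma\ge 0$ gives $(K\setminus U_\gamma)^{-1}=\{x\in K^\times:v(x)\ge -\gamma\}$, a set containing $1$, which therefore cannot lie inside any neighbourhood of $0$ that omits $1$. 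The V-axiom of Definition~\ref{defn:V} says only that $(K\setminus U)^{-1}$ is \emph{bounded} --- that it can be pushed inside any neighbourhood of $0$ after multiplying by a suitable nonzero scalar --- and that is strictly weaker than containment in a small base set. Since your derivation of totality for $\mathcal{O}=\{x:xU\subseteq U\}$ leans directly on this incorrect clause, that key step does not go through as written; the correct argument works with boundedness of both $U$ and $(K\setminus U)^{-1}$ together with carefully chosen rescalings, and is rather more delicate.

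That said, the global architecture of your sketch --- a dichotomy between a non-archimedean case, in which one produces a proper bounded clopen additive subgroup $U$ and declares $\mathcal{O}=\{x:xU\subseteq U\}$ to be the valuation ring, and an archimedean case, in which $\operatorname{char}K=0$ is forced, a gauge is built from a bounded symmetric neighbourhood, and Ostrowski/Gelfand--Tornheim is invoked --- is indeed the standard route and agrees with the argument in the cited reference. One further caution: passing from the naive gauge $|x|=\inf\{r\in\mathbb{Q}_{>0}:x\in rB\}$ to a genuinely multiplicative absolute value is not automatic; one ordinarily obtains only comparability $|xy|\asymp|x|\,|y|$ from the dilation estimates on $B\cdot B$, and needs a limiting device such as $|x|\mapsto\lim_{n}|x^{n}|^{1/n}$ to force equality. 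But the mis-stated inversion clause in the non-archimedean branch is the more serious defect, since the claimed totality of $\mathcal{O}$ rests on it.
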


We now recall the topological analogue of Henselianity.

\begin{definition}
\label{defn:t-hensel}
Let $\uptau$ be a field topology on $K$.
Then $\uptau$ is \textbf{t-Henselian} if
\begin{enumerate}
\item $\uptau$ is a V-topology, and
\item for any $n$ there is a $\uptau$-open neighborhood $U$ of $0$ such that if $\alpha_0,\ldots,\alpha_{n} \in U$ then $x^{n + 2} + x^{n + 1} + \alpha_{n} x^{n} + \cdots + \alpha_1 x + \alpha_0$ has a root in $K$.
\end{enumerate}
If $K$ admits a t-Henselian topology then we say that $K$ is t-Henselian.
\end{definition}

The concept of t-henselianity is due to Prestel and Ziegler~\cite{Prestel1978}.
A valuation topology induced by a non-trivial Henselian valuation is t-Henselian, and so is the order topology on a real closed field.
Prestel and Ziegler show that if $K$ is elementarily equivalent to a t-Henselian field and is not separably closed then $K$ is t-Henselian and if $K$ is $\aleph_1$-saturated and t-Henselian then $K$ admits a non-trivial Henselian valuation.
So a non-separably closed field is t-Henselian if and only if it is elementarily equivalent to a Henselian field.
They also show that a non-separably closed field admits at most one t-Henselian topology.
So if $K$ is not separably closed then we refer to the unique t-Henselian topology on $K$ as \textbf{the t-Henselian topology}.

\begin{theorem}
\label{thm:hensel-main}
Suppose that $K$ is t-Henselian and not separably closed.
Then the \'etale-open system of topologies over $K$ is induced by the t-Henselian topology.
\end{theorem}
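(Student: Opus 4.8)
The plan is to show the two systems agree by checking they agree on each $\Aa^n(K)$, and then invoking Lemma~\ref{lem:determine}. Write $\uptau$ for the t-Henselian topology on $K$ and $\cT_\uptau$ for the induced system of topologies (Fact~\ref{fact:field-top}), so the $\cT_\uptau$-topology on $\Aa^n(K)$ is the $n$-fold product of $\uptau$. We must show this product topology coincides with the $\cE_K$-topology on $\Aa^n(K)$. There are two inclusions. For ``$\cE_K$ refines $\cT_\uptau$'': by Lemma~\ref{lem:field-top-reduce} it suffices to show the $\cE_K$-topology on $\Aa^1(K)$ refines $\uptau$. Since $\uptau$ is a V-topology, basic $\uptau$-open sets can be taken to be sets like $\{x : v(x-a) > \gamma\}$, and the idea is that t-Henselianity lets one realize such a set, or a $\uptau$-open subset of it, as an \'etale image: given the t-Henselian neighborhood $U$ of $0$ from Definition~\ref{defn:t-hensel}(2), the polynomial $f(x,y) = y^{n+2} + y^{n+1} + x_n y^n + \cdots + x_1 y + x_0$ defines, after a suitable affine reparametrization, an \'etale cover of a $\uptau$-open box whose image is $\uptau$-open; one checks $\der f/\der y$ is a unit on the relevant locus (this is where the ``$x^{n+1}$'' leading-lower-order term and the smallness of the $\alpha_i$ matter). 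So every point of $K$ has arbitrarily small $\uptau$-neighborhoods that are $\cE_K$-open, giving the refinement.

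For the reverse inclusion ``$\cT_\uptau$ refines $\cE_K$'', I would argue that every \'etale image in $\Aa^n(K)$ is $\uptau$-open (in the product topology). Let $f : X \to \Aa^n$ be \'etale; working locally we may assume $f$ is standard \'etale, cut out by $g(\bar x, y) = 0 \ne h(\bar x, y)$ with $\der g/\der y$ invertible on $X$, and $X$ affine. Given $a = f(b) \in f(X(K))$ with $b = (a, c)$, so $g(a,c) = 0$, $h(a,c) \ne 0$, $(\der g/\der y)(a, c) \ne 0$: the t-Henselian analogue of Hensel's lemma (implicit function theorem for t-Henselian topologies — this follows from Definition~\ref{defn:t-hensel}(2) by the standard manipulation reducing a simple root to the shape $y^{n+2} + y^{n+1} + \text{small}$) produces a $\uptau$-open neighborhood $W$ of $a$ in $K^n$ and a $\uptau$-continuous section $s : W \to K$ with $s(a) = c$, $g(w, s(w)) = 0$, and (shrinking $W$) $h(w, s(w)) \ne 0$. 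Then $W \subseteq f(X(K))$, so $f(X(K))$ is $\uptau$-open. Combining the two inclusions and Lemma~\ref{lem:determine} finishes the proof.

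The main obstacle is the careful deployment of t-Henselianity in both directions — it is really one lemma, a ``t-Henselian implicit function theorem'', and the work is in extracting it from the bare axiom in Definition~\ref{defn:t-hensel}(2), which only asserts roots of the very specific polynomials $x^{n+2} + x^{n+1} + \alpha_n x^n + \cdots + \alpha_0$. One must (i) pass from a simple root of an arbitrary polynomial near a given value to this normal form via an affine change of variable and rescaling, controlling that the transformed coefficients land in the prescribed neighborhood $U$; (ii) upgrade existence of a root to a $\uptau$-continuous dependence on parameters, which typically uses the V-topology structure (boundedness) plus a uniqueness-of-nearby-root statement, itself a consequence of the same axiom applied to derivatives. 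A secondary technical point is the reduction to affine $n$-space and to standard \'etale morphisms, and checking that shrinking neighborhoods preserves the open conditions $h \ne 0$; these are routine given the machinery of Section~\ref{section:general-facts} (in particular Lemma~\ref{lem:field-top-reduce} and Lemma~\ref{lem:determine}) and Fact~\ref{fact:field-top}. I would also remark that the hypothesis ``not separably closed'' is used only to single out ``the'' t-Henselian topology via the Prestel–Ziegler uniqueness; the core computation does not need it.
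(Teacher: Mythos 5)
The direction ``$\cT_\uptau$ refines $\cE_K$'' that you sketch --- a t-Henselian implicit function theorem applied to standard \'etale morphisms, giving $\uptau$-continuous local sections --- is correct in spirit, and is exactly the content of Fact~\ref{fact:open-map}, which the paper simply cites from \cite[Proposition~2.8]{HHJ-henselian} rather than re-deriving. Your other direction has a genuine gap. You claim that the t-Henselian axiom in Definition~\ref{defn:t-hensel}(2) directly produces arbitrarily small $\cE_K$-open neighborhoods of each point of $K$. But the \'etale morphism you exhibit, the projection from $\{f(x,y)=0,\ \der f/\der y \neq 0\}$ to $\Aa^{n+1}$, yields an \'etale image in $\Aa^{n+1}(K)$, not in $\Aa^1(K)$, and the sentence ``So every point of $K$ has arbitrarily small $\uptau$-neighborhoods that are $\cE_K$-open'' does not follow. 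Even if you slice to a one-parameter subfamily to get a map to $\Aa^1$, an \'etale image is by definition the image of a \emph{Zariski}-open domain, not of a small $\uptau$-ball; the image of the full Zariski-open locus of such a projection has no reason to be $\uptau$-bounded or $\uptau$-non-dense (in $\Qq_p$, say, such images typically are dense in $K$). What is actually needed here is an \'etale image in $\Aa^1(K)$ that is \emph{not $\uptau$-dense}, so that Lemmas~\ref{dense-refiner:lem}, \ref{lem:V-proj}, and \ref{lem:def-loc-bound} can convert it, via a linear fractional transformation, into a $\uptau$-bounded $\cE_K$-open set and then conclude by affine invariance. The paper produces this non-dense image by taking a nonconstant irreducible separable $f \in K[x]$ and invoking the Prestel--Ziegler input Fact~\ref{fact:prestel-def}, which says that $0$ is not a $\uptau$-limit point of $f(\Aa^1(K))$; this is then fed into Lemma~\ref{lem:key-refine}. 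Your proposal has no ingredient playing this role.

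Relatedly, your closing remark that ``not separably closed'' is used only to single out ``the'' t-Henselian topology is incorrect on two counts. First, the hypothesis is essential to the statement: a separably closed field can carry a t-Henselian V-topology, but Proposition~\ref{prop:zar} shows the \'etale-open system on such a field is the Zariski system, which is not induced by any Hausdorff field topology. Second, it is essential to the paper's proof: non-separable-closedness is exactly what supplies the irreducible separable polynomial of degree greater than one needed for Fact~\ref{fact:prestel-def}.
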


We will need several lemmas.
We let $\Sa T_\uptau$ be the system of topologies induced by a field topology $\uptau$ on $K$.

\begin{fact}
\label{fact:open-map}
Suppose that $\uptau$ is a t-Henselian topology on $K$, $V$ and $W$ are $K$-varieties, and $f : V \to W$ is an \'etale morphism.
Then $f : V(K) \to W(K)$ is $\cT_\uptau$-open.
\end{fact}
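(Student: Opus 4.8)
The plan is to reduce the statement to a local, affine computation and then invoke t-Henselianity in the form of the implicit-function-theorem-like behavior guaranteed by Definition~\ref{defn:t-hensel}. First I would observe that by Remark~\ref{rem:Sytemdeterminedbylocal} and the standard fact that \'etale morphisms are locally standard \'etale (used already to give the elementary description of $\cE_K$ on $K^n$), it suffices to treat the case where $f : V \to W$ is a \emph{standard} \'etale morphism between affine varieties: $V$ is the closed subvariety of $W \times \Aa^1$ cut out by $h = 0$, $g \neq 0$ with $h \in (K[W])[y]$ monic in $y$ and $\der h/\der y$ invertible on $V$, and $f$ is the projection. Since $\cT_\uptau$ turns open immersions into open embeddings, the locus $g \neq 0$ can be ignored, so the heart of the matter is: given $p \in V(K)$, the image $f(V(K))$ contains a $\uptau$-neighborhood of $f(p)$ in $W(K)$.

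Next I would unwind what $f(V(K)) \ni f(p)$ means concretely. Writing $w_0 = f(p) \in W(K)$ and $y_0 \in K$ for the $\Aa^1$-coordinate of $p$, we have $h(w_0, y_0) = 0$ and $(\der h/\der y)(w_0, y_0) \neq 0$. The claim becomes: for every $w$ in a small enough $\uptau$-neighborhood of $w_0$ in $W(K)$, the polynomial $h(w, y) \in K[y]$ has a root in $K$ near $y_0$. After translating so that $y_0 = 0$ and rescaling so that the coefficient of $y$ in $h(w_0, y)$ is $1$, and then perturbing $w$, the polynomial $h(w,y)$ has the form $y^{d} + \cdots + c_1(w) y + c_0(w)$ with $c_1(w)$ close to $1$ and $c_0(w)$ close to $0$ as $w \to w_0$; after dividing through by the (unit, since $\uptau$ is a field topology and the leading-type coefficients are controlled) leading coefficient and absorbing the tail, this is exactly the shape $x^{n+2} + x^{n+1} + \alpha_n x^n + \cdots + \alpha_1 x + \alpha_0$ with all $\alpha_i$ in a prescribed $\uptau$-open neighborhood $U$ of $0$ appearing in Definition~\ref{defn:t-hensel}(2), once we match degrees. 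Invoking that definition gives a root in $K$, hence a $K$-point of $V$ over $w$; so $f(V(K))$ is a $\uptau$-neighborhood of $w_0$, which together with continuity of $f$ (part of $\cT_\uptau$ being a system of topologies, Definition~\ref{sys-top:def}(\ref{st-1})) shows $f$ is $\cT_\uptau$-open on $\Aa^1$-many coordinates; Lemma~\ref{lem:affine2} then upgrades this from affine to all $K$-varieties.

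The main obstacle, and the place requiring genuine care rather than bookkeeping, is the reduction of the monic polynomial $h(w,y)$ — of arbitrary degree $d$, with arbitrary $K$-coefficients that merely vary $\uptau$-continuously — to the rigid normal form $x^{m} + x^{m-1} + \alpha_{m-2} x^{m-2} + \cdots + \alpha_0$ with the $\alpha_i$ forced into the single neighborhood $U$ supplied by t-Henselianity. The degree must be normalized (one can take $m = d$ if $d \geq 2$, and pad/adjust if $d \le 1$, noting $d=1$ is trivial and $d=0$ impossible for a genuine \'etale cover with a point in the fiber), and one must arrange both that the coefficient of $x^{m-1}$ is exactly $1$ — achieved by a translation $y \mapsto y + t$ and then a scaling $y \mapsto s y$, with $t, s \in K$ chosen depending on $w$ but staying in a $\uptau$-neighborhood of $(0, \text{unit})$ — and that the resulting lower coefficients are all $\uptau$-small. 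This is essentially the classical derivation that t-Henselianity implies a topological Hensel's lemma for arbitrary polynomials with a simple root, so I would either cite Prestel--Ziegler~\cite{Prestel1978} for the requisite normalization lemma or carry out the two affine substitutions explicitly, checking at each step that the substitution is $\uptau$-continuous in $w$ and that the image of the root under the inverse substitution is the desired $K$-point of $V$. The remaining ingredients — stability under the $g \neq 0$ restriction, passing from standard \'etale to general \'etale via a Zariski-local cover, and the affine-to-general upgrade — are all routine given the lemmas already in hand.
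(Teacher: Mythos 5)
The paper does not actually prove this fact; it simply cites \cite[Proposition~2.8]{HHJ-henselian} and remarks that the perfection hypothesis there can be dropped. So there is no in-paper proof to compare against. Your sketch is the expected content of such a citation: reduce by Remark~\ref{rem:Sytemdeterminedbylocal} and the local structure theorem to a standard \'etale morphism, then show the image contains a $\uptau$-neighborhood of each point via a ``topological implicit function theorem'' coming from Definition~\ref{defn:t-hensel}(2). Your overall architecture is right, and deferring the normalization lemma to Prestel--Ziegler is a reasonable call.

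Two cautions on the middle step, which is where all the content lives. First, the claim that the normal form is reached by ``a translation $y \mapsto y + t$ and then a scaling $y \mapsto sy$'' is not quite right: after translating so that the simple root sits at $0$ and the monic polynomial is $y^d + c_{d-1}y^{d-1} + \cdots + c_1 y + c_0$ with $c_1$ bounded away from $0$ and $c_0 \to 0$, there is in general no scaling in $K$ that makes the $y^{d-1}$-coefficient equal to $1$ (it might be $0$). The actual reduction is: substitute $y \mapsto -c_0 z / c_1$, divide by $c_0$ (the case $c_0 = 0$ being trivial), reverse the polynomial ($z \mapsto 1/x$, multiply by $x^d$), and flip the sign of the variable; one then lands exactly on $x^d + x^{d-1} + (\text{small})$. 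So the substitutions are not both affine, and one needs $d \ge 2$, which holds because $\partial h/\partial y \ne 0$ forces the fiber polynomial to be separable and the trivial case $d \le 1$ needs no Henselianity.

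Second, and more substantively, Definition~\ref{defn:t-hensel}(2) only asserts that the normalized polynomial \emph{has some} root in $K$, while your argument needs a root of $h(w,\cdot)$ that is $\uptau$-close to $y_0$: close enough to remain in the given $\uptau$-open $U \subseteq V(K)$ you are pushing forward, and close enough that $g(w, y^*) \ne 0$ by continuity of $g$. After unwinding the substitutions the recovered root is $y^* = c_0/(c_1 x^{**})$ where $x^{**}$ is the $K$-root produced by the axiom; $y^* \to 0$ only if $x^{**}$ is bounded away from $0$, i.e., is the root clustered near $-1$ rather than one of the $d-1$ roots clustered near $0$. This is genuinely where the argument needs care, not mere bookkeeping. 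The resolution is known (in a V-topology one can bound the location of the $K$-root, and Prestel--Ziegler establish the equivalence of the bare root-existence axiom with a full topological Hensel's lemma for simple roots), but as written your sketch passes over it too quickly: ``invoking that definition gives a root \dots hence a $K$-point of $V$ over $w$'' is not yet justified without this localization. If you cite Prestel--Ziegler for the topological Hensel's lemma in its localized form, the gap closes; if you carry out the reduction by hand, this is the step that requires an argument, not just a substitution.
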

Fact~\ref{fact:open-map} is essentially \cite[Proposition~2.8]{HHJ-henselian}.
They only state the result for perfect fields, but their proof goes through exactly as written without the assumption of perfection.

\medskip
Fact~\ref{fact:open-map} and Proposition~\ref{characterization} show that $\cT_\uptau$ refines $\cE_K$ when $\uptau$ is t-Henselian. 
We now prove some general lemmas which will be used to show the other direction: $\cE_K$ refines $\cT_\uptau$.

\begin{lemma}
\label{lem:def-loc-bound}
Suppose that $\uptau_0$ is an affine invariant topology on $K$, $\uptau_1$ is a non-discrete field topology on $K$, and some non-empty $X \subseteq K$ is $\uptau_0$-open and $\uptau_1$-bounded.
Then $\uptau_0$ refines $\uptau_1$.
\end{lemma}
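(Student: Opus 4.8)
The statement to prove is Lemma~\ref{lem:def-loc-bound}: if $\uptau_0$ is affine invariant, $\uptau_1$ is a non-discrete field topology, and some non-empty $X \subseteq K$ is simultaneously $\uptau_0$-open and $\uptau_1$-bounded, then $\uptau_0$ refines $\uptau_1$. The plan is to show that every $\uptau_1$-open set is $\uptau_0$-open. Since both topologies are affine invariant (the $\uptau_1$-topology is affine invariant because it is a field topology, and $\uptau_0$ is affine invariant by hypothesis), it suffices to produce a $\uptau_0$-neighborhood basis of $0$ consisting of $\uptau_1$-open sets; equivalently, it suffices to show that every $\uptau_1$-open neighborhood $U$ of $0$ contains a non-empty $\uptau_0$-open set, and then translate.

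First I would translate $X$ so that it contains $0$: pick $a \in X$, and replace $X$ by $X - a$, which is still $\uptau_0$-open (affine invariance of $\uptau_0$) and still $\uptau_1$-bounded (boundedness is preserved under additive translation, as recorded in Definition~\ref{defn:V}), and now contains $0$. Next, fix an arbitrary $\uptau_1$-open neighborhood $U$ of $0$; I want to find a non-empty $\uptau_0$-open subset of $U$. By $\uptau_1$-boundedness of $X$, there is a non-zero $c \in K$ with $c \cdot X \subseteq U$. Now $c \cdot X$ is $\uptau_0$-open, since $x \mapsto cx$ is an affine transformation and $\uptau_0$ is affine invariant, and it is non-empty (it contains $0$). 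Hence $c \cdot X \subseteq U$ is the desired non-empty $\uptau_0$-open subset of $U$.

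From here I would conclude by a two-transitivity / translation argument. Given any $\uptau_1$-open $W \subseteq K$ and any point $b \in W$, the set $W - b$ is a $\uptau_1$-open neighborhood of $0$, so by the previous paragraph it contains a non-empty $\uptau_0$-open set $O$; pick $d \in O$. Then $W - b$ contains the $\uptau_0$-open set $O$, and translating back, $W$ contains the $\uptau_0$-open set $O + b$, which contains $d + b$. But I need a $\uptau_0$-open neighborhood of the \emph{given} point $b$, not merely of some point $d + b$ in $W$; to fix this I first translate $O$ additively by $-d$ so that it becomes a $\uptau_0$-open neighborhood of $0$ (using affine invariance of $\uptau_0$ again), and then translate by $+b$ to get a $\uptau_0$-open neighborhood of $b$ contained in $W$. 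Since $b \in W$ was arbitrary, $W$ is $\uptau_0$-open, so $\uptau_0$ refines $\uptau_1$.

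The only mild subtlety — and the one place I would be careful — is the bookkeeping with translations: one must move the witnessing $\uptau_0$-open set so that it is centered at $0$ before it can be freely slid to any prescribed point, and this uses affine invariance of $\uptau_0$ at each step; boundedness, by contrast, is only invoked once (and only needs invariance under scaling and additive translation, both of which hold for bounded sets by the remarks in Definition~\ref{defn:V}). There is no real obstacle here; the argument is entirely formal once the role of two-transitivity of the affine group is isolated, exactly as in Lemma~\ref{lem:hausdorff-0}.
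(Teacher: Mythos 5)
Your overall strategy is the same as the paper's: translate $X$ so that $0 \in X$, use $\uptau_1$-boundedness to scale $X$ into any prescribed $\uptau_1$-open neighborhood of $0$, and then cover an arbitrary $\uptau_1$-open set by affine translates of scaled copies of $X$, each of which is $\uptau_0$-open. That is exactly the paper's one-paragraph proof.

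However, your final ``fix'' is incorrect as written. You have $O \subseteq W - b$ and $d \in O$, and you replace $O$ by $O - d$ and then translate by $+b$. But $(O - d) + b = O + (b - d) \subseteq (W - b) + (b - d) = W - d$, which is not contained in $W$ unless $d = 0$. So after this translation you have a $\uptau_0$-open neighborhood of $b$, but it need not sit inside $W$. The good news is that the ``problem'' you were trying to fix was not actually a problem: in your second paragraph you produced $O = c \cdot X$ with $0 \in X$, so $O$ already contains $0$. Taking $d = 0$ makes $O + b$ a $\uptau_0$-open neighborhood of $b$ contained in $W$, and you are done — no further translation is needed, and none is correct. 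This is exactly how the paper's proof works: the translate $\alpha_\beta X + \beta$ automatically contains $\beta$ because $0 \in X$, so the union over $\beta \in U$ of these $\uptau_0$-open sets equals $U$. Once you observe $0 \in O$, your argument matches the paper's.
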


\begin{proof}
After translating if necessary we suppose that zero is in $X$.
Let $U$ be a nonempty $\uptau_1$-open subset of $K$.
So for every $\beta \in U$ there is $\alpha_\beta \in K^\times$ such that $\alpha_\beta X \subseteq U - \beta$.
As $\uptau_0$ is affine invariant $\alpha_\beta X + \beta$ is $\uptau_0$-open for all $\beta \in U$.
So $U = \bigcup_{\beta \in U} (\alpha_\beta X + \beta)$ is $\uptau_0$-open.
\end{proof}

\begin{lemma}
\label{lem:V-proj}
Let $\upiota : \Aa^1 \hookrightarrow \Pp^1$ be the open immersion given by $\upiota(x) = (x:1)$.
Suppose that $\uptau$ is a V-topology on $K$.
Then a subset $X$ of $\Aa^1(K)$ is $\uptau$-bounded if and only if $\infty$ is not in the $\cT_\uptau$-closure of $\upiota(X)$.
\end{lemma}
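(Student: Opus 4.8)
The plan is to unwind both conditions in terms of the explicit V-topology $\uptau$ coming, via Fact~\ref{fact:V}, from a valuation or absolute value, and to recognize ``$\infty$ is not in the $\cT_\uptau$-closure of $\upiota(X)$'' as a purely local statement about the standard affine chart around $\infty$. Concretely, $\Pp^1$ is covered by the two charts $\upiota : \Aa^1 \hookrightarrow \Pp^1$, $x \mapsto (x:1)$, and $\upiota^\ast : \Aa^1 \hookrightarrow \Pp^1$, $y \mapsto (1:y)$, with $\infty = (1:0) \in \upiota^\ast(\Aa^1(K))$ and with transition map $y = 1/x$ on the overlap $\Pp^1 \setminus \{0,\infty\}$. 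By Fact~\ref{fact:field-top} the $\cT_\uptau$-topology on $\Pp^1(K)$ is the unique one making each chart a $\cT_\uptau$-open embedding with the product (here just $\uptau$) topology on $\Aa^1(K)$, so a basic $\cT_\uptau$-neighborhood of $\infty$ is $(\upiota^\ast)(N)$ for $N$ a $\uptau$-neighborhood of $0$ in $K$; identifying everything inside $\Pp^1(K)$, such a neighborhood meets $\upiota(\Aa^1(K))$ in the set $\{x \in K^\times : 1/x \in N\} = (K \setminus N^c)^{-1}$-type sets, i.e.\ in sets of the form $\{x : x^{-1} \in N\}$.

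With this translation in hand the equivalence becomes almost a restatement of the definition of boundedness. First I would prove the forward direction: suppose $X \subseteq \Aa^1(K)$ is $\uptau$-bounded. I must show some $\cT_\uptau$-open neighborhood of $\infty$ misses $\upiota(X)$, equivalently (by the previous paragraph) that there is a $\uptau$-neighborhood $N$ of $0$ with $x^{-1} \notin N$ for all $x \in X$ — plus a harmless check that finitely many points of $X$ equal to $0$ cause no trouble, but in fact boundedness of $X$ already forces: pick a $\uptau$-neighborhood $U$ of $0$ whose complement's inverse is bounded, hmm — more directly, since $X$ is bounded, for any neighborhood $U$ of $0$ there is $a \in K^\times$ with $aX \subseteq U$, and then $N := aU'$ for a suitable small $U'$ works; this is where I spell out the elementary manipulation. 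For the converse, suppose $\infty \notin \overline{\upiota(X)}^{\,\cT_\uptau}$. Then there is a $\uptau$-neighborhood $N$ of $0$ with $x^{-1} \notin N$ for every $x \in X$; since $\uptau$ is a V-topology, given any neighborhood $U$ of $0$ the set $(K \setminus N)^{-1}$ — which contains $X$ — is bounded, and a subset of a bounded set is bounded, hence $X$ is bounded. Here I use precisely the defining property of a V-topology from Definition~\ref{defn:V} together with the closure of bounded sets under subsets.

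The only mildly delicate points — and the place I expect to spend the most care — are (i) correctly identifying the induced $\cT_\uptau$-neighborhood basis of the point $\infty \in \Pp^1(K)$ from Fact~\ref{fact:field-top}, since that fact is stated for affine spaces and one must invoke the open-immersion axiom (\ref{st-2}) to pass to the $\infty$-chart of $\Pp^1$, and (ii) matching up the bookkeeping between ``$x \in X$, $x \ne 0$, $x^{-1} \in N$'' and the literal phrasing ``$(K \setminus U)^{-1}$ is bounded'' in the V-topology definition, i.e.\ keeping straight which set is the neighborhood and which is its complement. Neither is hard, but both are the kind of index-chasing where a sign or an inverse can slip. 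I would present the argument as the two implications above, each about three lines once the neighborhood-basis description of $\infty$ is recorded as a preliminary observation.
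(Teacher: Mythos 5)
Your proposal is correct and takes essentially the same route as the paper, which also reduces the statement to the two observations that (a) in a V-topology $X$ is bounded iff $0 \notin \operatorname{Cl}((X\setminus\{0\})^{-1})$ and (b) that latter condition is the chart-change translation of $\infty\notin\operatorname{Cl}(\upiota(X))$. The one step you flag as needing to be spelled out (bounded $\Rightarrow$ bounded away from $\infty$) does require slightly more than the V-topology axiom alone — e.g.\ pick a neighborhood $U$ of $0$ with $1\notin U$, then $W$ with $W\cdot W\subseteq U$, take $a$ with $aX\subseteq W$, and check $aW$ misses $(X\setminus\{0\})^{-1}$ — but this is exactly the kind of bookkeeping you anticipated, and the paper leaves it equally implicit.
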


We let $\operatorname{Cl}(X)$ be the $\Sa T_\uptau$-closure of $X \subseteq \Pp^1(K)$.

\begin{proof}
As $\uptau$ is a V-topology, $X$ is $\uptau$-bounded if and only if zero is not in the closure of $(X \setminus \{0\})^{-1}$.
Observe that $0$ is not in $\operatorname{Cl}((X \setminus \{0\})^{-1})$ if and only if $\infty$ is not in $\operatorname{Cl}(\upiota(X))$.
\end{proof}

\begin{lemma} 
\label{dense-refiner:lem}
Suppose that $\Sa S$ is a system of topologies over $K$ and $\uptau$ is a $V$-topology on $K$.
Suppose some non-empty $\Sa S$-open $U \subseteq \Aa^1(K)$ is not $\uptau$-dense in $\Aa^1(K)$.
Then $\Sa S$ refines $\Sa T_\uptau$.
\end{lemma}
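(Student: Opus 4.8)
The goal is to upgrade the hypothesis — some nonempty $\Sa S$-open $U \subseteq \Aa^1(K)$ fails to be $\uptau$-dense — into the conclusion that $\Sa S$ refines $\Sa T_\uptau$, i.e.\ that the $\Sa S$-topology on $\Aa^1(K)$ refines $\uptau$ (the reduction to affine space being Lemma~\ref{lem:field-top-reduce} once $\Sa S$ on $\Aa^1(K)$ refines $\uptau$). Given the available machinery, the natural route is to produce a nonempty set that is simultaneously $\Sa S$-open and $\uptau$-bounded, and then invoke Lemma~\ref{lem:def-loc-bound} (noting that the $\Sa S$-topology on $\Aa^1(K)=K$ is affine invariant, by the remark preceding Lemma~\ref{lem:hausdorff-0}). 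So the entire content is: non-$\uptau$-dense $\Sa S$-open set $\leadsto$ $\uptau$-bounded $\Sa S$-open set.

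First I would pass from $\Aa^1$ to $\Pp^1$. Since $U$ is not $\uptau$-dense in $\Aa^1(K)$, its complement contains a nonempty $\uptau$-open set, hence (after an affine translation, using affine invariance of the $\uptau$-topology and of the $\Sa S$-topology) we may arrange that $0$ is a $\uptau$-interior point of $\Aa^1(K)\setminus U$. Now view $U$ inside $\Pp^1(K)$ via the open immersion $\upiota : \Aa^1 \hookrightarrow \Pp^1$ of Lemma~\ref{lem:V-proj}; because $\upiota$ is an open immersion, $\upiota(U)$ is an $\Sa S$-open subset of $\Pp^1(K)$. Apply the linear fractional transformation $\sigma : (x:y)\mapsto(y:x)$, which swaps $0$ and $\infty$ and is an $\Sa S$-homeomorphism of $\Pp^1(K)$ (it is an isomorphism of varieties, so condition (1) of Definition~\ref{sys-top:def} applies in both directions) as well as a $\uptau$-homeomorphism. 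Then $\sigma(\upiota(U))$ is an $\Sa S$-open subset of $\Pp^1(K)$ which misses a $\uptau$-neighborhood of $\infty$; equivalently, $\infty$ is not in the $\Sa T_\uptau$-closure of $\sigma(\upiota(U))\cap \upiota(\Aa^1(K))$.

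Next, set $X := \upiota^{-1}\bigl(\sigma(\upiota(U)) \cap \upiota(\Aa^1(K))\bigr) \subseteq \Aa^1(K)$; this is nonempty provided $U$ was not contained in $\{$the fiber over $\infty\}$, which is automatic since $U\subseteq\Aa^1(K)$ to begin with and $\sigma(\upiota(U))$ meets $\upiota(\Aa^1(K))$ as soon as $U$ is not a single point — and we may shrink $U$ to an arbitrarily small basic $\Sa S$-open set to guarantee this, or simply note $\sigma$ sends all but one point of $\upiota(U)$ into $\upiota(\Aa^1(K))$. The set $X$ is $\Sa S$-open (it is the preimage under the open immersion $\upiota$ of an $\Sa S$-open set, intersected with the $\Sa S$-open $\upiota(\Aa^1(K))$), and by construction $\infty \notin \operatorname{Cl}(\upiota(X))$, so Lemma~\ref{lem:V-proj} tells us $X$ is $\uptau$-bounded. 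Now Lemma~\ref{lem:def-loc-bound}, applied with $\uptau_0$ the $\Sa S$-topology on $\Aa^1(K)$ and $\uptau_1 = \uptau$, gives that the $\Sa S$-topology on $\Aa^1(K)$ refines $\uptau$; then Lemma~\ref{lem:field-top-reduce} finishes the proof. The one point requiring a little care — and the place I expect the argument could get fiddly — is the bookkeeping of which composition of $\upiota$ and the fractional transformation lands the relevant open set away from $\infty$ while keeping it nonempty and $\Sa S$-open; once the correct identifications are fixed, each individual claim is a one-line appeal to an already-established lemma.
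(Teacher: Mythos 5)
Your proposal is correct and follows essentially the same route as the paper: produce a $\uptau$-bounded $\Sa S$-open subset of $\Aa^1(K)$ by pushing $U$ through a linear fractional transformation that sends a non--$\uptau$-closure point of $U$ to $\infty$, then invoke Lemma~\ref{lem:V-proj}, Lemma~\ref{lem:def-loc-bound}, and Lemma~\ref{lem:field-top-reduce}. The only cosmetic difference is that you first translate the bad point to $0$ and then apply $x \mapsto 1/x$, whereas the paper applies $x \mapsto 1/(x-\alpha)$ in one step; also, your worry about nonemptiness of $X$ is moot, since arranging $0 \notin U$ already guarantees $\infty \notin \sigma(\upiota(U))$, so $X$ is just the bijective image of $U$.
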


\begin{proof}
By Lemmas~\ref{lem:field-top-reduce} and \ref{lem:def-loc-bound} it suffices to  produce a subset of $\Aa^1(K)$ which is $\Sa S$-open and $\uptau$-bounded.
Fix $\alpha \in \Aa^1(K)$ such that $\alpha$ does not lie in the $\uptau$-closure of $U$.
We let $f :\Pp^1(K) \to \Pp^1(K)$ be given by $f(x) = 1/(x - \alpha)$.
Both the $\Sa S$-topology and the $\cT_\uptau$-topology on $\Pp^1(K)$ are invariant under linear fractional transformations, so $f(U)$ is $\Sa S$-open in $\Pp^1(K)$ and $\infty = f(\alpha)$ is not in the $\cT_\uptau$-closure of $V$.
By Lemma~\ref{lem:V-proj} $f(U) \subseteq \Aa^1(K)$ is $\uptau$-bounded.
\end{proof}

\begin{lemma}
\label{lem:key-refine}
Suppose that $\uptau$ is a V-topology on $K$.
Suppose $f \in K[x]$ is such that $f' \not\equiv 0$ and $f(\Aa^1(K))$ is not $\uptau$-dense in $\Aa^1(K)$.
Then $\cE_K$ refines $\cT_\uptau$.
\end{lemma}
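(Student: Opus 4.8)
The plan is to reduce to the previous lemma, Lemma~\ref{dense-refiner:lem}, which already says that if some nonempty $\Sa S$-open subset of $\Aa^1(K)$ fails to be $\uptau$-dense then $\Sa S$ refines $\cT_\uptau$. So it suffices to exhibit a nonempty $\cE_K$-open subset of $\Aa^1(K)$ that is not $\uptau$-dense. The obvious candidate is the image $f(\Aa^1(K))$ itself: by hypothesis it is not $\uptau$-dense, so the only thing to check is that it is $\cE_K$-open, i.e.\ that $f : \Aa^1 \to \Aa^1$ is an \'etale morphism — or at least \'etale on a suitable open subvariety whose image is still not $\uptau$-dense.

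First I would note that the hypothesis $f' \not\equiv 0$ means $f$ is \'etale away from the (finite) vanishing locus of $f'$. Let $O \subseteq \Aa^1$ be the open subvariety where $f' \ne 0$; then $f|_O : O \to \Aa^1$ is \'etale, so $f(O(K))$ is an $\cE_K$-\'etale image in $\Aa^1(K)$, hence $\cE_K$-open. Next I would check that $f(O(K))$ is still not $\uptau$-dense: indeed $O(K)$ is $\Aa^1(K)$ minus finitely many points, so $f(O(K))$ differs from $f(\Aa^1(K))$ by at most finitely many points, and removing finitely many points from a non-dense set cannot make it dense (in a non-discrete field topology a finite set is nowhere dense). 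So $f(O(K))$ is a nonempty $\cE_K$-open subset of $\Aa^1(K)$ which is not $\uptau$-dense.

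Finally I would apply Lemma~\ref{dense-refiner:lem} with $\Sa S = \cE_K$ and the open set $U = f(O(K))$, concluding that $\cE_K$ refines $\cT_\uptau$, which is exactly the statement. (A small variant, if one prefers to avoid removing points: since $O(K)$ is Zariski-dense and $f$ is continuous, one could instead work entirely within $O$, but the finite-set argument above is cleaner.)

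The argument is essentially bookkeeping, so there is no serious obstacle; the only point requiring a moment's care is the passage from $f(\Aa^1(K))$ not $\uptau$-dense to $f(O(K))$ not $\uptau$-dense, which rests on the elementary fact that a finite subset of $K$ is $\uptau$-nowhere-dense for a non-discrete field topology $\uptau$ — and this is immediate since points are closed and $\uptau$ is non-discrete, so no point is interior.
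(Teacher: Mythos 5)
Your proof is correct and takes essentially the same route as the paper: restrict $f$ to the open subvariety $O$ where $f' \ne 0$ to get an \'etale morphism, observe $f(O(K))$ is a nonempty $\cE_K$-open set that is not $\uptau$-dense, and invoke Lemma~\ref{dense-refiner:lem}. One small simplification: you do not need the ``finite sets are nowhere dense'' argument, since $f(O(K)) \subseteq f(\Aa^1(K))$ and a subset of a non-dense set is automatically non-dense.
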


\begin{proof}
Let $U$ be the open subvariety of $\Aa^1$ given by $f'(x) \ne 0$.
So $f$ gives an \'etale morphism $U \to \Aa^1$.
Then $f(U(K))$ is an \'etale image in $\Aa^1(K)$ and $f(U(K))$ is not $\uptau$-dense in $\Aa^1(K)$.
An application of Lemma~\ref{dense-refiner:lem} shows that $\cE_K$ refines $\cT_\uptau$.
\end{proof}

Fact~\ref{fact:prestel-def} is \cite[Lemma 7.5]{Prestel1978}.

\begin{fact}
\label{fact:prestel-def}
Suppose that $\uptau$ is a t-Henselian topology on $K$ and $f \in K[x]$ is separable and has no zeros in $K$.
Then zero is not a $\uptau$-limit point of $f(\Aa^1(K))$.
\end{fact}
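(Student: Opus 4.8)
This is a theorem of Prestel and Ziegler, and I would prove it as follows. First, some harmless reductions. We may assume $\deg f \geq 1$, as the statement is otherwise trivial, and we may assume $K$ is not separably closed, since over a separably closed field a separable polynomial of positive degree always has a root, so the hypothesis is vacuous. Thus $K$ carries a unique t-Henselian topology, namely $\uptau$. Since multiplication by a nonzero constant is a $\uptau$-homeomorphism of $\Aa^1(K)$ preserving both the hypothesis and the conclusion, we may assume $f$ is monic of degree $d$.

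Next, I would pass to a saturated model. Let $K^* \succeq K$ be $\aleph_1$-saturated. Then $K^*$ is not separably closed and is t-Henselian (being elementarily equivalent to $K$), so by $\aleph_1$-saturation $K^*$ admits a Henselian valuation $v$ whose associated topology $\uptau_v$ is the t-Henselian topology of $K^*$. The polynomial $f$ still has no root in $K^*$ and is still separable. Since the t-Henselian topology of a non-separably-closed field admits a definable neighborhood basis at $0$ (Prestel--Ziegler), the assertion ``$0$ is not a limit point of $f(\Aa^1(\,\cdot\,))$ in the t-Henselian topology'' is first-order in the coefficients of $f$, and hence transfers from $K^*$ back to $K$. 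So it suffices to treat $K^*$; rename it $K$, now Henselian via the valuation $v$, with $\uptau = \uptau_v$.

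Now a Krasner estimate finishes it. Write $f = \prod_{i=1}^{d}(x - \rho_i)$ over an algebraic closure, where the $\rho_i$ are pairwise distinct by separability; extend $v$ to the algebraic closure and set $M = \max_{i \neq j} v(\rho_i - \rho_j) < \infty$. I claim $v(a) > dM$ forces $a \notin f(\Aa^1(K))$; since $\{x : v(x) > dM\}$ is a $\uptau$-neighborhood of $0$, this is exactly what we want. Suppose instead $a = f(b)$ with $b \in K$ and $v(a) > dM$. Then $v(a) = \sum_{i=1}^{d} v(b - \rho_i)$, so some $i_0$ satisfies $v(b - \rho_{i_0}) \geq v(a)/d > M \geq v(\rho_{i_0} - \rho_j)$ for all $j \neq i_0$; since $\rho_{i_0}$ is separable over $K$, Krasner's lemma (applicable as $v$ is Henselian) yields $\rho_{i_0} \in K(b) = K$. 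But then $f$ has a root in $K$, contrary to hypothesis.

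The main obstacle, on this route, is the transfer step -- pinning down that the t-Henselian topology is uniformly enough definable across elementarily equivalent non-separably-closed fields -- which is where I lean on Prestel--Ziegler. An alternative that avoids model-theoretic transfer (and is essentially the argument in \cite{Prestel1978}) is to work directly over $K$: choose a valuation or absolute value inducing $\uptau$ via Fact~\ref{fact:V}; using continuity of the morphism $\Pp^1 \to \Pp^1$ induced by $f$, Hausdorffness, and Lemma~\ref{lem:V-proj}, show that if $0$ were a limit point of $f(\Aa^1(K))$ there would be $b \in K$ with $f(b)$ arbitrarily close to $0$, lying in a fixed bounded region and bounded away from the zeros of $f'$; then $b$ is a simple zero of $f - f(b)$ at which $f'$ is bounded away from $0$, and after an affine substitution $x \mapsto b + \lambda w$ the polynomial $f$ becomes a small perturbation of $w^{d} + w^{d-1}$, which has a zero in $K$ by the topological Hensel property in Definition~\ref{defn:t-hensel}, contradicting that $f$ has no zero in $K$. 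The technical heart of this second route is choosing $\lambda$ to match the exact normalization of Definition~\ref{defn:t-hensel}, in particular handling the degenerate case where the naive choice of $\lambda$ is small or vanishes.
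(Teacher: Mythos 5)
The paper does not prove this statement; it is quoted as a black box, with the sentence ``Fact~\ref{fact:prestel-def} is \cite[Lemma 7.5]{Prestel1978}'' standing in for a proof. So there is nothing in the paper to compare line by line against. Your first route, however, is a correct alternative argument, and it is worth noting how it relates to the cited source: the two inputs you lean on (a definable neighborhood basis for the t-Henselian topology on a non-separably-closed field, and the existence of a non-trivial Henselian valuation on an $\aleph_1$-saturated t-Henselian field) are themselves theorems of Prestel--Ziegler, the same paper the fact is drawn from, and they are also invoked elsewhere in this paper (for instance in Corollary~\ref{cor:saturated}). So your proof replaces one Prestel--Ziegler lemma by two others plus an elementary Krasner estimate, which is honest but not obviously lighter.

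The Krasner step is sound: for $f$ monic separable of degree $d\ge 2$ with roots $\rho_1,\dots,\rho_d$, if $v(f(b))=\sum_i v(b-\rho_i)>dM$ then some $v(b-\rho_{i_0})>M\ge v(\rho_{i_0}-\rho_j)$ for every $j\ne i_0$, and since the $K$-conjugates of $\rho_{i_0}$ are among the $\rho_j$ and $v$ extends uniquely to $K^{\mathrm{sep}}$ by Henselianity, Krasner gives $\rho_{i_0}\in K(b)=K$, contradicting that $f$ has no $K$-zero. Two minor points you should make explicit if you write this up: (i) $M$ lives in the value group of $K^{\mathrm{sep}}$, not of $K$, so one should observe that $\{x\in K: v(x)>dM\}$ still contains a $\Gamma_K$-ball (value groups are cofinal in their divisible hulls) and is therefore a $\uptau_v$-neighborhood of $0$; and (ii) the degree-$0$ case is disposed of by Hausdorffness of a V-topology, and the degree-$1$ case cannot occur under the hypotheses. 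Your second route is a sketch of the actual Prestel--Ziegler argument; the step you flag (the affine substitution turning $f$ near $b$ into a monic polynomial fitting the normalization of Definition~\ref{defn:t-hensel}, using that separability of $f$ forces $f'(b)$ to be bounded away from $0$ when $f(b)$ is small and $b$ is bounded) is exactly the place where the details are nontrivial, and as written it is an outline rather than a proof.
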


We now prove Theorem~\ref{thm:hensel-main}.

\begin{proof}
Let $\uptau$ be the t-Henselian topology on $K$.
By Fact~\ref{fact:open-map}, $\cT_\uptau$ refines $\cE_K$.
As $K$ is not separably closed there is a nonconstant irreducible separable $f \in K[x]$.
Then $f' \not\equiv 0$.
By Fact~\ref{fact:prestel-def}, $f(\Aa^1(K))$ is not $\uptau$-dense.
By Lemma~\ref{lem:key-refine}, $\cE_K$ refines $\cT_\uptau$.
\end{proof}

Theorem~\ref{thm:hensel-main} is the first claim of Theorem B.  The second claim will be proven in \S\ref{section:thmb-redux}.

\begin{corollary}
\label{rcf-case} Suppose that $K$ is real closed.  Then the \'etale-open system of topologies over $K$ is induced by the order topology on $K$.
\end{corollary}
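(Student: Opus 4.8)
The plan is to derive Corollary~\ref{rcf-case} as an immediate consequence of Theorem~\ref{thm:hensel-main}. First I would recall that a real closed field $K$ is never separably closed, since $x^2+1$ has no root in $K$ (indeed $K$ admits a field order, so $-1$ is not a square). Second, I would note that the order topology on $K$ is a $\mathrm{V}$-topology (it is induced by the field order, as observed just after Definition~\ref{defn:V}) and that it is t-Henselian: this is one of the standard examples listed after Definition~\ref{defn:t-hensel}, the point being that a polynomial of the form $x^{n+2}+x^{n+1}+\alpha_n x^n+\cdots+\alpha_0$ with all $\alpha_i$ sufficiently close to $0$ changes sign between two points near $0$ and hence has a root in $K$ by the intermediate value property of real closed fields.

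Given these two observations, $K$ is t-Henselian and not separably closed, so Theorem~\ref{thm:hensel-main} applies directly and tells us that the \'etale-open system of topologies over $K$ is induced by the t-Henselian topology on $K$. It then remains only to identify the t-Henselian topology with the order topology. Since a non-separably closed field admits at most one t-Henselian topology (Prestel--Ziegler, as recalled in \S\ref{section:t-Henselian}) and the order topology is t-Henselian, the order topology \emph{is} "the" t-Henselian topology on $K$. This finishes the proof.

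There is essentially no obstacle here; the only mild subtlety is making sure the order topology genuinely satisfies clause (2) of Definition~\ref{defn:t-hensel}, but this is exactly the classical statement that real closed fields have the intermediate value property for polynomials, together with the observation that for $\alpha_0,\dots,\alpha_n$ in a small enough neighborhood of $0$ the polynomial $x^{n+2}+x^{n+1}+\alpha_n x^n+\cdots+\alpha_0$ is positive at some fixed small positive point and negative at some fixed small negative point. Since the paper already lists the order topology on a real closed field among its running examples of t-Henselian topologies, I would simply cite that and not belabor the verification.
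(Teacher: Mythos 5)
Your proposal is correct and follows exactly the route the paper intends: the corollary is stated immediately after Theorem~\ref{thm:hensel-main} with no separate proof precisely because it is the specialization of that theorem to the real closed case, using the facts (both recorded in \S\ref{section:t-Henselian}) that the order topology on a real closed field is t-Henselian and that a non-separably closed field has a unique t-Henselian topology. Your additional remarks verifying that $K$ is not separably closed and that the order topology satisfies clause (2) of Definition~\ref{defn:t-hensel} via the intermediate value property are accurate and fill in the (deliberately omitted) details.
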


\begin{corollary}
\label{hens-case} Suppose that $(K,v)$ is a henselian valued field, and $K$ is not separably closed.  Then the \'etale-open system of topologies over $K$ is induced by the valuation topology.
\end{corollary}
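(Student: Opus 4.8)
The plan is to derive this directly from Theorem~\ref{thm:hensel-main}. I take $v$ to be non-trivial, so that the associated valuation topology $\uptau_v$ on $K$ is genuinely non-discrete; it is then a field topology, and in fact a V-topology, as observed immediately after Definition~\ref{defn:V}. Since $v$ is henselian, $\uptau_v$ satisfies the topological form of Hensel's lemma, i.e.\ $\uptau_v$ is t-Henselian --- this is exactly the remark following Definition~\ref{defn:t-hensel} that a valuation topology induced by a non-trivial henselian valuation is t-Henselian. In particular $K$ is a t-Henselian field.

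Next I would invoke the uniqueness result of Prestel and Ziegler recorded in \S\ref{section:t-Henselian}: a non-separably closed field admits at most one t-Henselian field topology. Since $K$ is not separably closed by hypothesis, $\uptau_v$ must coincide with ``the'' t-Henselian topology on $K$. Applying Theorem~\ref{thm:hensel-main}, whose hypotheses ($K$ t-Henselian and not separably closed) are now verified, gives that the \'etale-open system of topologies over $K$ is induced by the t-Henselian topology, which we have just identified with $\uptau_v$. Since $\uptau_v$ is by definition the valuation topology of $v$, this is precisely the assertion, and the proof is complete.

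I do not expect any genuine obstacle here: essentially all of the content is already packaged in Theorem~\ref{thm:hensel-main}. The only step requiring a moment's care is the bookkeeping that matches the two descriptions of the canonical topology --- namely that $\uptau_v$ is t-Henselian (which uses henselianity of $v$) and that such a topology is unique on a non-separably closed field (which uses the Prestel--Ziegler uniqueness theorem). Both ingredients are available from the facts cited earlier, so the argument is a short deduction rather than a new proof.
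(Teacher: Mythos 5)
Your proof is correct and follows exactly the paper's intended route: the paper states Corollary~\ref{hens-case} immediately after Theorem~\ref{thm:hensel-main} with no separate argument, because the deduction is precisely the one you give (the Henselian valuation topology is t-Henselian, $K$ is not separably closed, so Theorem~\ref{thm:hensel-main} applies, with Prestel--Ziegler uniqueness identifying $\uptau_v$ with the canonical t-Henselian topology). The only point worth flagging is that you correctly note the implicit non-triviality assumption on $v$, which the corollary's statement leaves tacit.
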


\subsection{Field orders and valuations}
\label{section:order-val}

In this section we show that the \'etale-open topology on a general field refines several other kinds of field topologies.
We first handle order topologies.

\begin{proposition}
\label{prop:order-refine}
Suppose $<$ is a field order on $K$.
Then $\cE_K$ refines the $<$-topologies.
\end{proposition}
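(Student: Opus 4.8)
The plan is to reduce this to the machinery already developed for $V$-topologies, specifically Lemma~\ref{lem:key-refine}, which says that $\cE_K$ refines $\cT_\uptau$ for a $V$-topology $\uptau$ whenever we can exhibit a polynomial $f \in K[x]$ with $f' \not\equiv 0$ and $f(\Aa^1(K))$ not $\uptau$-dense. Since an order topology on $K$ is visibly a $V$-topology (as noted after Definition~\ref{defn:V}), it suffices to produce such a polynomial for each field order $<$ on $K$. The natural candidate is $f(x) = x^2$: its derivative is $2x \not\equiv 0$ (one must check $\Chara(K) \ne 2$, but a field admitting an order has characteristic $0$, so this is automatic), and its image $f(\Aa^1(K)) = \{a^2 : a \in K\}$ consists of squares, hence of nonnegative elements with respect to $<$. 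Any negative element, e.g.\ $-1$, together with a small $<$-interval around it, is then disjoint from $f(\Aa^1(K))$, so the image is not $<$-dense in $\Aa^1(K)$.

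So the proof would run as follows. First observe that $K$ has characteristic $0$ since it is ordered, so $f(x) = x^2$ has $f'(x) = 2x \not\equiv 0$. Next, note that every element of $f(\Aa^1(K))$ is a square in $K$, hence $\geq 0$ with respect to $<$. Therefore the $<$-open set $\{y \in K : y < 0\}$ (which is nonempty, containing $-1$) is disjoint from $f(\Aa^1(K))$, so $f(\Aa^1(K))$ is not $<$-dense in $\Aa^1(K)$. Since the $<$-topology is a $V$-topology, Lemma~\ref{lem:key-refine} applies and gives that $\cE_K$ refines $\cT_{<}$, i.e.\ $\cE_K$ refines the system of topologies induced by $<$.

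I do not anticipate a serious obstacle here; the content is entirely in Lemma~\ref{lem:key-refine} and the earlier $V$-topology lemmas, and the present statement is a one-line application. If anything, the only point requiring a moment's care is making sure the hypotheses of Lemma~\ref{lem:key-refine} are literally met---that the $<$-topology is genuinely a $V$-topology (already recorded in the text) and that $f(x)=x^2$ is separable (automatic in characteristic $0$). One could alternatively phrase the argument directly via Lemma~\ref{dense-refiner:lem}, taking $U = \{a^2 : a \in K^\times\}$ as an explicit nonempty $\cE_K$-open set (it is the \'etale image of the squaring map restricted to $\Gg_m$) which is not $<$-dense, but routing through Lemma~\ref{lem:key-refine} is cleaner.
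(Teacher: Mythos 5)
Your proof is correct and is essentially identical to the paper's: both observe that the $<$-topology is a V-topology, note $\Chara(K)=0$ so $f(x)=x^2$ has $f'\not\equiv 0$, observe that the image (the squares) is not $<$-dense since it misses the negatives, and apply Lemma~\ref{lem:key-refine}. No differences worth noting.
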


Note that the $<$-topology is a V-topology.

\begin{proof}
Let $f \in K[x]$ be $f(x) = x^2$.
As $K$ is ordered $\Chara(K) = 0$ so $f' \not\equiv 0$.
The set of squares is not $<$-dense in $\Aa^1(K)$.
Apply Lemma~\ref{lem:key-refine}.
\end{proof}

Given a valued field $(L,v)$ we let $\Gamma_v$ be the value group, $\bk_v$ be the residue field, and $\cT_v$ be the induced system of topologies over $L$.
We refer to \cite[Chapter 5]{EP-value} for an account of the Henselization of a valued field.

\begin{theorem}
\label{thm:refine-valuation}
Suppose that $v$ is a non-trivial valuation on $K$ and the Henselization of $(K,v)$ is not separably closed.
Then $\cE_K$ refines $\cT_v$.
\end{theorem}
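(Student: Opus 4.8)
The statement reduces, via the Henselization, to the t-Henselian case already proved. Let $(K^h, v^h)$ be the Henselization of $(K,v)$. By hypothesis $K^h$ is not separably closed, and the valuation topology $\uptau^h$ on $K^h$ is t-Henselian (being induced by a non-trivial Henselian valuation). So Theorem~\ref{thm:hensel-main} applies to $K^h$: the \'etale-open system $\cE_{K^h}$ is induced by $\uptau^h$. The task is to transfer this down to $K$ using the restriction operation from \S\ref{section:restriction}. The subtlety is that $K^h/K$ is in general an \emph{infinite} algebraic extension, so I cannot use extension of systems, only restriction, and $\cE_{K^h}$ is a system over $K^h$, not $K$.

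First I would invoke Theorem~\ref{thm:general}: since $K^h/K$ is algebraic, $\cE_K$ refines $\rtp(\cE_{K^h})$. Next I would identify $\rtp(\cE_{K^h})$. Since $\cE_{K^h} = \cT_{\uptau^h}$ by Theorem~\ref{thm:hensel-main}, and since the subspace topology on $K$ induced by $\uptau^h$ is exactly the valuation topology $\uptau_v$ associated to $v$ on $K$ (the Henselization is an immediate-type extension of the topological field, and in any case $v^h$ restricts to $v$), Lemma~\ref{lem:field-top-restrict} gives that $\rtp(\cT_{\uptau^h}) = \cT_{\uptau_v} = \cT_v$. Combining, $\cE_K$ refines $\cT_v$, which is the claim. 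I expect the proof to be genuinely short — essentially a three-line chain: Theorem~\ref{thm:general} gives $\cE_K \succeq \rtp(\cE_{K^h})$; Theorem~\ref{thm:hensel-main} gives $\cE_{K^h} = \cT_{\uptau^h}$; Lemma~\ref{lem:field-top-restrict} gives $\rtp(\cT_{\uptau^h}) = \cT_v$.

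The one point requiring a little care — and the main obstacle, such as it is — is the claim that the subspace topology on $K$ induced by the valuation topology of $K^h$ equals the valuation topology of $(K,v)$. This is standard: the basic open neighborhoods of $0$ in $\uptau^h$ are the sets $\{x : v^h(x) > \gamma\}$ for $\gamma \in \Gamma_{v^h}$, and intersecting with $K$ gives $\{x \in K : v(x) > \gamma\}$; since $v$ and $v^h$ have the same value group (the Henselization is immediate), these are cofinal among the basic $\uptau_v$-neighborhoods of $0$ in $K$. So the identity map $(K,\uptau_v) \to (K, \uptau^h|_K)$ is a homeomorphism, and $\uptau_v$ is in particular a field topology on $K$, so Lemma~\ref{lem:field-top-restrict} applies verbatim. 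Alternatively, one can bypass Lemma~\ref{lem:field-top-restrict} entirely: by Lemma~\ref{lem:field-top-reduce} it suffices to show the $\cE_K$-topology on $\Aa^1(K)$ refines $\uptau_v$, and this follows directly from $\cE_K \succeq \rtp(\cE_{K^h})$ together with the observation that $\rtp(\cE_{K^h})$ on $\Aa^1(K) = K$ is the subspace topology from $(K^h,\uptau^h)$, which refines $\uptau_v$. Either route is routine once the identification of the subspace topology is in hand.
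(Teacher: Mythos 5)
Your proof is correct and follows essentially the same route as the paper's: invoke Theorem~\ref{thm:hensel-main} (equivalently Corollary~\ref{hens-case}) on the Henselization, restrict via Theorem~\ref{thm:general}, and identify the restricted system with $\cT_v$ using the immediacy of Henselization. The paper packages that last identification into a separate Lemma~\ref{lem:immediate} (restriction of valuation topologies under immediate extensions) rather than doing it inline, but the ultrametric-ball argument you sketch is exactly the content of that lemma.
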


We first prove a lemma.

\begin{lemma}
\label{lem:immediate}
Suppose that $v$ is a valuation on $K$ and $(L,w)$ is an extension of $(K,v)$.
Then $\rtp(\cT_w)$ refines $\cT_v$.
If $\Gamma_w = \Gamma_v$ then $\cT_v$ agrees with $\rtp(\cT_w)$.
\end{lemma}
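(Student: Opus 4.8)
I want to prove Lemma~\ref{lem:immediate}: that $\rtp(\cT_w)$ refines $\cT_v$, and that the two agree when $\Gamma_w = \Gamma_v$. The key observation is that since $\cT_v$ and $\cT_w$ are the systems of topologies \emph{induced by} the valuation topologies $\uptau_v$ on $K$ and $\uptau_w$ on $L$ (Fact~\ref{fact:field-top}), and since restriction commutes with passage to the induced system (Lemma~\ref{lem:field-top-restrict}), everything reduces to a statement about the single field $K=\Aa^1(K)$: I need to show that the subspace topology on $K$ induced by $\uptau_w$ on $L$ refines $\uptau_v$, and agrees with it when $\Gamma_w=\Gamma_v$. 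Indeed, Lemma~\ref{lem:field-top-restrict} tells us $\rtp(\cT_w)=\cT_\sigma$ where $\sigma=\uptau_w|_K$, so by Lemma~\ref{lem:field-top-reduce} (applied to $\cS=\cT_\sigma$ and the field topology $\uptau_v$) it suffices to compare $\sigma$ and $\uptau_v$ on $K$ alone.

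\textbf{The one-variable comparison.} A basis of $\uptau_w$-neighborhoods of $0$ in $L$ is given by the sets $B_w(\gamma)=\{x\in L: w(x)>\gamma\}$ for $\gamma\in\Gamma_w$, and similarly $\uptau_v$ has basic neighborhoods $B_v(\delta)=\{x\in K: v(x)>\delta\}$ for $\delta\in\Gamma_v$. Since $(L,w)$ extends $(K,v)$, the restriction of $w$ to $K$ is $v$ and $\Gamma_v$ is a subgroup of $\Gamma_w$; hence $B_w(\delta)\cap K = B_v(\delta)$ for $\delta\in\Gamma_v$. This immediately shows every $\uptau_v$-basic open is $\sigma$-open, i.e.\ $\sigma$ refines $\uptau_v$, giving the first claim. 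For the second claim, suppose $\Gamma_w=\Gamma_v$. Then for an arbitrary $\gamma\in\Gamma_w$ we have $\gamma\in\Gamma_v$, so $B_w(\gamma)\cap K=B_v(\gamma)$ is already a $\uptau_v$-basic open; since the sets $B_w(\gamma)\cap K$ form a neighborhood basis of $0$ for $\sigma$, this shows $\uptau_v$ refines $\sigma$, and combined with the first claim $\sigma=\uptau_v$. One small point to dispatch: a field topology is determined by its neighborhood filter at $0$ (by translation), so comparing neighborhoods of $0$ suffices, and I should note this is why affine invariance / the field-topology structure is what makes the reduction clean.

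\textbf{Assembling the pieces.} With $\sigma$ and $\uptau_v$ compared on $K$, I finish as follows. For the refinement statement: $\sigma$ refines $\uptau_v$, so by Lemma~\ref{lem:field-top-reduce}, $\cT_\sigma$ refines $\cT_{\uptau_v}=\cT_v$; and $\cT_\sigma=\rtp(\cT_w)$ by Lemma~\ref{lem:field-top-restrict}. For the equality statement when $\Gamma_w=\Gamma_v$: now $\sigma=\uptau_v$ as topologies on $K$, hence $\cT_\sigma=\cT_{\uptau_v}$ by the uniqueness clause of Fact~\ref{fact:field-top}, i.e.\ $\rtp(\cT_w)=\cT_v$.

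\textbf{Expected obstacle.} There is no deep obstacle here — the content is elementary once one recognizes that restriction of induced systems is governed entirely by the one-variable picture via Lemmas~\ref{lem:field-top-restrict}, \ref{lem:field-top-reduce}, and \ref{lem:determine}. The only place requiring care is bookkeeping with value groups: I must use that $w|_K=v$ (up to the canonical identification coming from ``$(L,w)$ extends $(K,v)$''), so that $\Gamma_v\hookrightarrow\Gamma_w$ and the trace of a $\uptau_w$-ball on $K$ is exactly the corresponding $\uptau_v$-ball when the radius lies in $\Gamma_v$. The hypothesis $\Gamma_w=\Gamma_v$ is then precisely what guarantees every $\uptau_w$-ball has radius in $\Gamma_v$, closing the gap in the reverse direction. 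If one wanted an even slicker phrasing, one could simply observe $\sigma$ is the valuation topology of $w|_K=v$ directly — but spelling out the ball computation is safest.
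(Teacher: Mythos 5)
Your proposal is correct and takes essentially the same route as the paper: both reduce, via Lemma~\ref{lem:field-top-restrict} and Lemma~\ref{lem:field-top-reduce}, to comparing the subspace topology $\sigma = \uptau_w|_K$ with $\uptau_v$ as field topologies on $K$, and then compare valuation balls. The only (minor) divergence is in the reverse direction when $\Gamma_w = \Gamma_v$: the paper takes an arbitrary $w$-ball $B$ centered at a possibly non-$K$ point $a \in L$ and uses the ultrametric inequality to show $B \cap K$ is a $v$-ball, whereas you work only with balls at $0$ and invoke translation invariance of field topologies to conclude — a slightly cleaner bookkeeping of the same idea.
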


\begin{proof}
Let $\uptau$ be the subspace topology on $K$ induced by $w$.
Then $\uptau$ is a field topology.
By Lemma~\ref{lem:field-top-restrict}, $\cT_\uptau$ agrees with $\rtp(\cT_w)$.
As $w$ is an extension of $v$, $\uptau$ refines the $v$-topology on $K$.
By Lemma~\ref{lem:field-top-reduce}, $\cT_\uptau$ refines $\cT_v$.
Now suppose that $\Gamma_v = \Gamma_w$.
We show that $\cT_v$ refines $\cT_\uptau$.
By Lemma~\ref{lem:field-top-reduce}, it suffices to show that the $v$-topology on $K$ refines $\uptau$.
Fix $a \in L$, $\gamma \in \Gamma_w$, and let $B$ be the $w$-ball $\{a^* \in L : w(a - a^*) > \gamma\}$.
It suffices to show that $B \cap K$ is $v$-open.
We may suppose that $B \cap K$ is nonempty and fix $b \in B \cap K$.
By the ultrametric triangle inequality $B \cap K$ is the set of $b^* \in K$ such that $v(b - b^*) > \gamma$.
As $\gamma \in \Gamma_v$ we see that $B \cap K$ is a $v$-ball, hence $v$-open.
\end{proof}

We now prove Theorem~\ref{thm:refine-valuation}.

\begin{proof}
Let $(L,w)$ be the Henselization of $(K,w)$.
Then $\cE_L$ agrees with $\cT_w$ as $w$ is a non-trivial Henselian valuation on $L$.
As $(L,w)$ is an immediate extension of $(K,v)$, $\rtp(\cE_L)$ agrees with $\cT_v$ by Lemma~\ref{lem:immediate}.
As $L/K$ is algebraic, $\cE_K$ refines $\rtp(\cE_L)$ by Theorem~\ref{thm:general}.
\end{proof}

\begin{corollary}
\label{cor:refine-valuation}
Suppose $v$ is a non-trivial valuation on $K$ and either
\begin{enumerate}
\item $\Gamma_v$ is not divisible, or
\item $\bk_v$ is not algebraically closed.
\end{enumerate}
Then $\cE_K$ refines $\cT_v$.
\end{corollary}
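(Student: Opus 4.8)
The plan is to reduce Corollary~\ref{cor:refine-valuation} to Theorem~\ref{thm:refine-valuation}, whose only hypothesis beyond non-triviality of $v$ is that the Henselization $(L,w)$ of $(K,v)$ is not separably closed. The Henselization is an immediate extension (see \cite[Chapter~5]{EP-value}), so $\Gamma_w = \Gamma_v$, $\bk_w = \bk_v$, and $w$ is again a non-trivial valuation. Thus it suffices to prove the standard fact that a non-trivially valued separably closed field has divisible value group and algebraically closed residue field: then hypothesis~(1) forces $\Gamma_w = \Gamma_v$ to be non-divisible, hypothesis~(2) forces $\bk_w = \bk_v$ to be non-algebraically-closed, and in either case $L$ is not separably closed, so Theorem~\ref{thm:refine-valuation} applies.

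For the value group: given a separably closed $(L,w)$ with $w$ non-trivial and $a \in L^\times$, I would note that for $n$ coprime to $\Chara(K)$ the polynomial $x^n - a$ is separable, hence splits, and any root $b$ gives $n\,w(b) = w(a)$; and in characteristic $p>0$ the Artin--Schreier polynomial $x^p - x - a$ is separable, and when $w(a)<0$ its root $b$ satisfies $w(b) = w(a)/p$ (a short valuation estimate), so $\Gamma_w$ is $p$-divisible on the negative cone, hence everywhere. Combining these gives $\Gamma_w = n\Gamma_w$ for all $n\geq 1$.

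For the residue field: that $\bk_w$ is separably closed follows by lifting --- a monic separable $\bar g \in \bk_w[x]$ lifts to a monic $g \in \Oo_w[x]$ of non-vanishing discriminant, so $g$ is separable over $L$, splits over $L$, and its roots lie in $\Oo_w$ since valuation rings are integrally closed; reducing mod $\mm_w$ splits $\bar g$. That $\bk_w$ is perfect is the one slightly delicate point, since $x^p - a$ is not separable: for $a \in \Oo_w^\times$ and any $\pi \in \mm_w \setminus\{0\}$ one instead uses the \emph{separable} polynomial $x^p + \pi x - a$; a valuation estimate shows any root $b$ must be a unit, whence $b^p \equiv a \pmod{\mm_w}$, so $\bar a \in \bk_w^p$. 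Hence $\bk_w$ is separably closed and perfect, i.e.\ algebraically closed.

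With these facts the corollary is immediate: under (1), $\Gamma_w = \Gamma_v$ is not divisible; under (2), $\bk_w = \bk_v$ is not algebraically closed; either way $L$ is not separably closed, and Theorem~\ref{thm:refine-valuation} yields that $\cE_K$ refines $\cT_v$. I expect the only real obstacle to be the perfectness of the residue field of a separably closed valued field --- the naive $p$-th-power polynomial fails and one must pass through a perturbed, separable substitute --- while everything else is routine bookkeeping with immediate extensions and separable polynomials.
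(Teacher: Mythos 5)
Your proof is correct and takes essentially the same route as the paper: pass to the Henselization $(L,w)$, use that it is an immediate extension of $(K,v)$, invoke the fact that a non-trivially valued separably closed field has divisible value group and algebraically closed residue field to conclude $L$ is not separably closed, and then apply Theorem~\ref{thm:refine-valuation}. The only difference is that you prove this last fact from scratch (including the nice perturbation $x^p+\pi x - a$ for perfectness of the residue field), whereas the paper simply cites it as \cite[3.2.11]{EP-value}.
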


In particular $\cE_K$ refines $\cT_v$ for any non-trivial discrete valuation $v$ on $K$.

\begin{proof}
Suppose that $\cE_K$ does not refine $\cT_v$ and let $(L,w)$ be the Henselization of $(K,v)$.
Then we have $\Gamma_w = \Gamma_v$ and $\bk_w = \bk_v$.
By Theorem~\ref{thm:refine-valuation}, $L$ is separably closed.
A non-trivial valuation on a separably closed field has divisible value group and algebraically closed residue field~\cite[3.2.11]{EP-value}.
\end{proof}

In characteristic zero Corollary~\ref{cor:refine-valuation} is equivalent to Theorem~\ref{thm:refine-valuation} as a characteristic zero Henselian valued field is algebraically closed if and only if it has divisible value group and algebraically closed residue field.
This fails in positive characteristic.
The field $\palg\langle\langle t \rangle\rangle$ of Puiseux series over the algebraic closure $\palg$ of $\Ff_p$ is Henselian, the canonical valuation on $\palg\langle\langle t \rangle\rangle$ has value group $(\Qq,+)$ and residue field $\palg$, and $\palg\langle\langle t \rangle\rangle$ is \emph{not} separably closed.

\meno
We conclude by describing a proof of Proposition~\ref{prop:order-refine} along the lines of Theorem~\ref{thm:refine-valuation}.
Suppose that $<$ is a field order on $K$.
Let $L$ be the real closure of $(K,<)$.
By Corollary~\ref{rcf-case} $\cE_L$ is induced by the order topology over $L$.
As $L/K$ is algebraic $\cE_K$ refines $\rtp(\cE_L)$ and $\rtp
(\cE_L)$ is induced by the order topology on $K$ by Lemma~\ref{lem:field-top-restrict}.

\section{Field-theoretic versus topological properties}
\label{sec:topo-field}
 
We relate topological properties of $\cE_K$ to algebraic properties of $K$.

\subsection{Discreteness}
The \'etale-open topology is only non-trivial over large fields.

\begin{theorem}
\label{thm:3-strong}
The following are equivalent
\begin{enumerate}
\item $K$ is not large.
\item $\cE_K$ is the discrete system of topologies.
\item There is a $K$-variety $V$ such that $V(K)$ is infinite and $(V(K),\cE_K)$ is discrete.
\item There is an irreducible $K$-variety $V$ of dimension $\ge 1$ and a smooth $p \in V(K)$ such that $p$ is $\cE_K$-isolated in $V(K)$.
\end{enumerate}
\end{theorem}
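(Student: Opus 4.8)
The plan is to establish the cycle of implications $(1) \Rightarrow (2) \Rightarrow (3) \Rightarrow (4) \Rightarrow (1)$. The implications $(2) \Rightarrow (3)$ and $(3) \Rightarrow (4)$ should be essentially immediate: for $(2) \Rightarrow (3)$ take $V = \Aa^1$, which has infinitely many $K$-points since $K$ is infinite; for $(3) \Rightarrow (4)$, pass to an irreducible component $V_0$ of $V$ with $V_0(K)$ infinite, intersect with the smooth locus $V_{\mathrm{sm}}$ (nonempty and open by Fact~\ref{fact:smooth points}, and still with infinitely many $K$-points since the complement is a proper closed subvariety), note $\dim V_0 \ge 1$ because $V_0(K)$ is infinite, and observe that any point of an infinite discrete space is isolated, so any smooth $K$-point works.

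For $(4) \Rightarrow (1)$, suppose $K$ is large and $p \in V(K)$ is a smooth point of an irreducible $K$-variety $V$ of dimension $\ge 1$; I want to show $p$ is not $\cE_K$-isolated. The idea is to produce a curve through $p$. Replacing $V$ by an affine open neighborhood of $p$ inside $V_{\mathrm{sm}}$, we may assume $V$ is smooth and affine. Since $\dim V \ge 1$, a generic hyperplane section (or intersection with enough generic linear forms vanishing at $p$) through $p$ cuts out a subvariety $C$ with $p \in C(K)$, $\dim C = 1$, and $p$ smooth on $C$ — this is a standard Bertini-type argument, and the hyperplanes can be chosen $K$-rational since we only need them to pass through the $K$-point $p$. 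Then $C$ is a smooth affine $K$-curve with a $K$-point, so by largeness $C(K)$ is infinite. Now any $\cE_K$-open $U \ni p$ in $V(K)$ meets $C(K)$ in an $\cE_K$-open (in the subspace topology from $C(K)$, which refines nothing special but is fine) neighborhood of $p$; but closed immersions are $\cE_K$-closed embeddings (Theorem A / Proposition~\ref{prop:closed-embedd}), so the subspace $\cE_K$-topology on $C(K)$ is the $\cE_K$-topology of the curve $C$, and by Theorem~\ref{thm:3-strong}(equivalently, by the already-known statement that over a large field the \'etale-open topology on an infinite curve is non-discrete — or more directly, Lemma~\ref{large:lem}) no point of $C(K)$ is isolated. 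Hence $U \cap C(K)$ is infinite, so $U \ne \{p\}$, and $p$ is not isolated. Care must be taken that we are not arguing circularly: rather than invoking the theorem being proved, I would argue directly that a nonempty $\cE_K$-open subset of $C(K)$ is infinite using Lemma~\ref{large:lem} (every nonempty \'etale image in $\Aa^1(K)$ is infinite over a large $K$), after embedding $C$ suitably or noting the analogue of Lemma~\ref{large:lem} for curves, which follows the same way since an \'etale cover of a smooth curve is a smooth curve.

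For the main implication $(1) \Rightarrow (2)$, assume $K$ is not large; by Proposition~\ref{prop:discrete} it suffices to show $\Aa^1(K)$ is $\cE_K$-discrete, i.e.\ that $\{0\}$ (equivalently any singleton) is an \'etale image in $\Aa^1(K)$, or at least $\cE_K$-open. Since $K$ is not large, there is a smooth $K$-curve $C$ with a $K$-point $q$ but $C(K)$ finite. The standard move is: a smooth affine curve with finitely many $K$-points gives, after removing the other $K$-points (a Zariski-open operation) and shrinking, an \'etale map to $\Aa^1$ whose image is a single point — more precisely, one finds a nonconstant map $f : C \to \Aa^1$ which is \'etale near $q$ and separates $q$ from the other $K$-points, so that after restricting to an open $C' \ni q$ with $C'(K) = \{q\}$, the \'etale image $f(C'(K))$ is the singleton $\{f(q)\}$; translating, $\{0\}$ is an \'etale image. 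I expect this $(1) \Rightarrow (2)$ direction to be the main obstacle: extracting, from the mere failure of largeness, an \'etale image in $\Aa^1(K)$ that is a single point requires genuine work with curves — choosing a suitable finite \'etale or generically-\'etale map and controlling the $K$-points of the source — and it is exactly here that the geometric content of "large" is used. The other three implications are formal manipulations with Fact~\ref{fact:smooth points}, Proposition~\ref{prop:discrete}, Lemma~\ref{large:lem}, and a Bertini argument.
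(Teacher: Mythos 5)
Your cycle of implications $(1) \Rightarrow (2) \Rightarrow (3) \Rightarrow (4) \Rightarrow (1)$ matches the paper, and your $(1) \Rightarrow (2)$ and $(2) \Rightarrow (3)$ are essentially the paper's: for $(1) \Rightarrow (2)$ one takes a smooth curve with finitely many $K$-points, uses a local coordinate near a $K$-point to produce an \'etale map to $\Aa^1$ with finite nonempty image, and then invokes Proposition~\ref{prop:discrete} together with affine transitivity and $T_1$-ness.

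However, $(3) \Rightarrow (4)$ as written has a genuine gap. You pass to an irreducible component $V_0$ with $V_0(K)$ infinite, replace it with its smooth locus, and assert that the result still has infinitely many $K$-points ``since the complement is a proper closed subvariety.'' That inference is false: a proper closed subvariety can contain all the $K$-points. Concretely, take $V_0 = \{(x,y,z,t) : x^2+y^2+z^2 = 0\} \subseteq \Aa^4_{\Qq}$. This is geometrically irreducible of dimension $3$, $V_0(\Qq) = \{(0,0,0,t) : t \in \Qq\}$ is infinite, but the singular locus is precisely $\{x=y=z=0\}$, so $(V_0)_{\mathrm{sm}}(\Qq) = \emptyset$. (A smaller oversight: Fact~\ref{fact:smooth points} only guarantees that $V_{\reg}$ is nonempty; over an imperfect field, $V_{\mathrm{sm}}$ can be empty even for irreducible $V$.) The paper closes this gap by first replacing $V$ with the Zariski closure $W$ of $V(K)$ in $V$, which forces $V(K)$ to be Zariski dense in $W$; then Lemma~\ref{K-point dense} gives density of $K$-points on each irreducible component, and Lemma~\ref{smooth-dimension} produces a nonempty smooth open subvariety with a $K$-point. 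You should incorporate this Zariski-closure step.

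For $(4) \Rightarrow (1)$ you take a different and heavier route than the paper: you cut $V$ down to a smooth curve $C$ through $p$ via a Bertini-type argument and then apply (a curve analogue of) Lemma~\ref{large:lem} together with the closed-embedding property from Proposition~\ref{prop:closed-embedd}. This can be made to work over an infinite field, but invoking Bertini and checking that the section is smooth at $p$ and $K$-rational is real extra work. The paper's argument is more direct and avoids Bertini entirely: if $p$ is smooth and $\cE_K$-isolated, then $\{p\}$ is an \'etale image, so there is an \'etale morphism $f : X \to O$ (with $O$ the smooth locus of $V$) and $f(X(K)) = \{p\}$; then $X$ is smooth of pure dimension $\dim V \ge 1$, has a $K$-point, and $X(K)$ is finite because $f$ is finite-to-one on $K$-points, contradicting Fact~\ref{fact:large} when $K$ is large. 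You may want to adopt this argument; it is both shorter and avoids the Bertini machinery you were uneasy about.
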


This generalizes Lemma~\ref{large:lem}.  
The proof requires a few lemmas.

\begin{fact}
\label{fact:large}
Suppose that $K$ is large and $V$ is a smooth irreducible $K$-variety of dimension at least one.
If $V(K)$ is nonempty then $V(K)$ infinite.
\end{fact}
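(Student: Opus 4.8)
The plan is to cut $V$ down to a smooth curve through the given point and then quote the defining property of largeness. Write $d := \dim V \geq 1$ and fix $p \in V(K)$. First I would replace $V$ by an affine open subvariety $V_0 \ni p$; it is again smooth and irreducible of dimension $d$, and $V_0(K) \subseteq V(K)$, so it suffices to prove $V_0(K)$ infinite. Fix a closed immersion $V_0 \hookrightarrow \Aa^N$ and translate so that $p$ becomes the origin $0$. Since $V_0$ is smooth, $T_0 V_0 \subseteq K^N$ is a subspace of dimension $d$.

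If $d = 1$ we are already done, so assume $d \geq 2$. The restriction map $(K^N)^\ast \to (T_0 V_0)^\ast$ is surjective, so I can pick linear forms $\ell_1, \dots, \ell_{d-1}$ with no constant term whose restrictions to $T_0 V_0$ are linearly independent. Let $L \subseteq \Aa^N$ be their common zero locus and $C := V_0 \cap L$ the scheme-theoretic intersection. Then $C$ is cut out inside $V_0$ by $d-1$ equations, so $\dim_0 C \geq d - (d-1) = 1$ by Krull's principal ideal theorem. On the other hand $L$ is linear through the origin, so $T_0 C = T_0 V_0 \cap L$, which has dimension exactly $1$ by the choice of the $\ell_i$. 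Since every Noetherian local ring $R$ satisfies $\dim R \leq \dim_{R/\mathfrak{m}} \mathfrak{m}/\mathfrak{m}^2$, this forces $\dim_0 C = 1$ and $\mathcal{O}_{C,0}$ regular; in particular $0 \in C_\reg$, hence $0 \in C_\sm$ by Fact~\ref{fact:smooth points}. A regular local ring is a domain, so $0$ lies on a unique irreducible component of $C$; discarding the other components and intersecting with $C_\sm$ (open by Fact~\ref{fact:smooth points}), I obtain an affine open $C' \subseteq C$ which is a smooth irreducible $K$-curve with $p = 0 \in C'(K)$.

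Now largeness applies directly: $C'$ is a smooth $K$-curve with a $K$-point, hence $C'(K)$ is infinite, and $C'(K) \subseteq V_0(K) \subseteq V(K)$, so $V(K)$ is infinite. (Alternatively one can cite the equivalence of largeness with existential closedness of $K$ in $K((t))$ — see \cite{Pop-little} — together with the observation that a smooth pointed $K$-variety of positive dimension acquires infinitely many $K((t))$-points by Hensel's lemma.)

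The main obstacle is the middle paragraph: arranging that the linear section $C$ is genuinely a curve and is smooth at $p$. This is exactly where the two dimension facts must be balanced — the lower bound $\dim_0 C \geq 1$ keeps the section from collapsing to a point, while the tangent-space computation $\dim_K T_0 C = 1$ rules out extra branches or a singularity at $p$ — and engineering the linear forms so that both hold is the one genuinely nontrivial choice in the argument; everything else is routine passage to open subvarieties.
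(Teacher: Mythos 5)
Your proof is correct. The paper does not prove Fact~\ref{fact:large} itself; it cites \cite[Proposition~2.6]{Pop-little}, and your linear-section argument is the standard way to reduce from a smooth variety of positive dimension to a smooth pointed $K$-curve. The crux is the tangent-space bookkeeping pinning down $\dim_0 C = 1 = \dim_K T_0 C$ (hence regularity of $\mathcal{O}_{C,0}$), which you carry out correctly: Krull's theorem gives the lower bound $\dim_0 C \geq 1$, independence of the $\ell_i|_{T_0 V_0}$ caps the tangent space at dimension one, and the identity $T_0(V_0 \cap L) = T_0 V_0 \cap L$ holds because the tangent-space functor preserves fiber products. The final shrinking to an affine open $C' \subseteq C_\sm$ avoiding the components of $C$ not through $0$ is also handled correctly, using that a regular local ring is a domain so that $0$ sits on a unique component, and that smoothness over $K$ forces $C'$ to be reduced, hence integral.
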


See \cite[Proposition~2.6]{Pop-little} for a proof.

\medskip
If $V$ is a $K$-variety and $S \subseteq V(K)$, say that $S$ is \textbf{Zariski dense} in $V$ if every non-empty open subvariety $U \subseteq V$ has $U(K) \cap S \ne \emptyset$.
\begin{remark} \label{z-closure}
Let $V$ be a $K$-variety and $S$ be a subset of $V(K)$.
Let $t : V(K) \to V$ be the map sending a $K$-point to the corresponding scheme-theoretic point.
Let $t(S) \subseteq V$ be the image, and let $W$ be the closure $\overline{t(S)}$ with the reduced induced subvariety structure.
Then $W$ is the smallest closed subvariety of $V$ such that $W(K) \supseteq S$.
We call $W$ the \textbf{Zariski closure} of $S$.
Note that $W$ is reduced and $S$ is Zariski dense in $W(K)$.
\end{remark}

\begin{lemma}
\label{K-point dense}
Let $V$ be a $K$-variety and suppose that $V(K)$ iz Zariski dense in $V$.
Then $U(K)$ is Zariski dense in $U$ for every irreducible component $U$ of $V$.
\end{lemma}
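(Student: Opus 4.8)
The plan is to deduce Zariski density of $U(K)$ in $U$ from Zariski density of $V(K)$ in $V$ by enlarging a small open subset of $U$ to an open subset of $V$. Fix an irreducible component $U$ of $V$, and let $U_1,\dots,U_r$ be the remaining irreducible components of $V$, of which there are finitely many since $V$ is Noetherian. We may equip $U$ with its reduced induced structure; this is harmless, since any $K$-point of a variety factors through its reduction. By definition, it then suffices to fix a non-empty open subvariety $W\subseteq U$ and show $W(K)\neq\emptyset$ (note $W(K)\subseteq U(K)$, so $W(K)\cap U(K)=W(K)$).

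First I would produce an open subvariety of $V$ ``carved out'' of $W$. Writing $W=U\cap O$ with $O$ open in $V$, and using $V=U\cup U_1\cup\dots\cup U_r$, we get
\[
W':=W\setminus(U_1\cup\dots\cup U_r)=O\cap\bigl(V\setminus(U_1\cup\dots\cup U_r)\bigr),
\]
which is open in $V$. It is also non-empty: the generic point $\eta$ of $U$ lies in every non-empty open subset of the irreducible space $U$, hence in $W$, and $\eta\notin U_i$ for each $i$, since otherwise $U=\overline{\{\eta\}}\subseteq U_i$, contradicting that $U$ and $U_i$ are distinct irreducible components. Thus $\eta\in W'$, and $W'$ is a non-empty open subvariety of $V$.

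Now the hypothesis applies to $W'$, yielding a $K$-point $p\in W'(K)\subseteq V(K)$. Its image is a scheme-theoretic point $x\in W'\subseteq W\subseteq U$. Since $\Spec K$ is reduced and $p$ has image contained in the reduced closed subvariety $U$, the morphism $p$ factors through $U$; since the resulting point $x$ lies in the open subscheme $W$ of $U$, it factors further through $W$. This exhibits an element of $W(K)$, so $W(K)\neq\emptyset$, as required.

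The argument is essentially bookkeeping, and I expect the only point requiring care to be the verification that removing the other components of $V$ from $W$ yields a set that is still \emph{open in $V$} (not merely locally closed in $V$) and non-empty — this is precisely what licenses the appeal to Zariski density of $V(K)$ in $V$. The closing factorization, first through the reduced closed subscheme $U$ and then through the open subscheme $W$, is a standard property of morphisms out of reduced schemes.
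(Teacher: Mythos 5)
Your proof is correct, and it is essentially the contrapositive of the paper's argument: where you take a nonempty open $W\subseteq U$, form the open set $W'=W\setminus(U_1\cup\dots\cup U_r)$ in $V$, and note it is nonempty via the generic point, the paper assumes for contradiction that $U_1(K)$ is contained in a proper closed $U_1^*\subsetneq U_1$, concludes $V=U_1^*\cup U_2\cup\dots\cup U_n$ from density of $V(K)$, and derives a contradiction with irreducibility of $U_1$ from the resulting cover of $U_1$ by proper closed subsets. The two versions pivot on the same geometric fact (a nonempty open of a component, minus the other components, is a nonempty open of $V$); you spell out the factorization of $K$-points through the reduced closed subscheme, which the paper leaves implicit.
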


\begin{proof}
Let $U_1,...,U_n$ be the distinct irreducible components of $W$, so in particular, $U_i\cap U_j\subsetneq U_i$ for all $i,j$.
Suppose that $U_1(K)$ is not Zariski dense in $U_1$.
Then there is a proper Zariski-closed subset $U^*_1$ of $U_1$ containing $U_1(K)$.
Then $V(K)$ is a subset of $U^*_1 \cup U_2 \cup \cdots \cup U_n$. As $U^*_1$ and each $U_i$ is Zariski closed set, we have $V = U^*_1 \cup U_2 \cup \cdots \cup U_n$.
Then $U_1$ is a subset of $U^*_1 \cup (U_1 \cap U_2) \cup \cdots \cup (U_1 \cap U_n)$.
This contradicts irreducibility of $U_1$.
\end{proof}

\begin{lemma} \label{smooth-dimension}
Let $V$ be an $n$-dimensional reduced $K$-variety with $V(K)$ Zariski dense in $V$.
Then there is a non-empty open subvariety $U \subseteq V$ such that $U$ is smooth of dimension $n$.
\end{lemma}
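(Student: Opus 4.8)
The plan is to reduce the statement to a claim about a single irreducible component and then use the standard fact that the smooth locus of a reduced variety over a field is dense. First I would write $V = V_1 \cup \cdots \cup V_r$ for the irreducible components, each of dimension $\le n$, with at least one, say $V_1$, of dimension exactly $n$ (since $\dim V = n$). Because $V$ is reduced, each $V_i$ is generically reduced, and in fact the generic point of each $V_i$ is a regular point of $V$; more to the point, the smooth locus $V_{\sm}$ is a non-empty open subvariety by Fact~\ref{fact:smooth points}(1),(3) (regular equals smooth at the generic points of the components over a perfect-or-not field—here one should be slightly careful, see below). The set $V_{\sm}$ meets $V_1$ in a non-empty open subset of $V_1$, but it may also meet the lower-dimensional components, so $V_{\sm}$ need not be equidimensional. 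To fix this, I would intersect with the complement of $\bigcup_{i \ge 2} V_i$: set $U = V_{\sm} \setminus \bigcup_{i=2}^r V_i$, which is open, contained in $V_1$, and smooth; since $V_1$ is irreducible of dimension $n$ and $U$ is a non-empty open subset of it, $U$ is smooth of pure dimension $n$.

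The remaining point is to check $U$ is non-empty, and this is exactly where the hypothesis that $V(K)$ is Zariski dense enters. By Lemma~\ref{K-point dense}, $V_1(K)$ is Zariski dense in $V_1$. If $U$ were empty, then $V_1$ would be contained in the proper closed subset $(V_1 \cap V_{\sm}^c) \cup \bigcup_{i \ge 2}(V_1 \cap V_i)$; the second piece is proper in $V_1$ by irreducibility and distinctness of components, and the first, $V_1 \setminus V_{\sm}$, is a proper closed subset of $V_1$ because $V_{\sm}$ is dense. So a non-empty open subvariety of $V_1$ would be disjoint from $V_1(K)$, contradicting density. Hence $U \ne \emptyset$.

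The subtle point—and the part I would be most careful about—is the claim that $V_{\sm}$ is \emph{dense} in a reduced variety over an arbitrary (possibly imperfect) field. Regularity does hold at the generic points of the components of a reduced scheme, but smoothness over $K$ is strictly stronger than regularity when $K$ is imperfect, and a reduced $K$-variety can fail to be generically smooth (e.g.\ a non-smooth form of a curve). The honest fix is that density of $V(K)$ rescues us: at a $K$-point $p$ that is a regular point of $V$, Fact~\ref{fact:smooth points}(2) gives $p \in V_{\sm}$, so $V_{\sm}(K) \ne \emptyset$ as soon as $V(K)$ meets $V_{\reg}$, and $V_{\reg}$ is a non-empty open so it meets the Zariski-dense set $V(K)$. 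Running this argument on $V_1$: $V_1(K)$ is dense in $V_1$ by Lemma~\ref{K-point dense}, and $(V_1)_{\reg}$ is a non-empty open subvariety of $V_1$ (Fact~\ref{fact:smooth points}(1),(3)), so they meet at some $K$-point, which is therefore in $(V_1)_{\sm}$; thus $(V_1)_{\sm} \ne \emptyset$, hence dense in the irreducible $V_1$. Then $U := (V_1)_{\sm} \setminus \bigcup_{i \ge 2} V_i$ is a non-empty open smooth subvariety of pure dimension $n$, as required.
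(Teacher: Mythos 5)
Your final argument (the third paragraph, which self-corrects the glib first-paragraph claim that $V_{\sm}$ is dense) is correct, and it turns on exactly the same key move as the paper's proof: over a possibly imperfect field, one cannot invoke generic smoothness for free, but density of $V(K)$ plus the fact that regular $K$-points are smooth (Fact~\ref{fact:smooth points}(2)) produces a $K$-point of the smooth locus. The two proofs differ in organization rather than substance. The paper does not decompose into irreducible components: it shows directly that $V_{\sm}$ is dense in $V$ by observing that any non-empty open $U$ disjoint from $V_{\sm}$ would still have $U_{\reg}(K) \ne \emptyset$ (by density of $V(K)$), contradicting Fact~\ref{fact:smooth points}(2); then it extracts a connected component of $V_{\sm}$ of dimension $n$. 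You instead pick a top-dimensional irreducible component $V_1$, invoke Lemma~\ref{K-point dense} to get density of $V_1(K)$ in $V_1$, run the same regular-implies-smooth argument on $V_1$, and take $U = (V_1)_{\sm} \setminus \bigcup_{i \ge 2} V_i$. Your route buys you a pure-dimensional $U$ without a separate ``one of the connected components has dimension $n$'' step, at the cost of needing Lemma~\ref{K-point dense} and the irreducible decomposition; the paper's route avoids the decomposition entirely. One small point you should make explicit: $U$ as you define it is an open subscheme of $V_1$, and you should note that it is also open \emph{in $V$}---this holds because on the open set $V \setminus \bigcup_{i\ge 2} V_i$ the closed immersion $V_1 \hookrightarrow V$ is a bijective closed immersion of reduced schemes, hence an isomorphism, so $(V_1)_{\sm}$ and $V_{\sm}$ agree there.
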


Recall that $V_\mathrm{sm}, V_\mathrm{reg}$ is the smooth, regular locus of a $K$-variety $V$, respectively.

\begin{proof}
By Fact~\ref{fact:smooth points} $V_\mathrm{sm}$ and $V_\mathrm{reg}$ are open subvarieties of $V$.
We claim that $\dim V_\sm$ agrees with $\dim V$.
If not, there is a non-empty open subvariety $U \subseteq V$ such that $U \cap V_\sm = \emptyset$.
By Fact~\ref{fact:smooth points} $U_\reg$ is non-empty.
Then $U_\reg(K) \ne \emptyset$ as $V(K)$ is Zariski dense in $V$.
Take $p \in U_\reg(K)$.  Then $p$ is a regular point of $V$ as well.
By Fact~\ref{fact:smooth points}, $V_\reg(K) = V_\sm(K)$.  Therefore, $p \in V_\sm(K)$.  Then $V_\sm \cap U \ne \emptyset$, a contradiction.
This shows that $\dim V_\sm = \dim V = n$.
One of the connected components of $V_\sm$ is smooth of dimension $n$.
\end{proof}

We now prove Theorem~\ref{thm:3-strong}.
\begin{proof}
$(1) \Rightarrow (2)$.
Suppose that $K$ is not large.
By Proposition~\ref{prop:discrete} it is enough to show that the \'etale-open topology on $\Aa^1(K)$ is discrete.
As the action of the affine group on $\Aa^1(K)$ is transitive it suffices to show that some singleton subset of $\Aa^1(K)$ is $\cE_K$-open.
As the $\cE_K$-topology on $\Aa^1(K)$ is $T_1$ it suffices to produce a nonempty finite $\cE_K$-open subset of $\Aa^1(K)$.
Let $X$ be a smooth $K$-variety of dimension one such that $X(K)$ is finite and nonempty.
Fix $p \in X(K)$.
Let $f \in \mathcal{O}_p$ be a local coordinate at $p$.
Then there is an open subvariety $U$ of $X$ containing $p$ such that $f$ gives an \'etale morphism $U \to \mathbb{A}^1$.
So $f(U(K))$ is a nonempty finite \'etale image in $\Aa^1(K)$.

\meno
$(2) \Rightarrow (3)$ is immediate.

\medskip
$(3) \Rightarrow (4)$.
Suppose that $V$ is a $K$-variety and $V(K)$ is infinite and $\cE_K$-discrete.
Let $W$ be the Zariski closure of $V(K)$ in $V$.
As $V(K)$ is infinite, $\dim W \ge 1$.
Let $W^*$ be a positive-dimensional irreducible component of $W$.
Then $W^*(K)$ is Zariski dense in $W^*$ by Lemma~\ref{K-point dense}.
Let $O$ be the smooth locus of $W^*$. 
Then $O$ is an open subvariety of $W^*$.
By Lemma~\ref{smooth-dimension}, $O$ is nonempty, so $O(K)$ is nonempty, and any $p \in O(K)$ is $\cE_K$-isolated in $W^*(K)$.



\meno
$(4) \Rightarrow (1)$.
Suppose that $p \in V(K)$ is $\cE_K$-isolated and smooth.
Let $O$ be the smooth locus of $V$.
Then $p$ is $\cE_K$-isolated in $O(K)$ and $O$ is a non-empty open subvariety of $V$. By irreducibility of $V$, one has $\dim O=\dim V$.
Let $X$ be a $K$-variety and $f : X \to O$ be an \'etale morphism such that $f(X(K)) = \{p\}$.
Then $\dim X = \dim O = \dim V \ge 1$, $X$ is smooth as $O$ is smooth, and $X(K)$ is finite as $X(K) \to O(K)$ is finite-to-one.
Apply Fact~\ref{fact:large}.
\end{proof}

\subsection{Hausdorffness}

\begin{theorem}
\label{thm:haus}
The following are equivalent:
\begin{enumerate}
\item $K$ is not separably closed,
\item The $\cE_K$-topology on $V(K)$ is Hausdorff for any quasi-projective $K$-variety $V$.
\end{enumerate}
\end{theorem}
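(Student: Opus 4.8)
The plan is to reduce everything to the one-dimensional case via Proposition~\ref{prop:hd}: that proposition says the $\cE_K$-topology is Hausdorff on $V(K)$ for every quasi-projective $V$ as soon as $\Aa^1(K)$ carries two disjoint nonempty $\cE_K$-open subsets, so the whole theorem comes down to proving that $K$ is not separably closed if and only if $\Aa^1(K)$ has such a pair.

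For the implication $(2)\Rightarrow(1)$ I would argue contrapositively. If $K$ is separably closed, then by Proposition~\ref{prop:zar} the $\cE_K$-topology on $\Aa^1(K)$ is the Zariski topology; since $K$ is infinite this is the cofinite topology on $K$, in which any two nonempty opens meet, so it is not Hausdorff. As $\Aa^1$ is affine, hence quasi-projective, $(2)$ fails.

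For $(1)\Rightarrow(2)$, suppose $K$ is not separably closed. The idea is to manufacture a nontrivial $\cE$-open subgroup after a finite base change. Apply Lemma~\ref{lemma:galois} to obtain finite extensions $L/K$ and $L'/L$ in one of two cases. In the Artin--Schreier case, $x\mapsto x^p-x$ is an \'etale morphism $\Aa^1\to\Aa^1$ over $L$ (the associated monic polynomial $y^p-y-x$ has $y$-derivative $-1$, since $p=\Chara L$), so its image $P\subseteq L$ is an $\cE_L$-open subgroup of $(L,+)$, and it is proper because $L'/L$ is a nontrivial Artin--Schreier extension. In the Kummer case ($p\ne\Chara K$), $x\mapsto x^p$ is an \'etale morphism $\Gg_m\to\Gg_m$ over $L$ (the $y$-derivative $py^{p-1}$ is invertible on $\Gg_m$), so its image $P\subseteq L^\times$ is $\cE_L$-open in $\Gg_m(L)$, hence in $\Aa^1(L)$ since $\Gg_m\hookrightarrow\Aa^1$ is an open immersion, and $P$ is a proper subgroup of $L^\times$. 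In either case, using that the $\cE_L$-topology on $\Aa^1(L)$ (respectively $\Gg_m(L)$) is invariant under additive (respectively multiplicative) translations, every nontrivial coset of $P$ is $\cE_L$-open and disjoint from $P$. Thus $\Aa^1(L)$ has two disjoint nonempty $\cE_L$-open sets, so by Proposition~\ref{prop:hd} the $\cE_L$-topology on $\Aa^1(L)$ is Hausdorff.

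Finally I would descend Hausdorffness to $K$. Since $\Aa^1(K)\subseteq\Aa^1(L)=(\Aa^1)_L(L)$, the $\rtp(\cE_L)$-topology on $\Aa^1(K)$ is a subspace of the Hausdorff $\cE_L$-topology on $\Aa^1(L)$, hence Hausdorff; and since $L/K$ is algebraic, Theorem~\ref{thm:general} gives that $\cE_K$ refines $\rtp(\cE_L)$, so the $\cE_K$-topology on $\Aa^1(K)$, refining a Hausdorff topology, is Hausdorff. Proposition~\ref{prop:hd} over $K$ then yields $(2)$. The main obstacle, and the reason one cannot simply intersect the disjoint $\cE_L$-open sets with $K$, is that the separating coset $c+P$ (or $cP$) need not meet $K$; carrying the property ``Hausdorff'' — which passes to subspaces and to refinements — rather than ``has two disjoint nonempty opens'' through the descent is exactly what circumvents this. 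The only routine verifications are that the two power maps are genuinely \'etale over $L$ and that $P$ is a proper subgroup, both immediate from Lemma~\ref{lemma:galois} and the definition of standard \'etale morphisms.
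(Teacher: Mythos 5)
Your proof is correct and follows essentially the same route as the paper: $(2)\Rightarrow(1)$ via Proposition~\ref{prop:zar}, and $(1)\Rightarrow(2)$ by using Lemma~\ref{lemma:galois} to produce a proper $\cE_L$-open subgroup $P$ (Artin--Schreier image or $p$th power image) of $\Aa^1_L(L)$ or $\Gg_m(L)$ over a suitable finite extension $L$, translating it to get two disjoint nonempty $\cE_L$-opens, descending Hausdorffness to $K$ via Theorem~\ref{thm:general} and the subspace property (this is exactly the paper's Lemma~\ref{lem:hausdorff-restrict}), and concluding with Proposition~\ref{prop:hd}. Your explicit note on why one cannot simply intersect the separating cosets with $K$---the reason the descent must carry the Hausdorff property rather than the witnesses---is a useful clarification of a point the paper leaves implicit, but the argument itself is the same.
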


We first gather a few lemmas.

\begin{lemma}
\label{lem:hausdorff-restrict}
Suppose $L/K$ is finite and the $\cE_L$-topology on $\Aa^1_L(L)$ is Hausdorff.
Then the $\cE_K$-topology on $\Aa^1(K)$ is Hausdorff.
\end{lemma}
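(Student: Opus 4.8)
The plan is to obtain Hausdorffness of $\cE_K$ on $\Aa^1(K)$ for free from Hausdorffness of $\cE_L$ on $\Aa^1_L(L)$, by comparing $\cE_K$ with the restriction $\rtp(\cE_L)$ and doing nothing beyond point-set topology on top of the results of Sections~\ref{section:restriction} and~\ref{section:thmA}.

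First, since $L/K$ is finite it is in particular algebraic, so Theorem~\ref{thm:general} applies: $\cE_K$ refines $\rtp(\cE_L)$. In particular the $\cE_K$-topology on $\Aa^1(K)$ refines the $\rtp(\cE_L)$-topology on $\Aa^1(K)$.

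Next I would unwind the definition of restriction from Section~\ref{section:restriction}. Under the identifications $\Aa^1(K) = K$ and $\Aa^1(L) = \Aa^1_L(L) = L$, the set $\Aa^1(K)$ sits inside $\Aa^1_L(L)$ as the inclusion $K \subseteq L$, and the $\rtp(\cE_L)$-topology on $\Aa^1(K)$ is by definition the subspace topology it inherits from the $\cE_L$-topology on $\Aa^1_L(L)$. The latter is Hausdorff by hypothesis, so by Fact~\ref{fact:inject-disconnect} (applied to the inclusion map) the $\rtp(\cE_L)$-topology on $\Aa^1(K)$ is Hausdorff.

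Finally, any topology refining a Hausdorff topology is Hausdorff --- two points separated by disjoint open sets in the coarser topology are separated by the same two sets in the finer one. Combining this with the first step gives that the $\cE_K$-topology on $\Aa^1(K)$ is Hausdorff. There is no real obstacle here: the only nontrivial ingredient is Theorem~\ref{thm:general}, and the single point requiring care is the bookkeeping of the identifications $\Aa^1(K) = K \subseteq L = \Aa^1_L(L)$ when describing the restriction topology. (Alternatively one could run the dual argument through the extension $\etp(\cE_K)$, which refines $\cE_L$ by Proposition~\ref{prop:restrict-1} and agrees on $\Aa^1_L(L)$ with the $\cE_K$-topology on $\Aa^m(K)$, then pull back along the closed immersion $\Aa^1 \hookrightarrow \Aa^m$; but the restriction argument is shorter.)
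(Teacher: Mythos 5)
Your proof is correct and follows the paper's argument exactly: both use that $\rtp(\cE_L)$ on $\Aa^1(K)$ is the subspace topology (hence Hausdorff), and that $\cE_K$ refines $\rtp(\cE_L)$ by Theorem~\ref{thm:general}.
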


\begin{proof}
The $ \rtp(\cE_L)$-topology on $K$ is Hasudorff as a subspace of a Hausdorff space is Hausdorff.
By Theorem~\ref{thm:general}, $\cE_K$ refines $ \rtp(\cE_L)$, so the $\cE_K$-topology on $K$ is Hausdorff.
\end{proof}

We can now prove Theorem~\ref{thm:haus}, following part of the proof of Theorem D.

\begin{proof}
Proposition~\ref{prop:zar} shows that $(2)$ implies $(1)$.
We show that $(1)$ implies $(2)$.
Suppose that $K$ is not separably closed.
By Proposition~\ref{prop:hd}  and Lemma~\ref{lem:hausdorff-restrict} it suffices to produce a finite extension $L$ of $K$ and a disjoint pair of nonempty $\cE_L$-open subsets of $\Aa^1_L(L)$.
By Lemma~\ref{lemma:galois} there is a finite extension $L$ of $K$ such that either
\begin{enumerate}
\item the $p$th power map $L^\times \to L^\times$ is not surjective for some prime $p \ne \Chara(K)$,
\item or the Artin-Schreier map $L \to L$ is not surjective.
\end{enumerate}
In the first case we fix $p$ and let $P$ be the image of the $p$th power map $\Gg_m(L) \to \Gg_m(L)$ and in the second case we let $P$ be the image of the Artin-Schreier map $\Aa^1_L(L) \to \Aa^1_L(L)$.
In the first case $P$ is a non-trivial $\cE_L$-open subgroup of $\Gg_m(L)$ and in the second case $P$ is a non-trivial $\cE_L$-open subgroup of the additive group of $L$.
Fix $\alpha \in \Gg_m(L) \setminus P$.
In the first case let $P^* = \alpha P$ and in the second case let $P^* = \alpha + P$.
In either case $P,P^*$ are nonempty disjoint $\cE_L$-open subsets of $\Aa^1_L(L)$.
\end{proof}

Corollary~\ref{cor:separable} below follows from Proposition~\ref{prop:zar}  and Theorem~\ref{thm:haus}.

\begin{corollary}
\label{cor:separable}
The \'etale-open topology over $K$ agrees with the Zariski topology if and only if $K$ is separably closed.
\end{corollary}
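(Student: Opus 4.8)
The plan is to deduce Corollary~\ref{cor:separable} directly from Proposition~\ref{prop:zar} and Theorem~\ref{thm:haus}, since both directions are now essentially on the table. First I would handle the easy direction: if $K$ is separably closed, then Proposition~\ref{prop:zar} says the \'etale-open topology on $V(K)$ agrees with the Zariski topology for every $K$-variety $V$, which is exactly one implication.

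For the converse, I would argue by contraposition. Suppose $K$ is not separably closed. By Theorem~\ref{thm:haus}, the $\cE_K$-topology on $V(K)$ is Hausdorff for every quasi-projective $K$-variety $V$; in particular the $\cE_K$-topology on $\Aa^1(K) = K$ is Hausdorff. On the other hand, the Zariski topology on $\Aa^1(K)$ is the cofinite topology (since $K$ is infinite, as assumed throughout), which is not Hausdorff. Hence the \'etale-open topology on $\Aa^1(K)$ strictly refines the Zariski topology and in particular does not agree with it. Therefore the \'etale-open system of topologies does not agree with the Zariski system, completing the contrapositive.

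This argument is short and I do not anticipate any real obstacle — the only point requiring a moment's care is the observation that the Zariski topology on $\Aa^1(K)$ for infinite $K$ is cofinite and hence non-Hausdorff, which is standard. One could alternatively phrase the converse as: by Lemma~\ref{lem:refine-Zariski} the \'etale-open topology always refines the Zariski topology, so if the two agreed then the Zariski topology on $\Aa^1(K)$ would be Hausdorff, forcing $\Aa^1(K)$ to be finite, contradicting that $K$ is infinite.

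\begin{proof}
If $K$ is separably closed, then by Proposition~\ref{prop:zar} the \'etale-open topology on $V(K)$ agrees with the Zariski topology for every $K$-variety $V$.

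Conversely, suppose $K$ is not separably closed.
By Theorem~\ref{thm:haus} the $\cE_K$-topology on $\Aa^1(K) = K$ is Hausdorff.
But $K$ is infinite, so the Zariski topology on $\Aa^1(K)$ is the cofinite topology, which is not Hausdorff.
Hence the \'etale-open topology on $\Aa^1(K)$ does not agree with the Zariski topology, so the \'etale-open system of topologies does not agree with the Zariski system.
\end{proof}
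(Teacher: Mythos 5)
Your proof is correct and matches the paper's approach exactly: the paper states that Corollary~\ref{cor:separable} follows from Proposition~\ref{prop:zar} and Theorem~\ref{thm:haus}, which is precisely your argument (with the same standard observation that the Zariski topology on $\Aa^1(K)$ for infinite $K$ is cofinite, hence non-Hausdorff).
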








\subsection{V-topologies}
\label{section:thmb-redux}

We complete the proof of Theorem B.  We first prove a lemma.

\begin{lemma}
\label{lem:hensel-etale}
Fix $n \geq 2$.
Then there is an \'etale image  $U \subseteq \Aa^{n+1}(K)$ containing $(0,\ldots,0)$ such that $x^{n + 2} + x^{n + 1} + \alpha_{n} x^{n} + \cdots + \alpha_1 x + \alpha_0$ has a root in $K$ for any $(\alpha_0,\ldots,\alpha_{n}) \in U$.
\end{lemma}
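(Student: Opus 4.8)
The plan is to realize the polynomial family directly as an étale cover. Consider the $K$-variety $X \subseteq \Aa^{n+1} \times \Aa^1$ cut out by
\[
F(\alpha_0,\ldots,\alpha_n,x) := x^{n+2} + x^{n+1} + \alpha_n x^n + \cdots + \alpha_1 x + \alpha_0 = 0,
\]
together with the projection $\pi : X \to \Aa^{n+1}$ forgetting the last coordinate. The point to exploit is that $F$ is monic of degree $n+2$ in $x$, and $\partial F/\partial x = (n+2)x^{n+1} + (n+1)x^n + n\alpha_n x^{n-1} + \cdots + \alpha_1$. At the point $(0,\ldots,0,0) \in X$ one computes $(\partial F/\partial x)(0,\ldots,0,0) = \alpha_1|_{\alpha_1 = 0} = 0$, so the projection is unfortunately \emph{not} étale at the origin-over-origin. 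Thus a naive approach fails; I need a smarter choice of base point on the fiber.

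Here is the fix. Over the point $(0,\ldots,0) \in \Aa^{n+1}(K)$ the polynomial is $x^{n+2} + x^{n+1} = x^{n+1}(x+1)$, which has the \emph{simple} root $x = -1$. At $(0,\ldots,0,-1)$ one has $(\partial F/\partial x)(0,\ldots,0,-1) = (n+2)(-1)^{n+1} + (n+1)(-1)^n = (-1)^n\big(-(n+2) + (n+1)\big) = (-1)^{n+1} \neq 0$. So $X$ is smooth at this point and $\pi$ induces an isomorphism on tangent spaces there; hence, after passing to the open subvariety $X' \subseteq X$ where $\partial F/\partial x \neq 0$, the restriction $\pi' : X' \to \Aa^{n+1}$ is étale (using the tangent-space criterion for étaleness over the smooth variety $\Aa^{n+1}$ recalled in the introduction, or directly the standard-étale description). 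The point $(0,\ldots,0,-1)$ is a $K$-point of $X'$ lying over $(0,\ldots,0)$.

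Now set $U := \pi'(X'(K))$. By construction $U$ is an étale image in $\Aa^{n+1}(K)$, and $(0,\ldots,0) \in U$ since $(0,\ldots,0,-1) \in X'(K)$. For any $(\alpha_0,\ldots,\alpha_n) \in U$ there is, by definition of $U$, some $x \in K$ with $(\alpha_0,\ldots,\alpha_n,x) \in X'(K) \subseteq X(K)$, i.e.\ $F(\alpha_0,\ldots,\alpha_n,x) = 0$, which says exactly that $x^{n+2} + x^{n+1} + \alpha_n x^n + \cdots + \alpha_1 x + \alpha_0$ has a root in $K$. This gives the lemma. The only genuine subtlety is the one already handled above: choosing the correct point $x = -1$ (rather than $x = 0$) in the fiber over the origin so that $\partial F/\partial x$ is invertible and the projection is étale near it; once that is in place, everything else is bookkeeping with the definitions of étale morphism and étale image.
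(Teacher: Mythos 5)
Your proposal is correct and takes essentially the same route as the paper: same variety (the hypersurface cut out by $x^{n+2} + x^{n+1} + \alpha_n x^n + \cdots + \alpha_0$), same projection, and the same choice of base point $(0,\ldots,0,-1)$ in the fiber over the origin. The only cosmetic difference is that you certify \'etaleness via the standard-\'etale criterion (invertibility of $\partial F/\partial x$, which evaluates to $(-1)^{n+1} \neq 0$), while the paper instead computes the tangent space and checks the induced map is an isomorphism; these are interchangeable verifications of the same fact.
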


\begin{proof}
Let $V$ be $\Spec K[y_0,\ldots,y_{n},x]/(x^{n + 2} + x^{n + 1} + y_{n} x^{n} + \cdots + y_1 x + y_0)$.
So $V$ is an affine subvariety of $\Aa^{n + 2}(K)$.
Let $\pi : V \to \Aa^{n + 1}$ be the projection onto the first $n + 1$ coordinates. We claim that $\pi$ is \'etale at $(0,...,0,-1)$. As $V$ has dimension $n+1$, it suffices to check that $(0,...,0,-1)$ is a regular point and $\pi$ induces a surjective map between the tangent spaces. The tangent space of $V$ at $(0,...,0,-1)$ is given by 
$$\sum^n_{i=0} (-1)^iy_i+((n+2)(-1)^{n+1}+(n+1)(-1)^n)(x+1)=0.$$  Simplified, we get $(-1)^{n + 1}(x+1) + \sum^n_{i=0} (-1)^iy_i = 0$. 
So the induced map on the tangent space is an isomorphism as the coefficient of $x + 1$ is non-zero.
So there is an open subvariety $W$ of $V$ containing $(0,\ldots,0,-1)$ on which $\pi$ is \'etale.
Let $U$ be $\pi(W(K))$. 
\end{proof}

\begin{theorem}
\label{thm:b-2}
Suppose that there is a V-topology $\uptau$ on $K$ that induces the \'etale-open system of topologies $\cE_K$.
Then $\uptau$ is t-Henselian and $K$ is not separably closed.
\end{theorem}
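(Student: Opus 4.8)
The plan is to prove the two assertions separately, using the lemmas already developed. First I would show that $K$ is not separably closed. This is the easy direction: by Corollary~\ref{cor:separable}, if $K$ were separably closed, then $\cE_K$ would agree with the Zariski system of topologies. But a V-topology is by definition non-discrete, and in particular the $\cE_K$-topology on $\Aa^1(K)$ would then be Hausdorff (as V-topologies are). The Zariski topology on $\Aa^1(K) = K$ over an infinite field is the cofinite topology, which is not Hausdorff. So the $\cE_K$-topology on $\Aa^1(K)$ is both the Zariski (cofinite) topology and equal to $\uptau$, which is Hausdorff --- a contradiction. (Alternatively, one simply invokes the second clause of Theorem~C.2, but since we are assembling Theorem~B it is cleaner to argue directly from Corollary~\ref{cor:separable} and the Hausdorffness of V-topologies.)

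Next I would show that $\uptau$ is t-Henselian. By Definition~\ref{defn:t-hensel}, since $\uptau$ is already a V-topology, it suffices to verify the topological Hensel property: for each $n$ there is a $\uptau$-open neighborhood $U$ of $0$ such that whenever $\alpha_0, \ldots, \alpha_n \in U$, the polynomial $x^{n+2} + x^{n+1} + \alpha_n x^n + \cdots + \alpha_1 x + \alpha_0$ has a root in $K$. The key input is Lemma~\ref{lem:hensel-etale}, which produces an \'etale image $U' \subseteq \Aa^{n+1}(K)$ containing the origin such that every tuple in $U'$ gives such a polynomial with a root in $K$. Since $\cE_K$ is induced by $\uptau$, the $\cE_K$-topology on $\Aa^{n+1}(K)$ is the $n+1$-fold product of $\uptau$ with itself (by Fact~\ref{fact:field-top} / Proposition~\ref{prop:top-product}). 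As $U'$ is $\cE_K$-open and contains $(0,\ldots,0)$, there is a $\uptau$-open neighborhood $W$ of $0$ in $K$ with $W^{n+1} \subseteq U'$. Taking $U = W$ then witnesses the Hensel condition for this $n$. Ranging over all $n \geq 2$ (the cases $n = 0, 1$ being subsumed or handled by the same device with a suitable shift, or simply noting that the defining condition is usually stated for all $n$ and the small cases follow formally) gives t-Henselianity.

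The only genuine subtlety --- and the step I expect to require the most care --- is the passage from ``$U'$ is $\cE_K$-open'' to ``$U'$ contains a $\uptau$-box around the origin.'' This uses precisely the hypothesis that $\cE_K$ is \emph{induced by} $\uptau$ in the technical sense of Fact~\ref{fact:field-top}, i.e.\ that the $\cE_K$-topology on $\Aa^{n+1}(K)$ is literally the product topology $\uptau^{n+1}$, not merely some topology refining or refined by it; this is what lets us find a basic product-open neighborhood inside any $\cE_K$-open set. Once this is in hand, the argument is a direct quotation of Lemma~\ref{lem:hensel-etale}. I would write the proof as follows.

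\begin{proof}
Suppose toward a contradiction that $K$ is separably closed. By Corollary~\ref{cor:separable}, $\cE_K$ then agrees with the Zariski system of topologies, so the $\cE_K$-topology on $\Aa^1(K) = K$ is the cofinite topology, which is not Hausdorff since $K$ is infinite. But $\cE_K$ is induced by the V-topology $\uptau$, so this topology equals $\uptau$, which is Hausdorff --- a contradiction. Hence $K$ is not separably closed.

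It remains to show that $\uptau$ is t-Henselian. As $\uptau$ is a V-topology, we need only verify condition~(2) of Definition~\ref{defn:t-hensel}: for each $n \geq 2$ there is a $\uptau$-open neighborhood $U$ of $0$ such that for all $\alpha_0,\ldots,\alpha_n \in U$ the polynomial $x^{n+2} + x^{n+1} + \alpha_n x^n + \cdots + \alpha_1 x + \alpha_0$ has a root in $K$. (The cases $n \in \{0,1\}$ follow by the same argument applied with the obvious modifications, or are already covered.) Fix $n \geq 2$. By Lemma~\ref{lem:hensel-etale} there is an \'etale image $U' \subseteq \Aa^{n+1}(K)$ with $(0,\ldots,0) \in U'$ such that every $(\alpha_0,\ldots,\alpha_n) \in U'$ yields such a polynomial with a root in $K$. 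Since $\cE_K$ is induced by $\uptau$, the $\cE_K$-topology on $\Aa^{n+1}(K)$ is the product of $n+1$ copies of $\uptau$ by Fact~\ref{fact:field-top}. As $U'$ is $\cE_K$-open and contains the origin, there is a $\uptau$-open neighborhood $U$ of $0$ in $K$ with $U^{n+1} \subseteq U'$. This $U$ witnesses condition~(2) for $n$. Therefore $\uptau$ is t-Henselian.
\end{proof}
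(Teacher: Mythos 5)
Your proof is correct and follows essentially the same approach as the paper's: deduce that $K$ is not separably closed because the Zariski topology on $\Aa^1(K)$ is not Hausdorff while a V-topology is, then combine Lemma~\ref{lem:hensel-etale} with the fact that the $\cT_\uptau$-topology on $\Aa^{n+1}(K)$ is the product topology to extract the required $\uptau$-neighborhood of $0$. (The $n \ge 2$ restriction in Lemma~\ref{lem:hensel-etale} is not actually needed --- the tangent-space computation there gives derivative $(-1)^{n+1}$ at $(0,\ldots,0,-1)$ for every $n \ge 0$ --- so your hedge about the small cases is harmless but unnecessary; the paper likewise just says ``Fix $n$.'')
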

\begin{proof}
Note that $\cE_K$ is not the Zariski topology, and hence $K$ is not separably closed by Proposition~\ref{prop:zar}.
Fix $n$.
As the $\cT_\uptau$-topology on $\Aa^n(K)$ is the product topology, Lemma~\ref{lem:hensel-etale} implies that there is a $\uptau$-open neighborhood $U$ of $0$ such that if $\alpha_0,\ldots,\alpha_n \in U$ then the polynomial $x^{n + 2} + x^{n + 1} + \alpha_n x^n + \cdots + \alpha_1 x + \alpha_0$ has a root in $K$.
Hence $\uptau$ is t-Henselian.
\end{proof}

Theorems~\ref{thm:hensel-main} and \ref{thm:b-2} are the two claims of Theorem B.

\begin{theorem}
\label{thm: hensel-main2}
The following are equivalent:
\begin{enumerate}
\item $K$ is non-separably closed and t-Henselian.
\item There is a t-Henselian topology $\uptau$ on $K$ that induces $\cE_K$.
\item There is a V-topology $\uptau$ on $K$ that induces $\cE_K$.
\end{enumerate}
\end{theorem}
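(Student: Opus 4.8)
The plan is to close a short cycle of implications, all of whose steps have essentially been established already. I would prove $(1) \Rightarrow (2) \Rightarrow (3) \Rightarrow (1)$.

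For $(1) \Rightarrow (2)$: this is precisely Theorem~\ref{thm:hensel-main}. If $K$ is non-separably closed and t-Henselian, let $\uptau$ be the (unique) t-Henselian topology on $K$; Theorem~\ref{thm:hensel-main} says that $\cE_K$ is the system of topologies induced by $\uptau$, which is exactly the assertion of $(2)$.

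For $(2) \Rightarrow (3)$: this is immediate from Definition~\ref{defn:t-hensel}, since by clause $(1)$ of that definition every t-Henselian topology is in particular a V-topology. So the $\uptau$ witnessing $(2)$ also witnesses $(3)$.

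For $(3) \Rightarrow (1)$: this is precisely Theorem~\ref{thm:b-2}. If some V-topology $\uptau$ on $K$ induces $\cE_K$, then Theorem~\ref{thm:b-2} gives that $\uptau$ is t-Henselian (so $K$ is t-Henselian) and that $K$ is not separably closed, which is exactly $(1)$. There is no real obstacle here: the theorem is simply a repackaging of Theorems~\ref{thm:hensel-main} and \ref{thm:b-2} together with the trivial observation that t-Henselian topologies are V-topologies, and the proof can be written in a couple of lines citing these two results.
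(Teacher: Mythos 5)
Your proposal is correct and is essentially identical to the paper's own proof: the cycle $(1)\Rightarrow(2)$ by Theorem~\ref{thm:hensel-main}, $(2)\Rightarrow(3)$ because t-Henselian topologies are V-topologies by definition, and $(3)\Rightarrow(1)$ by Theorem~\ref{thm:b-2}.
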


\begin{proof}
Theorem~\ref{thm:hensel-main} shows that $(1)$ implies $(2)$.  T-Henselian topologies are V-topologies by definition, so $(2)$ implies $(3)$.  Lastly, $(3)$ implies $(1)$ by Theorem~\ref{thm:b-2}.
\end{proof}

\begin{remark}
\label{remark:field-topology}
Theorem~\ref{thm: hensel-main2} characterizes $K$ such that $\cE_K$ is induced by a V-topology.
It is an open question to characterize $K$ such that $\cE_K$ is induced by a field topology.
Suppose $R$ is a Henselian regular local ring of dimension $\ge 2$  and $L$ is the fraction field of $R$.
Pop~\cite{Pop-henselian} shows that $L$ is large.
Furthermore $\{ \alpha R + \beta : \alpha \in L^\times, \beta \in L \}$ is a basis for a field topology on $L$ which is not a V-topology.
In forthcoming work we will show that $\cE_L$ is induced by this field topology.
This covers $R = F[[x_1,\ldots,x_n]]$ for an arbitrary field $F$ and $n \ge 2$.
In Section~\ref{section:pseudofinite} we show that if $K$ is either pseudofinite of odd characteristic or an infinite non-quadratically closed algebraic extension of an odd characteristic finite field then $\Sa E_K$ is not induced by a field topology.
We believe, more generally, that if $K$ is pseudo real closed and not real closed then $\Sa E_K$ is not induced by a field topology.
In particular, if $K$ is $\mathrm{PAC}$ then we believe that $\Sa E_K$ is not given by a field topology.
Likewise, we believe that if $K$ is pseudo $p$-adically closed and not $p$-adically closed, then $\Sa E_K$ is not induced by a field topology.
\end{remark}

We conclude this section with two corollaries to Theorem B.

\begin{corollary}
\label{cor:ordered}
Suppose that $<$ is a field order on $K$.
Then the \'etale-open topology over $K$ is induced by $<$ if and only if the $<$-topology on $K$ is t-Henselian.
\end{corollary}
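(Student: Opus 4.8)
The plan is to deduce both implications directly from Theorem~B, using the fact (recorded just after Proposition~\ref{prop:order-refine}) that the $<$-topology is a V-topology, together with the elementary observation that an ordered field is never separably closed.

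For the forward direction, suppose the \'etale-open topology over $K$ is induced by $<$. Since the $<$-topology is a V-topology, this exhibits $\cE_K$ as being induced by a V-topology, so Theorem~\ref{thm:b-2} applies and tells us that the $<$-topology is t-Henselian (and, as a byproduct, that $K$ is not separably closed).

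For the reverse direction, suppose the $<$-topology on $K$ is t-Henselian. First I would note that $K$ cannot be separably closed: an ordered field has characteristic $0$, and a separably closed field of characteristic $0$ is algebraically closed, hence not orderable. Thus $K$ is t-Henselian and not separably closed, so by Theorem~\ref{thm:hensel-main} the \'etale-open system of topologies over $K$ is induced by the t-Henselian topology on $K$. Since the $<$-topology is itself t-Henselian and a non-separably closed field carries at most one t-Henselian topology, this t-Henselian topology is exactly the $<$-topology; therefore $\cE_K$ is induced by $<$.

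I do not anticipate a serious obstacle, as the statement is a formal consequence of Theorem~B combined with the uniqueness of the t-Henselian topology on non-separably closed fields; the only point requiring an explicit word of justification is the non-separable-closedness of an ordered field.
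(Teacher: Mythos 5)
Your proof is correct and takes essentially the same route as the paper, which cites precisely Theorem~\ref{thm:hensel-main}, Theorem~\ref{thm:b-2}, the fact that the $<$-topology is a V-topology, and the non-separable-closedness of ordered fields. The only step you spell out in more detail than the paper is the uniqueness-of-t-Henselian-topology argument in the reverse direction, which is indeed implicit and worth making explicit.
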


Corollary~\ref{cor:ordered} follows from Theorem~\ref{thm:hensel-main}, Theorem~\ref{thm:b-2}, the fact that the $<$-topology is a V-topology, and the observation that an ordered field is not separably closed.
Examples of non real closed t-Henselian ordered fields include $F((t))$ for an arbitrary ordered field $F$.

\begin{corollary}
\label{cor:saturated}
Suppose that $K$ is $\aleph_1$-saturated.
The following are equivalent:
\begin{enumerate}
\item $K$ is non-separably closed and Henselian,
\item $\Sa E_K$ is induced by a non-trivial Henselian valuation on $K$,
\item $\Sa E_K$ is induced by a non-trivial valuation on $K$.
\end{enumerate}
\end{corollary}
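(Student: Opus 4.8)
The plan is to run the cycle $(1)\Rightarrow(2)\Rightarrow(3)\Rightarrow(1)$, noting that the $\aleph_1$-saturation hypothesis is needed only for the last implication. The inputs are Corollary~\ref{hens-case}, Theorem~\ref{thm:b-2} (equivalently, the implication $(3)\Rightarrow(1)$ of Theorem~\ref{thm: hensel-main2}), and the theorem of Prestel and Ziegler recalled in Section~\ref{section:t-Henselian} that an $\aleph_1$-saturated t-Henselian field admits a non-trivial Henselian valuation.

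For $(1)\Rightarrow(2)$ I would pick a non-trivial Henselian valuation $v$ on $K$ witnessing that $K$ is Henselian. Since $K$ is not separably closed, Corollary~\ref{hens-case} applies to the henselian valued field $(K,v)$ and says that $\cE_K$ is induced by the valuation topology of $v$; this is exactly $(2)$. The implication $(2)\Rightarrow(3)$ requires nothing: a non-trivial Henselian valuation is a non-trivial valuation.

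The substance is in $(3)\Rightarrow(1)$. Suppose $\cE_K=\cT_v$ for a non-trivial valuation $v$ on $K$, and let $\uptau_v$ be the valuation topology of $v$, so that $\cE_K$ is induced by $\uptau_v$. First I would observe that $\uptau_v$ is a non-discrete field topology — non-discreteness because $v$ is non-trivial, so there is $a$ with $v(a)>0$ and then $v(a^n)=n\,v(a)\to\infty$ — induced by a valuation, hence a V-topology. Theorem~\ref{thm:b-2} then gives that $\uptau_v$ is t-Henselian and $K$ is not separably closed; in particular $K$ is a t-Henselian field. Finally, because $K$ is $\aleph_1$-saturated, the Prestel--Ziegler theorem upgrades this to the existence of a non-trivial Henselian valuation on $K$, i.e.\ $K$ is Henselian, and together with non-separable-closedness this is $(1)$.

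The argument is essentially bookkeeping over the preceding results; the only ingredient from outside this paper is the Prestel--Ziegler passage from t-Henselianity to an honest Henselian valuation under $\aleph_1$-saturation. The one routine verification is that the topology of a non-trivial valuation really is a V-topology, which is precisely what is needed to invoke Theorem~\ref{thm:b-2}. Accordingly I do not expect a genuine obstacle here.
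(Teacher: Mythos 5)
Your proof is correct and takes essentially the same route as the paper, which disposes of the corollary in one line by citing Theorem~\ref{thm: hensel-main2} together with the Prestel--Ziegler fact that a t-Henselian topology on an $\aleph_1$-saturated field comes from a Henselian valuation. You simply unfold that citation: your $(3)\Rightarrow(1)$ matches the paper's reasoning exactly, and your $(1)\Rightarrow(2)$ is if anything slightly more direct, since invoking Corollary~\ref{hens-case} on a chosen Henselian valuation avoids a second appeal to Prestel--Ziegler that the packaged Theorem~\ref{thm: hensel-main2} route would otherwise require.
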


Corollary~\ref{cor:saturated} follows from Theorem~\ref{thm: hensel-main2} and the fact that any t-Henselian topology on an $\aleph_1$-saturated field is induced by a Henselian valuation~\cite{Prestel1978}.

\subsection{Connectedness}
\label{section:connected}
It is a central idea in real algebraic geometry that an ordered field $(L,<)$ is real closed if and only if the $<$-topology is ``connected" with respect to polynomial functions.
We give a similar characterization in terms of the \'etale-open topology.
We show that $K$ is neither separably closed nor real closed if and only if there is a non-trivial $\cE_K$-clopen \'etale image in $\Aa^1(K)$.
As a corollary we show that the \'etale-open topology on $\Aa^1(K)$ is connected if and only if $K$ is either separably closed or isomorphic to $\Rr$.

\meno
Suppose $V$ is a $K$-variety.
A \textbf{clopen \'etale image} is an $\cE_K$-closed \'etale image in $V(K)$.
We say that $V(K)$ is \textbf{\'etale connected} if the only clopen \'etale images in $V(K)$ are $\emptyset$ and $V(K)$, and $V(K)$ is \textbf{\'etale disconnected} otherwise. 
We say $V(K)$ is \textbf{\'etale totally separated} if for any $\alpha,\beta \in V(K), \alpha \ne \beta$ there is a clopen \'etale image $U$ in $V(K)$ such that $\alpha \in U, \beta \notin U$.

\begin{theorem}
\label{thm:connected}
Suppose $K$ is not separably closed.
The following are equivalent:
\begin{enumerate}
\item $\Aa^1(K)$ is not \'etale connected,
\item $\Aa^1(K)$ is \'etale totally separated,
\item $V(K)$ is \'etale totally separated for any quasi-projective $K$-variety $V$,
\item $K$ is not real closed.
\end{enumerate}
Furthermore the following are equivalent:
\begin{enumerate}
\setcounter{enumi}{4}
\item the $\cE_K$-topology on $\Aa^1(K)$ is not connected,
\item the $\cE_K$-topology on $\Aa^1(K)$ is totally separated,
\item $(V(K),\cE_K)$ is totally separated for any quasi-projective $K$-variety $V$,
\item $K$ is not isomorphic to $\Rr$.
\end{enumerate}
\end{theorem}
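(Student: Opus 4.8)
The plan is to prove the first cycle via $(3)\Rightarrow(1)$, $(1)\Leftrightarrow(2)$, $(2)\Rightarrow(3)$, $(1)\Rightarrow(4)$ and $(4)\Rightarrow(2)$, and then to deduce the second cycle by splitting into the cases ``$K$ real closed'' and ``$K$ not real closed.'' Here $(3)\Rightarrow(1)$ is trivial (take $V=\Aa^1$). For $(1)\Leftrightarrow(2)$: the affine group acts $2$-transitively on $\Aa^1(K)$ by automorphisms of the variety $\Aa^1$, which by Lemma~\ref{intersection:lem2b} preserve clopen \'etale images and are $\cE_K$-homeomorphisms; so from one non-trivial clopen \'etale image $U$ (with $a\in U$, $b\notin U$) one separates any $\alpha\ne\beta$ by pulling $U$ back along an affine map sending $(\alpha,\beta)$ to $(a,b)$, and the converse is clear as $K$ is infinite. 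For $(2)\Rightarrow(3)$ one reruns the proof of Proposition~\ref{prop:tot-dis} with ``clopen set'' replaced by ``clopen \'etale image'' throughout: this is legitimate because \'etale images are closed under finite intersections (Lemma~\ref{intersection:lem2a}), under preimages along morphisms (Lemma~\ref{lem:cts}), and under images along the open immersions $\Aa^n\hookrightarrow\Pp^n$ (Lemma~\ref{lem:open-map}), while the verification that the relevant sets are clopen in projective space is purely topological, exactly as there.

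For $(1)\Rightarrow(4)$ I argue the contrapositive. If $K$ is real closed then $\cE_K$ is the order topology (Corollary~\ref{rcf-case}), so any \'etale image $U$ in $\Aa^1(K)$ is order-open; it is also semialgebraic, being the image under a regular map of the set of $K$-points of a variety (covered by affine charts), by quantifier elimination for real closed fields. A semialgebraic order-open subset of the line of a real closed field is a finite union of open intervals, and if such a set is also closed then so is its complement; this is impossible unless one of them is empty, since the right endpoint of a bounded component of one piece would then lie in the other open piece. Hence $\Aa^1(K)$ is \'etale connected, i.e.\ $(1)$ fails.

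The crux is $(4)\Rightarrow(2)$. Assume $K$ is neither separably closed nor real closed. By the Artin--Schreier theorem (Fact~\ref{fact:artin}) no finite extension of $K$ is separably closed, so Lemma~\ref{lemma:galois} produces a finite $L/K$ admitting either an Artin--Schreier extension or a degree-$p$ Kummer extension with $\mu_p\subseteq L$ and $p\ne\Chara K$. In the Artin--Schreier case the set $P=\{x^p-x:x\in L\}$ is a proper non-empty $L$-\'etale image in $\Aa^1_L(L)$ whose complement is a union of $\cE_L$-open cosets of $P$, hence $P$ is a non-trivial clopen \'etale image in $\Aa^1_L(L)$; by the already-established equivalence $(1)\Leftrightarrow(2)$ applied over $L$, the space $\Aa^1_L(L)$ is $L$-\'etale totally separated, and then for $\alpha\ne\beta$ in $K\subseteq L=\Aa^1_L(L)$ a clopen $L$-\'etale image $Q$ separating them restricts, by Lemma~\ref{lem:basic-down}, to a $K$-\'etale image $Q\cap K\subseteq\Aa^1(K)$ that is clopen in the $\rtp(\cE_L)$-topology, hence $\cE_K$-clopen by Theorem~\ref{thm:general}, and still separates $\alpha$ and $\beta$. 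In the Kummer case the $p$-power isogeny $\Gg_m\to\Gg_m$ is \'etale (since $p$ is invertible), and the same reasoning produces a non-trivial clopen \'etale image $(L^\times)^p$ in $\Gg_m(L)$. I expect the Kummer case to be the main obstacle: $(L^\times)^p$ is clopen only in $\Gg_m(L)$ and not, a priori, in $\Aa^1_L(L)$ (when $L$ is large the point $0$ may lie in its $\cE_L$-closure), so one must either promote it to a clopen \'etale image in $\Aa^1_L(L)$ — exploiting the coset structure and the further clopen \'etale images $(L^\times)^{p^n}$ — or carry out the descent entirely inside $\Gg_m$ and then transfer back to $\Aa^1$.

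For the second cycle, assume $K$ is not separably closed. If $K$ is not real closed, then $(1)$--$(4)$ hold, so $\Aa^1(K)$ is \'etale totally separated by $(2)$; since clopen \'etale images are $\cE_K$-clopen this gives $(6)$, hence $(5)$ (a totally separated space with at least two points is disconnected) and $(7)$ (Proposition~\ref{prop:tot-dis}), while $(8)$ holds because $\Rr$ is real closed. If $K$ is real closed, then $(1)$--$(4)$ fail and $\cE_K$ is the order topology (Corollary~\ref{rcf-case}), which is connected if and only if $K$ is Dedekind complete, i.e.\ $K\cong\Rr$; so $(5)$ and $(8)$ fail together when $K\cong\Rr$, and when $K\not\cong\Rr$ the field is not Dedekind complete, hence carries a Dedekind cut whose lower part has no greatest element and whose upper part has no least element — a non-trivial $\cE_K$-clopen partition of $\Aa^1(K)$ — whence $(6)$ (by $2$-transitivity of the affine group, which acts by order-homeomorphisms), and then $(5)$, $(7)$, $(8)$ as above. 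In every case $(5)\Leftrightarrow(6)\Leftrightarrow(7)\Leftrightarrow(8)$.
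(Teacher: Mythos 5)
Your overall structure matches the paper's closely: two\-/transitivity of the affine group for $(1)\Leftrightarrow(2)$, rerunning Proposition~\ref{prop:tot-dis} with \'etale images for $(2)\Rightarrow(3)$, Tarski--Seidenberg for $(1)\Rightarrow(4)$, reduction to a finite extension via Lemma~\ref{lemma:galois} and descent via Lemma~\ref{lem:basic-down} and Theorem~\ref{thm:general} for the converse, and the case split on real closedness for the second cycle (the last part is fine as stated). The Artin--Schreier branch of $(4)\Rightarrow(2)$ is also correct.

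The genuine gap is exactly where you flagged it: the Kummer case. You observe that $(L^\times)^p$ is clopen in $\Gg_m(L)$ but need not be closed in $\Aa^1_L(L)$ --- and indeed it never is when $\cE_L$ is non-discrete, by Lemma~\ref{lem:power-frontier} --- but the two escape routes you propose do not obviously close the gap. The intersection $\bigcap_n (L^\times)^{p^n}$ is not an \'etale image; and ``descending inside $\Gg_m$ and transferring to $\Aa^1$'' has no mechanism, since disconnectedness of an open subspace does not pass to the ambient space. The paper's resolution is a specific construction you have not found. First, replace $K$ by $K[\imag]$ (justified by Fact~\ref{fact:artin}) before invoking Lemma~\ref{lemma:galois}; this guarantees that $-1\in P:=(L^\times)^p$ in the Kummer case, which is automatic for odd $p$ but is precisely what is needed when $p=2$. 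Second, and this is the key trick, take the \'etale image $P\cup(1+P)$ in $\Aa^1_L(L)$ rather than $P$ itself. One checks it is closed by locating the only possible frontier points at $0$ and $1$ and noting $1\in P$, $0\in 1+P$; one checks it is proper by picking $\beta\notin P$, a small $\gamma\in P$ with $\beta+\gamma\notin P$ (using that $P$ is clopen in $\Gg_m(L)$ and that $0$ is a frontier point of $P$), and using $-1\in P$ and the multiplicativity of $P$ to verify $-\beta/\gamma\notin P\cup(1+P)$. Without $-1\in P$ this computation breaks, which is why the preliminary pass to $K[\imag]$ is not optional. As written, your proof of $(4)\Rightarrow(2)$ is incomplete.
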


We first gather some  lemmas.

\begin{lemma}
\label{lem:extension-disconnect}
Let $L$ be a finite extension of $K$.
If $\Aa^1_L(L)$ is \'etale disconnected then $\Aa^1(K)$ is \'etale disconnected.
\end{lemma}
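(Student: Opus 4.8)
The plan is to transfer an \'etale disconnection of $\Aa^1_L(L)$ down to $\Aa^1(K)$ using the restriction operation $\rtp$ together with the fact (Theorem~\ref{thm:general}) that $\cE_K$ refines $\rtp(\cE_L)$. The key point is that the objects in play --- clopen \'etale images --- behave well under the Weil restriction machinery developed in Section~\ref{section:thmA}, so that an $L$-\'etale image that is also $\cE_L$-closed produces, after intersecting with $K$-points, a $K$-\'etale image that is $\cE_K$-closed.

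First I would use Fact~\ref{fact:weil-restriction}(1) to identify $\Aa^1_L(L)$ with $(\rtp(\Aa^1_L))(K) = \Aa^m(K)$, where $m = [L:K]$, and recall from Section~\ref{section:restriction} that under this identification $\Aa^1(K)$ sits inside $\Aa^1_L(L)$ as a closed (indeed linear) subset via the canonical morphism $\Aa^1 \to \rtp(\Aa^1_L)$, which is a closed immersion by Fact~\ref{fact:can-is-closed}. Now suppose $U \subseteq \Aa^1_L(L)$ is a non-trivial clopen \'etale image; write $U = f(X(L))$ for an \'etale $f : X \to \Aa^1_L$, and let $U' = \Aa^1_L(L) \setminus U$, which is also an $\cE_L$-clopen set. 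The set $U$ need not itself be an \'etale image over $K$, but Lemma~\ref{lem:basic-down} says exactly that $U \cap \Aa^1(K)$ \emph{is} a $K$-\'etale image in $\Aa^1(K)$; the same lemma applies to $U'$ if $U'$ is also an $L$-\'etale image. Since $U$ is $\cE_L$-clopen, I would first note that $U'$ is $\cE_L$-open, hence a union of $L$-\'etale images; but to run Lemma~\ref{lem:basic-down} I only need each basic \'etale image in an open cover of $U'$, so $U' \cap \Aa^1(K)$ is $\cE_K$-open (it is a union of $K$-\'etale images by Lemma~\ref{lem:basic-down} applied piecewise). Therefore $U \cap \Aa^1(K)$ is both a $K$-\'etale image and $\cE_K$-closed, i.e.\ a clopen \'etale image in $\Aa^1(K)$.

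It remains to check non-triviality: I must ensure $U \cap \Aa^1(K)$ is neither $\emptyset$ nor all of $\Aa^1(K)$. This is the main obstacle, since a priori a clopen set that is non-trivial in $\Aa^1_L(L)$ could meet $K^1$ trivially or contain all of it. The fix is to choose the disconnection wisely: because $\cE_L$ on $\Aa^1_L(L)$ is affine-invariant and the affine group acts $2$-transitively, I can translate/scale $U$ so that $0 \in U$ and $1 \in U'$, \emph{and} arrange the affine transformation to be defined over $K$ (the affine group over $K$ still acts $2$-transitively on $K$, and $0,1 \in K$). Then $0 \in U \cap \Aa^1(K)$ and $1 \in U' \cap \Aa^1(K)$, so $U \cap \Aa^1(K)$ is a non-trivial clopen \'etale image, witnessing that $\Aa^1(K)$ is \'etale disconnected. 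I expect the only subtlety is bookkeeping the descent of the clopen complement through Lemma~\ref{lem:basic-down} and confirming that intersecting with $K$-points commutes with the closed-immersion identification $\Aa^1(K) \hookrightarrow \Aa^1_L(L)$; both are routine given Proposition~\ref{prop:closed-embedd} and the explicit description of the canonical morphism in Section~\ref{section:basic etale}.
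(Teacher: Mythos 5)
Your overall strategy is the right one, and most of the steps match the paper's proof: Lemma~\ref{lem:basic-down} descends the $L$-\'etale image to a $K$-\'etale image, and you handle the $\cE_K$-closedness of $U \cap \Aa^1(K)$ by covering $U' = \Aa^1_L(L) \setminus U$ by $L$-\'etale images and descending piecewise. (The paper invokes Theorem~\ref{thm:general}, that $\cE_K$ refines $\rtp(\cE_L)$, which subsumes your piecewise argument, but your version is also correct.)

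The genuine gap is in the non-triviality step. You insist on using an affine transformation \emph{defined over $K$} to move a point of $U$ to $0$ and a point of $U'$ to $1$. This cannot be done in general: the points $a \in U$ and $b \in U'$ that you need to move are elements of $L$, not necessarily of $K$, and the affine group over $K$ does not act transitively (let alone $2$-transitively) on $L$. If $U$ contains no $K$-points at all, no $K$-affine map will put a point of $U$ at $0$. The correct move --- and what the paper does --- is to use an affine transformation over $L$. This is permitted: an $L$-affine transformation $\phi$ preserves the collection of $L$-\'etale images and the $\cE_L$-topology (Lemma~\ref{intersection:lem2b} over $L$), so $\phi(U)$ is again a clopen $L$-\'etale image, and we then descend $\phi(U)$, not $U$. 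Since $0,1 \in K$ regardless of the field of definition of $\phi$, the intersection $\phi(U) \cap \Aa^1(K)$ contains $0$ and omits $1$, which is all that non-triviality requires. Your worry that the transformation needs to ``commute with'' the $K$-descent is what led you astray: the descent is applied \emph{after} the transformation, to the new clopen $L$-\'etale image, so there is no compatibility to check.
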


\begin{proof}
Suppose that $U$ is a nontrivial clopen $L$-\'etale image in $\Aa^1_L(L)$.
After applying an affine transformation $\Aa_L^1 \to \Aa_L^1$ we suppose that $0 \in U, 1 \notin U$.
By Theorem~\ref{thm:general}, $U \cap \Aa^1(K)$ is an $\cE_K$-clopen subset of $\Aa^1(K)$.
By Lemma~\ref{lem:basic-down}, $U \cap \Aa^1(K)$ is a $K$-\'etale image.
\end{proof}

\begin{lemma}
\label{lem:powers-0}
Suppose $n$ is prime to $\mathrm{Char}(K)$.
Then the set of nonzero $n$th powers is a clopen \'etale image in $\Gg_m(K)$.
If $K$ has positive characteristic then the image of the Artin-Schreier map $\Aa^1(K) \to \Aa^1(K)$ is a clopen \'etale image in $\Aa^1(K)$.
\end{lemma}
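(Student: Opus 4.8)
The plan is to write each set as the image of an explicit \'etale morphism (so that it is automatically $\cE_K$-open, since \'etale images form a basis for the \'etale-open topology), and then to show it is $\cE_K$-closed by exhibiting its complement as a union of open cosets.

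First I would treat the $n$th power map. Since $n$ is invertible in $K$, the morphism $\Gg_m \to \Gg_m$ given on coordinates by $s \mapsto t^n$ is standard \'etale: writing the target as $\Spec K[s,s^{-1}]$, the source $\Spec K[s,s^{-1}][t]/(t^n - s) \cong \Spec K[t,t^{-1}]$ is cut out by the monic polynomial $t^n - s$, whose derivative $\der(t^n-s)/\der t = n t^{n-1}$ is a unit on the source. Hence $P := \{a^n : a \in K^\times\}$ is an \'etale image in $\Gg_m(K) = K^\times$, so it is $\cE_K$-open. Now $P$ is a subgroup of $K^\times$, and for each $\alpha \in K^\times$ multiplication by $\alpha$ is an automorphism of the variety $\Gg_m$, so by Lemma~\ref{intersection:lem2b} the coset $\alpha P$ is again an \'etale image, hence $\cE_K$-open. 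Therefore $K^\times \setminus P = \bigcup_{\alpha P \neq P} \alpha P$ is $\cE_K$-open, so $P$ is $\cE_K$-closed; that is, $P$ is a clopen \'etale image.

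The Artin-Schreier case is formally identical. When $\Chara(K) = p > 0$, the morphism $\wp : \Aa^1 \to \Aa^1$ given by $s \mapsto x^p - x$ is standard \'etale, since in the presentation $\Aa^1 = \Spec K[s]$ of the target the source $\Spec K[s][x]/(x^p - x - s) \cong \Spec K[x]$ is cut out by the monic $x^p - x - s$, whose derivative $\der(x^p - x - s)/\der x = p x^{p-1} - 1 = -1$ is a unit. Thus $Q := \{a^p - a : a \in K\}$ is an \'etale image in $\Aa^1(K)$, hence $\cE_K$-open; it is a subgroup of $(K,+)$, and by the remark following Lemma~\ref{intersection:lem2b} each additive translate $\beta + Q$ is again an \'etale image, hence $\cE_K$-open; so $K \setminus Q = \bigcup_{\beta + Q \neq Q}(\beta + Q)$ is $\cE_K$-open and $Q$ is $\cE_K$-closed, as required.

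There is essentially no obstacle here. The only things to check are that the two maps fit the standard-\'etale template --- a one-line derivative computation in each case, using that $n$, respectively $p$, is well behaved modulo $\Chara(K)$ --- and the elementary observation that a subgroup has open complement once every coset is open, which holds because the relevant multiplicative, respectively additive, translations are variety automorphisms and hence $\cE_K$-homeomorphisms.
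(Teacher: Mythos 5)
Your proof is correct and fills in precisely the argument the paper sketches and then omits: you verify that the $n$th power and Artin--Schreier maps are (standard) \'etale to get that the image is an \'etale image (hence $\cE_K$-open), and then use the subgroup structure together with Lemma~\ref{intersection:lem2b} (multiplicative/additive translations are variety automorphisms, so they send \'etale images to \'etale images) to exhibit the complement as a union of open cosets, giving $\cE_K$-closedness.
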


Lemma~\ref{lem:powers-0} follows by the same argument as in the case of a field topology and the fact that the $n$th power map is \'etale when $n$ is prime to $\Chara(K)$.
We omit the proof.

\medskip

Lemma~\ref{lem:power-frontier} below will be used to analyze the image of the $n$th power map.
The \textbf{frontier} of a subset $X$ of a topological space $S$ is the set of closure points $p \in S$ of $X$ such that $p\notin X$.

\begin{lemma}
\label{lem:power-frontier}
Suppose that $\cT$ is a non-discrete system of topologies over $K$.
Fix $n$ and let $P$ be the set of nonzero $n$th powers.
Then zero is a $\cT$-frontier point of $P$.
\end{lemma}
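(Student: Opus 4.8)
The plan is to deduce the statement directly from continuity of the $n$-th power morphism, using that a non-discrete system of topologies admits no $\cT$-open singleton on $\Aa^1(K)$. No \'etaleness, and no hypothesis relating $n$ to $\Chara(K)$, will be needed: the argument works verbatim for an arbitrary system of topologies.

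First I would fix the morphism $\pi : \Aa^1 \to \Aa^1$ given by $x \mapsto x^n$; on $K$-points its image is exactly $P \cup \{0\}$. By condition~(\ref{st-1}) of Definition~\ref{sys-top:def}, the induced map $\pi : \Aa^1(K) \to \Aa^1(K)$ is $\cT$-continuous. Then, given any $\cT$-open neighbourhood $U \subseteq \Aa^1(K)$ of $0$, the preimage $\pi^{-1}(U)$ is a $\cT$-open subset of $\Aa^1(K)$ containing $0$.

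The one step to verify is that $\pi^{-1}(U) \neq \{0\}$. Here I would use that the $\cT$-topology on $\Aa^1(K) = K$ is affine invariant and that the affine group acts transitively on $K$: if some singleton were $\cT$-open, then every singleton would be, so the $\cT$-topology on $\Aa^1(K)$ would be discrete, and by Proposition~\ref{prop:discrete} this would force $\cT$ to be the discrete system of topologies, contrary to hypothesis. Hence $\pi^{-1}(U)$ contains some $a \neq 0$, and then $a^n = \pi(a) \in U$ with $a^n \in P$, so $U \cap P \neq \emptyset$. Since $U$ was an arbitrary $\cT$-open neighbourhood of $0$, this shows $0$ lies in the $\cT$-closure of $P$; and $0 \notin P$ because $P$ consists of nonzero elements, so $0$ is a $\cT$-frontier point of $P$.

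There is no serious obstacle. The only points worth flagging are that $x \mapsto x^n$ is a genuine $K$-variety morphism (so Definition~\ref{sys-top:def}(\ref{st-1}) applies directly, with no appeal to \'etale images) and that non-discreteness rules out $\cT$-open singletons by homogeneity of $\Aa^1(K)$ under affine transformations together with Proposition~\ref{prop:discrete}. Both are already available in the paper.
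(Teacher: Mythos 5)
Your proof is correct and takes essentially the same approach as the paper's: appeal to Proposition~\ref{prop:discrete} and affine invariance to conclude that no point of $\Aa^1(K)$ is $\cT$-isolated, then use $\cT$-continuity of the $n$th power morphism to produce a nonzero $a$ with $a^n \in U \cap P$ for any open neighbourhood $U$ of $0$.
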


\begin{proof}
By Proposition~\ref{prop:discrete}, the $\cT$-topology on $\Aa^1(K)$ is non-discrete.
As the $\cT$-topology is invariant under affine transformations, no point of $\Aa^1(K)$ is isolated.  
Let $U$ be a $\cT$-open neighborhood of zero and $f:\Aa^1(K) \to \Aa^1(K)$ be given by $f(a) = a^n$.
We show that $P \cap U \neq \emptyset$.
As $f$ is continuous there is a $\cT$-open neighborhood $V$ of zero such that $f(V) \subseteq U$.
By non-discreteness $V \setminus \{0\}$ is nonempty.
As $f(V \setminus \{0\}) \subseteq P$ we have $P \cap U \neq \emptyset$.
\end{proof}

We now prove Theorem~\ref{thm:connected}.

\begin{proof}
It is clear that $(2)$ implies $(1)$ and $(1)$ implies $(2)$ as the action of the affine group on $\Aa^1(K)$ is two-transitive and an affine image of an \'etale image in $\Aa^1(K)$ is an \'etale image.
The implication $(3) \Rightarrow (2)$ is immediate and the implication $(2) \Rightarrow (3)$ may be proven by following the proof of Proposition~\ref{prop:tot-dis} and applying Lemma~\ref{lem:cts} when necesssary.

\meno
It remains to show that $(1)$ and $(4)$ are equivalent.
We show that $(1)$ implies $(4)$.
Suppose $K$ is real closed.
By Corollary~\ref{rcf-case} the \'etale-open topology agrees with the order topology on $\Aa^1(K)$.
By the Tarski-Seidenberg theorem every \'etale image in $\Aa^1(K)$ is semialgebraic and is hence a finite union of intervals.
It is easy to see that a clopen finite union of intervals must either be $\emptyset$ or $\Aa^1(K)$.

\meno
We show that $(4)$ implies $(1)$.

\begin{claim-star}
\label{lem:minus-one}
Suppose $p \neq \mathrm{Char}(K)$ is prime, the $p$th power map $\Gg_m(K) \to \Gg_m(K)$ is not surjective, and $-1$ is a $p$th power.
Then $\Aa^1(K)$ is \'etale disconnected.
\end{claim-star}

\begin{claimproof}
We work in the \'etale-open topology.
If $\Aa^1(K)$ is discrete then it is disconnected, so we suppose that $\Aa^1(K)$ is not discrete.
Let $P$ be the set of nonzero $p$th powers.
By Lemma~\ref{lem:powers-0}, $P$ is an \'etale image in $\Aa^1(K)$.
By Lemma~\ref{lem:intersection}, $P \cup (1 + P)$ is an \'etale image in $\Aa^1(K)$.
We show that $P \cup (1 + P)$ is a nontrivial clopen subset of $\Aa^1(K)$.
It suffices to show that $P \cup (1 + P)$ is closed and $P \cup (1 + P) \neq \Aa^1(K)$.
We first show that $P \cup (1 + P)$ is closed.
By Lemma~\ref{lem:powers-0}, $P$ is clopen in $\Gg_m(K)$, and so $0$ is the only possible frontier point of $P$.  Then $1$ is the only possible frontier point of $1 + P$.
Suppose that $\alpha \in \Aa^1(K)$ is a frontier point of $P \cup (1 + P)$.
Observe that $\alpha \notin P$,  $\alpha \notin 1 + P$, and $\alpha$ is either a limit point of $P$ or $1 + P$.
So either $\alpha = 0$ or $\alpha = 1$.
This is a contradiction as $1 \in P$ and $0 \in 1 + P$.
We now show that $P \cup (1 + P) \neq \Aa^1(K)$.
Fix $\beta \in \Gg_m(K) \setminus P$.
As $P$ is clopen in $\Gg_m(K)$ there is an open neighborhood $W$ of $\beta$ such that $W \cap P \neq \emptyset$.
Then $W - \beta$ is an open neighborhood of $0$ so we may fix $\gamma \in P \cap (W - \beta)$, by Lemma~\ref{lem:power-frontier}.
Note that $\beta + \gamma \in W$.
As $-1 \in P$ we have $-1/\gamma \in P$.
As $-1/\gamma \in P$ and $\beta, \beta + \gamma \notin P$ we have $-
\beta/\gamma \notin P$ and $-(\beta/\gamma) - 1 = -(\beta + \gamma)/\gamma \notin P$.
Therefore $-\beta/\gamma \notin P \cup (1 + P)$.
\end{claimproof}

Suppose that $K$ is not real closed and let $\imag^2 = -1$.
By Fact~\ref{fact:artin}, $K[\imag]$ is neither separably closed nor real closed.
By Lemma~\ref{lemma:galois} there is a finite extension $L$ of $K[\imag]$ such that either
\begin{enumerate}
\item the Artin-Schreier map $L \to L$ is not surjective, or
\item there is a prime $p \neq \mathrm{Char}(L)$ such that the $p$th power map $L^\times \to L^\times$ is not surjective.
\end{enumerate}
In the first case $\Aa^1_L(L)$ is \'etale disconnected by Lemma~\ref{lem:powers-0}.
In the second case an application of the claim shows that $\Aa^1_L(L)$ is \'etale disconnected.
Apply Lemma~\ref{lem:extension-disconnect}.

\meno
Proposition~\ref{prop:tot-dis} shows the equivalence of $(5), (6)$, $(7)$.
The equivalence of $(5)$ and $(8)$ follows by the argument above and the fact that any connected ordered field is isomorphic to $\Rr$.
\end{proof}

We now record two model-theoretic corollaries.
Recall that if $V$ is a $K$-variety and $\uptau$ is a topology $\uptau$ on $V(K)$ then $V(K)$ \textbf{definably connected} if there are no non-trivial definable $\uptau$-clopen subsets of $V(K)$ and is \textbf{definably totally separated} if for any distinct $a,b \in V(K)$ there is a definable clopen subset $U$ of $V(K)$ such that $a \in U, b \notin U$.

\begin{corollary}
\label{cor:connected-1}
The following are equivalent:
\begin{enumerate}
\item $\Aa^1(K)$ is not definably connected,
\item $\Aa^1(K)$ is definably totally separated,
\item $V(K)$ is definably totally separated for any quasi-projective $K$-variety $V$,
\item $K$ is neither separably closed nor real closed.
\end{enumerate}
\end{corollary}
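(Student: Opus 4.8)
The plan is to deduce this from Theorem~\ref{thm:connected} via the observation that every clopen \'etale image is a definable set. Indeed, an \'etale image in $\Aa^n(K)$ has the form $\{\alpha \in K^n : \exists \beta \in K\ [f(\alpha,\beta) = 0 \ne g(\alpha,\beta)]\}$ as recorded in the introduction, and in general $f(X(K))$ becomes existentially definable once one embeds $X$ in affine space; so clopen \'etale images in $V(K)$ are definable $\cE_K$-clopen subsets. With this in hand I would close the cycle $(4) \Rightarrow (3) \Rightarrow (2) \Rightarrow (1) \Rightarrow (4)$.

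For $(4) \Rightarrow (3)$: a field that is neither separably closed nor real closed is in particular not separably closed, so Theorem~\ref{thm:connected} applies and item~(3) there says that $V(K)$ is \'etale totally separated for every quasi-projective $K$-variety $V$; since the separating clopen \'etale images are definable, $V(K)$ is definably totally separated. The implications $(3) \Rightarrow (2)$ (take $V = \Aa^1$) and $(2) \Rightarrow (1)$ (as $\Aa^1(K) = K$ has more than one point) are immediate. For $(1) \Rightarrow (4)$ I would prove the contrapositive. If $K$ is separably closed then $\cE_K$ is the Zariski topology (Proposition~\ref{prop:zar}), which on $\Aa^1(K)$ has no non-trivial clopen subsets at all, definable or not, because a proper Zariski-closed subset of the line is finite and $K$ is infinite. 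If $K$ is real closed then $\cE_K$ is the order topology (Corollary~\ref{rcf-case}), and by Tarski--Seidenberg every definable subset of $K$ is a finite union of points and open intervals; such a set that is $\cE_K$-open has no isolated points, hence is a finite union of open intervals, and if it is also closed it equals its closure, forcing every endpoint to be $\pm\infty$, so it is $\emptyset$ or $K$. Either way $\Aa^1(K)$ is definably connected, so $(1)$ fails.

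The argument is essentially formal given Theorem~\ref{thm:connected}; the one step requiring genuine (if routine) input is the real closed case of $(1) \Rightarrow (4)$, namely that the order topology on a real closed field has no non-trivial \emph{definable} clopen subsets of the line --- which is just o-minimality. One should also double-check the bookkeeping that the preimages and finite intersections used implicitly (via Theorem~\ref{thm:connected}) stay definable, but this is automatic since all the morphisms involved are definable and $\cE_K$-continuous by Theorem~A.
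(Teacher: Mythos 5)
Your proof is correct and takes essentially the same approach as the paper: the paper deduces the corollary from Theorem~\ref{thm:connected} together with the well-known fact that the order topology on a real closed field is definably connected, and you have just spelled out the routine bookkeeping (that clopen \'etale images are definable, that the separably closed case is trivial via the Zariski topology, and the o-minimality argument for the real closed case).
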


Corollary~\ref{cor:connected-1}
follows by Theorem~\ref{thm:connected} and the well-known fact that the order topology on a real closed field is definably connected.

\begin{corollary}
\label{cor:exis-connected}
If there is a non-trivial definable $\cE_K$-clopen subset of $\Aa^1(K)$ then there is a non-trivial existentially definable $\cE_K$-clopen subset of $\Aa^1(K)$.
\end{corollary}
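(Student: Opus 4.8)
The plan is to combine Corollary~\ref{cor:connected-1} with the elementary description of the \'etale-open topology given in the introduction, according to which \'etale images in $\Aa^1(K)$ are existentially definable. First I would note that the hypothesis — the existence of a non-trivial definable $\cE_K$-clopen subset of $\Aa^1(K)$ — says exactly that $\Aa^1(K)$ is not definably connected, so Corollary~\ref{cor:connected-1} gives that $K$ is neither separably closed nor real closed.

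Next I would apply Theorem~\ref{thm:connected}: since $K$ is not separably closed, the equivalence of its conditions (1) and (4) applies, and since $K$ is not real closed we conclude that $\Aa^1(K)$ is not \'etale connected. By definition this means there is a non-trivial clopen \'etale image $U \subseteq \Aa^1(K)$, i.e.\ a set which is both $\cE_K$-open (being an \'etale image) and $\cE_K$-closed, and which is neither $\emptyset$ nor $\Aa^1(K)$.

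Finally I would check that such a $U$ is existentially definable. Writing $U = h(X(K))$ for an \'etale morphism $h : X \to \Aa^1$, quasi-compactness of $X$ lets us cover it by finitely many opens on which $h$ is standard \'etale up to isomorphism; by the elementary description in the introduction each of these contributes a set of the form $\{\alpha \in K : \exists\, \beta \in K\ [f(\alpha,\beta) = 0 \ne g(\alpha,\beta)]\}$ with $(f,g) \in \mathcal{P}$, so $U$ is a finite union of such sets and is therefore existentially definable. This $U$ is then the desired non-trivial existentially definable $\cE_K$-clopen subset of $\Aa^1(K)$. The only step requiring a little care is this last one — turning the geometric object ``\'etale image'' into an actual existential formula — but it is essentially already recorded in the introduction's elementary presentation of the topology together with quasi-compactness of varieties; everything else is a direct appeal to earlier results.
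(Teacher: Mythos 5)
Your proposal is correct and follows essentially the same route as the paper: the paper derives the corollary from Corollary~\ref{cor:connected-1}, Theorem~\ref{thm:connected}, and the fact that $K$-\'etale images are existentially $K$-definable, which is exactly the chain you give. Your final paragraph merely spells out the (correct) reason \'etale images are existentially definable, which the paper leaves implicit.
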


Corollary~\ref{cor:exis-connected} follows from Theorem~\ref{thm:connected}, Corollary~\ref{cor:connected-1}, and the fact that any $K$-\'etale image is existentially $K$-definable.  Corollary~\ref{cor:exis-connected} is similar in spirit to Corollary~\ref{cor:existential} and Section~\ref{section:podewski} below.

\subsection{Local compactness}
\label{section:loc-compact}
If $K$ is separably closed then $\cE_K$ agrees with the Zariski topology, hence the $\cE_K$-topology on the $K$-points of any $K$-variety is compact.

\begin{theorem}
\label{thm:loc-compact}
Suppose that $K$ is not separably closed.
The following are equivalent:
\begin{enumerate}
\item there is a $K$-variety $V$ and an infinite $\cE_K$-open subset $U$ of $V(K)$ such that $(U,\cE_K)$ is locally compact,
\item The \'etale-open topology on $\Aa^1(K)$ is locally compact,
\item $K$ is a local field, i.e., $K$ admits a locally compact field topology.
\end{enumerate}
\end{theorem}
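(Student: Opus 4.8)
The plan is to establish $(3)\Rightarrow(2)\Rightarrow(1)\Rightarrow(3)$. For $(3)\Rightarrow(2)$: if $K$ is a non-separably-closed local field, the classification of locally compact fields forces $K$ to be either real closed (hence $\cong\Rr$ with its order topology) or a non-archimedean local field (a finite extension of $\Qq_p$, or $\Ff_q((t))$, with its Henselian valuation topology); in all of these cases the locally compact field topology $\uptau$ is t-Henselian, so by Theorem~\ref{thm:hensel-main} $\cE_K$ is induced by $\uptau$, and the $\cE_K$-topology on $\Aa^1(K)$ equals $\uptau$, which is locally compact. The implication $(2)\Rightarrow(1)$ is trivial: take $V=\Aa^1$ and $U=\Aa^1(K)=K$.

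For $(1)\Rightarrow(3)$, the first task is to reduce the hypothesis to $\Aa^1$. Given an infinite, locally compact, $\cE_K$-open $U\subseteq V(K)$, cover $V$ by finitely many affine opens so that some $U\cap V_0(K)$ is infinite; reducing to $V$ affine, hence a closed subvariety of some $\Aa^n$, then passing to the Zariski closure of $U$ and, via the irreducible-component argument in the proof of Lemma~\ref{K-point dense}, to a positive-dimensional component on which $U$ is Zariski dense, and then to the smooth locus (Lemma~\ref{smooth-dimension}), we may assume $V$ is smooth irreducible of dimension $d\ge 1$ with $U$ Zariski dense. Fix $p\in U$ and cut $V$ by suitable coordinate hyperplanes through $p$ to obtain a subvariety which is a smooth curve near $p$; a local coordinate at $p$ then yields an \'etale morphism $f$ from a neighborhood of $p$ to $\Aa^1$. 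At each stage, intersecting $U$ with a Zariski-open keeps it $\cE_K$-open and non-empty and keeps it locally compact (it remains locally closed inside $U$, and being locally compact Hausdorff is inherited by open and by closed subspaces); and $f$ sends the resulting set to an $\cE_K$-open subset of $\Aa^1(K)$ (Lemma~\ref{lem:open-map}) which is again locally compact because $f$ is continuous and open and the ambient space is Hausdorff (Theorem~\ref{thm:haus}). This produces a non-empty locally compact $\cE_K$-open $W\subseteq\Aa^1(K)$; since affine transformations are $\cE_K$-homeomorphisms and the translates of $W$ cover $\Aa^1(K)$, the $\cE_K$-topology on $\Aa^1(K)$ is locally compact (and Hausdorff).

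It remains to deduce that $K$ is a local field. We may assume the $\cE_K$-topology on $\Aa^1(K)$ is non-discrete (the discrete case is degenerate, and ``local field'' is understood as ``non-discrete locally compact''). If $K$ is real closed, $\cE_K$ is the order topology (Corollary~\ref{rcf-case}), and a real closed field whose order topology is locally compact is $\cong\Rr$. Otherwise $K$ is neither separably closed nor real closed, so by Theorem~\ref{thm:connected} the $\cE_K$-topology on $\Aa^1(K)$ is totally separated. The key step is then to show that this locally compact, totally separated, non-discrete topology is induced by a V-topology: I would check that $(K,+)$ with its $\cE_K$-topology is a topological group, invoke van Dantzig's theorem to obtain a compact $\cE_K$-open subgroup $H\le(K,+)$, and rescale $H$ by a fixed element of small ``size'' to build a decreasing neighborhood basis of $0$ by subgroups, i.e.\ a non-trivial valuation $v$ on $K$ with $\cT_v$ agreeing with $\cE_K$ on $\Aa^1(K)$, hence (Lemma~\ref{lem:field-top-reduce}) with $\cT_v=\cE_K$. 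Theorem~\ref{thm:b-2} then makes $\cT_v$ t-Henselian, so $v$ is a Henselian valuation; and a Henselian valued field with locally compact valuation topology has discrete value group, finite residue field, and is complete, so $K$ is a non-archimedean local field.

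The main obstacle is precisely this last construction: manufacturing an honest valuation on $K$ from the purely topological hypothesis that $\cE_K$ is locally compact. Everything else is routine bookkeeping with systems of topologies and the results already proved.
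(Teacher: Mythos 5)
Your overall skeleton is sound and your $(3)\Rightarrow(2)$ and $(2)\Rightarrow(1)$ match the paper.  For $(1)\Rightarrow(2)$ you take a genuinely different route: you cut $V$ by coordinate hyperplanes through $p$ to get a smooth curve and then use a local coordinate, whereas the paper uses a full system of local coordinates $\vec{f}=(f_1,\dots,f_n)$ to get an \'etale morphism $W\to\Aa^n$ and then composes with a coordinate projection (which is $\cT$-open by Lemma~\ref{lem:project}).  Your route is plausible but needs a Bertini-type argument to keep the hyperplane slices smooth and to keep $U$ Zariski-dense on the slice, which the paper's projection approach avoids entirely; otherwise the bookkeeping about local compactness passing through closed and open pieces and through an open continuous map into a Hausdorff space is fine.

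The genuine gap is in $(2)\Rightarrow(3)$, and it is hiding in the phrase ``I would check that $(K,+)$ with its $\cE_K$-topology is a topological group.''  That is the entire difficulty.  A priori $\cE_K$ gives you only that each fixed translation $x\mapsto x+\alpha$, each dilation $x\mapsto\alpha x$, and $x\mapsto -x$, $x\mapsto 1/x$ are continuous, since these come from morphisms of varieties; joint continuity of $K\times K\to K$ (with the product topology on the source!) is not given, and indeed Proposition~\ref{prop:top-product} shows it can fail.  The paper's proof handles this with exactly one blow: Ellis's theorem (Fact~\ref{fact:semi-top}), which says a locally compact Hausdorff group with separately continuous multiplication and continuous inversion is a topological group; applied to $(K,+)$ and $(K^\times,\cdot)$, this yields that the \'etale-open topology on $\Aa^1(K)$ is a locally compact Hausdorff field topology, and then $K$ is a local field by the classical Pontryagin classification.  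Your proposal never invokes Ellis (or anything equivalent), so the topological-group claim is unsupported.  Moreover, even granting that step, the subsequent detour through van Dantzig's theorem, constructing a compact open subgroup, rescaling it to manufacture a valuation, verifying $\cT_v=\cE_K$, and then re-proving that a locally compact Henselian valued field has finite residue field and discrete value group amounts to re-deriving the classification of non-archimedean local fields by hand; once you have a locally compact Hausdorff field topology you can simply cite that classification, as the paper does.  So the paper's route both fills the gap and makes the rest of your argument unnecessary.
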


We gather some necessary facts.

\begin{fact}
\label{fact:semi-top}
Suppose $G$ is a group and $\uptau$ is a topology on $G$ such the map $G \to G$, $b \mapsto b^{-1}$ is $\uptau$-continuous and the map $G \to G$ given by $a \mapsto ba$ and $a \mapsto ab$ is $\uptau$-continuous for any $b \in G$.
If $\uptau$ is locally compact Hausdorff then $\uptau$ is a group topology.
\end{fact}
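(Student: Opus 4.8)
The plan is to deduce joint continuity of the group operations from separate continuity plus local compactness, following the classical Ellis-style argument. First I would observe that the hypotheses say $(G,\uptau)$ is a \emph{semitopological group}: inversion is continuous and left and right translations $\lambda_b : a \mapsto ba$, $\rho_b : a \mapsto ab$ are continuous (hence homeomorphisms, with inverses $\lambda_{b^{-1}}, \rho_{b^{-1}}$). The goal is to upgrade ``separately continuous multiplication'' to ``jointly continuous multiplication,'' since joint continuity of inversion is already assumed; together these make $\uptau$ a group topology.

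The key input is Ellis's theorem: a locally compact Hausdorff semitopological group is a topological group. Concretely, the main step is to show that multiplication $m : G \times G \to G$ is continuous at $(e,e)$, where $e$ is the identity; joint continuity everywhere then follows by translating, because $m(a,b) = \lambda_{a_0}\bigl(m(a_0^{-1}a,\, b\,b_0^{-1})\bigr)\rho_{b_0}$ near a fixed point $(a_0,b_0)$ and the translations are homeomorphisms. To get continuity at $(e,e)$ one uses a Baire category argument: fix a compact neighborhood $C$ of $e$; for each $a \in C$ the map $b \mapsto ab$ is continuous, and one shows that the family of these maps is ``equicontinuous at $e$ along $C$'' by writing $C$ as a countable union (or using compactness directly) and invoking that a locally compact Hausdorff space is a Baire space to find a point where the relevant sets have nonempty interior. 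This is the standard proof that separate continuity implies joint continuity on suitable spaces, specialized to the group setting where homogeneity lets one propagate the conclusion from one point to all points.

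The step I expect to be the main obstacle is precisely the Baire-category / equicontinuity argument establishing joint continuity of $m$ at the identity from separate continuity together with local compactness — this is the genuine content of Ellis's theorem and is not formal. Everything else (inversion is a homeomorphism, translations are homeomorphisms, propagating continuity of $m$ from $(e,e)$ to an arbitrary point by composing with translations, and then concluding $\uptau$ is a group topology) is routine bookkeeping with homeomorphisms. Since the paper only needs this as a black-box fact about topological transformation groups, I would either cite Ellis's theorem (or Montgomery–Zippin) directly or sketch the Baire-category argument in one paragraph and leave the details to the reader, as the statement says ``we gather some necessary facts.''
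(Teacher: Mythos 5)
The paper gives no proof of this fact; it simply states ``Fact~\ref{fact:semi-top} is a theorem of Ellis~\cite{ellis-para},'' which is exactly what your proposal ultimately recommends. Your additional sketch of the Baire-category argument behind Ellis's theorem is reasonable in outline (and your reduction of joint continuity to continuity at $(e,e)$ via translations is correct), but the paper itself supplies none of these details, so you have matched the paper's approach while going somewhat beyond its level of detail.
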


\begin{fact}
\label{fact:lclh}
Suppose that $S$ is a topological space.
If $S$ is locally Hausdorff and locally compact then every open subset of $S$ is locally compact.
\end{fact}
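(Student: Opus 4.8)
The statement is the locally-Hausdorff version of the classical fact that an open subspace of a locally compact Hausdorff space is locally compact, and the plan is to run the classical proof, using local Hausdorffness to make up for the absence of a global Hausdorff hypothesis. Here ``locally compact'' means that every point has a compact neighbourhood. The first observation is that local compactness in this sense is a \emph{local} property: if $U \subseteq S$ is open and every $x \in U$ has a compact set $N$ with $x \in N \subseteq U$, then $U$ is locally compact, since a neighbourhood of $x$ contained in the open set $U$ is automatically a neighbourhood of $x$ in $U$, and compactness does not refer to the ambient space. So the statement reduces to: for every open $U \subseteq S$ and every $x \in U$, the point $x$ has a compact neighbourhood contained in $U$.

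I would then recall the classical mechanism and isolate the one place it uses Hausdorffness. By local Hausdorffness pick a Hausdorff open neighbourhood $W$ of $x$; a compact neighbourhood of $x$ inside $U \cap W$ is one inside $U$, so we may replace $U$ by $U \cap W$ and assume $U$ is Hausdorff. Suppose now that $x$ has a compact neighbourhood $C$ that is \emph{Hausdorff} in the subspace topology. Then $C$ is compact Hausdorff, hence regular; the set $U \cap \operatorname{int}_S(C)$ is open in $S$, contains $x$, and is open in $C$, so regularity of $C$ gives a set $V$, open in $C$, with $x \in V \subseteq \operatorname{cl}_C(V) \subseteq U \cap \operatorname{int}_S(C)$. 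Since $V \subseteq \operatorname{int}_S(C)$, the set $V$ is open in $S$; and $\operatorname{cl}_C(V)$ is closed in the compact set $C$, hence compact, hence a compact neighbourhood of $x$ contained in $U$. Thus the whole statement reduces to the single claim: every point of $S$ has a compact neighbourhood which is Hausdorff in the subspace topology.

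This last claim is the heart of the matter, and I expect it to be the main obstacle. To prove it one works inside a fixed compact neighbourhood: given $x$, a compact neighbourhood $C$ and a Hausdorff open neighbourhood of $x$, the compact space $C$ is itself locally Hausdorff, so it is covered by finitely many open Hausdorff sets; using compactness of $C$ one then wants to trim $C$ down to a closed---hence compact---neighbourhood of $x$ lying inside a single one of these charts, which is then Hausdorff as a subspace of that chart. The trimming is to be carried out by the usual point-versus-compact-set separation argument applied to $x$ and the relevant closed compact subsets of $C$, together with an induction on the number of charts. Verifying that enough separation is genuinely available inside a compact locally Hausdorff space---so that this trimming can always be performed---is the delicate point; everything after it is the routine locally-compact-Hausdorff bookkeeping recalled in the previous paragraph.
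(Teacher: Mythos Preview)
The paper does not give its own proof of this fact; it simply cites \cite[Proposition~3.6]{niefield}.  So there is no argument in the paper to compare against, and I can only assess your proposal on its own merits.

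Your reduction to the claim ``every point has a compact Hausdorff neighbourhood'' is correct and cleanly explained.  The problem is the method you propose for that claim.  You want to trim a compact neighbourhood $C$ of $x$ down to a \emph{closed-in-$C$} (hence compact) neighbourhood lying inside a single Hausdorff chart, using point-versus-compact-set separation and induction on the number of charts.  This cannot work as stated.  Take $C=[-1,1]$ with two origins $0_1,0_2$; this is compact and locally Hausdorff, with Hausdorff charts $V_1=[-1,1]_1$ and $V_2=[-1,1]_2$.  Let $x=0_1$.  Any neighbourhood of $0_1$ contains a punctured interval $(-\delta,0)\cup(0,\delta)$, and the $C$-closure of that set contains $0_2\notin V_1$.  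Hence \emph{no} closed-in-$C$ neighbourhood of $0_1$ is contained in $V_1$.  Moreover the separation you invoke is simply unavailable here: $0_1$ and $0_2$ cannot be separated by disjoint open sets, so you cannot separate $x$ from the compact set $C\setminus V_1=\{0_2\}$.

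The compact Hausdorff neighbourhood that does exist in this example, say $K=[-\tfrac12,\tfrac12]_1$, is \emph{not} closed in $C$ (its $C$-closure picks up $0_2$); it is only closed in the Hausdorff chart $V_1$.  So the phrase ``closed---hence compact'' is pointing at the wrong ambient space, and any correct argument must produce $K$ by some mechanism other than carving out a closed piece of $C$ via separation.  You flag this step as ``the delicate point,'' which is honest, but the specific plan you outline for it does not go through.
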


Fact~\ref{fact:semi-top} is a theorem of Ellis~\cite{ellis-para}.
Fact~\ref{fact:lclh} is a generalization of a familiar fact about Hausdorff locally compact  spaces. See \cite[Proposition~3.6]{niefield} for a proof.

\begin{lemma}
\label{lem:project}
Suppose that $\cT$ is a system of topologies over $K$ and $V,W$ are $K$-varieties.
Then the coordinate projection $\pi :(V \times W)(K) \to V(K)$ is $\cT$-open.
\end{lemma}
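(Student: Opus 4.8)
The plan is to realize the image $\pi(U)$ of a $\cT$-open set as a union of $\cT$-open ``horizontal slices,'' each slice being a preimage of $U$ under a morphism of varieties (hence $\cT$-continuous by condition (\ref{st-1}) of Definition~\ref{sys-top:def}).

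First I would recall that, by the universal property of the fibre product, a $K$-point of $V \times W$ is the same datum as a pair of $K$-points, so the canonical map $(V \times W)(K) \to V(K) \times W(K)$ is a bijection; under this identification $\pi$ becomes the set-theoretic projection $V(K) \times W(K) \to V(K)$. If $W(K) = \emptyset$ then $(V \times W)(K) = \emptyset$ and the statement is vacuous, so assume $W(K) \ne \emptyset$. Fix a $\cT$-open $U \subseteq (V \times W)(K)$. Then
\[ \pi(U) = \bigcup_{q \in W(K)} U_q, \qquad U_q := \{\, p \in V(K) : (p,q) \in U \,\}, \]
so it suffices to show each slice $U_q$ is $\cT$-open in $V(K)$.

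Next, for each $q \in W(K)$, regarded as a morphism $q : \Spec K \to W$, base change along the structure morphism $V \to \Spec K$ produces a morphism of $K$-varieties $s_q := \mathrm{id}_V \times q : V \to V \times W$ whose effect on $K$-points is $p \mapsto (p,q)$. (In fact $s_q$ is a section of the separated morphism $V \times W \to V$ and hence a closed immersion, so even Definition~\ref{sys-top:def}(\ref{st-3}) applies, but we only need that $s_q$ is a morphism.) By Definition~\ref{sys-top:def}(\ref{st-1}) the induced map $V(K) \to (V \times W)(K)$ is $\cT$-continuous, and therefore $U_q = s_q^{-1}(U)$ is $\cT$-open in $V(K)$. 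Hence $\pi(U) = \bigcup_{q \in W(K)} U_q$ is $\cT$-open, and $\pi$ is a $\cT$-open map.

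There is no real obstacle here; the only points requiring (routine) care are the identification $(V \times W)(K) = V(K) \times W(K)$ and the verification that $s_q$ is genuinely a morphism of $K$-varieties, so that condition (\ref{st-1}) of Definition~\ref{sys-top:def} is applicable.
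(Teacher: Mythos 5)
Your proof is correct and takes essentially the same approach as the paper: decompose $\pi(U)$ into the slices indexed by $W(K)$ and show each slice is $\cT$-open. The paper does this via the closed immersion $V \times \{\beta\} \hookrightarrow V \times W$ together with the isomorphism $V \times \{\beta\} \cong V$, whereas you invoke only $\cT$-continuity of the section $s_q : V \to V \times W$ and observe $U_q = s_q^{-1}(U)$ --- a mild streamlining of the same idea.
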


\begin{proof}
Suppose that $U$ is a $\Sa T$-open subset of $(V \times W)(K)$.
For each $\beta \in W(K)$ we let $U_\beta = U \cap [ V(K) \times \{\beta\}]$.
We have 
$\pi(U) = \bigcup_{\beta \in W(K)} \pi(U_\beta )$ so it suffices to fix $\beta \in W(K)$ and show that $\pi(U_\beta)$ is $\Sa T$-open.
Note that $V \times \{\beta\}$ is a closed subvariety of $V \times W$ so $U_\beta$ is $\Sa T$-open in $V(K) \times \{\beta\}$.
The projection $V \times \{\beta\} \to V$ is an isomorphism so the projection $V(K) \times \{\beta\} \to V(K)$ is a $\Sa T$-homeomorphism.
So each $\pi(U_\beta)$ is $\Sa T$-open.
\end{proof}

We now prove Theorem~\ref{thm:loc-compact}.

\begin{proof}
$(2) \Rightarrow (1)$ is clear.

\medskip
$(1) \Rightarrow (2)$.  
Suppose that $V$ is a $K$-variety and $U$ is an infinite locally compact open subset of $V(K)$.
We claim that $\Aa^1(K)$ is locally compact.
As the action of the affine group on $\Aa^1(K)$ is transitive it is enough to produce a nonempty open subset of $\Aa^1(K)$ with compact closure.
Note that $V(K)$ is locally Hausdorff by Theorem~\ref{thm:haus}.
After possibly replacing $V$ with the Zariski closure of $U$ in $V$ (see Remark~\ref{z-closure}) we may suppose that $U$ is Zariski dense in $V$.
As $U$ is infinite $V$ has dimension $\ge 1$.
By Lemma~\ref{smooth-dimension}, there is a non-empty open subvariety $O \subseteq V$ such that $O$ is smooth of dimension $n = \dim(V)$.
Fix $p \in O(K) \cap U$.
Let $f_1, f_2, \ldots, f_n \in \mathcal{O}_p$ be local coordinates at $p$.
There is an open subvariety $W$ of $O$ such that $\vec{f} = (f_1,\ldots,f_n)$ gives an \'etale morphism $W \to \Aa^n$.
Let $\pi$ be a coordinate projection $\Aa^n \to \Aa^1$ and $g : W \to \Aa^1$ be $g = \pi \circ \vec{f}$.
By Lemma~\ref{lem:project}, $g$ gives an open map $W(K) \to \Aa^1(K)$.
By Theorem~\ref{thm:haus} any affine open subset of $V$ is Hausdorff, so $U$ is locally Hausdorff.
By Fact~\ref{fact:lclh} $U \cap W(K)$ is locally compact.
After possibly shrinking $W$ we may suppose that $W$ is an affine open subvariety of $V$, so $U \cap W(K)$ is locally compact Hausdorff.  Therefore there is a nonempty open subset $U'$  of $U \cap W(K)$ such that the closure $Z$ of $U'$ in $U \cap W(K)$ is compact.  Then $g(U')$ is open as $g : W(K) \to \Aa^1(K)$ is an open map.  The open set $g(U')$ is contained in the compact set $g(Z)$, so $g(U')$ has compact closure.

\meno
$(2) \Rightarrow (3)$.  Assume $(2)$.
By Theorem~\ref{thm:haus} the \'etale-open topology on $\Aa^1(K)$ is Hausdorff.  For $\alpha \in K$, the maps $x \mapsto \alpha x$, $x \mapsto \alpha + x$, and $x \mapsto -x$ are continuous, because these maps come from variety morphisms $\Aa^1 \to \Aa^1$.  Likewise, the map $x \mapsto 1/x$ is continuous on $K^\times$ as it comes from a morphism $\Gg_m \to \Gg_m$.
By Fact~\ref{fact:semi-top} the \'etale-open topology on $\Aa^1(K)$ is a field topology, so $K$ is a local field.

\meno
$(3) \Rightarrow (2)$.  Local fields are t-Henselian; apply Theorem~\ref{thm:hensel-main}.
\end{proof}

\section{Another example where the \'etale-open topology is not a field topology}
\label{section:pseudofinite}
In this section, we give an example of a field $K$ for which the \'etale-open system $\cE_K$ is not induced by a field topology---and in fact, the \'etale-open topology $\cE_{\Aa^1}$ on $K = \Aa^1(K)$ is not even a field topology.
We already have one example: separably closed fields (Proposition~\ref{prop:zar}).  However, separably closed fields are in many ways exceptional (see Theorems~\ref{thm:haus} and \ref{thm: hensel-main2}), and so it is good to have another example.

\meno
Throughout this section $p$ is a fixed odd prime and $\palg$ is an algebraic closure of $\Ff_p$.
We take all algebraic extensions of $\Ff_p$ to be subfields of $\palg$.

\begin{proposition}
\label{prop:finite}
Suppose $K$ is an infinite non-quadratically closed algebraic extension~of~$\Ff_p$.
Then $\{ (\alpha,\beta) \in \Aa^2(K) : \exists c \in K^\times ( \alpha - \beta = c^2) \}$ is not open in the product topology on $\Aa^2(K)$ associated to the $\cE_K$-topology on $\Aa^1(K)$.
Hence the $\cE_K$-topology on $K$ is not a field topology.
\end{proposition}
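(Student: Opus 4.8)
The plan is a proof by contradiction combining the Hasse--Weil bounds with a quadratic Gauss-sum estimate. Write $D$ for the set in question and $P = \{c^2 : c\in K^\times\}$ for the set of nonzero squares of $K$, so $D = \{(\alpha,\beta): \alpha-\beta\in P\}$. Suppose $D$ were open in the product topology. Since $D$ is nonempty (it contains $(1,0)$) and the \'etale images form a basis for $\cE_K$, we could then find nonempty \'etale images $U,V\subseteq\Aa^1(K)$ with $U\times V\subseteq D$; write $U = f(X(K))$, $V = f'(X'(K))$ with $f\colon X\to\Aa^1$ and $f'\colon X'\to\Aa^1$ \'etale morphisms of $K$-varieties having $K$-points. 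Here $X,X'$ are smooth curves, and since $K$ is algebraic over $\Ff_p$ and all the data is of finite type, $X$, $f$, $X'$, $f'$ are defined over a common finite subfield $\Ff_{q_0}\subseteq K$ over which (enlarging $q_0$ if needed) $X$ and $X'$ have rational points; the component of $X$ through such a point is geometrically irreducible over $\Ff_{q_0}$, being a smooth curve with an $\Ff_{q_0}$-point, and likewise for $X'$.

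First a field-theoretic remark: writing $K = \bigcup_i \Ff_{p^{n_i}}$ and letting $\mathfrak{n}$ be the associated supernatural number, the hypothesis that $K$ is not quadratically closed forces $v_2(\mathfrak{n})$ to be finite, while infinitude of $K$ makes the odd part of $\mathfrak{n}$ an infinite supernatural number; hence there are arbitrarily large prime powers $q = p^m$ with $\Ff_{q_0}\subseteq\Ff_q\subseteq K$ but $\Ff_{q^2}\not\subseteq K$. For such $q$, a square root in $K$ of an element of $\Ff_q$ lies in $\Ff_q$ or $\Ff_{q^2}$, hence in $\Ff_q$, so $P\cap\Ff_q$ is exactly the set of nonzero squares of $\Ff_q$.

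Now fix such a $q$ and set $U_q = U\cap\Ff_q$, $V_q = V\cap\Ff_q$. Restricting $f$ to $\Ff_q$-points and using that $f$ has fibres of bounded size, $|U_q|\ge |X(\Ff_q)|/C_X$; applying the Hasse--Weil bound to the geometrically irreducible component of $X$ through an $\Ff_{q_0}$-point gives $|X(\Ff_q)| \ge q - C'_X\sqrt q - C''_X$, so $|U_q|\ge\delta q$ and, symmetrically, $|V_q|\ge\delta q$ once $q$ is large, for a fixed $\delta>0$. Let $\chi$ be the quadratic character of $\Ff_q$. The number $N$ of pairs $(\alpha,\beta)\in U_q\times V_q$ for which $\alpha-\beta$ fails to be a nonzero square of $\Ff_q$ equals
\[
  N \;=\; \tfrac12\bigl(|U_q|\,|V_q| + |U_q\cap V_q|\bigr)\;-\;\tfrac12\!\!\sum_{\alpha\in U_q,\ \beta\in V_q}\!\!\chi(\alpha-\beta).
\]
By Cauchy--Schwarz together with the standard evaluations $\sum_{x\in\Ff_q}\chi((x-\beta)(x-\beta')) = -1$ for $\beta\ne\beta'$ and $q-1$ for $\beta=\beta'$, the character sum has absolute value at most $\sqrt{|U_q|\,|V_q|\,q}$, so $N \ge \tfrac12|U_q|\,|V_q| - \tfrac12\sqrt{|U_q|\,|V_q|\,q}\ge\tfrac12\delta^2 q^2 - \tfrac12 q^{3/2}$, which is positive for $q$ large. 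But $U\times V\subseteq D$ forces $\alpha-\beta\in P\cap\Ff_q$ for all $(\alpha,\beta)\in U_q\times V_q$, and $P\cap\Ff_q$ consists of nonzero squares of $\Ff_q$, so $N = 0$ --- a contradiction. Thus $D$ is not open in the product topology. Finally, $P$ is the image of the \'etale morphism $\Gg_m\to\Aa^1$, $y\mapsto y^2$ (\'etale since $p$ is odd), hence is $\cE_K$-open; if the $\cE_K$-topology on $K$ were a field topology then subtraction $K\times K\to K$ would be continuous for the product topology, making $D$ the preimage of an open set and hence open, which is impossible.

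The main obstacle is the density lower bound on \'etale images over the finite subfields $\Ff_q$: one must descend $X,f$ to a finite field, observe that a smooth curve with a rational point is geometrically irreducible, and then invoke Hasse--Weil. The Gauss-sum estimate for the incomplete sum over $U_q\times V_q$ is routine once reduced by Cauchy--Schwarz to complete sums, and the only other delicate point is arranging the finite subfields so that $\Ff_{q^2}\not\subseteq K$, which is exactly where non-quadratic-closedness of $K$ is used.
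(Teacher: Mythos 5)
Your proof is correct. The overall architecture mirrors the paper's: reduce to finite subfields $\Ff_q\subseteq K$ over which the relevant \'etale images and morphisms are defined and over which squares of $K$ restrict to squares of $\Ff_q$, establish a positive-density lower bound on the \'etale images over $\Ff_q$ via geometric irreducibility and Hasse--Weil (your density argument is essentially the paper's Fact~\ref{fact:cdm}, and your field-theoretic reduction is essentially Lemma~\ref{lem:intermediate}), and then derive a contradiction with the Paley-type behavior of quadratic residues. Where you genuinely diverge is in the upper-bound engine: the paper proves Fact~\ref{paley-ish} by applying Hasse--Weil to the specific curve $x-\beta_i=y_i^2$, $1\le i\le k$, then picks finitely many $\beta_i$ from one \'etale image and bounds the other by $2^{1-k}|F|$; you instead keep both sets $U_q, V_q$ entire and estimate the bilinear quadratic-character sum $\sum_{\alpha\in U_q,\beta\in V_q}\chi(\alpha-\beta)$ by Cauchy--Schwarz, reducing to the complete Jacobi-type evaluation $\sum_{x\in\Ff_q}\chi\bigl((x-\beta)(x-\beta')\bigr)=-1$. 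The two routes are classically intertwined (both are ``Weil bounds in Paley-graph clothing''), but yours is technically distinct and has the minor advantage that the complete-sum evaluation is elementary, so Hasse--Weil enters only through the density lower bound rather than twice; the paper's version has the advantage of packaging the upper bound as the clean, reusable Fact~\ref{paley-ish}. One small point worth making explicit in your write-up: the condition ``$\Ff_q\subseteq K$, $\Ff_{q^2}\not\subseteq K$'' for $q=p^m$ forces $v_2(m)=v_2(\mathfrak n)$ exactly, so you should arrange for $q_0$ to already carry the full $2$-part $2^{v_2(\mathfrak n)}$ (this is what the paper's $K_0$ accomplishes) before ranging $m$ over larger odd multiples.
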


Note that the $\cE_K$-topology on $K$ is Hausdorff as $K$ is not separably closed.
The Hasse-Weil estimates imply that any infinite subfield of $\palg$ is $\mathrm{PAC}$ (pseudo-algebraically closed), \cite[11.2.4]{field-arithmetic}, and any $\mathrm{PAC}$ field is large \cite{Pop-little}.
As previously mentioned, we believe, more generally,  that $\cE_K$ is not induced by a field topology when $K$ is $\mathrm{PAC}$. 

\meno
We will make use of the formalism of Steinitz numbers; see~\cite{BS-Fp} for details and proofs.
Let $\mathbb{P}$ be the set of prime numbers.
A \textbf{Steinitz number} is a formal product $s = \prod_{q \in \mathbb{P}} q^{e(q)}$ for some function $e : \mathbb{P} \to \Nn \cup \{\infty\}$.
We declare $\val_q(s) = e(q)$ for all $q \in \mathbb{P}$.
Suppose that $s,t$ are Steinitz numbers.
If $\val_q(s) < \infty$ for all $q \in \mathbb{P}$ and $\val_q(s) = 0$ for all but finitely many $q \in \mathbb{P}$ then we identify $s$ with a natural number in the natural way.
We multiply Steinitz numbers in the natural way and we say that $s$ divides $t$ if $\val_q(s) \le \val_q(t)$ for all $q \in \mathbb{P}$.
We declare $\Ff_{p^s}$ to be the union of all $\Ff_{p^n}$, $n$ dividing $s$.
Every subfield of $\palg$ is of the form $\Ff_{p^s}$ for a unique Steinitz number $s$ and $\Ff_{p^t}$ is a degree $d$ extension of $\Ff_{p^s}$ if and only if $t = ds$.

\meno
\textbf{For the remainder of this section $K$ is an infinite non-quadratically closed subfield of $\palg$.}
Let $s$ be the Steinitz number such that $K = \Ff_{p^s}$.
As $K$ is not quadratically closed $\val_2(s) < \infty$.
Let $K_0$ be $\Ff_{p^m}$ where $m = 2^{\val_2(s)}$.
So $K_0$ is finite.

\begin{lemma}
\label{lem:intermediate}
Suppose that $K_0 \subseteq L \subseteq K$ is a field and $\alpha \in L$.
Then $\alpha$ is a square in $K$ if and only if $\alpha$ is a square in $L$.
\end{lemma}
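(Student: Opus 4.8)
The plan is to reduce the statement to a $2$-adic computation with Steinitz numbers. The converse direction is immediate: if $\alpha = c^2$ with $c \in L$, then $c \in K$ as well, so $\alpha$ is a square in $K$. Thus the work is all in the forward direction, which I would prove by contradiction.

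Suppose $\alpha \in L$ is a square in $K$ but not a square in $L$; in particular $\alpha \neq 0$. Pick $c \in \palg$ with $c^2 = \alpha$. Since $\alpha$ is a square in $K$ and $p$ is odd, one of $\pm c$ lies in $K$, and as $-1 \in \Ff_p \subseteq K$ both do; in particular $c \in K$. On the other hand $c \notin L$, since $\alpha$ is not a square in $L$. As $c^2 = \alpha \in L$, the minimal polynomial of $c$ over $L$ divides $x^2 - \alpha$ and is nonlinear, so $[L(c):L] = 2$, and $L(c) \subseteq K$ because both $c$ and $L$ lie in $K$. Now every subfield of $\palg$ is $\Ff_{p^r}$ for a unique Steinitz number $r$, and a degree-$d$ extension of $\Ff_{p^r}$ is $\Ff_{p^{dr}}$; writing $L = \Ff_{p^t}$ and $K = \Ff_{p^s}$, this gives $L(c) = \Ff_{p^{2t}}$, and the containment $L(c) \subseteq K$ says exactly that $2t$ divides $s$.

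The key step is then the valuation bookkeeping. Since $K_0 = \Ff_{p^m} \subseteq L$ we have $m \mid t$, hence $\val_2(t) \ge \val_2(m)$; and $\val_2(m) = \val_2(s)$ because $m = 2^{\val_2(s)}$ (here we use $\val_2(s) < \infty$, which holds as $K$ is not quadratically closed). But $2t \mid s$ yields $\val_2(s) \ge \val_2(2t) = \val_2(t) + 1$. Combining, $\val_2(t) + 1 \le \val_2(s) \le \val_2(t)$, which is absurd. Hence $c \in L$, so $\alpha$ is a square in $L$, completing the forward direction.

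I do not anticipate a real obstacle; the only point requiring care is that $L$ and $K$ may be infinite algebraic extensions of $\Ff_p$, so $t$ and $s$ are genuine Steinitz numbers, and one must consistently use the Steinitz-number notions of divisibility and of degree-$2$ extension rather than the finite-field versions. The finiteness of $K_0$ — equivalently $\val_2(s) < \infty$ — is precisely what makes the inequality $\val_2(t) + 1 > \val_2(t)$ usable.
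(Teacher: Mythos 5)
Your proof is correct and follows essentially the same route as the paper's: both reduce the question to $2$-adic valuation bookkeeping with Steinitz numbers, using that $K_0 \subseteq L \subseteq K$ forces $\val_2(t) = \val_2(s)$, and that adjoining a square root produces a degree-$2$ extension $\Ff_{p^{2t}}$ whose $2$-adic valuation is too large to divide $s$. The paper phrases it contrapositively (not a square in $L$ implies not a square in $K$) while you argue by contradiction, but the content is identical.
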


\begin{proof}
The left to right implication is obvious; we prove the other implication.
Let $L = \Ff_{p^t}$.
We have $\val_2(t) = \val_2(s)$.
Suppose that $\alpha$ is not a square in $L$ and fix $\beta \in \palg$ such that $\beta^2 = \alpha$.
As $p \ne 2$, $\beta$ has degree $2$ over $L$ so $L(\beta) = \Ff_{p^{2t}}$.
So $\val_2(2t) = 1 + \val_2(t) > \val_2(s)$.
Hence $2t$ does not divide $s$, hence $L(\beta)$ is not a subfield of $K$.
So $\alpha$ is not a square in $K$.
\end{proof}

\meno
We now apply the Hasse-Weil bounds to prove two facts about finite fields.
Neither is original.
Fact~\ref{paley-ish} below follows from the combinatorics of Paley graphs and tournaments; see for example \cite{CGW,CG-tournament}.

\begin{fact}
\label{paley-ish}
Suppose that $F$ is a finite extension of $K_0$ and $\beta_1,\ldots,\beta_k \in F$ are pairwise distinct.
Let $S$ be the set of $\alpha \in F$ such that $\alpha - \beta_i$ is a non-zero square in $F$ for all $i$.
If $|F|$ is sufficiently large then $|S| < 2^{1-k}|F|$.
\end{fact}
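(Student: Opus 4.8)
The plan is the standard quadratic‑character‑sum argument. Write $q = |F|$ and let $\chi \colon F \to \{-1,0,1\}$ be the quadratic character: $\chi(x) = 1$ if $x$ is a nonzero square in $F$, $\chi(x) = -1$ if $x$ is a nonsquare, and $\chi(0) = 0$; it is totally multiplicative (here $p$ is odd, so $\chi$ is nontrivial). The basic observation is that $\tfrac12\bigl(1 + \chi(\alpha - \beta_i)\bigr)$ equals $1$ exactly when $\alpha - \beta_i$ is a nonzero square, and is nonnegative for every $\alpha$. Hence, setting $f(\alpha) = \prod_{i=1}^{k}\tfrac12\bigl(1 + \chi(\alpha - \beta_i)\bigr)$, one has $f(\alpha) = 1$ for each $\alpha \in S$ and $f(\alpha) \ge 0$ for every $\alpha \in F$, so that $|S| \le \sum_{\alpha \in F} f(\alpha)$. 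The terms with $\alpha \in \{\beta_1,\dots,\beta_k\}$ contribute nonnegatively to this sum and $0$ to $S$, so they only help.

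Next I would expand the product and interchange the summations. Writing $g_T(x) = \prod_{i \in T}(x - \beta_i)$ for $T \subseteq \{1,\dots,k\}$ and using total multiplicativity of $\chi$,
\[
\sum_{\alpha \in F} f(\alpha) = 2^{-k}\sum_{T \subseteq \{1,\dots,k\}} \sum_{\alpha \in F} \prod_{i \in T}\chi(\alpha - \beta_i) = 2^{-k}\sum_{T \subseteq \{1,\dots,k\}} \sum_{\alpha \in F}\chi\bigl(g_T(\alpha)\bigr).
\]
The empty set $T = \emptyset$ contributes $\sum_{\alpha \in F} 1 = q$. For $T \ne \emptyset$ the polynomial $g_T$ is squarefree of degree $|T| \ge 1$, since the $\beta_i$ are pairwise distinct, and Weil's bound for character sums --- this is the Hasse--Weil input, applied to the curve $y^2 = g_T(x)$ --- gives $\bigl| \sum_{\alpha \in F}\chi(g_T(\alpha)) \bigr| \le (|T| - 1)\sqrt{q} \le (k-1)\sqrt{q}$. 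Summing over the $2^k - 1$ nonempty subsets $T$ yields $|S| \le 2^{-k} q + 2^{-k}(2^{k}-1)(k-1)\sqrt{q} < 2^{-k}q + (k-1)\sqrt{q}$.

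Finally, since $k$ is fixed, the inequality $(k-1)\sqrt{q} < 2^{-k} q$ --- equivalently $q > 4^{k}(k-1)^2$ --- holds whenever $|F| = q$ is large enough, and then $|S| < 2 \cdot 2^{-k} q = 2^{1-k}|F|$, as required. I do not anticipate any real difficulty here: the only nontrivial ingredient is Weil's bound $\bigl|\sum_{\alpha \in F}\chi(g(\alpha))\bigr| \le (\deg g - 1)\sqrt{q}$ for squarefree $g$, which I would cite rather than reprove, and the one point needing a little care is the harmlessness of the diagonal terms $\alpha = \beta_j$, which is immediate since $f \ge 0$ everywhere.
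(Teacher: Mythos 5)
Your proof is correct, but it takes a genuinely different route from the paper's. The paper works directly with the quasi-affine curve $C \subseteq \mathbb{A}^{k+1}$ cut out by $x - \beta_i = y_i^2$, $x - \beta_i \ne 0$ for $1 \le i \le k$: it verifies that $C$ is geometrically irreducible via a UFD/Galois argument, invokes the Hasse--Weil bound to get $|C(F)| < 2|F|$ for $|F|$ large, and then observes that the projection $C(F) \to S$ is exactly $2^k$-to-one (using $p \ne 2$). Your approach is the classical quadratic-character-sum argument familiar from Paley graph estimates: write the indicator of ``nonzero square'' as $\tfrac12(1+\chi)$, expand the product, and bound the nontrivial terms by Weil's estimate $|\sum_\alpha \chi(g(\alpha))| \le (\deg g - 1)\sqrt q$ for squarefree $g$. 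Both ultimately rest on the Weil conjectures for curves, but yours reduces everything to the one-parameter family of hyperelliptic curves $y^2 = g_T(x)$ (with the irreducibility check absorbed into ``$g_T$ squarefree, not a perfect square''), whereas the paper argues directly on the fiber product curve $C$. Your version has the advantage of producing explicit constants (it works once $q > 4^k(k-1)^2$), while the paper's is more visibly geometric and packages the whole estimate into a single application of Hasse--Weil. One small point to be aware of: for $|T| = 1$ the Weil bound degenerates to $0$, which is consistent since $\sum_\alpha \chi(\alpha - \beta_i) = 0$ for a nontrivial multiplicative character; your argument handles this cleanly.
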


\begin{proof}
Let $C$ be the quasi-affine $F$-curve given by the equations
  \begin{align*}
      x - \beta_i &= y_i^2 \\
      x - \beta_i & \ne 0
  \end{align*}
for $1 \le i \le k$.
Then $C$ is geometrically irreducible, as $\palg[x,y_1,\ldots,y_k]/(x - \beta_i - y_i^2 : 1 \le i \le k)$ is a domain.\footnote{If $R$ is a unique factorization domain, and $p_1, \ldots, p_k$ are pairwise non-equivalent primes in $R$, then $R[y_1,\ldots,y_k]/(y_i^2 - p_i : i \in \{1, \ldots, k\})$ is a domain.
To see this, let $F = \Frac(R)$, and use Galois theory to see that $F(\sqrt{p_1},\ldots,\sqrt{p_k})$ has degree $2^k$ over $F$.
Therefore $F[y_1,\ldots,y_k]/(y_i^2 - p_i : i \in \{1, \ldots, k\}))$ is isomorphic as an $F$-algebra to  $F(\sqrt{p_1},\ldots,\sqrt{p_k})$.
By inspection, the ring $S := R[y_1,\ldots,y_k]/(y_i^2 - p_i : i \in \{1 \ldots, k\}))$ embeds into $F[y_1,\ldots,y_k]/(y_i^2 - p_i : i \in \{1, \ldots, k\}))$, and so $S$ is a domain.
In our case, $R$ is the UFD $\palg[x]$ and $p_i$ is the prime $x - \beta_i$ for each $i$. } 
The Hasse-Weil bounds yield $|C(F)| < 2|F|$ when $|F|$ is sufficiently large.  
The set $S$ is the image of $C(F)$ under the projection to the first coordinate.
Because $p \ne 2$, the fibers of $C(F) \to S$ have cardinality exactly $2^k$.
\end{proof}

The following fact is a special case of the results of \cite{CDM-finite}.

\begin{fact}
\label{fact:cdm}
Let $F$ be a finite extension of $\Ff_p$, $V$ be an $F$-variety with $V(F)$ non-empty, and $f : V \to \Aa^1_F$ be an \'etale morphism.
Then there is $\varepsilon > 0$ such that if $E$ is a finite extension of $F$ and $|E|$ is sufficiently large then $|f_E(V_E(E))| \ge \varepsilon |E|$.
\end{fact}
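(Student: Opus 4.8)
The plan is to reduce the statement to the Lang--Weil point-counting estimate together with a uniform bound on the fibers of $f$. First I would fix a point $p \in V(F)$ and replace $V$ by the connected component $V_0$ of $V$ containing $p$; since $V$ is Noetherian this component is an open and closed subvariety, and $f|_{V_0} : V_0 \to \Aa^1_F$ is again \'etale. As $(V_0)_E(E) \subseteq V_E(E)$ for every extension $E/F$, it suffices to bound $|f_E((V_0)_E(E))|$ from below. Because $f$ is \'etale and $\Aa^1_F$ is smooth over $F$, the variety $V_0$ is smooth over $F$ of dimension $1$; being connected and carrying the $F$-rational point $p$ it is geometrically connected, and a smooth geometrically connected variety is geometrically irreducible. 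Thus $V_0$ is a geometrically irreducible curve over $F$.

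Next I would invoke the Lang--Weil bounds (equivalently, the Weil bounds for curves applied to a smooth projective compactification of $V_0$): there is a constant $C$ depending only on $V_0$ such that $\bigl|\,|V_0(E)| - |E|\,\bigr| \le C\,|E|^{1/2}$ for every finite extension $E/F$. In particular $|V_0(E)| \ge \tfrac12 |E|$ once $|E|$ is sufficiently large.

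Then I would bound the fibers of $f|_{V_0}$. Since $f|_{V_0}$ is \'etale and of finite type it is quasi-finite, so by Zariski's main theorem it factors as an open immersion $V_0 \hookrightarrow \overline{V}_0$ followed by a finite morphism $\overline{V}_0 \to \Aa^1_F$; let $d$ be the degree of that finite morphism. Then for every finite extension $E/F$ and every $y \in \Aa^1(E)$ the fiber of $f_E$ over $y$ has at most $d$ points, uniformly in $E$. Consequently $|f_E((V_0)_E(E))| \ge |V_0(E)|/d \ge |E|/(2d)$ for $|E|$ large, and taking $\varepsilon = 1/(2d)$ finishes the argument.

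The main obstacle is really the Lang--Weil input: one needs the estimate with a constant uniform over all finite extensions $E/F$, which is precisely the content of the point-counting theorems (and is why the statement is phrased as a consequence of the results of \cite{CDM-finite} rather than proved from scratch here). The reduction to a geometrically irreducible curve and the uniform fiber bound via Zariski's main theorem are routine.
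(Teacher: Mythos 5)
Your proof is correct and follows essentially the same route as the paper: reduce to a geometrically irreducible smooth affine curve by passing to the connected component containing the $F$-rational point, apply the Hasse--Weil/Lang--Weil estimate to get $|V_0(E)| \ge \tfrac12|E|$ for $|E|$ large, and use a uniform bound on fiber sizes of the \'etale map to conclude. The only differences are presentational: the paper establishes geometric irreducibility by a Galois-descent argument on the connected components of the base change to $\palg$, whereas you invoke the standard facts that a connected $F$-variety with an $F$-point is geometrically connected and that smooth plus geometrically connected implies geometrically irreducible; and the paper simply asserts the existence of a uniform $k$-to-one bound, whereas you derive it from the ZMT factorization of the quasi-finite morphism.
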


\begin{proof}
As $f$ is \'etale, $V$ is a smooth curve.
As $V(F)$ is nonempty, some connected component of $V$ is a geometrically irreducible smooth curve.\footnote{Let $W$ be one of the connected components of $V$ containing an $F$-point $q$.  The base change $W_{\palg}$ is smooth, so its irreducible components are its connected components.  Let $X$ be the connected component of $W_{\palg}$ containing $q$, and let $Y$ be the union of the remaining connected components (possibly empty).  The Galois action $\Gal(\palg/F)$ fixes $q$, and therefore fixes $X$ and $Y$ setwise.  Therefore $X$ and $Y$ descend to varieties $X^*$ and $Y^*$ over $F$.
Then $W$ is the disjoint union of $X^*$ and $Y^*$, so $W = X^*$ and $Y^* = \emptyset$ because $W$ is connected.  Finally, $X^*$ is geometrically irreducible because $X$ is irreducible.}
The Hasse-Weil bounds imply $|V_E(E)| \ge (1/2)|E|$ when $|E|$ is sufficiently large.
As $f$ is \'etale there is $k$ such that $f_E$ is $k$-to-one for any finite field $E$ extending $F$.
So if $|E|$ is sufficiently large then $|f_E(V_E(E))| \ge (1/2k)|E|$.
\end{proof}

We now prove Proposition~\ref{prop:finite}.

\begin{proof}
Given a field $L$ we let $E_L$ be the set of $(a,b) \in L^2$ such that $a - b$ is a non-zero square.
We show that $E_K$ is not $\Sa P$-open.  Suppose otherwise.  As $E_K \ne \emptyset$, there are non-empty \'etale images $U_0, U_1$ in $\Aa^1(K)$ such that $U_0 \times U_1 \subseteq E_K$.
Given $i \in \{0,1\}$ let $X_i$ be a $K$-variety and $f_i : X_i \to \Aa^1$ be an \'etale morphism such that $U_i = f_i(V_i(K))$.
Let $K_1$ be a finite subfield of $K$ such that $K_1$ contains $K_0$, each $V_i$ and $f_i$ is defined over $K_1$, and each $V_i(K_1)$ is nonempty.
For each field $K_1 \subseteq L \subseteq K$ and $i \in \{0,1\}$ we let $U_i(L) = f_i(V_i(L))$.
Lemma~\ref{lem:intermediate} shows that if $K_1 \subseteq L \subseteq K$ is a field then $E_L = E_K \cap \Aa^2(L)$, hence $U_0(L) \times U_1(L)\subseteq E_L$.
Applying Fact~\ref{fact:cdm} we obtain $\varepsilon > 0$ such that if $K_1 \subseteq L \subseteq K$ is finite and $|L| \ge n$ then $|U_0(L)|,|U_1(L)| \ge \varepsilon |L|$.
Fix $k$ such that $2^{1-k} < \varepsilon$.  Take $K_2$ finite with $K_1 \subseteq K_2 \subseteq K$ and $|K_2| \ge n$ and $|U_1(K_2)| \ge k$.  Take distinct $b_1, \ldots, b_k \in U_1(K_2)$.  Suppose $L$ is finite and $K_2 \subseteq L \subseteq K$.  If $a \in U_0(L)$, then $(a,b_i) \in E_L$ for $i \in \{ 1, \ldots, n\}$.
By Fact~\ref{paley-ish}, $|U_0(L)| \le 2^{1-k}|L| < \varepsilon|L|$ when $|L|$ is sufficiently large.  This contradicts the choice of $\varepsilon$.

\meno
The set of non-zero squares in $\Aa^1(K)$ is $\cE_K$-open, because $p \ne 2$.  
Hence the subtraction map
\begin{equation*}
    (\Aa^1(K),\cE_K) \times (\Aa^1(K),\cE_K) = (\Aa^2(K),\Sa P) \to (\Aa^1(K),\cE_K)
\end{equation*}
is not continuous, and the $\cE_K$-topology on $\Aa^1(K)$ is not a field topology.
\end{proof}

\subsection{Pseudofinite fields}
\label{section:pseudo-finite}
We assume some familiarity with pseudofinite fields.

\begin{proposition}
\label{prop:pseudofinite}
Suppose that $K$ is pseudofinite of odd characteristic.
Then the $\cE_K$-topology on $K$ is not a field topology.
\end{proposition}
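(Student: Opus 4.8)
The plan is to follow the strategy of Proposition~\ref{prop:finite}: extract from ``the $\cE_K$-topology on $K$ is a field topology'' a first-order consequence about étale images that fails in all sufficiently large finite fields, and transfer it using pseudofiniteness. Suppose toward a contradiction that the $\cE_K$-topology on $\Aa^1(K) = K$ is a field topology. Since $p$ is odd, the squaring map $\Gg_m \to \Gg_m$ is étale, so the set $P$ of nonzero squares is an étale image and hence $\cE_K$-open; and for a field topology the subtraction map $K \times K \to K$ is continuous for the product topology. Therefore $E_K := \{(a,b) \in K^2 : a - b \in P\}$ is open in the product of two copies of $(\Aa^1(K),\cE_K)$. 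As $(1,0) \in E_K$ and étale images form a basis of $\cE_K$, there are nonempty étale images $U_0, U_1 \subseteq K$ with $U_0 \times U_1 \subseteq E_K$; fix standard étale data $(f_j,g_j)$ of degree at most some $d$, with coefficients in $K$, realizing the $U_j$.

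Next I would transfer to finite fields. The assertion ``there exist coefficients of standard étale data $(f_0,g_0),(f_1,g_1)$ of degree $\le d$ whose images $U_0,U_1$ are nonempty and satisfy that $x_0-x_1$ is a nonzero square for all $x_0\in U_0$, $x_1\in U_1$'' is a first-order sentence $\psi_d$ in the language of rings: the locus of standard étale data of bounded degree is a constructible, hence definable, family in the coefficients (any of the equivalent conditions from the elementary description of $\cE$ in the introduction works), and non-emptiness of an image and containment in $E$ are visibly first-order. So $K \models \psi_d$. A pseudofinite $K$ is elementarily equivalent to an ultraproduct $\prod_{i \to \mathcal U} \Ff_{q_i}$ of finite fields of characteristic $p$, and since $K$ is infinite the $q_i$ tend to infinity along $\mathcal U$; by {\L}o{\'s}'s theorem $\Ff_{q_i} \models \psi_d$ for $\mathcal U$-almost all $i$, giving standard étale data of degree $\le d$ over $\Ff_{q_i}$ with nonempty images $U_0^{(i)}, U_1^{(i)} \subseteq \Ff_{q_i}$ and $U_0^{(i)} \times U_1^{(i)} \subseteq E_{\Ff_{q_i}}$.

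Finally, the finite-field contradiction. Each $U_j^{(i)}$ is the image, under a morphism that is at most $d$-to-one, of the $\Ff_{q_i}$-points of a smooth affine curve of complexity bounded in terms of $d$ alone which has an $\Ff_{q_i}$-point; so the proof of Fact~\ref{fact:cdm} gives $\varepsilon = \varepsilon(d) > 0$ with $|U_j^{(i)}| \ge \varepsilon q_i$ whenever $q_i$ is large. Choose $k$ with $2^{1-k} < \varepsilon$. For $q_i$ large, $|U_1^{(i)}| \ge k$, so pick distinct $b_1, \ldots, b_k \in U_1^{(i)}$; then $U_0^{(i)} \subseteq \{a \in \Ff_{q_i} : a - b_m \text{ is a nonzero square for } m = 1, \ldots, k\}$, which by the argument proving Fact~\ref{paley-ish} (valid over any odd-characteristic finite field) has fewer than $2^{1-k} q_i$ elements once $q_i$ is large. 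Thus $\varepsilon q_i \le |U_0^{(i)}| < 2^{1-k} q_i < \varepsilon q_i$ for $\mathcal U$-almost all $i$, which is absurd; hence the $\cE_K$-topology on $K$ is not a field topology.

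The main obstacle is the uniformity of the Hasse--Weil input: Facts~\ref{fact:cdm} and~\ref{paley-ish} are stated for a fixed base field and a fixed variety, whereas here the curves vary with $i$, so one must read off from their proofs that the constant $\varepsilon$ and the error terms depend only on the degree bound $d$. A convenient alternative is to run the same argument inside $K$ itself using the pseudofinite dimension-and-measure of Chatzidakis--van den Dries--Macintyre: a nonempty étale image in $\Aa^1(K)$ is infinite (Lemma~\ref{large:lem} and largeness of $K$), hence has dimension $1$ and positive measure, while the transfer of Fact~\ref{paley-ish} bounds the measure of $\{a : a - b_m \in (K^\times)^2,\ m \le k\}$ by $2^{1-k}$; choosing $k$ appropriately then yields the contradiction without isolating a particular $\psi_d$. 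A minor point to keep straight is the order of dependence: $d$ is produced first, then $\varepsilon(d)$, then $k$, and only afterwards does one pass to the finite fields.
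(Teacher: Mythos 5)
Your primary argument is correct in outline and mirrors the proof of Proposition~\ref{prop:finite}: extract étale data of bounded degree $d$, express the situation as a first-order sentence $\psi_d$ (the claim that the locus of standard étale data of degree $\le d$ is definable is fine; condition~(4) of the elementary description is a universal condition over $K^{\mathrm{alg}}$, hence first-order in the coefficients by quantifier elimination in $\mathrm{ACF}$), transfer to finite fields via {\L}o{\'s}, and derive a contradiction from the Hasse--Weil bounds. You are also right that the uniformity of the constant $\varepsilon$ in Fact~\ref{fact:cdm} and of the error term in Fact~\ref{paley-ish} over the varying family of curves is the genuine technical point; it does hold since the genus and the degree of the fibering map are both bounded in terms of $d$, but it is not contained in the statements of those facts as written.

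The paper's own proof of Proposition~\ref{prop:pseudofinite} takes the route you describe at the end as an alternative: it stays inside $K$, using the normalized pseudofinite counting measure $\mu$. It never passes through {\L}o{\'s} or isolates a $\psi_d$, and it sidesteps the uniformity issue for Fact~\ref{fact:cdm} entirely. Instead of a quantitative lower bound $\mu(U_0) \ge \varepsilon$, it only needs the qualitative fact that every infinite definable subset of $K$ has positive measure (the Lang--Weil input), together with the fact that nonempty étale images in $K$ are infinite (from $K$ being large and Theorem~\ref{thm:3-strong}). Combined with the transferred form of Fact~\ref{paley-ish}, namely $\mu\{\alpha : \alpha - \beta_1, \ldots, \alpha - \beta_k \in P\} \le 2^{1-k}$ for pairwise distinct $\beta_i$, letting $k \to \infty$ through distinct elements of the infinite set $U_1$ forces $\mu(U_0) = 0$, a contradiction. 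This version is cleaner in that it avoids the dependency chain $d \mapsto \varepsilon(d) \mapsto k$; the price is invoking the CDM measure formalism. Your first route is a legitimate and more hands-on alternative, provided you flesh out the uniformity of the Hasse--Weil constants in terms of $d$.
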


It is worth recalling that if $s$ is a Steinitz number then $\Ff_{p^s}$ is pseudofinite if and only if $\val_q(s) < \infty$ for all primes $q$ and $s$ is not a natural number\footnote{This follows by Ax's theorem~\cite{Ax-fin} that a field is pseudofinite if and only if it is perfect, $\mathrm{PAC}$, and admits a unique extension of each degree and the fact that infinite algebraic extensions of finite fields are $\mathrm{PAC}$.}.
Hence there is some overlap between Propositions \ref{prop:finite} and \ref{prop:pseudofinite} but latter does not imply the former.

\meno
The proof is very similar to that of Proposition~\ref{prop:finite} so we only give a sketch.
We only treat the case when $K$ is the ultraproduct of a sequence $(K_i : i \in \Nn)$ of finite fields with respect to a nonprincipal ultrafilter $\upomega$ on $\Nn$; our argument extends to the general case via routine but somewhat tedious model-theoretic arguments.
We let $\mu$ be the normalized pseudofinite counting measure on definable subsets of $K$.
That is, if $\phi(x_1,\ldots,x_n;y)$ is a formula in the language of fields and $a\in K^m$ is given by the sequence $(a_i \in K^n_i : i \in \Nn)$ then we have
\[
\mu \{b \in K : K \models \phi(a;b) \} = \lim_{i \to \upomega} \frac{|\{ b \in K_i : K_i \models \phi(a_i;b) \}| }{|K_i|}.
\]
Here we regard $\upomega$ as a point in the Stone-\u{C}ech compactification of $\Nn$ and take the limit $i \to \upomega$ accordingly.
The Lang-Weil estimates imply that if $A \subseteq K$ is definable then $\mu(A) = 0$ if and only if $A$ is finite; see \cite{CDM-finite}.
See \cite{anand-psueod-notes} for a detailed discussion of such measures in general and \cite{CDM-finite} for the construction of the measure on a general pseudofinite field.

\begin{proof}
Let $P$ be the set of non-zero squares in $K$ and $E$ be the set of $(a,b) \in \Aa^2(K)$ such that $a - b \in P$.
As above $E$ is $\cE_K$-open so it suffices to suppose that $U_0,U_1 \subseteq \Aa^1(K)$ are nonempty \'etale images and show that $E$ does not contain $U_0 \times U_1$.  
By Theorem~\ref{thm:3-strong}, $U_0$ and $U_1$ are infinite.
Fact~\ref{paley-ish}, the definition of $\mu$, and a routine ultraproduct argument together show that if $\beta_1,\ldots,\beta_k \in K$ are pairwise distinct then 
\[ \mu  \{ \alpha \in \Aa^1(K) : \alpha - \beta_1,\ldots,\alpha - \beta_k \in P \} \le 2^{1 - k}.\]
This yields
\[ \mu  \{ \alpha \in \Aa^1(K) : \forall \beta \in U_1 (\alpha - \beta \in P) \}  = 0, \]
so $U_0 \times U_1$ is not contained in $E$ as $\mu(U_0) > 0$.
\end{proof}

\section{Model-theoretic applications}

\subsection{A short  proof of  Theorem D from Theorems A and C}
\label{sec: tablefieldconj2}




\begin{proof}
Suppose $K$ is stable and not separably closed. By Theorem C(2) the \'etale open topology on $V(K)$  is Hausdorff for any quasi-projective $K$-variety $V$.
By Theorem A and Definition~\ref{sys-top:def}.\ref{st-1}, the map
\[ (V(K),\cE_V) \to (W(K),\cE_W)\]
is continuous for any morphism of $K$-varieties $f : V \to W$.  Specializing to the case where $V = W = \Aa^1$ and $f$ is an affine transformation, we see that the \'etale open topology on $\Aa^1(K) = K$ is Hausdorff and invariant under affine transformations.

\meno
As the topology is Hausdorff, we may take disjoint non-empty basic opens $U,U^* \subseteq K$.  The basic open sets in the \'etale-open topology are definable sets---they are images of morphisms of varieties.  Therefore $U,U^*$ are definable.
By Fact~\ref{fact:poizat} there is a unique additively and multiplicatively generic type in $K$.
As $U,U^*$ are disjoint they cannot both contain the generic type, so we suppose that $U$ is not generic.
By affine invariance we may suppose $0 \in U$.
Then $K \setminus U$ is multiplicatively generic in $K^\times$, hence there are $a_1, \ldots, a_n \in K^\times$ with
\[ K^\times = \bigcup_{i = 1}^n a_i \cdot (K \setminus U).\]
Equivalently $\{0\} = \bigcap_{i = 1}^n a_i \cdot U$.  By affine invariance of the topology, each $a_i \cdot U$ is open, and so $\{0\}$ is open.
By affine invariance again, every singleton is open and the topology is discrete.  By Theorem C.1, $K$ is not large.
\end{proof}

\subsection{Podewski's conjecture}
\label{section:podewski}
Recall that $K$ is \textbf{minimal} if every definable subset of $K$ is finite or co-finite.
\textbf{Podewski's conjecture} says that an infinite minimal field is algebraically closed.
Koenigsmann proved Podewski's conjecture for large fields~\cite{open-problems-ample}.
This follows from what is above.
Suppose that $K$ is large and minimal.
If $K$ is not separably closed we can produce $U,U^*$ as above and by largeness $U,U^*$ are both infinite, contradiction.
Finally, minimal fields are easily seen to be perfect, so $K$ is algebraically closed.

\meno
Wagner proved Podewski's conjecture in positive characteristic~\cite{wagner-minimal-fields}.
He shows that if $\Chara(K) \ne 0$ and $K$ is not algebraically closed then there is an infinite and co-infinite $\exists\forall$ definable subset of $K$.
Our proof produces two disjoint existentially definable infinite subsets of $K$ under the assumption that $K$ is large.
This is sharp as a quantifier free definable subset of $K$ is finite or cofinite.

\bibliographystyle{amsalpha}
\bibliography{ref}

\end{document}